\theoremstyle{plain}
\newtheorem{thm}{Theorem}[section]
\newtheorem{prop}[thm]{Proposition}
\newtheorem{lem}[thm]{Lemma}
\newtheorem{rmk}[thm]{Remark}
\newtheorem{cor}[thm]{Corollary}
\newtheorem{prop-defi}[thm]{Proposition-Definition}
\newtheorem{thm-defi}[thm]{Theorem-Definition}
\newtheorem{lem-defi}[thm]{Lemma-Definition}
\newtheorem{question}[thm]{Question}
\newtheorem{proposal}[thm]{Proposal}
\theoremstyle{definition}
\newtheorem{defi}[thm]{Definition}
\newtheorem{assum}[thm]{Assumption}
\newcommand{\aA}{\mathcal{A}}
\newcommand{\bB}{\mathcal{B}}
\newcommand{\cC}{\mathcal{C}}
\newcommand{\dD}{\mathcal{D}}
\newcommand{\fF}{\mathcal{F}}
\newcommand{\hH}{\mathcal{H}}
\newcommand{\lL}{\mathcal{L}}
\newcommand{\oO}{\mathcal{O}}
\newcommand{\pP}{\mathcal{P}}
\newcommand{\qQ}{\mathcal{Q}}
\newcommand{\rR}{\mathcal{R}}
\newcommand{\tT}{\mathcal{T}}
\newcommand{\uU}{\mathcal{U}}
\newcommand{\vV}{\mathcal{V}}
\newcommand{\zZ}{\mathcal{Z}}
\newcommand{\bbC}{\mathbb{C}}
\newcommand{\bbH}{\mathbb{H}}
\newcommand{\bbL}{\mathbb{L}}
\newcommand{\bbP}{\mathbb{P}}
\newcommand{\bbQ}{\mathbb{Q}}
\newcommand{\bbR}{\mathbb{R}}
\newcommand{\bbZ}{\mathbb{Z}}
\newcommand{\Hom}{\mathop{\rm Hom}\nolimits}
\newcommand{\Cone}{\mathop{\rm Cone}\nolimits}
\newcommand{\ch}{\mathop{\rm ch}\nolimits}
\newcommand{\Spec}{\mathop{\rm Spec}\nolimits}
\newcommand{\Coh}{\mathop{\rm Coh}\nolimits}
\newcommand{\Per}{\mathop{\rm Per}\nolimits}
\newcommand{\Imm}{\mathop{\rm Im}\nolimits}
\newcommand{\Ker}{\mathop{\rm Ker}\nolimits}
\newcommand{\Stab}{\mathop{\rm Stab}\nolimits}
\newcommand{\Amp}{\mathop{\rm Amp}\nolimits}
\newcommand{\NS}{\mathop{\rm NS}\nolimits}
\newcommand{\Ei}{\mathop{\rm Ei}\nolimits}
\newcommand{\Vect}{\mathop{\rm Vect}\nolimits}
\newcommand{\excl}[1]{\langle{#1}\rangle_{\text{ex}}}
\begin{document}

\title{The noncommutative MMP for blowup surfaces}
\author{Tomohiro Karube}
\date{\today}

\address{Graduate School of Mathematical Sciences, The university of Tokyo, Meguro-ku, Tokyo, 153-8914, Japan.}
\email{karube-tomohiro803@g.ecc.u-tokyo.ac.jp}

\begin{abstract}
  We study the noncommutative minimal model program for  blowups of surfaces.
  The program, as defined by Halpern-Leistner, is designed to construct a quasi-convergent path in the space of Bridgeland stability conditions.
  In this paper, we construct a family of quasi-convergent paths in the case of blowups of surfaces.
  These paths provide different semiorthogonal decompositions through parameter transformation.
\end{abstract}

\maketitle
\setcounter{tocdepth}{1}
\tableofcontents
\tableofcontents

\section{Introduction}
\subsection{Motivation}
Let $X$ be a smooth projective variety over $\bbC$.
The minimal model program (MMP) is a program for constructing a sequence of birational maps
\[
X = X_0 \dashrightarrow X_1 \dashrightarrow \cdots \dashrightarrow X_n = X_{\min}.
\]
Here, $X_{\min}$ is a minimal model of $X$ or a Mori fiber space.
Each birational map $X_i \dashrightarrow X_{i+1}$ is a flip or a divisorial contraction.
The MMP is a fundamental method in the study of birational geometry of algebraic varieties.
When $X$ is a surface, each step is a contraction of a $(-1)$ curve.
In the case of 3-folds, a flip can occur in the MMP (cf.\cite{KM:98}).

Through the MMP,  the derived category of a variety can become smaller.
In \cite{BO:95}, Bondal and Orlov have shown the relationship between semiorthogonal decompositions of derived categories and birational maps of varieties.
If $X_i$ and $X_{i+1}$ are smooth and a birational map $X_i \dashrightarrow X_{i+1}$ is a divisorial contraction or a simple flip,
then it is proved in \cite{BO:95} that the derived category $\dD^b(X_{i+1})$ is a part of a semiorthogonal decomposition of $\dD^b(X_i)$.
The above result was generalized by Kawamata in \cite{Kaw:02} to the case of orbifolds.
There are also studies pointing out the relationship between derived categories and birational geometry (\cite{Kaw:02b,Bri:00}).

The \emph{noncommutative minimal model program (NCMMP)}, proposed by Halpern-Leistner in \cite{HL:23}, is an analogue of MMP in derived categories.
The NCMMP aims to divide the derived category into smaller components through semiorthogonal decomposition
by using the space of Bridgeland stability conditions.
The space of Bridgeland stability conditions is defined in \cite{Bri:07}, and 
it has been studied as an invariant of triangulated categories.
In the NCMMP, we consider a path in the space of Bridgeland stability conditions to construct a semiorthogonal decomposition of the derived category.
Such a path is called a \emph{quasi-convergent path} defined in \cite{HL:23,HLJR}.

Semiorthogonal decompositions of a variety is also related to the quantum cohomology.
Let $F$ be a Fano variety.
The quantum cohomology is a deformation of the cohomology ring of $F$ via Gromo-Witten invariants, 
whose product is denoted by $\star_{\tau}$ for a parameter $\tau \in H^*(X)$.
The quantum cohomology ring $(H^*(F),\star_{\tau})$ defines the quantum differential equation
\[
t \frac{d \Phi_t}{dt} + c_1(F) \star_{c_1(F)\log(t)} \Phi_t= 0.
\]
Dubrovin's conjecture says that the semi-simplicity of the quantum cohomology of $F$ is equivalent to 
the existence of a full exceptional collection on $\dD^b(F)$ \cite{Dub:98}.
The Stokes matrix of the above equation plays an important role in this conjecture.
The conjecture is partially refined and is known as the Gamma Conjecture II.
The Gamma Conjecture II, proposed in \cite{GGI}, expects a correspondence between the behavior of solutions to the quantum differential equation and exceptional objects in the derived category of $F$. 
In the NCMMP, we have the following problem inspired by the Gamma Conjecture II.
\begin{question}[{\cite[Proposal III.]{HL:23}}]\label{question:ncmmp}
  Let $\pi \colon X \rightarrow Y$ be a contraction (i.e. surjective map with connected fibers) of a smooth projective variety.
  Let $\Phi_t$ be an endomorphism of $H^*(X)$ such that $\Phi_t(\alpha)$ satisfies the truncated quantum differential equation for all $\alpha \in H^*(X)$.
  \begin{enumerate}
    \item Does a family of central charges $Z_t = \int_{X} \Phi_t$ lift to a family of stability conditions on $X$?
    \item If the answer to the above question is yes, then is the family of stability conditions a quasi-convergent path?
  \end{enumerate}
\end{question}
In \cite{HL:23}, only the fundamental solution of the quantum differential equation is considered.
However, we consider all paths satisfying the truncated quantum differential equation in this paper.

\subsection{Main results}
In this paper, we study the NCMMP for the blowup of a surface $\pi \colon X \rightarrow Y$.
There is a semiorthogonal decomposition $\langle \oO_C(-1),\dD^b(Y)  \rangle$ of $\dD^b(X)$, where $C$ is the exceptional divisor of $\pi$.
The semiorthogonal decompositions mutation-equivalent to $\langle \oO_C(-1),\dD^b(Y)  \rangle$ are given by 
\[
\langle \oO_C(k),f_{k+1}^L\dD^b(Y)  \rangle \text{ and } \langle f^L_{k}\dD^b(Y) ,\oO_C(k) \rangle \text{ for } k \in \bbZ.
\]
Here, $f_{k}^L (-) = f^*(-) \otimes \oO_C(-k)$.
Let $\Stab(X)$ be the space of Bridgeland stability conditions on $X$.
We mainly consider the normalized stability conditions whose central charges $Z$ are normalized to be $Z(\oO_x) = -1$.
Let $\Stab_n(X)$ be the space of normalized stability conditions on $X$.
The space $\Stab_n(X)$ contains the following open subset $U(X)$:
\[
U(X) = \left\{ \sigma \in \Stab_n(X) \colon \text{all skyscraper sheaves } \oO_x \text{ are stable of phase one} \right\}.
\]
We call $U(X)$ the normalized geometric chamber of $X$.
Other regions of $\Stab_n(X)$ called the gluing region are defined by semiorthogonal decompositions of $\dD^b(X)$.
We discuss two type of gluing regions $U(\rR_{0})$ and $U(\lL_{-1})$, see \Cref{def:region_prime} and \Cref{defi:region} in details.
These gluing regions are defined by the notion of glued stability conditions induced in \cite{CP:09}.

We construct two families of quasi-convergent paths in $\Stab(X)$.
One of the above families can be taken so that  its initial values lies in $U(X)$.
The other family induces different semiorthogonal decompositions through parameter transformation.
We consider the following central charges on $X$ defined to be
\begin{equation}
  \zZ(\lambda,t)(E) = g(\lambda,t)\ch_1(E)C + Z_{0}(E) 
\end{equation}
where $t \in \bbR_{\geq T_0}$ for some $T_0 >1$.
The function $g(\lambda,t)$ satisfies the truncated quantum differential equation for a blowup $\pi \colon X \rightarrow Y$.
Here, $\lambda$ is a parameter in $\bbR \oplus i[-1,1]$.
The truncated quantum differential equation of $\pi$  is defined to be 
\begin{equation}\label{eq:qdeINTRO}
  \frac{d}{dt}\left(t \frac{d g(\lambda, t)}{dt}\right) = \lambda t \frac{d g(\lambda, t)}{dt}.
\end{equation}
The following is the main theorem of this paper.
\begin{thm}[\Cref{thm:startgeom}]
  Let $Y$ be a smooth projective surface and $\pi \colon X \rightarrow Y$ be a blowup of a point.
  Let $T_0 \in \bbR_{\geq 1}$ and $\sigma_{T_0}$ be a stability condition in the geometric chamber $U(X)$.
  Fix a semiorthogonal decomposition which is mutation-equivalent to $\langle \oO_C(-1),\dD^b(Y)  \rangle$.
  Then, there exist a parameter $\lambda$ and  a solution $g(\lambda,t)$ of the truncated quantum differential equation (\ref{eq:qdeINTRO})
  such that the central charge $\zZ(\lambda,t)$ defines a quasi-convergent path $\sigma_{\bullet}$.
  Moreover, the path $\sigma_{\bullet}$ induces the fixed semiorthogonal decomposition of $\dD^b(X)$.
\end{thm}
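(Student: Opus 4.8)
The plan is to solve the truncated quantum differential equation (\ref{eq:qdeINTRO}) explicitly, to recognise $\zZ(\lambda,t)$ as a ray issuing from the central charge of $\sigma_{T_0}$, and to calibrate the direction of that ray so that it leaves the geometric chamber $U(X)$ across a single wall into the gluing region attached to the prescribed semiorthogonal decomposition, where it then remains. Setting $h=t\,dg/dt$, the equation (\ref{eq:qdeINTRO}) becomes $dh/dt=\lambda h$, so that $h=c_1e^{\lambda t}$ and hence $g(\lambda,t)=c_1\Ei(\lambda t)+c_2$ for $\lambda\neq0$ (and $g(\lambda,t)=c_1\log t+c_2$ for $\lambda=0$). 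Since $\ch_1(\oO_x)\cdot C=0$, the normalisation $\zZ(\lambda,t)(\oO_x)=Z_0(\oO_x)=-1$ is automatic, and after absorbing the constant $g(\lambda,T_0)$ into $Z_0$ we may assume that $\zZ(\lambda,T_0)$ equals the central charge $Z_{T_0}$ of $\sigma_{T_0}$, so that
\[
\zZ(\lambda,t)(E)=Z_{T_0}(E)+\bigl(g(\lambda,t)-g(\lambda,T_0)\bigr)\,\ch_1(E)\cdot C .
\]
When $\lambda\in\bbR_{>0}$ the quantity $\Ei(\lambda t)$ is real and increases monotonically to $+\infty$ on $[T_0,\infty)$, so $g(\lambda,t)-g(\lambda,T_0)$ sweeps out the ray $c_1\bbR_{\geq0}$; thus the image of $\zZ(\lambda,\bullet)$ is a single half-line whose direction depends only on $\theta:=\arg c_1$, the parameter $\lambda>0$ and the modulus $|c_1|$ affecting only the parametrisation. (For $\lambda$ with negative real part, or purely imaginary, $g(\lambda,t)$ stays bounded and $\sigma_\bullet$ merely converges in $\Stab(X)$, inducing no nontrivial decomposition; for $\mathrm{Re}(\lambda)>0$ with $\mathrm{Im}(\lambda)\neq0$ the half-line is replaced by a spiral whose phases wind indefinitely, and which is not quasi-convergent. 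This is why one takes $\lambda$ real.) The problem is thereby reduced to the single choice of $\theta$.

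Next I would invoke the wall-and-chamber picture around $U(X)$ established earlier: its bounding walls, together with the adjacent gluing regions $U(\rR_k)$ and $U(\lL_k)$, $k\in\bbZ$, obtained from $U(\rR_0)$ and $U(\lL_{-1})$ of \Cref{def:region_prime} and \Cref{defi:region} by the twist autoequivalences $-\otimes\oO_X(jC)$, and realising the decompositions $\langle f_k^L\dD^b(Y),\oO_C(k)\rangle$ and $\langle\oO_C(k),f_{k+1}^L\dD^b(Y)\rangle$ respectively. Because $\ch_1(\oO_C(j))\cdot C=C^2=-1$ for every $j$, moving $\zZ(\lambda,t)$ in the direction $c_1(\ch_1(-)\cdot C)$ translates every $\zZ(\lambda,t)(\oO_C(j))$ by the same amount out to infinity in the direction $-c_1$; choosing $\arg c_1$ so that this direction points ``upward'' or ``downward'' relative to $\sigma_{T_0}$, and calibrating it within the corresponding half-turn, determines across which wall of $U(X)$ the ray exits, and hence which of the regions $U(\lL_k)$ (for a left decomposition $\langle\oO_C(k),f_{k+1}^L\dD^b(Y)\rangle$) or $U(\rR_k)$ (for a right one $\langle f_k^L\dD^b(Y),\oO_C(k)\rangle$) it enters. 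We fix $\theta$ to be the value corresponding to the prescribed decomposition.

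The technical heart is to show that $\zZ(\lambda,t)$ lifts to a stability condition $\sigma_t\in\Stab(X)$ for every $t\geq T_0$ and that the lift is eventually confined to this single gluing region. On the part of the ray inside $U(X)$ this is immediate from the openness of $U(X)$ and from the fact that $\zZ(\lambda,t)(\oO_x)=Z_{T_0}(\oO_x)$ is constant. After the wall one uses the description of glued stability conditions in the sense of \cite{CP:09}: a point of $U(\rR_k)$ (resp.\ $U(\lL_k)$) is the gluing of a stability condition on $\langle\oO_C(k)\rangle\simeq\dD^b(\mathrm{pt})$ with one on $f_{k+1}^L\dD^b(Y)\simeq\dD^b(Y)$, subject to a phase-separation inequality. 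Restricting $\zZ(\lambda,t)$ to the first factor yields an arbitrary central charge --- hence always a stability condition --- whose value on $\oO_C(k)$ tends to infinity with argument converging to that of $-e^{i\theta}$; restricting to the second factor yields a central charge that differs from a fixed one by a term proportional to $\rk(-)\,g(\lambda,t)$, hence moves toward the Gieseker/large-volume region of $\Stab(Y)$, where for $t\gg0$ no further wall is crossed; and one checks that the phase-separation inequality holds, and is strict, for all $t\geq T_0$, while the support property is inherited from $\sigma_{T_0}$ and from the gluing construction. I expect this global wall-crossing bookkeeping --- ensuring that the ray neither leaves $\Stab(X)$ nor escapes the chosen gluing region --- to be the main obstacle; the asymptotics of $\Ei(\lambda t)$ are themselves elementary.

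Finally, quasi-convergence and the identification of the limit follow from the criterion of \cite{HL:23}. For $E$ with $\ch_1(E)\cdot C=0$ the central charge $\zZ(\lambda,t)(E)=Z_{T_0}(E)$ is constant, while for $\ch_1(E)\cdot C\neq0$ one has $\zZ(\lambda,t)(E)/g(\lambda,t)\to\ch_1(E)\cdot C$ with leading direction fixed by $\theta$; so the appropriately renormalised central charges converge, and since $\sigma_t$ stays in a single gluing region for $t\gg0$ the Harder--Narasimhan filtrations stabilise and the masses remain bounded. Hence $\sigma_\bullet$ is quasi-convergent, and its limit realises precisely the semiorthogonal decomposition that defines the gluing region it ends up in, which by the choice of $\theta$ is the prescribed one. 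Allowing $k$ and the two orderings to range over all admissible values then shows that every decomposition mutation-equivalent to $\langle\oO_C(-1),\dD^b(Y)\rangle$ is obtained.
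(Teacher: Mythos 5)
There is a genuine gap, and it is concentrated in your choice of $\lambda$. You argue that one must take $\lambda\in\bbR_{>0}$ because for $\Ree\lambda>0$ with $\Imm\lambda\neq 0$ the phases of $\zZ(\lambda,t)(\oO_C(k))$ ``wind indefinitely, and [the path] is not quasi-convergent.'' This is exactly backwards relative to what the statement requires. With $\lambda$ real, $g(\lambda,t)$ moves along a real ray, so $\arg\zZ(\lambda,t)(\oO_C(k))$ \emph{converges} to a finite limit while $\zZ(\lambda,t)$ is constant on $f^*\dD^b(Y)$; hence all phase differences $\phi_t(E)-\phi_t(F)$ stay bounded, the relation $E\prec F$ (which demands $\phi_t(F)-\phi_t(E)\to\infty$) never holds between the two would-be components, and moreover $\sim^i$ identifies $\oO_C(k)$ with objects of $f^*\aA_Y$ while $\sim$ does not (the log-mass ratio $\log m_t(\oO_C(k))/m_t(f^*E)$ diverges), so the hypothesis $\sim\,=\,\sim^i$ of the theorem of \cite{HLJR} that produces the semiorthogonal decomposition fails. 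The paper's mechanism is the opposite one: it takes $\Imm\lambda\neq0$ so that, by the asymptotics $\Ei(\lambda t)\sim e^{\lambda t}/(\lambda t)$, the argument $\arg g(\lambda,t)\sim\Imm(\lambda)\,t$ is eventually monotone and diverges to $\pm\infty$; this infinite winding of the phase of $\oO_C(k)$ against the frozen phases of $f^*\dD^b(Y)$ is precisely what yields quasi-convergence (Proposition \ref{prop:qcpath}, Theorem \ref{thm:summarizing}, whose hypotheses are $\arg Z_t(\oO_C(k))\to\mp\infty$) and the ordering of the two components, with the sign of $\Imm\lambda$ selecting $\langle\oO_C(-1),\dD^b(Y)\rangle$ versus $\langle\dD^b(Y),\oO_C\rangle$. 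Your real-ray path does lift and does escape to large mass on $\oO_C(k)$, but it does not induce the decomposition.

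Separately, your global architecture differs from the paper's and leaves its hardest step unproved. You start at an arbitrary $\sigma_{T_0}\in U(X)$ and must show the ray crosses exactly one wall into the correct gluing region and stays there; you yourself flag this ``wall-crossing bookkeeping'' as the main obstacle, and it is not carried out. The paper avoids it entirely: it first builds the path with initial value a type $(C_{-1})$ stability condition on $\partial U(X)\cap\partial U(\rR_{0})$ (Theorem \ref{thm:startinboundary}), where entry into $U(\rR_{0})$ or $U(\lL_{-1})$ is controlled by the sign of $\Imm\lambda$ and the normalization of $w$, and then uses the deformation theorem to extend the solution of the ODE backwards to $t\in[T_0-\varepsilon,T_0)$, where the lift is geometric because $Z_t(\oO_C(-1))$ still lies in $\bbH$. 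If you want to salvage your forward approach you would need, at minimum, to replace real $\lambda$ by $\Imm\lambda\neq0$ and then reprove the confinement to a single gluing region in the presence of the spiralling phase, which is what Lemma \ref{lem:start rR prime -1} and Theorem \ref{thm:gluingregion} are for.
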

Note that the function $g(\lambda, t)$ appearing in the main theorem may not be unique.
However, the function $g(\lambda, t)$ constructed in the proof is discontinuous at $\lambda = 0$.
The next theorem asserts the function $g(\lambda, t)$ can be taken to be a continuous function when the initial value of the path lies in a gluing region $U(\rR_{0})$.
It was discovered that two semiorthogonal decompositions $\langle \oO_C(-1),\dD^b(Y)  \rangle$ and $\langle \dD^b(Y) ,\oO_C \rangle$
 can be obtained through continuous deformations of a quasi-convergent path according to this theorem.

\begin{thm}[\Cref{thm:startgluing}]
  Let $Y$ be a smooth projective surface and $\pi \colon X \rightarrow Y$ be a blowup of a point.
  Fix a compact subset $K \subset \bbR \oplus i[-1,1]$.
  Then, there exists a solution $g(\lambda,t)$ of the truncated quantum differential equation such that the central charge $\zZ(\lambda,t)$ lifts to a path $\sigma_t \in \Stab(X)$.
  The function $g(\lambda,t)$ is continuous on $(\lambda,t) \in K\times \bbR_{\geq T_0}$
  Moreover, if $\Imm \lambda \not= 0$, then the path $\sigma_{\bullet}$ is quasi-convergent. In the case of $\Imm \lambda < 0$, the path
induces a semiorthogonal decomposition $\langle \oO_C(-1),\dD^b(Y)  \rangle$. In the case of $\Imm \lambda > 0$, the path induces
a semiorthogonal decomposition $\langle \dD^b(Y) ,\oO_C \rangle$.
\end{thm}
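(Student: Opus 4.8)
The plan is to adapt the construction behind \Cref{thm:startgeom}, but to start the path in the gluing region $U(\rR_0)$ rather than in the geometric chamber $U(X)$; having a glued stability condition as the initial point is exactly what lets $g$ stay continuous across $\lambda=0$ and what makes the induced semiorthogonal decomposition depend only on the sign of $\Imm\lambda$. The first step is to solve \eqref{eq:qdeINTRO} in closed form. Setting $h(\lambda,t)=t\,\partial_t g(\lambda,t)$ turns \eqref{eq:qdeINTRO} into $\partial_t h=\lambda h$, hence $h(\lambda,t)=h(\lambda,T_0)\,e^{\lambda(t-T_0)}$ and
\[
  g(\lambda,t)=a(\lambda)+b(\lambda)\int_{T_0}^{t}\frac{e^{\lambda s}}{s}\,ds ,
\]
with $a(\lambda)=g(\lambda,T_0)$ and $b(\lambda)=e^{-\lambda T_0}h(\lambda,T_0)$. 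For $\lambda\neq 0$ the integral equals $\Ei(\lambda t)-\Ei(\lambda T_0)$, and at $\lambda=0$ it equals $\log(t/T_0)$; since $T_0>1>0$ the map $(\lambda,t)\mapsto\int_{T_0}^{t}e^{\lambda s}/s\,ds$ is in fact jointly real-analytic on $\bbC\times\bbR_{\geq T_0}$, so any continuous (e.g.\ constant) choice of $a(\lambda),b(\lambda)$ gives a $g$ continuous on $K\times\bbR_{\geq T_0}$. I would fix $a(\lambda)$ so that $\zZ(\lambda,T_0)$ is the central charge of a chosen stability condition in $U(\rR_0)$ (possible by \Cref{defi:region}), and choose $b(\lambda)$ — together with $T_0=T_0(K)$ large enough — so that the initial velocity of the path points into the relevant part of the gluing region.

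Next I would establish the lift. Since $\zZ(\lambda,t)(E)=g(\lambda,t)\,\ch_1(E)\cdot C+Z_0(E)$, only the classes with $\ch_1(E)\cdot C\neq0$ move with $t$: the central charges of the $\oO_C(k)$ translate by $-g(\lambda,t)$, while the central charge on the $\dD^b(Y)$-component is frozen. From the closed form, $|\partial_t g(\lambda,t)|=|b(\lambda)|\,e^{t\Ree\lambda}/t$, so one controls the trajectory $t\mapsto g(\lambda,t)$ explicitly: it stays bounded (up to a logarithm) unless $\Ree\lambda>0$, in which case it spirals outward at a controlled rate. Bounding $g(\lambda,t)$ along this trajectory and substituting into the phase inequalities that define $U(\rR_0)$ and $U(\lL_{-1})$ (and into the description of the geometric chamber the path may cross) shows that $\zZ(\lambda,t)$ is the central charge of a glued stability condition of \cite{CP:09} for every $t\geq T_0$; a single quadratic form witnessing the support property, valid uniformly for $\lambda\in K$ by compactness, then upgrades this to a continuous path $\sigma_t\in\Stab(X)$ obtained by deforming the initial point via Bridgeland's deformation theorem \cite{Bri:07}.

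Now assume $\Imm\lambda\neq0$. The crucial feature of the closed form is that the argument of $g(\lambda,t)$ winds with nonzero speed comparable to $\Imm\lambda$ as $t\to\infty$ — outward when $\Ree\lambda>0$, spiralling into the finite limit $g_\infty(\lambda)$ when $\Ree\lambda\leq0$ — and hence the same is true of $\zZ(\lambda,t)(\oO_C(k))=-g(\lambda,t)+Z_0(\oO_C(k))$ relative to the frozen central charges on the $\dD^b(Y)$-component, whereas when $\Imm\lambda=0$ there is no winding and the mechanism below produces nothing, which is why quasi-convergence is claimed only off the real axis. I would verify the defining property of a quasi-convergent path of \cite{HL:23,HLJR} — that the Harder--Narasimhan phases of every object either converge or tend monotonically to $\pm\infty$ — by checking it on the exceptional objects $\oO_C(k)$ and on objects coming from $\dD^b(Y)$ and then propagating it through filtrations. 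The winding direction, i.e.\ the sign of $\Imm\lambda$, decides whether the surviving exceptional object ends up to the left or to the right of the decomposition; a direct computation with $-g(\lambda,t)$ against the chamber structure around $U(\rR_0)$ and $U(\lL_{-1})$ matches $\Imm\lambda<0$ with $\langle\oO_C(-1),\dD^b(Y)\rangle$ and $\Imm\lambda>0$ with $\langle\dD^b(Y),\oO_C\rangle$.

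The step I expect to be the main obstacle is the lift when $\Ree\lambda>0$: there the trajectory of $g$ spirals through infinitely many walls of $\Stab(X)$, so one cannot remain in a single chamber and must show that every wall-crossing keeps $\zZ(\lambda,t)$ inside a region covered by the gluing construction and that the chosen quadratic form keeps witnessing the support property along the whole spiral — all uniformly over the compact parameter set $K$, and in particular across $\Ree\lambda=0$, where the qualitative behaviour of the trajectory changes. The accompanying identification of the limiting semiorthogonal decomposition in each regime also requires a careful chamber analysis near $U(\rR_0)$ and $U(\lL_{-1})$; everything else reduces to bookkeeping with the explicit formula for $g$ and the phase inequalities of \cite{CP:09}.
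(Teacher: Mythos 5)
Your overall strategy---solving the truncated equation explicitly as $g(\lambda,t)=a(\lambda)+b(\lambda)\bigl(\Ei(\lambda t)-\Ei(\lambda T_0)\bigr)$, observing that $\int_{T_0}^{t}e^{\lambda s}s^{-1}\,ds$ is jointly continuous (indeed analytic) through $\lambda=0$, starting the path in $U(\rR_{0})$, and reading off the semiorthogonal decomposition from the sign of $\Imm\lambda$---is the paper's strategy. But one of your key claims is false. You assert that for $\Ree\lambda\le 0$ the argument of $g(\lambda,t)$ still ``winds with nonzero speed'' while spiralling into its finite limit, ``and hence the same is true of $\zZ(\lambda,t)(\oO_C(k))$''. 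This inference fails: when $\Ree\lambda\le 0$ the exponential integral $\Ei(\lambda t)$ converges as $t\to\infty$, so $\zZ(\lambda,t)(\oO_C(k))=-g(\lambda,t)+Z_0(\oO_C(k))$ converges to a fixed complex number and its argument is bounded. The winding of $\arg\bigl(g(\lambda,t)-g_\infty(\lambda)\bigr)$ about the limit point contributes nothing to the phase of $\oO_C(k)$. Hence $\phi_t(\oO_C(k))-\phi_t(F)$ stays bounded for $F\in f^*\aA_Y$, the relation $\prec$ needed to separate the two components never holds, and no nontrivial semiorthogonal decomposition is induced. This is precisely why \Cref{thm:startgluing} in the body takes $\lambda\in[1,M]\oplus i[-1,1]$, so that $\Ree\lambda$ is bounded below by a positive constant, and why \Cref{assumption}(2)--(3) demand that $|Z_t(\oO_C(-1))|$ and $|\arg Z_t(\oO_C(-1))|$ diverge. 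You must restrict your compact set $K$ to $\Ree\lambda>0$ (uniformly) for the ``Moreover'' clauses to hold.

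Separately, the step you flag as the main obstacle---the spiral of $g$ ``through infinitely many walls,'' to be handled by wall-crossing and a uniform quadratic form---is not how the lift works, and your formulation would be much harder to carry out than what is needed. The gluing region $U(\rR_{0})$ consists of all $\sigma_Y*_{\rR_{0}}\tau_{\mu}$ with $\Imm\mu>\pi$ unbounded above, and $U(\lL_{-1})$ of all $\tau_{\nu}*_{\lL_{-1}}\sigma_Y$ with $\Imm\nu\le-\pi$ unbounded below; since $\arg Z_t(\oO_C)$ is eventually monotone by \Cref{lem:ei}, the path enters one of these regions and then remains in it for all later $t$. The lift is therefore the explicit family of glued stability conditions with the one-dimensional factor rotating, its support property coming from \Cref{prop:support}, with no deformation or wall-crossing argument along the spiral. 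The only delicate points are the finitely many passages between $U(\rR_{0})$ and $U(\lL_{-1})$ on an initial segment, and these are exactly what \Cref{thm:gluingregion} controls by showing that the boundary component $W_{-1}$ is already contained in $U(\lL_{-1})$. Reorganizing your lift around this observation, and restricting $K$ as above, would bring your argument in line with a correct proof.
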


To show the above theorems, we study the walls of the gluing region of the space of Bridgeland stability conditions.
We consider gluing regions $U(\rR_{0})$ and $U(\lL_{-1})$ corresponding to semiorthogonal decompositions $\langle \oO_C(-1),\dD^b(Y)  \rangle$ and $\langle \dD^b(Y) ,\oO_C \rangle$.

\begin{thm}[\Cref{thm:gluingregion}]
  Let $\sigma_Y = (Z_Y,\aA_Y)$ be a normalized geometric stability condition on $Y$, and let $U(\rR_{0})_{\sigma_Y}$ be a set defined by 
\[
  U(\rR_{0})_{\sigma_Y} = \left\{ \sigma \in U(\rR_{0}) \colon \sigma \text{ is glued from }\tau_{\lambda} \text{ and } \sigma_Y.\right\}.
\]
  The boundary of $U(\rR_{0})_{\sigma_Y}$ consists of disjoint union of two components $W_0$ and $W_1$, which are defined by
\begin{align*}
  &W_0 = \{\tau \in \partial U(\rR_{0}) \colon \oO_C(-1)[1] \text{ is semistable of phase } 1 \},\\
  &W_{-1} = \{\tau \in \partial U(\rR_{0}) \colon \oO_C(-1)[1] \text{ is semistable of phase } 0 \}.
 \end{align*}
 Moreover, the following two properties hold:
 \begin{enumerate}
  \item For any $\tau \in W_0$, if $\oO_C$ and $\oO_C(-1)[1]$ are both $\tau$-stable, then $\tau$ lies in the boundary of the normalized geometric chamber $U(X)$.
  \item The set $W_{-1}$ is contained in $U(\lL_{-1})$.
 \end{enumerate}
\end{thm}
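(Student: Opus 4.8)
The plan is to use the parametrization of $U(\rR_{0})_{\sigma_Y}$ provided by the gluing construction and to track the objects $\oO_C$, $\oO_C(-1)$ and the skyscrapers $\oO_x$ along it. A point of $U(\rR_{0})_{\sigma_Y}$ is glued along $\langle\oO_C(-1),\dD^b(Y)\rangle$ from a stability condition $\tau_{\lambda}$ on the exceptional subcategory $\langle\oO_C(-1)\rangle\simeq\dD^b(\mathrm{pt})$ and the fixed $\sigma_Y$; hence $\tau_{\lambda}$ is recorded by $w=Z(\oO_C(-1))\in\bbC^{*}$ together with a lift of the phase of $\oO_C(-1)$, and by the gluing criterion of \cite{CP:09} this datum is an honest stability condition exactly when that phase lies in an open interval whose endpoints are governed by the phases of the $\sigma_Y$-semistable objects. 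Since $\sigma_Y$ is normalized geometric, $\aA_Y$ has phases in $(0,1]$, which forces $\phi(\oO_C(-1)[1])\in(0,1)$, and the compatibility degenerates in a limit precisely when $\phi(\oO_C(-1)[1])$ reaches $0$ or $1$. First I would make this explicit: identify $U(\rR_{0})_{\sigma_Y}$ with an open region of the $w$-plane, conclude that every point of $\partial U(\rR_{0})_{\sigma_Y}$ has $\oO_C(-1)[1]$ semistable of phase $0$ or of phase $1$, and hence $\partial U(\rR_{0})_{\sigma_Y}=W_{0}\sqcup W_{-1}$ — the union being disjoint because a nonzero semistable object has a single phase.

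For part (1), let $\tau\in W_{0}$ with $\oO_C$ and $\oO_C(-1)[1]$ both $\tau$-stable. From $\phi_{\tau}(\oO_C(-1)[1])=1$ we get $Z_{\tau}(\oO_C(-1))=m\in\bbR_{>0}$, and from $0\to\oO_C(-1)\to\oO_C\to\oO_x\to0$ (for $x\in C$, with $p=\pi(C)$) and $Z_{\tau}(\oO_x)=-1$ we get $Z_{\tau}(\oO_C)=m-1\in\bbR$. As $\oO_C(-1)[1]$ is $\tau$-stable it is simple in the heart $\aA_{\tau}$, so the nonzero map $\oO_C(-1)[1]\to\pi^{*}\oO_p$ coming from the canonical nonsplit triangle $\oO_C(-1)[1]\to\pi^{*}\oO_p\to\oO_C\xrightarrow{+1}$ (the cohomology triangle of $\pi^{*}\oO_p=\dL\pi^{*}\oO_p$) is a monomorphism in $\aA_{\tau}$ with cokernel $\oO_C$; thus $\oO_C\in\aA_{\tau}$, so $Z_{\tau}(\oO_C)=m-1$ lies in the upper half-plane or on the negative real axis, and being real and nonzero it is negative, giving $m<1$ and $\phi_{\tau}(\oO_C)=1=\phi_{\tau}(\oO_C(-1)[1])$. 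Then the triangle $\oO_C\to\oO_x\to\oO_C(-1)[1]\xrightarrow{+1}$ exhibits each $\oO_x$ ($x\in C$) as an extension in $\aA_{\tau}$ of two distinct $\tau$-stable objects of phase $1$, so $\oO_x$ is $\tau$-semistable of phase $1$ but not $\tau$-stable, whence $\tau\notin U(X)$; for $x\notin C$, $\oO_x\cong\pi^{*}\oO_{\pi(x)}$ is $\tau$-semistable of phase $1$ because $\sigma_Y$ is geometric and gluing preserves semistability of objects pulled back from $\dD^b(Y)$. Finally I would conclude $\tau\in\overline{U(X)}$ — either from a description of $\overline{U(X)}$ as the locus on which every $\oO_x$ is semistable of a common phase, or by deforming $Z_{\tau}$ slightly towards the geometric chamber as explained below — whence $\tau\in\partial U(X)$.

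For part (2), let $\tau\in W_{-1}$, so $\oO_C(-1)[1]$ is $\tau$-semistable of phase $0$, giving $Z_{\tau}(\oO_C(-1))=-m$ with $m\in\bbR_{>0}$ and $Z_{\tau}(\oO_C)=-1-m\in\bbR_{<0}$. I would show $\tau$ is glued along the mutation-equivalent decomposition $\langle\dD^b(Y),\oO_C\rangle$ defining $U(\lL_{-1})$, by gluing the restriction of $\tau$ to $\pi^{*}\dD^b(Y)$ (a normalized geometric $\sigma_Y'$ on $Y$) with the stability condition on $\langle\oO_C\rangle$ carrying $\oO_C$ at phase $1$, and checking that the result is $\tau$; in particular this makes $\oO_C$ $\tau$-semistable of phase $1$. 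The gluing criterion of \cite{CP:09} is met at $\tau$ because $\oO_C(-1)$ has reached the phase $-1$ extremal for the $\rR_{0}$-decomposition, which — through the nonsplit triangle $\oO_C(-1)[1]\to\pi^{*}\oO_p\to\oO_C\xrightarrow{+1}$ exchanging $\oO_C(-1)$ with $\oO_C$ — places $\oO_C$ at a phase strictly interior to the range admissible for $\oO_C$ as the right-hand piece of $\langle\dD^b(Y),\oO_C\rangle$; this interiority is exactly what puts $\tau$ inside the open region $U(\lL_{-1})$ rather than merely in its closure.

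The step I expect to be the main obstacle is the conclusion $\tau\in\overline{U(X)}$ in part (1): semistability of all $\oO_x$ at $\tau$ does not by itself place $\tau$ in the closure of the geometric chamber, so one must genuinely exhibit geometric stability conditions limiting to $\tau$ — e.g.\ by Bridgeland's deformation theorem \cite{Bri:07}, moving $Z(\oO_C(-1))$ off $\bbR_{>0}$ so that the heart becomes a tilt of $\Coh(X)$, in which every skyscraper is simple hence stable, so that the perturbed condition lies in $U(X)$ — and one must control which objects are destabilized along the approach (or instead establish the precise description of $\overline{U(X)}$). A close second is the gluing analysis in part (2): identifying the restriction of $\tau$ to $\pi^{*}\dD^b(Y)$ as geometric, matching the $\langle\dD^b(Y),\oO_C\rangle$-glued heart with $\aA_{\tau}$, and establishing the strict interiority that lands $\tau$ in $U(\lL_{-1})$ rather than $\partial U(\lL_{-1})$ all require a delicate comparison of the two semiorthogonal decompositions across $W_{-1}$ through the nonsplit triangle linking $\oO_C(-1)$ and $\oO_C$.
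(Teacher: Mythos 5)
Your overall architecture matches the paper's: parametrize $U(\rR_{0})_{\sigma_Y}$ by the central charge of the exceptional object, identify the boundary with the locus where that charge becomes real (giving $W_0\sqcup W_{-1}$), prove (1) by deforming off the wall into the geometric chamber, and prove (2) by matching the slicing at the wall with the $\lL_{-1}$-glued heart. Two remarks on the setup: for $\rR_{0}$ the exceptional piece of the gluing is $\langle\oO_C\rangle$, not $\langle\oO_C(-1)\rangle$, so the region is parametrized by $\lambda$ with $Z(\oO_C)=e^{\lambda}$ and the boundary is reached as $\Imm\lambda\to\pi$; the dichotomy on $\oO_C(-1)[1]$ then follows from $Z(\oO_C(-1))=Z(\oO_C)+1$ being real there, which is how the paper argues (via Proposition \ref{prop:stableobject}). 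Also, your claim that $\oO_x$ for $x\notin C$ is semistable at $\tau$ "because gluing preserves semistability" does not apply at a boundary point; the paper proves this (claim (i)) by a separate contradiction argument using the exact sequence $0\to\oO_C^k[-1]\to E\to f^*f_1(E)\to 0$ at nearby interior points.

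The genuine gap is the one you yourself flag in part (1): you must actually produce geometric stability conditions limiting to $\tau$, and your proposed shortcut — "the heart becomes a tilt of $\Coh(X)$, in which every skyscraper is simple hence stable" — is false. Skyscrapers supported on $C$ are typically not simple in the relevant tilted hearts (e.g.\ if $\oO_C\in\tT$ and $\oO_C(-1)\in\fF$, then $0\to\oO_C\to\oO_x\to\oO_C(-1)[1]\to 0$ is exact in the tilt), and indeed the whole content of the wall is that $\oO_x$, $x\in C$, is strictly semistable at $\tau$. The paper's proof of claim (ii) occupies most of the argument: it perturbs to $Z_\lambda$ with $Z_\lambda(\oO_C(-1))=\lambda$, $\Imm\lambda>0$ small, takes a putative destabilizing $\tau_\lambda$-stable $E\hookrightarrow\oO_x$, uses local finiteness of walls to assume $E$ is $\tau$-semistable of phase one, and then analyzes the triangle $f^*f_*E\to E\to F$ with $F\in\langle\oO_C(-1)[1]\rangle$, using $\chi(\oO_C(-1),F)=0$, Serre duality to pin down which shifts $\oO_C(-1)[n]$ can occur, and the resulting maps $\oO_C\to E$ resp.\ $E\to\oO_C$ to force $E\cong\oO_x$ or derive a contradiction. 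Without some version of this case analysis (or an independently established description of $\overline{U(X)}$), your proof of (1) is incomplete. By contrast, your worry about "strict interiority" in part (2) is unnecessary: by Definition \ref{defi:region} the region $U(\lL_{-1})$ includes $\Imm\lambda=-\pi$, so it suffices to show, as the paper does, that $\pP(0,1]$ coincides with the glued heart $\langle\oO_C(-1)[2]\rangle *_{\lL_{-1}}\aA_Y$.
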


\subsection{Relation to existing work}
In \cite{HL:23}, Halpern-Leistner proposed the NCMMP, and showed Question \ref{question:ncmmp} for the projective line.
The NCMMP for $\bbP^2$ was studied in \cite{Zu:24}.
In \cite{HLJR}, it has been shown that there exists a quasi-convergent path for any semiorthogonal decomposition.
The constructed path lies in the gluing region corresponding to the semiorthogonal decomposition.
There is not much research on the NCMMP and quasi-convergent paths starting from other chambers.

However, there are several works that shows the relationship between the Bridgeland stability conditions and birational geometry.
There is a relationship between birational morphism of moduli spaces on a variety $X$ and walls in $\Stab(X)$.
Let $M$ be a moduli space of stable sheaves on a K3 surface or $\bbP^2$.
It is known that the minimal model of $M$ can be obtained by changing the stability condition on the corresponding surface \cite{ABCH,BM:14,BMW}.
By considering the moduli space of stable objects  whose numerical class is the same as the skyscraper sheaf,
Toda showed that the MMP of surfaces can be determined by the structure of the derived category alone \cite{Tod:14}.

\subsection{Acknowledgement}
The author is greatful to my advisor Yukinobu Toda for valuable comments.
I would also like to thank W. Hara and T. Yoshida for useful conversations,
 and Nantao Zhang for pointing out the mistakes in the previous version.
This work was supported by JSPS KAKENHI Grant Number 	24KJ0713.
The author would like to thank the Hausdorff Research Institute for
Mathematics funded by the Deutsche Forschungsgemeinschaft (DFG, German Research
Foundation) under Germany's Excellence Strategy -EXC-2047/1 - 390685813 
for their hospitality during the preparation of this work.
\subsection{Notation and  Convention}
In this paper, all the varieties are defined over $\bbC$.
For a variety $X$, $\dD^b(X)$ denotes the bounded derived category of coherent sheaves on $X$.
For a proper morphism $f \colon X \rightarrow Y$ of varieties, we consider the derived pullback $\bbL f^*$ and the derived pushforword $\bbR f_*$.
We will write $\bbL f^*$ and $\bbR f_*$ simply $f^*$ and $f_*$ 
when no confusion can arise.
We denote the cohomology functor by $\hH^i_{\aA}(E)$ with respect to a heart $\aA$.
For an object $E\in \dD^b(X)$, we write it $\hH^i(E)$ for short when no confusion can arise.
Let $H^i(X,E)$ be the $i$-th cohomology  group of an object $E \in\dD^b(X) $.

\section{Preliminaries}

\subsection{Recollements}
In this section, we define recollements of triangulated categories.
The notion of recollements was introduced by Beilinson, Bernstein, and Deligne in \cite{BBD:82}.
They showed that t-structures can be glued with respect to the recollements.
We will review the standard properties of recollements and their relation to semiorthogonal decompositions.
For the proofs, we refer the reader to \cite{Jor:10,LV:12,LVY:14}.

\subsubsection{Recollements and semiorthogonal decompositions}
\begin{defi}\label{defi:rec}
Let $\aA,\bB,\cC$ be triangulated categories.
A recollement of $\bB$ between $\aA$ and $\cC$ consists of the following diagram of functors:
\[
  \rR \colon
  \begin{tikzcd}
  \aA \arrow[r,"i" description] & \bB \arrow[l,bend right=40,"i_L" description] \arrow[l,bend left= 40,"i_R"] \arrow[r,"j_R" description]
   & \cC\arrow[l,bend right=40,"j" description ] \arrow[l,bend left= 40,"j_{RR}" description],
    \end{tikzcd}
\]
where the functors $i,i_L,i_R,j,j_R,j_{RR}$ satisfy the following conditions:
\begin{enumerate}
  \item $i$ and $j$ are fully faithful functors.
  \item $j_R \circ i =0$.
  \item $i_L \dashv i \dashv i_R$ and $j \dashv j_R \dashv j_{RR}$.
  \item For any object $b$ in $\bB$, there exist two distinguished triangles:
  \begin{align*}
    j j_R b \rightarrow b \rightarrow i i_Lb \overset{+}{\rightarrow} ,\\
    i i_R b \rightarrow b \rightarrow j_{RR}j_R b \overset{+}{\rightarrow} .
  \end{align*}
\end{enumerate}
\end{defi}

\begin{lem-defi}[{\cite[Lemma 2.1.]{LV:12}}]\label{lem-defi:actiononauto}
Let $\rR$ be a recollement of $\bB$ between $\aA$ and $\cC$:
\[
  \rR \colon
  \begin{tikzcd}
  \aA \arrow[r,"i" description] & \bB \arrow[l,bend right=40,"i_L" description] \arrow[l,bend left= 40,"i_R"] \arrow[r,"j_R" description]
   & \cC\arrow[l,bend right=40,"j" description ] \arrow[l,bend left= 40,"j_{RR}" description].
    \end{tikzcd}
\]
For any autoequivalence $\Phi$ of $\bB$, we can define the recollement $\Phi \rR$:
\[
  \Phi \rR \colon
  \begin{tikzcd}
  \aA \arrow[r,"\Phi \circ i" description] & \bB \arrow[l,bend right=70,"i_L \circ \Psi" description] \arrow[l,bend left= 70,"i_R \circ \Psi"] \arrow[r,"j_R \circ \Psi"]
   & \cC\arrow[l,bend right=70,"\Phi \circ j" description ] \arrow[l,bend left= 70,"\Phi \circ j_{RR}" description].
    \end{tikzcd}
\]
Here, $\Psi$ is a quasi-inverse of $\Phi$.
\end{lem-defi}

\begin{lem-defi}[{\cite[Theorem 2.1.]{LVY:14},\cite[Theorem 5.]{BK:89}}]
  Let $\rR$ be a recollement of $\bB$ between $\aA$ and $\cC$:
  \[
  \rR \colon
  \begin{tikzcd}
  \aA \arrow[r,"i" description] & \bB \arrow[l,bend right=40,"i_L" description] \arrow[l,bend left= 40,"i_R"] \arrow[r,"j_R" description]
   & \cC\arrow[l,bend right=40,"j" description ] \arrow[l,bend left= 40,"j_{RR}" description].
    \end{tikzcd}
\]
Assume that $\bB$ has the Serre functor $S_{\bB}$.
Then, there exist upper and lower recollements $\rR_U$ and $\rR_L$.
\[
  \rR_U \colon
  \begin{tikzcd}
    \cC \arrow[r,"j" description] & \bB  \arrow[l,bend left= 70,"j_R" description] \arrow[l,bend right = 70, "j_L"description] \arrow[r,"i_L" description] 
    & \aA \arrow[l,bend left=70,"i" description] \arrow[l,bend right=70,"i_{LL}" description], \ \ 
    \end{tikzcd}
  \rR_L \colon
  \begin{tikzcd}
    \cC \arrow[r,"j_{RR}" description] & \bB  \arrow[l,bend left= 70,"j_{RRR}" description] \arrow[l,bend right = 70, "j_R"description] \arrow[r,"i_R" description] 
    & \aA \arrow[l,bend left=70,"i_{RR}" description] \arrow[l,bend right=70,"i" description].
    \end{tikzcd}
\]

Here, $j_L$ is the left adjoint of $j$ and $i_{LL}$ is the left adjoint of $i_L$ defined to be
\[
j_L  = S_{\cC}^{-1} \circ j_{R} \circ S_{\bB}, \ \ i_{LL} = S_{\bB}^{-1} \circ i \circ S_{\aA},
\]
where $S_{\aA}$ and $S_{\cC}$ are Serre functors of $\aA$ and $\cC$ respectively.
The functors $j_{RRR}$ and $i_{RR}$ are defined similarly.
\end{lem-defi}

Next, we consider the relation between recollements and semiorthogonal decompositions which have only two components.
We recall the definition of a semiorthogonal decomposition.
\begin{defi} \label{defi:sod}
  Let $\dD$ be a triangulated category.
  A semiorthogonal decomposition of $\dD$ is a sequence of full triangulated subcategories $\dD_1,\dots,\dD_n$ of $\dD$ such that the following conditions are satisfied:
  \begin{enumerate}
    \item For any $n,m$, $\Hom_{\dD}(\dD_n,\dD_m) = 0$ if $n > m$.
    \item For any object $E$ in $\dD$, there exists a filtration 
    \[
      0=E_{n} \rightarrow \cdots \rightarrow E_1 \rightarrow E_0 = E
      \]such that each cone $\mathrm{Cone}(E_i \rightarrow E_{i-1})$ lies in $\dD_i$.
  \end{enumerate}
\end{defi}
We write $\langle (\dD_1,\iota_1),\dots,(\dD_n,\iota_n) \rangle$ for a semiorthogonal decomposition of $\dD$.
Here, $\iota_m$ is the inclusion functor from $\dD_m$ to $\dD$.
For simplicity, we write $\langle \dD_1,\dots \dD_n \rangle$ for a semiorthogonal decomposition.
\begin{defi}
  Let $\aA$ be an full subcategory of a triangulated category $\dD$.
  Here, if $\aA$ is called an admissible subcategory then $\aA$ is a full triangulated subcategory of $\dD$ and
   the inclusion functor $i \colon \aA \rightarrow \dD$ has left and right adjoints.
\end{defi}

\begin{rmk}
  We mainly consider triangulated categories with Serre functors (cf.\cite[Definition 1.28.]{Huy:06} ).
  If $\bB$ has the Serre functor and we have a recollement of $\bB$ between $\aA$ and $\cC$,
   then $\aA$ and $\cC$ have  Serre functors and they are admissible subcategories of $\bB$.
   The detailed proof appears in \cite{BK:89,Jor:10}
\end{rmk}

We focus on the semiorthogonal decompositions with two components $\langle  D_1, D_2\rangle $.
\begin{prop}[{\cite{BK:89}}]
  Let $\dD$ be a triangulated category with the Serre functor $S$.
  We assume that $\dD$ has a semiorthogonal decomposition $\langle (\dD_1,i_1),(\dD_2,i_2) \rangle$, and $\dD_1$ and $\dD_2$ are admissible.
  We denote by $S_1$ and $S_2$ the Serre functors of $\dD_1$ and $\dD_2$ respectively.
  Then, $\langle(D_2,i_2),(D_1,S^{-1}\circ i_1 \circ S_1) \rangle$ and $\langle(D_2,S \circ i_2 \circ S_2^{-1}),(D_1,i_1) \rangle$ are semiorthogonal decompositions of $\dD$.
\end{prop}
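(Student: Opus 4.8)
The plan is to realise the two sequences in the statement as the semiorthogonal decompositions attached to the \emph{upper} and \emph{lower recollements} supplied by the preceding Lemma--Definition, and to read off the embedding functors from the explicit Serre-functor formulas recorded there. The first task is to promote the decomposition to a recollement. Since $\dD$ has a Serre functor and both $\dD_1,\dD_2$ are admissible, the data $\langle(\dD_1,i_1),(\dD_2,i_2)\rangle$ upgrades to a recollement $\rR$ of $\dD$ between $\dD_1$ and $\dD_2$: take $i:=i_1$, $j:=i_2$ (fully faithful), let $i_L,i_R$ be the left and right adjoints of $i_1$ and $j_R:=i_2^{!}$ the right adjoint of $i_2$, and let $j_{RR}$ be the right adjoint of $j_R$, which exists because $\dD$ has a Serre functor and in fact equals $S\circ i_2\circ S_2^{-1}$. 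The identity $j_R\circ i=0$ holds because an object of $\dD_1$ has vanishing $\dD_2$-component by semiorthogonality; the triangle $jj_Rb\to b\to ii_Lb$ of \Cref{defi:rec} is the canonical triangle of $\langle\dD_1,\dD_2\rangle$; and the triangle $ii_Rb\to b\to j_{RR}j_Rb$ is the canonical triangle of the decomposition $\langle\dD_1^{\perp},\dD_1\rangle$, which exists since $\dD_1$ is admissible. Conversely, any recollement of $\dD$ between $\aA$ and $\cC$ gives a semiorthogonal decomposition $\langle i(\aA),j(\cC)\rangle$: semiorthogonality follows from $\Hom_{\dD}(jC,iA)\cong\Hom_{\cC}(C,j_RiA)=0$, and the required filtration is the first distinguished triangle of \Cref{defi:rec}.

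Applying the preceding Lemma--Definition to $\rR$ then yields $\rR_U\colon\dD_2\xrightarrow{\,j\,}\dD\xrightarrow{\,i_L\,}\dD_1$ and $\rR_L\colon\dD_2\xrightarrow{\,j_{RR}\,}\dD\xrightarrow{\,i_R\,}\dD_1$, and by the previous step each induces a two-term semiorthogonal decomposition of $\dD$. For $\rR_U$ the two fully faithful functors generating it are $j=i_2$ and $i_{LL}$, and the formula $i_{LL}=S^{-1}\circ i\circ S_1=S^{-1}\circ i_1\circ S_1$ (the Serre functor of the admissible subcategory $\dD_1$ being $S_1$) identifies the decomposition with $\langle(\dD_2,i_2),(\dD_1,S^{-1}\circ i_1\circ S_1)\rangle$. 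For $\rR_L$ the two fully faithful functors are $j_{RR}=S\circ i_2\circ S_2^{-1}$ and $i=i_1$, giving $\langle(\dD_2,S\circ i_2\circ S_2^{-1}),(\dD_1,i_1)\rangle$. This is the assertion.

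It is worth noting that one can bypass recollements entirely and argue directly from \Cref{defi:sod}. Semiorthogonality of $\langle\dD_2,S^{-1}(\dD_1)\rangle$ and of $\langle S(\dD_2),\dD_1\rangle$ is immediate from Serre duality, since $\Hom_{\dD}(S^{-1}A,B)\cong\Hom_{\dD}(B,A)^{\vee}$ and $\Hom_{\dD}(A,SB)\cong\Hom_{\dD}(B,A)^{\vee}$ vanish for $A\in\dD_1$, $B\in\dD_2$. For the filtrations, admissibility of $\dD_2$ gives the left adjoint $i_2^{*}$, and the triangle $F\to E\to i_2i_2^{*}E$ has $F\in{}^{\perp}\dD_2$; since $\dD_2^{\perp}=\dD_1$ for an SOD and ${}^{\perp}\dD_2=S^{-1}(\dD_2^{\perp})=S^{-1}(\dD_1)$ by Serre duality, this is the filtration needed for the first decomposition, and symmetrically $i_1i_1^{!}E\to E\to R$ with $R\in\dD_1^{\perp}=S(\dD_2)$ handles the second. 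Finally $S^{-1}\circ i_1\circ S_1$ is fully faithful with image $S^{-1}(\dD_1)={}^{\perp}\dD_2$ and $S\circ i_2\circ S_2^{-1}$ is fully faithful with image $S(\dD_2)=\dD_1^{\perp}$, which is exactly what \Cref{defi:sod} asks for.

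The step I expect to be the main obstacle is, in the recollement route, the equivalence between semiorthogonal decompositions with two admissible components and recollements — concretely, producing the adjoint $j_{RR}$ and verifying the second distinguished triangle $ii_Rb\to b\to j_{RR}j_Rb$ is isomorphic to the canonical triangle of $\langle\dD_1^{\perp},\dD_1\rangle$. This is precisely where the Serre functor is indispensable, through the standard identities conjugating the two adjoints of an admissible embedding by Serre functors, such as $i_1^{!}=S_1^{-1}\circ i_1^{*}\circ S$ together with ${}^{\perp}\dD_2=S^{-1}(\dD_2^{\perp})$. In the direct route these same facts are what is used, but no genuinely new input is required beyond them; once they are in place, everything else is bookkeeping with the formulas already recorded for $i_{LL}$ and $j_{RR}$.
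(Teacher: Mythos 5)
Your proposal is correct, and the ``direct route'' of your third paragraph is essentially the paper's own proof: the paper establishes semiorthogonality by the same Serre-duality computation, and obtains the filtration from an adjunction triangle --- it uses the counit of $S^{-1}\circ i_1\circ S_1\dashv\mu_1$ (with $\mu_1$ the left adjoint of $i_1$) to produce the subobject in $S^{-1}(\dD_1)$ and then checks that the cone lies in $\dD_2={}^{\perp}\dD_1$, whereas you project onto $\dD_2$ via the unit of $i_2^{*}\dashv i_2$ and identify the fibre with ${}^{\perp}\dD_2=S^{-1}(\dD_2^{\perp})=S^{-1}(\dD_1)$; these are mirror images of the same argument, and both are complete. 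Your primary (recollement) route is organized differently, but note that it is the riskier of the two in the context of this paper: the theorem giving the bijection between recollements and two-term semiorthogonal decompositions is proved \emph{after} and \emph{using} this proposition --- the second distinguished triangle $i_1\lambda_1E\to E\to\lambda_{2,R}\lambda_2E$ of the recollement $\Psi(\{\dD_1,\dD_2\})$ is manufactured there precisely from the right mutation. So the step you yourself flag as the main obstacle, namely verifying $ii_Rb\to b\to j_{RR}j_Rb$, is essentially equivalent to the statement being proved. If you close that gap by the Serre-duality computation of your third paragraph, you have reverted to the paper's argument; if you instead import the upper/lower recollement construction as a black box from \cite{BK:89,LVY:14}, the proof goes through but the real content has been outsourced to the citation. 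Either way, your direct argument stands on its own and matches the paper.
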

\begin{proof}
  We only prove that $\langle(D_2,i_2),(D_1,S^{-1}\circ i_1 \circ S_1) \rangle$ is a semiorthogonal decomposition of $\dD$.
  The proof of the other case is similar.
  The first condition is clear.
  Indeed, we have 
  \begin{align*}
    \Hom_{\dD}(S^{-1}\circ i_1 \circ S_1 (D_1),i_2(D_2)) &\cong \Hom_{\dD}(i_2(D_2),i_1\circ S_1 (D_1))^* \\
    &= \Hom_{\dD}(i_2(D_2),i_1 (D_1)) = 0.
  \end{align*}
  For the second condition, we consider an object $E$ in $\dD$.
  Set  $i_1^{\prime} = S^{-1}\circ i_1 \circ S_1$.
  Let $\mu_n$  be the left adjoint of $i_n$ for $n=1,2$.
  Since $i_1^{\prime}$ is the left adjoint, there is a distinguished triangle
  \[
    i_1^{\prime} \mu_1 E \rightarrow E \rightarrow C \overset{+}{\rightarrow},
  \] 
  where $C$ is a cone of $i_1^{\prime} \mu_1 E \rightarrow E$.
  We need to show $C$ lies in $\dD_2$.
  Since $\langle \dD_1,\dD_2 \rangle$ is a semiorthogonal decomposition,
  the subcategory $\dD_2 \subset \dD $ is the same as 
  \[
  \left\{ d \in \dD \colon \Hom(d,\dD_1) = 0 \right\}.
  \]
  By adjunctions, we have $ \Hom(i_1^{\prime} \mu_1 E, i_1 D_1) \cong \Hom(E, i_1 D_1) $.
  Then, since $\Hom(C,i_1(D_1))$ vanishes, $C$ lies in $\dD_2$.
\end{proof}
For a semiorthogonal decomposition $\langle \dD_1 ,\dD_2 \rangle$, 
 $\langle(\dD_2,S  i_2 S_2^{-1}),(\dD_1,i_1) \rangle$ and 
 $\langle(\dD_2,i_2),(\dD_1,S^{-1} i_1 S_1) \rangle$ are called the \textit{left} and \textit{right} mutation respectively.

\begin{thm}[{\cite[Section 1]{BB:18}}]
  Let $\dD$ be a triangulated category with the Serre functor.
  Consider the following two sets:
  \begin{align*}
   \mathrm{Rec}(\dD) &= \left\{ \rR \colon \rR \text{ is a recollement of } \dD \right\},\\
    \mathrm{SOD}_{2}(\dD) &= \left\{ \left\{ \dD_1,\dD_2 \right\} \colon
      \begin{array}{l}
         \text{admissible subcategories } \dD_1,\dD_2  \\
        \text{such that $\langle \dD_1,\dD_2 \rangle$ is a semiorthogonal decomposition of } \dD
        \end{array} 
    \right\}.\\
  \end{align*}

  Then, there is a bijection $\Phi$ between $\mathrm{Rec}(\dD)$ and $\mathrm{SOD}_2(\dD)$.
  The bijection is given by the following:
  \[
    \Phi: 
    \begin{tikzcd}
      \aA \arrow[r,"i" description] & \dD \arrow[l,bend right=40,"i_L" description] \arrow[l,bend left= 40,"i_R"] \arrow[r,"j_R" description]
       & \cC\arrow[l,bend right=40,"j" description ] \arrow[l,bend left= 40,"j_{RR}" description]
    \end{tikzcd}
    \mapsto \left\{ (\aA,i),(\cC,j) \right\}.
\]
The bijection sends the upper recollement of $\rR$ to the right mutation of $\Phi(\rR)$.
Similarly, the lower recollement of $\rR$ is sent to the left mutation of $\Phi(\rR)$.
\end{thm}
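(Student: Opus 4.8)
The plan is to produce an explicit inverse to $\Phi$ and check that both composites are identities; the mutation statement will then follow from the Serre-functor formulas in the Lemma--Definition constructing the upper and lower recollements together with the Proposition defining left and right mutations. Throughout a recollement is taken up to natural isomorphism of its six functors, $S$ denotes the Serre functor of $\dD$, and $S_{\dD_1},S_{\dD_2}$ the Serre functors of admissible subcategories (which exist, cf.\ the Remark above).

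First I would verify that $\Phi$ is well defined and injective, the routine half. If $\rR$ has data $(i,i_L,i_R,j,j_R,j_{RR})$, then $i,j$ are fully faithful by hypothesis; the condition $j_R\circ i=0$ with the adjunction $j\dashv j_R$ gives $\Hom_\dD(\cC,\aA)=0$, and the first distinguished triangle $jj_Rb\to b\to ii_Lb\overset{+}{\to}$ is the two-step filtration required by \Cref{defi:sod}, so $\langle\aA,\cC\rangle$ is a semiorthogonal decomposition; it has admissible components since $i$ has both adjoints and $j$ has the right adjoint $j_R$ and the left adjoint $j_L=S_\cC^{-1}\circ j_R\circ S$. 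For injectivity, in any recollement $i_L,i_R$ are the adjoints of $i$, $j_R$ the right adjoint of $j$, and the adjunction condition in \Cref{defi:rec} forces $j_{RR}$ to be the right adjoint of $j_R$, so uniqueness of adjoints recovers $\rR$ from the unordered pair $\{(\aA,i),(\cC,j)\}$ (aside from the degenerate case of mutually orthogonal $\aA$ and $\cC$). One also has the closed formula $j_{RR}\cong S\circ j\circ S_\cC^{-1}$, obtained by combining Serre duality on $\dD$ and on $\cC$ with $j\dashv j_R$.

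The real content is surjectivity: from a semiorthogonal decomposition $\langle\dD_1,\dD_2\rangle$ with $\dD_1,\dD_2$ admissible I would reconstruct a recollement. Let $i,j$ be the inclusions, $i_L,i_R$ the adjoints of $i$, $j_R$ the right adjoint of $j$, and put $j_{RR}:=S\circ j\circ S_{\dD_2}^{-1}$; this functor is fully faithful and, by the computation above, right adjoint to $j_R$, so the two chains of adjunctions hold. The vanishing $j_R\circ i=0$ is just the semiorthogonality $\Hom_\dD(\dD_2,\dD_1)=0$, and the first distinguished triangle is the canonical filtration triangle of $\langle\dD_1,\dD_2\rangle$. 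For the second triangle $ii_Rb\to b\to j_{RR}j_Rb\overset{+}{\to}$ I would identify, via Serre duality, $\dD_1^{\perp}=S({}^{\perp}\dD_1)=S(\dD_2)=\imm(j_{RR})$: the counit triangle $ii_Rb\to b\to C\overset{+}{\to}$ has $C\in\dD_1^{\perp}$, hence $C\cong j_{RR}(c)$ for a unique $c\in\dD_2$, and applying $j_R$ to $b\to C$ while using $j_R\circ i=0$ and $j_Rj_{RR}\cong\mathrm{id}$ forces $c\cong j_Rb$. Once the diagram is shown to be a recollement, $\Phi$ and this construction are mutually inverse because every auxiliary functor has been pinned down as an adjoint.

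For the mutation statement I would feed the formulas of the Lemma--Definition into the Proposition on mutations. In $\rR_U$ the two fully faithful functors singled out by $\Phi$ are $j\colon\dD_2\hookrightarrow\dD$ and $i_{LL}=S^{-1}\circ i\circ S_{\dD_1}$, whose essential image is $S^{-1}(\dD_1)={}^{\perp}\dD_2$, and this pair is exactly the right mutation of $\langle\dD_1,\dD_2\rangle$; symmetrically, $\rR_L$ singles out $j_{RR}$, with image $\dD_1^{\perp}=S(\dD_2)$, giving the left mutation. I expect the main obstacle to be the surjectivity step, and within it the production of the second distinguished triangle: this is where the Serre functor is genuinely needed and where one must reconcile the three descriptions $\dD_1^{\perp}$, $\imm(j_{RR})$ and the cone of the counit $ii_Rb\to b$. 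A secondary, purely bookkeeping, pitfall is correctly identifying which of the two curved arrows in $\rR_U$ (resp.\ $\rR_L$) is the functor picked out by $\Phi$, since that is precisely what replaces the original embedding of $\dD_1$ (resp.\ $\dD_2$) by the twisted one $i_{LL}$ (resp.\ $j_{RR}$).
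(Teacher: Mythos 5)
Your proposal is correct and follows essentially the same route as the paper: both directions hinge on defining the missing functor $j_{RR}=S\circ j\circ S_{\dD_2}^{-1}$ via the Serre functors and extracting the second distinguished triangle of the recollement from the right mutation $\langle(\dD_2,S\,i_2\,S_2^{-1}),(\dD_1,i_1)\rangle$ (the paper quotes that mutation's filtration triangle directly, while you rederive it from the counit triangle and the identification $\dD_1^{\perp}=S(\dD_2)$ -- the same fact in different clothing). Your treatment of injectivity and of the upper/lower-recollement compatibility is in fact more explicit than the paper's, which dismisses the latter as clear.
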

\begin{proof}
  First, we show that $\Phi(\rR)$ is a semiorthogonal decomposition.
  Let $\rR$ be a recollement of $\dD$:
  \[
  \begin{tikzcd}
    \aA \arrow[r,"i" description] & \dD \arrow[l,bend right=40,"i_L" description] \arrow[l,bend left= 40,"i_R"] \arrow[r,"j_R" description]
     & \cC\arrow[l,bend right=40,"j" description ] \arrow[l,bend left= 40,"j_{RR}" description].
  \end{tikzcd}
  \]
  Then, subcategories $i\colon\aA \hookrightarrow \dD$ and $j \colon \cC \hookrightarrow \dD$  are admissible.
  We need to show that $\langle(\aA,i),(\cC,j) \rangle$ is a semiorthogonal decomposition of $\dD$.
  Since $j_R i = 0$, we have $\Hom(j \cC,i \aA) =0$.
  The condition (2) in Definition \ref{defi:sod} follows from the first distinguished triangle in the condition (4) of Definition \ref{defi:rec} .

  Next, we construct the inverse map $\Psi \colon \mathrm{SOD}_2(\dD) \rightarrow \mathrm{Rec}(\dD)$.
  Let $\langle (\dD_1,i_1),(\dD_2,i_2) \rangle$ be a semiorthogonal decomposition of $\dD$.
  We define the recollement $\Psi( \left\{ (\dD_1,i_1),(\dD_2,i_2) \right\})$ as follows:
  \[
  \Psi(\left\{ (\dD_1,i_1),(\dD_2,i_2) \right\}) \coloneqq
  \begin{tikzcd}
    \dD_1 \arrow[r,"i_1" description] & \dD \arrow[l,bend right=40,"\mu_1" description] \arrow[l,bend left= 40,"\lambda_1" description] \arrow[r,"\lambda_2" description]
     & \dD_2\arrow[l,bend right=40,"i_2" description ] \arrow[l,bend left= 40,"\lambda_{2,R}"description].
  \end{tikzcd}
  \]
  Since $\dD_1$ is admissible, there are the adjunctions $\mu_1 \dashv i_1 \dashv \lambda_1$.
  The adjunctions $i_2 \dashv \lambda_2 \dashv \lambda_{2,R}$ are defined to be
  \[
    \lambda_{2,R} \coloneqq S \circ i_2 \circ S_2^{-1}.
  \]
  Here, $S$ and $S_2$ are the Serre functors of $\dD$ and $\dD_2$ respectively.
  Set $\rR = \Psi(\left\{ (\dD_1,i_1),(\dD_2,i_2)\right\}) $.
  We need to show that $\rR$ is a recollement of $\dD$.
  The conditions (1-3) in Definition \ref{defi:rec} are clear.
  We only need to show the condition (4).
  By the definition of semiorthogonal decompositions, there is  a distinguished triangle
  \[
  i_2 \lambda_{2} E \rightarrow E \rightarrow i_1 \mu_1 E \overset{+}{\rightarrow} 
  \] for any object $E$ in $\dD$.
  Therefore, we need to show that there exists the following distinguished triangle in Definition \ref{defi:rec}:
  \[
    i_1 \lambda_1 E \rightarrow E \rightarrow \lambda_{2,R} \lambda_{2} E \overset{+}{\rightarrow}.
  \]
  Consider the right mutation $\langle(\dD_2,S  i_2 S_2^{-1}),(\dD_1,i_1) \rangle$ of $\langle D_1,D_2 \rangle$.
  We have the following distinguished triangle for any $E \in \dD$:
  \[
    i_1 \lambda_1 E \rightarrow E \rightarrow S i_2 S_2^{-1} i_2 \lambda_2 E \overset{+}{\rightarrow}.
  \]
  Since $S i_2 S_2^{-1} i_2 \cong \lambda_{2,R} \lambda_2$, we have the desired distinguished triangle.
  The compatibility of the upper and lower recollements with the semiorthogonal decompositions is clear.
\end{proof}

\subsubsection{BBD-induction}
Next, we discuss the BBD-induction for t-structures.
\begin{defi}
  Let $\dD$ be a triangulated category.
  A t-structure on $\dD$ is a pair of full subcategories $(\dD^{\leq 0},\dD^{\geq 0})$ of $\dD$ such that the following conditions are satisfied:
  \begin{enumerate}
    \item $\Hom(\dD^{\leq 0},\dD^{\geq 0}[-1]) = 0$.
    \item $\dD^{\leq 0}[1] \subset \dD^{\leq 0}$ and $\dD^{\geq 0}[-1] \subset \dD^{\geq 0}$.
    \item For any object $E$ in $\dD$, there exists a distinguished triangle
    \[
      E^{\leq 0} \rightarrow E \rightarrow E^{\geq 0}[-1] \overset{+}{\rightarrow}
    \] such that $E^{\leq 0}$ lies in $\dD^{\leq 0}$ and $E^{\geq 0}$ lies in $\dD^{\geq 0}$.
  \end{enumerate}
\end{defi}

\begin{defi}
  Let  $(\dD^{\leq 0},\dD^{\geq 0})$ be a t-structure on a triangulated category $\dD$.
  A t-structure $(\dD^{\leq 0},\dD^{\geq 0})$  is called bounded if 
  \[
  \dD = \bigcup_{n \in \bbZ} \dD^{\leq 0}[-n] = \bigcup_{n \in \bbZ} \dD^{\geq 0}[-n].
  \]
\end{defi}

The heart of a t-structure $(\dD^{\leq 0},\dD^{\geq 0})$  is defined to be the full subcategory
\[
 \aA = \dD^{\leq 0} \cap \dD^{\geq 0}.
\]
It is known that the heart $\aA$ is an abelian category.
The BBD-induction is a method to construct a t-structure on a recollement.
\begin{thm}{\cite[Theorem 1.4.10.]{BBD:82}}\label{thm:BBD}
  Let $\rR$ be a recollement of a triangulated category $\dD$:
  \[
  \rR \colon
  \begin{tikzcd}
  \cC_1 \arrow[r,"i" description] & \dD \arrow[l,bend right=40,"i_L" description] \arrow[l,bend left= 40,"i_R"] \arrow[r,"j_R" description]
   & \cC_2 \arrow[l,bend right=40,"j" description ] \arrow[l,bend left= 40,"j_{RR}" description].
    \end{tikzcd}
  \]
  Assume that  $(\cC_1^{\leq 0},\cC_1^{\geq 0})$ and $(\cC_2^{\leq 0},\cC_2^{\geq 0})$ are bounded t-structures.
  Then, there exists a unique bounded t-structure on $\dD$ defined to be
\begin{align*}
  \dD^{\leq 0} &= \left\{ E \in \dD \colon j_R E \in \cC_2^{\leq 0 } , i_L E \in \cC_1^{\leq 0}  \right\},\\
  \dD^{\geq 0} &= \left\{ E \in \dD \colon  j_R E \in \cC_2^{\geq 0 } , i_R E \in \cC_1^{\geq 0}  \right\}.
\end{align*}
  \end{thm}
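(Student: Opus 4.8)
The plan is to verify directly that the displayed pair $(\dD^{\leq0},\dD^{\geq0})$ satisfies the three t-structure axioms and is bounded; uniqueness is then formal. Throughout I would use the standard consequences of the recollement axioms: since $i$ and $j$ are fully faithful one has $i_Li\cong i_Ri\cong\mathrm{id}_{\cC_1}$ and $j_Rj\cong j_Rj_{RR}\cong\mathrm{id}_{\cC_2}$, and since $j_Ri=0$, adjunction gives $i_Lj=0$ and $i_Rj_{RR}=0$. I write $\tau^{\cC_k}_{\leq0},\tau^{\cC_k}_{\geq1}$ for the truncation functors of the given bounded t-structure on $\cC_k$, and abbreviate $\dD^{\geq0}[-1]$ by $\dD^{\geq1}$.

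First I would dispose of the two easy axioms. For the orthogonality $\Hom(\dD^{\leq0},\dD^{\geq1})=0$, take $X\in\dD^{\leq0}$ and $Y\in\dD^{\geq1}$ and apply $\Hom(-,Y)$ to the triangle $jj_RX\to X\to ii_LX\overset{+}{\to}$ of \Cref{defi:rec}\,(4); by adjunction its outer terms become $\Hom(j_RX,j_RY)$ and $\Hom(i_LX,i_RY)$, which vanish by the definitions of $\dD^{\leq0}$ and $\dD^{\geq1}$, so $\Hom(X,Y)=0$. Shift-stability, e.g. $\dD^{\leq0}[1]\subset\dD^{\leq0}$, is immediate since $j_R,i_L$ are triangulated and $\cC_k^{\leq0}[1]\subset\cC_k^{\leq0}$. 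Boundedness is just as quick from the characterization: for any $X$, boundedness of the t-structures on $\cC_1,\cC_2$ gives $n$ with $j_RX\in\cC_2^{\leq n}$ and $i_LX\in\cC_1^{\leq n}$, hence $X\in\dD^{\leq n}$, and symmetrically $X\in\dD^{\geq-m}$ for $m\gg0$.

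The heart of the argument is the construction of the truncation triangles, which I would obtain by a two-stage gluing. Fix $X\in\dD$. Using the unit $X\to j_{RR}j_RX$ followed by $j_{RR}$ of the $\cC_2$-truncation, form
\[
X_1\to X\to j_{RR}\bigl(\tau^{\cC_2}_{\geq1}j_RX\bigr)\overset{+}{\to};
\]
applying $j_R$, together with $j_Rj_{RR}\cong\mathrm{id}$ and a triangle identity, identifies the right-hand map with the truncation map, so $j_RX_1\cong\tau^{\cC_2}_{\leq0}j_RX\in\cC_2^{\leq0}$. Then, using the unit $X_1\to ii_LX_1$ followed by $i$ of the $\cC_1$-truncation, form
\[
A\to X_1\to i\bigl(\tau^{\cC_1}_{\geq1}i_LX_1\bigr)\overset{+}{\to};
\]
applying $i_L$ (and $i_Li\cong\mathrm{id}$) gives $i_LA\cong\tau^{\cC_1}_{\leq0}i_LX_1\in\cC_1^{\leq0}$, while applying $j_R$ (and $j_Ri=0$) gives $j_RA\cong j_RX_1\in\cC_2^{\leq0}$, so $A\in\dD^{\leq0}$. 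Finally set $B=\Cone(A\to X)$ for the composite $A\to X_1\to X$; the octahedral axiom places $B$ in a triangle
\[
i\bigl(\tau^{\cC_1}_{\geq1}i_LX_1\bigr)\to B\to j_{RR}\bigl(\tau^{\cC_2}_{\geq1}j_RX\bigr)\overset{+}{\to},
\]
and applying $j_R$ (using $j_Ri=0$, $j_Rj_{RR}\cong\mathrm{id}$) and $i_R$ (using $i_Ri\cong\mathrm{id}$, $i_Rj_{RR}=0$) shows $j_RB\in\cC_2^{\geq1}$ and $i_RB\in\cC_1^{\geq1}$, i.e. $B\in\dD^{\geq1}$. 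Thus $A\to X\to B\overset{+}{\to}$ is the required truncation triangle.

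For uniqueness I would argue formally: a t-structure is determined by its aisle, and the orthogonality together with the truncation triangles just produced force $\dD^{\geq1}=(\dD^{\leq0})^{\perp}$. The main obstacle is really just the truncation step: one must perform the gluing in the right order — first over $\cC_2$ via $j_{RR}$, then over $\cC_1$ via $i$ — and keep careful track, via the octahedral axiom and the vanishing and adjunction identities for $i_L,i_R,j_R$, of the images of the intermediate objects $X_1$, $A$, $B$ under these functors. Everything else is a formal unwinding of the recollement axioms.
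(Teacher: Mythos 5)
Your proof is correct. The paper does not actually prove this statement --- it is quoted from \cite[Theorem 1.4.10]{BBD:82} --- and your argument is exactly the standard BBD gluing construction: the two-stage truncation (first along $j_{RR}$ over $\cC_2$, then along $i$ over $\cC_1$, in that order, which is forced since $i_Lj_{RR}\neq 0$ in general), followed by an octahedron, with the vanishings $j_Ri=0$ and $i_Rj_{RR}=0$ and the triangle identities doing the bookkeeping. Nothing essential is missing.
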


\begin{rmk}
  The heart of the t-structure constructed by the BBD-induction is given by
  \[ \aA = 
    \left\{ E \in \dD  \colon
    \begin{array}{l}
    \Hom(E, i(\aA_1)[n]) = 0 \text{ for $n<0$},\\
    \Hom(i(\aA_1)[n], E) = 0 \text{ for $n>0$}.
    \end{array}
    \right\}
  \]
  Here, $\aA_1$ and $\aA_2$ are the hearts of the t-structures on $\cC_1$ and $\cC_2$ respectively.
\end{rmk}

\subsection{Stability conditions}\label{pre:stab}
We review the notion of Bridgeland stability conditions. 
We also discuss the stability conditions on surfaces.

\subsubsection{Stability conditions and Bridgeland's deformation theorem}
First, we define stability functions on abelian categories.
Let $\aA$ be an abelian category and $K(\aA)$ be its Grothendieck group.
\begin{defi}
  \begin{enumerate}
  \item 
  A stability function on $\aA$ is a group homomorphism $Z : K(\aA) \rightarrow \bbC$ such that for all $0\not=E \in \aA$
  \[
  Z(E) \in \bbH \cup \bbR_{<0}.
  \]
  Here, $\bbH$ is the upper half plane.
  
  \item
  Let $Z$ be a stability function on $\aA$. The phase of an object $0 \not = E \in \aA$ is defined to be \[\phi(E) = (1/\pi) \arg Z(E) \in \left( 0,1\right].\]
  
  \item
  An object $0 \not = E \in \aA$ is said to be semistable (resp. stable) if every subobject $0 \not = A \subset E$ satisfies $\phi(A) \leq(resp. <) \phi(E)$.
  \item We say that $Z$ satisfies the Harder-Narasimhan property (HN property) if for every object $E\in\aA$ there is a filtration
  \[
  0 = E_0 \subset E_1 \subset \cdots \subset E_{n-1} \subset E_n = E
  \]
  whose factors $F_j = E_j / E_{j-1}$ are semistable objects of $\aA$ with
  \[
  \phi(F_1) >\phi(F_2) > \cdots> \phi(F_n).
  \]
  \end{enumerate}
  \end{defi}
  Let $\dD$ be a triangulated category. 
  The notion of support property is introduced in \cite{KS:08}.
  We fix a finite free abelian group $\Lambda$ with a group homomorphism $v:K(\dD) \rightarrow \Lambda$. 
  We note that if $\aA$ is the heart of a bounded t-structure on $\dD$, then $K(\aA)$ is isomorphic to
   $K(\dD)$.
  
  \begin{defi}
  \begin{enumerate}
  \item 
  A pre-stability condition $\sigma = (Z, \aA)$ on $\dD$ with respect to $\Lambda$ consists of the heart of a bounded t-structure $\aA$ and a stability function $Z$ on $\aA$ which factors through $v:K(\aA) \cong K(\dD) \rightarrow \Lambda$ and satisfies the HN-property. 
  \item
  Let $\lVert-\rVert$ be a norm on $\Lambda_{\bbR} = \Lambda\otimes \bbR$. A pre-stability condition $\sigma = (Z, \aA)$ on $\dD$ with respect to $\Lambda$ satisfies the support property if the following holds:
  \[
  \sup \left\{  \frac{\lVert v(E)\rVert}{\lvert Z(E) \rvert}\colon E \rm{\ is\ semistable\ in\ }\aA \right\} <\infty.
  \]
  \item 
  A stability condition is a pre-stability condition satisfying the support property.
  
  \end{enumerate}
  \end{defi}
  
  \begin{lem}[{\cite[Lemma A.4.]{BMS:16}}]
  A pre-stability condition $\sigma = (Z,\aA)$ with respect to $\Lambda$ satisfies the support property if and only if there exists a quadratic form $Q$ on the vector space $\Lambda_{\bbR}$ such that
  \begin{itemize}
  \item for any semistable objects $E \in \aA$, $Q(v(E)) \geq 0$, and 
  \item $Q$ is negative definite on $\Ker Z \subset \Lambda_{\bbR}$.
  \end{itemize}
  \end{lem}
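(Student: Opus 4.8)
The plan is to prove both implications by elementary arguments in the finite-dimensional real vector space $\Lambda_\bbR$, exploiting the homogeneity of quadratic forms together with compactness. First I would record a harmless reduction: since any two norms on $\Lambda_\bbR$ are equivalent, neither the support property nor the two conditions imposed on $Q$ (nonnegativity on semistable classes, negative-definiteness on $\Ker Z$) changes if $\lVert-\rVert$ is replaced by an equivalent norm. So I would fix an inner product on $\Lambda_\bbR$ and assume $\lVert v\rVert^2=\langle v,v\rangle$, which makes $v\mapsto\lVert v\rVert^2$ itself a quadratic form; note also that the $\bbR$-linear map $\Lambda_\bbR\to\bbC\cong\bbR^2$ induced by $Z$ makes $v\mapsto\lvert Z(v)\rvert^2$ a quadratic form.

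For the direction ``support property $\Rightarrow$ existence of $Q$'', let $C<\infty$ denote the value of the supremum occurring in the support property, so that $\lVert v(E)\rVert\le C\lvert Z(E)\rvert$ for every semistable $E\in\aA$. Then I would simply take
\[
Q(v)=C^2\lvert Z(v)\rvert^2-\lVert v\rVert^2 .
\]
By the displayed inequality $Q(v(E))\ge 0$ for every semistable $E$, while $Q$ restricted to $\Ker Z$ equals $-\lVert-\rVert^2$, which is negative definite; so this direction is immediate.

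For the converse, given such a $Q$, I would show that the sublevel set $S=\{v\in\Lambda_\bbR:Q(v)\ge 0,\ \lvert Z(v)\rvert\le 1\}$ is bounded, and then conclude by homogeneity. Boundedness follows by a normalization argument: if $v_n\in S$ with $\lVert v_n\rVert\to\infty$, then $w_n=v_n/\lVert v_n\rVert$ subconverges to some $w$ with $\lVert w\rVert=1$, and $Z(w)=\lim_n Z(v_n)/\lVert v_n\rVert=0$ since $\lvert Z(v_n)\rvert\le 1$, so $w\in\Ker Z\setminus\{0\}$, while $Q(w)=\lim_n Q(v_n)/\lVert v_n\rVert^2\ge 0$, contradicting the negative-definiteness of $Q$ on $\Ker Z$. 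Being closed, $S$ is then compact, so $M\coloneqq\sup_{v\in S}\lVert v\rVert<\infty$. Finally, for any semistable $E\in\aA$ one has $Z(E)\ne 0$ (a stability function never vanishes on a nonzero object), the rescaled class $v(E)/\lvert Z(E)\rvert$ lies in $S$ because $Q$ is homogeneous of degree two, and therefore $\lVert v(E)\rVert/\lvert Z(E)\rvert\le M$; hence the support property holds with bound $M$.

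I do not expect a serious obstacle, since the whole argument is linear algebra. The only steps that need a little care are the reduction to a Euclidean norm in the forward direction, so that $\lVert-\rVert^2$ is genuinely a quadratic form, and, in the converse, recognizing that the boundedness of the ratio $\lVert v(E)\rVert/\lvert Z(E)\rvert$ over semistable $E$ is equivalent to the compactness of the sublevel set $S$ — the point where finite-dimensionality of $\Lambda_\bbR$ is used.
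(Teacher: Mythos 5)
Your argument is correct. The paper itself gives no proof of this lemma (it is quoted from \cite[Lemma A.4]{BMS:16}), and your two directions --- taking $Q(v)=C^2\lvert Z(v)\rvert^2-\lVert v\rVert^2$ for the forward implication, and the homogeneity-plus-compactness argument showing $\{Q\ge 0,\ \lvert Z\rvert\le 1\}$ is bounded for the converse --- are essentially the standard proof from that reference, with the only delicate points (choosing a Euclidean norm so that $\lVert-\rVert^2$ is a quadratic form, and $Z(E)\neq 0$ for nonzero semistable $E\in\aA$) correctly handled.
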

  
  \begin{defi}
  A slicing $\pP =\{\pP(\phi)\}_{\phi \in \bbR}$ of $\dD$ consists of full additive subcategories $\pP(\phi) \subset \dD$ for each $\phi$, satisfying:
  \begin{enumerate}
  \item For all $\phi \in \bbR$, $\pP(\phi+1) = \pP(\phi)[1]$.
  \item If $\phi_1 > \phi_2$ and $E_j \in \pP(\phi_j)$, then $\Hom(E_1,E_2) = 0$.
  \item For every $E \in \dD$ there exists a finite sequence 
  \[
  0 = E_0 \rightarrow E_1 \rightarrow \cdots E_m =E
  \]
  such that  $\mathrm{Cone}(E_i \rightarrow E_{i+1})$ lies in $\pP(\phi_i)$ with $\phi_0 >\phi_1 >\cdots > \phi_{m-1}$.
   
  \end{enumerate}
  
  \end{defi}

  Let $\sigma =(Z,\aA)$ be a stability condition on $\dD$. For each real number $\phi \in (0,1]$, full additive category $\pP(\phi)$ is defined as follows:
  \[
  \pP(\phi) \coloneq \{E \in \aA \colon E \text{ is semistable of the phase } \phi  \}.
  \]
  For each real number $\phi \in \bbR$,
  \[
  \pP(\phi) = \pP (\phi-k ) [k]
  \]where $k = \lfloor \phi \rfloor$. 
   It follows from the support property that each category $\pP(\phi)$ is of finite length, so each semistable object has a Jordan-Holder filtration.
  For a non-zero object $E$, we write $\phi_{\sigma}^{+} (E) = \phi_0$ and $\phi_{\sigma} ^{-}(E) = \phi_{m-1}$. The mass of $E$ is defined by $m(E) = \sum |Z(A_i)|$.
  Then, the set $\Stab_{\Lambda}(\dD)$ of stability conditions with respect to $\Lambda$ on $\dD$ has a natural topology induced by the following metric
  \[
  d(\sigma_1,\sigma_2) = \sup_{0\not = E \in \dD} \left\{ \lvert\phi_{\sigma_1}^{-}(E) - \phi_{\sigma_2}^{-}(E)\rvert,\lvert\phi_{\sigma_1}^{+}(E) - \phi_{\sigma_2}^{+}(E)\rvert,\lvert\log \frac{m_{\sigma_1}(E)}{m_{\sigma_2}(E)}\rvert \right\}.
  \]
  The topology makes the forgetful map continuous
  \[
  \begin{array}{ccc}
  \pi:\Stab(\dD)&\rightarrow& \Hom(\Lambda,\bbC).\\
  (Z,\aA)&\mapsto&Z
  \end{array}
  \]
  It follows from this result called the deformation property of stability conditions that the forgetful map $\pi$ is a local homeomorphism.
  
  \begin{thm}[{\cite[Theorem 1.2.]{Bri:07},\cite[Lemma A.5.]{BMS:16}},{\cite[Theorem 1.2.]{Bay:19}}]\label{thm:deformation}
  Let $Q$ be a quadratic form on $\Lambda \otimes\bbR$ and assume that the stability condition $\sigma = (Z ,\pP)$ satisfies the support property with respect to $Q$. 
  Consider the open set of $\Hom(\Lambda,\bbC)$ consisting of central charges whose kernel is negative-definite with respect to $Q$, and let $U$ be the connected component containing $Z$.
  Then:
  \begin{enumerate}
  \item There is an open neighborhood $\sigma \in U_{\sigma} \subset \Stab_{\Lambda}(\dD)$ such that the restriction $\zZ : U_{\sigma} \rightarrow U$ is a covering map.
  \item All stability conditions in $U_{\sigma}$ satisfy the support property with respect to $Q$.
  \end{enumerate}
  \end{thm}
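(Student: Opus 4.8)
The plan is to deduce everything from a single local assertion near $\sigma$ and then run Bridgeland's deformation argument while keeping track of the quadratic form $Q$ in the manner of Bayer. Write $\pi\colon \Stab_{\Lambda}(\dD)\to \Hom(\Lambda,\bbC)$ for the forgetful map. The set of $W$ whose kernel is $Q$-negative definite is open, since negative definiteness of a quadratic form on a continuously varying subspace is an open condition; hence $U$ is an open submanifold of $\Hom(\Lambda,\bbC)$. It suffices to show: there is $\delta>0$ such that for every $W\in U$ with $\lVert W-Z\rVert<\delta$ (operator norm on $\Hom(\Lambda_{\bbR},\bbC)$) there is a \emph{unique} stability condition $\tau=(W,\qQ)$ with $d(\sigma,\tau)<\tfrac18$, and this $\tau$ again satisfies the support property with respect to $Q$. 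Granting this, (2) is immediate, and for (1) one takes $U_{\sigma}$ to be the connected component of $Z$ in $\pi^{-1}(U)\cap\{\tau\colon \text{support holds w.r.t. }Q\}$; local injectivity of $\pi$ (two nearby stability conditions with equal central charge have equal slicing, because the slicing is determined up to small perturbation by the central charge) together with the path lifting obtained by covering a path in $U$ by the above local neighborhoods shows that $\pi|_{U_{\sigma}}$ is a covering of $U$ — here the fact that masses and phases cannot escape to infinity along such a path, which is exactly what the uniform support bound provides, is what upgrades ``local homeomorphism'' to ``covering''.

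The quantitative input is the following. By the $Q$-form version of the support property recalled above (\cite[Lemma~A.4]{BMS:16}), the support property of $\sigma$ gives a constant $C_{0}>0$ with $\lvert Z(E)\rvert\geq C_{0}\,\lVert v(E)\rVert$ for every $\sigma$-semistable $E$. Hence for $W$ with $\lVert W-Z\rVert<\delta$ and every $\sigma$-semistable $E$,
\[
\lvert W(E)-Z(E)\rvert \;\leq\; \lVert W-Z\rVert\,\lVert v(E)\rVert \;\leq\; \frac{\delta}{C_{0}}\,\lvert Z(E)\rvert .
\]
Choosing $\delta$ small we get $W(E)\neq 0$, with $\lvert W(E)\rvert$ comparable to $\lvert Z(E)\rvert$ and the $W$-phase $\psi(E)\coloneqq\tfrac1\pi\arg W(E)$ (branch fixed by continuity from the $\sigma$-phase $\phi_{\sigma}(E)$) satisfying $\lvert\psi(E)-\phi_{\sigma}(E)\rvert<\epsilon$, where $\epsilon=\epsilon(\delta)\to 0$ as $\delta\to 0$. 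The point is that this control is uniform over \emph{all} $\sigma$-semistable classes simultaneously; this is precisely what the support property buys, and without it the phase shift would be unbounded.

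Next I would build the slicing $\qQ$ by re-bracketing Harder--Narasimhan filtrations, following \cite[Section~7]{Bri:07}. Given $E\in\dD$ with $\sigma$-HN factors $A_{1},\dots,A_{n}$ of strictly decreasing $\sigma$-phases, the $W$-phases $\psi(A_{i})$ lie within $\epsilon$ of $\phi_{\sigma}(A_{i})$ but need no longer be strictly decreasing; one groups consecutive factors whose $\sigma$-phases differ by at most $2\epsilon$ into blocks, and checks (using that within a block all $W$-values lie in an angular sector of aperture $<1$, so the block is a quasi-abelian extension with non-vanishing total $W$-mass) that each block assembles into a single $\tau$-semistable object with a well-defined $W$-phase and that the blocks form a $\qQ$-HN filtration with strictly decreasing phases. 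One then verifies the slicing axioms: the vanishing $\Hom(\qQ(\phi_{1}),\qQ(\phi_{2}))=0$ for $\phi_{1}>\phi_{2}$ follows from the corresponding vanishing for $\pP$ once $\epsilon<\tfrac14$, and local finiteness is inherited from $\pP$; thus $\tau=(W,\qQ)$ is a pre-stability condition with $d(\sigma,\tau)\leq\epsilon$, and the metric estimate gives uniqueness within distance $\tfrac18$. I expect this verification — that the re-bracketed data really is a slicing satisfying the HN property, which amounts to careful bookkeeping inside the quasi-abelian categories $\pP((a,b])$ — to be the most laborious step, though it is Bridgeland's standard argument.

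Finally I would propagate the support property to $\tau$, following \cite[Theorem~1.2]{Bay:19}. The key tool is a convexity lemma: since $\Ker W$ is $Q$-negative definite, for any $c\in\bbC^{\times}$ the restriction of $Q$ to the real affine subspace $\{v\in\Lambda_{\bbR}\colon W(v)=c\}$ — a coset of $\Ker W$ — is a concave function, and this forces $Q(v_{1}+v_{2})\geq 0$ whenever $Q(v_{1}),Q(v_{2})\geq 0$ and $W(v_{1}),W(v_{2})$ are positively proportional. Applying this to the classes $v(A_{i})$ of the $\sigma$-semistable factors making up a $\tau$-semistable object $F$: they satisfy $Q(v(A_{i}))\geq 0$, and their $W$-values lie in an angular sector whose aperture tends to $0$ as $\delta\to 0$, so a perturbation of the convexity lemma — valid once $\delta$ is small enough, at the cost of shrinking $U_{\sigma}$ so that $\Ker W$ stays $Q$-negative definite with a uniform margin — yields $Q(v(F))=Q\big(\textstyle\sum v(A_{i})\big)\geq 0$. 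Together with the hypothesis that $\Ker W$ is $Q$-negative definite, the quadratic-form criterion then shows $\tau$ satisfies the support property with respect to $Q$, completing the local assertion. The delicate quantitative point is exactly this last estimate: one must balance the aperture $\sim\epsilon$ of the cone of $W$-values of the merged factors against the margin of $Q$-negativity on $\Ker W$, which is where the finite rank of $\Lambda$ and compactness enter; I regard it, together with the re-bracketing of the slicing above, as the real content of the theorem.
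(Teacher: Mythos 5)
The paper does not prove this statement at all: it is quoted verbatim from the literature, with the proof delegated to Bridgeland, to \cite[Lemma A.5]{BMS:16}, and to Bayer's quantitative refinement. Your sketch reproduces exactly that standard argument (uniform phase/mass control from the support constant, re-bracketing of HN filtrations inside thin quasi-abelian subcategories $\pP((a,b])$, and Bayer's convexity lemma for propagating $Q$-nonnegativity to the merged classes), so there is nothing in the paper to compare it against beyond the citations, and your overall route is the right one. One step is stated imprecisely: a block of consecutive $\sigma$-HN factors whose phases lie in a $2\epsilon$-window does \emph{not} in general assemble into a single $\tau$-semistable object; the corresponding subquotient of $E$ merely lands in the thin category $\pP((\psi-\epsilon,\psi+\epsilon))$, where one must run a separate finite-length HN argument with respect to $W$-semistability to split it further, and the $\qQ$-HN factors of $E$ are the factors so obtained rather than the blocks themselves. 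This is exactly the ``laborious bookkeeping'' you defer to Bridgeland, so it is a gap in exposition rather than in strategy, but as written the assertion is false and the definition of $\qQ(\psi)$ (as $W$-semistable objects of the thin category, not as assembled blocks) needs to be supplied for the slicing axioms to be checkable.
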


\subsubsection{Stability conditions on surfaces}
Next, we introduce stability conditions on surfaces constructed in \cite{Bri:08,ABL:13,MS:17,Del:23}.
Let $X$ be a smooth projective surface,
and $K(X)$ be the Grothendieck group of coherent sheaves on $X$.
We denote by $\Lambda_X$ the image of the Chern character map $\ch \colon K(X) \rightarrow H^*(X;\bbZ)$.
In this paper, we consider stability conditions with respect to $\Lambda_X$.
  We denote by $\NS(X)$ the N\'eron-Severi group of $X$.
  Set ${N}^1(X) = \NS(X) \otimes \bbR$.
 To construct t-structures on $\dD^b(X)$, we use torsion pairs.
 \begin{defi}
  Let $\aA$ be an abelian category.
  A torsion pair $(\tT,\fF)$ in an abelian category $\aA$ is a pair of full subcategories $\tT$ and $\fF$ 
  of $\aA$ with the following conditions:
  \begin{enumerate}
    \item for any $T \in \tT$ and $F \in \fF$, $\Hom(T,F) = 0$, and 
    \item for any $E \in \dD$, there exist objects $T \in \tT$ and $F \in \fF$ with the following exact sequence in $\aA$:
    \[
      0 \rightarrow T \rightarrow E \rightarrow F \rightarrow 0.
    \]
  \end{enumerate}
 \end{defi}
 \begin{prop}[{\cite[Proposition 2.1.]{HRS:96}}]
  Let $\aA$ be the heart of a bounded t-structure on $\dD^b(X)$.
  If there exists a torsion pair $(\tT,\fF)$ in $\aA$, 
  then we have the following heart of a bounded t-structure on $\dD^b(X)$:
  \[
   \aA^{\sharp} = \langle \tT,\fF[1] \rangle_{\mathrm{ex}}.
  \] 
  Here, $\langle \tT,\fF[1] \rangle_{\mathrm{ex}}$ is the extension closure of $\tT$ and $\fF[1]$ in $\dD^b(X)$.
 \end{prop}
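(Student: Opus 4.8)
The plan is to produce the tilted t-structure explicitly in terms of the cohomology functors $\hH^{i}_{\aA}$ of the given t-structure, and then to identify its heart with the extension closure $\langle \tT, \fF[1] \rangle_{\mathrm{ex}}$. Concretely, I would set
\[
\dD^{\leq 0}_{\sharp} = \{ E \in \dD^{b}(X) : \hH^{i}_{\aA}(E) = 0 \text{ for } i \geq 1 \text{ and } \hH^{0}_{\aA}(E) \in \tT \},
\]
\[
\dD^{\geq 1}_{\sharp} = \{ E \in \dD^{b}(X) : \hH^{i}_{\aA}(E) = 0 \text{ for } i \leq -1 \text{ and } \hH^{0}_{\aA}(E) \in \fF \},
\]
and check the three axioms of a bounded t-structure for this pair. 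The shift axiom is immediate from $\hH^{i}_{\aA}(E[1]) = \hH^{i+1}_{\aA}(E)$ together with $0 \in \tT \cap \fF$. For the orthogonality $\Hom(\dD^{\leq 0}_{\sharp}, \dD^{\geq 1}_{\sharp}) = 0$, I would take $E \in \dD^{\leq 0}_{\sharp}$, $F \in \dD^{\geq 1}_{\sharp}$ and use the $\aA$-truncation triangles $\tau^{\leq -1}_{\aA} E \to E \to \hH^{0}_{\aA}(E) \to$ and $\hH^{0}_{\aA}(F) \to F \to \tau^{\geq 1}_{\aA} F \to$: since $F \in \dD^{\geq 0}_{\aA}$ the first reduces the computation to $\Hom_{\dD}(\hH^{0}_{\aA}(E), F)$, and since $\hH^{0}_{\aA}(E) \in \aA$ the second reduces it to $\Hom_{\aA}(\hH^{0}_{\aA}(E), \hH^{0}_{\aA}(F))$, which vanishes because $\hH^{0}_{\aA}(E) \in \tT$, $\hH^{0}_{\aA}(F) \in \fF$ and $(\tT, \fF)$ is a torsion pair.

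The substantive step is the existence of truncation triangles. Given $E$, I would set $A = \hH^{0}_{\aA}(E)$, take its torsion decomposition $0 \to tA \to A \to fA \to 0$ in $(\tT, \fF)$, and form $E'$ as the fibre of the composite $\tau^{\leq 0}_{\aA} E \to A \to fA$. Since the fibre of a composite fits into a triangle with the fibres of the two factors, and these fibres are $\tau^{\leq -1}_{\aA} E$ and $tA$, I obtain a triangle $\tau^{\leq -1}_{\aA} E \to E' \to tA \to$, and its long exact $\aA$-cohomology sequence gives $\hH^{i}_{\aA}(E') = 0$ for $i \geq 1$ and $\hH^{0}_{\aA}(E') \cong tA \in \tT$, so $E' \in \dD^{\leq 0}_{\sharp}$. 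Dually, applying the cofibre version of the octahedron to the composite $E' \to \tau^{\leq 0}_{\aA} E \to E$ (whose two cofibres are $fA$ and $\tau^{\geq 1}_{\aA} E$) produces a triangle $fA \to \Cone(E' \to E) \to \tau^{\geq 1}_{\aA} E \to$; its $\aA$-cohomology sequence gives $\hH^{i}_{\aA}(\Cone(E' \to E)) = 0$ for $i \leq -1$ and $\hH^{0}_{\aA}(\Cone(E' \to E)) \cong fA \in \fF$, so $\Cone(E' \to E) \in \dD^{\geq 1}_{\sharp}$. Hence $E' \to E \to \Cone(E' \to E) \to$ is the required triangle.

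Finally, boundedness of the new t-structure follows from that of the $\aA$-t-structure, since for $|n|$ large a shift $E[-n]$ has all its $\aA$-cohomology concentrated in degrees $\leq 0$ (resp.\ $\geq 1$), hence lies in $\dD^{\leq 0}_{\sharp}$ (resp.\ $\dD^{\geq 1}_{\sharp}$). For the heart, unwinding the two defining conditions shows that $\dD^{\leq 0}_{\sharp} \cap \dD^{\geq 1}_{\sharp}[1]$ consists exactly of the objects $E$ with $\hH^{i}_{\aA}(E) = 0$ for $i \notin \{-1, 0\}$, $\hH^{-1}_{\aA}(E) \in \fF$, and $\hH^{0}_{\aA}(E) \in \tT$; the triangle $\hH^{-1}_{\aA}(E)[1] \to E \to \hH^{0}_{\aA}(E) \to$ then exhibits each such $E$ as an extension of an object of $\tT$ by an object of $\fF[1]$, and conversely every such extension satisfies these conditions, so the heart equals $\langle \tT, \fF[1] \rangle_{\mathrm{ex}}$. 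I expect the only genuine bookkeeping to be in the two octahedra of the second paragraph — keeping straight which cohomology object of $E'$ and of $\Cone(E' \to E)$ equals $tA$, $fA$, or a truncation of $E$ — while the remaining steps are formal manipulations with the $\aA$-t-structure and torsion-pair orthogonality.
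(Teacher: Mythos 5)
Your proof is correct: this is exactly the standard Happel--Reiten--Smal{\o} tilting argument (define the tilted aisles via the $\aA$-cohomology conditions, verify orthogonality using the torsion-pair vanishing $\Hom(\tT,\fF)=0$, and build the truncation triangles with two octahedra on $\tau^{\leq 0}_{\aA}E \to \hH^0_{\aA}(E) \to fA$), and the paper itself gives no proof but simply cites \cite[Proposition 2.1]{HRS:96}, where the argument is the same as yours. The only point worth making explicit in the final step is that the inclusion $\langle \tT,\fF[1]\rangle_{\mathrm{ex}} \subseteq \dD^{\leq 0}_{\sharp}\cap\dD^{\geq 1}_{\sharp}[1]$ uses that the heart of a t-structure is closed under extensions, which is automatic.
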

 We define twisted Chern characters for a class $B\in{N}^1(X)$.
 \begin{defi}
  Let $B$ be a class in ${N}^1(X)$.
  The twisted Chern character $\ch^B(E)$ of an object $E$ in $\dD^b(X)$ is defined to be
  \[
  \ch^B(E) = e^{-B} \ch(E) \in H^*(X;\bbQ).
  \]
 \end{defi}
We can compute
 \[
 \ch_0^B = \ch_0,\  \ch_1^B = \ch_1 - B\ch_0,\  \ch_2^B = \ch_2 - B\ch_1 + \frac{B^2}{2}\ch_0.
 \]
Let $H$ be an ample class in ${N}^1(X)$.
Then, we can define a twisted slope $\mu_{H}$ on $\Coh(X)$ by
\[
 \mu_{H}(E) = 
\begin{cases}
  \frac{\ch_1(E).H}{\ch_0(E)} &\text{ if }\ch_0(E) >0, \\
  \infty &\text{ if }\ch_0(E) =0.\\
\end{cases}
\]
We have the following torsion pair in $\Coh(X)$:
\begin{align*}
  \tT_{\beta,H} &= \langle E \colon E \text{ is $\mu_H$-stable with } \mu_H(E) > \beta \rangle_{\mathrm{ex}},\\
  \fF_{\beta,H} &= \langle E \colon E \text{ is $\mu_H$-stable with } \mu_H(E) \leq \beta \rangle_{\mathrm{ex}}     .
\end{align*}
We denote by $\aA_{\beta,H}$ the heart of the bounded t-structure on $\dD^b(X)$ associated with the torsion pair $(\tT_{\beta,H},\fF_{\beta,H})$.
We consider the following central charge.
\begin{align}
  Z_{B, H}(E) &=  -\ch_2^B(E) + \frac{H^2}{2} \ch_0 ^B(E) + i H \ch_1^B(E),\label{cc:bri}.
\end{align}
where $B$ is a divisor class and $H$ is an ample class in $N^1(X)$.

\begin{thm}[{\cite{ABL:13,Bri:08},\cite[Theorem 6.10.]{MS:17}}]
  Let $X$ be a smooth projective surface, and let $B$ and $H$ be classes in $N^1(X)$ with $H$ ample.
  Set $\beta = H.B$.
  Then, the pair $ \sigma_{B,H} = (Z_{B,H},\aA_{\beta,H})$ is a stability condition on $\dD^b(X)$.
\end{thm}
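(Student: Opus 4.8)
The plan is to verify the three requirements in the definition of a stability condition for the pair $\sigma_{B,H}=(Z_{B,H},\aA_{\beta,H})$: that $Z_{B,H}$ is a stability function on the tilted heart, that it factors through $\Lambda_X$ and has the Harder--Narasimhan property, and that it satisfies the support property. The heart $\aA_{\beta,H}$ is already known to be the heart of a bounded t-structure by the tilting proposition \cite[Proposition 2.1.]{HRS:96} (boundedness being a standard addendum), and $Z_{B,H}$ visibly factors through $\ch$, so only the stability-function inequality, the HN property, and the support property remain. Throughout I would reduce statements about an object $E\in\aA_{\beta,H}$ to statements about $\mu_H$-semistable sheaves via the defining exact sequence
\[
0\to \hH^{-1}(E)[1]\to E\to \hH^0(E)\to 0
\]
in $\aA_{\beta,H}$, with $\hH^{-1}(E)\in\fF_{\beta,H}$ and $\hH^0(E)\in\tT_{\beta,H}$.

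First I would check that $Z_{B,H}$ is a stability function. Since $\ch_1^B=\ch_1-B\,\ch_0$ and $\beta=H.B$, the imaginary part $\Imm Z_{B,H}=H.\ch_1^B$ is nonnegative on $\aA_{\beta,H}$: on $\tT_{\beta,H}$ one has $\mu_H>\beta$, so $H.\ch_1^B>0$ in positive rank, while $H.\ch_1^B=H.\ch_1\ge 0$ for a torsion sheaf; and on $\fF_{\beta,H}[1]$ one has $H.\ch_1^B(F[1])=-H.\ch_1^B(F)\ge 0$. When $\Imm Z_{B,H}(E)=0$, both $\hH^0(E)$ and $\hH^{-1}(E)$ have vanishing $H.\ch_1^B$, which forces $\hH^0(E)$ to be a zero-dimensional torsion sheaf and $\hH^{-1}(E)$ to be a $\mu_H$-semistable torsion-free sheaf with $\mu_H=\beta$. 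In the first case $\Ree Z_{B,H}=-\mathrm{length}\le 0$; in the second, the Hodge index theorem gives $(\ch_1^B(\hH^{-1}(E)))^2\le 0$ because that class is $H$-orthogonal, so the classical Bogomolov inequality (whose discriminant is invariant under twisting by $B$) yields $\ch_2^B(\hH^{-1}(E))\le 0<\tfrac{H^2}{2}\ch_0(\hH^{-1}(E))$ in positive rank, hence $\Ree Z_{B,H}(\hH^{-1}(E)[1])<0$. Adding the two contributions shows $\Ree Z_{B,H}(E)<0$ whenever $E\ne 0$ lies on the real axis.

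Next I would establish the support property by exhibiting a quadratic form $Q$ as in \cite[Lemma A.4.]{BMS:16}. The natural candidate is built from the twisted discriminant $(H.\ch_1^B(E))^2-2(H^2)\ch_0(E)\ch_2^B(E)$ together with a Hodge-index correction term $(H^2)(\ch_1^B(E))^2-(H.\ch_1^B(E))^2\le 0$, arranged so that $Q$ is negative definite on $\Ker Z_{B,H}$; that negative-definiteness is a direct computation with the Hodge index theorem. The substantive point is that $Q(v(E))\ge 0$ for every $\sigma_{B,H}$-semistable $E$: this is the Bogomolov--Gieseker inequality in tilt-stability, and I would prove it by induction, passing to Jordan--Hölder factors and then observing that a $\sigma_{B,H}$-stable object is either a shift of a $\mu_H$-semistable sheaf, where the classical Bogomolov inequality applies, or is destabilised at the level of $\Coh(X)$, in which case a short exact sequence together with induction on the rank (or on $H.\ch_1^B$) closes the estimate; see \cite{Bri:08,MS:17}. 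I expect this step to be the main obstacle, since Steps 1 and 3 are essentially linear algebra and formal category theory, whereas the tilt-stability Bogomolov inequality is the genuinely nontrivial input.

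Finally I would deduce the Harder--Narasimhan property. First I would show the tilted heart $\aA_{\beta,H}$ is noetherian, using noetherianness of $\Coh(X)$ to control the $\hH^0$-parts and the discriminant bound from the previous step to control the $\hH^{-1}$-parts along an ascending chain. With $\aA_{\beta,H}$ noetherian and the support property in hand, there is no infinite flag of subobjects of strictly increasing phase, so a standard criterion of Bridgeland \cite{Bri:07,Bri:08} produces HN filtrations. Combining the three steps, $\sigma_{B,H}=(Z_{B,H},\aA_{\beta,H})$ is a stability condition on $\dD^b(X)$.
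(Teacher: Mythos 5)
This theorem is quoted in the paper from \cite{ABL:13,Bri:08} and \cite[Theorem 6.10]{MS:17} without proof, so there is no internal argument to compare against; your outline is essentially the standard proof from those references. Your Step 1 (positivity of $Z_{B,H}$ on $\aA_{\beta,H}$ via the torsion pair, Hodge index, and the Bogomolov inequality applied to the $\mu_H$-semistable piece $\hH^{-1}(E)$ of slope exactly $\beta$) and Step 2 (support property via a quadratic form mixing the twisted discriminant with a Hodge-index correction, with the tilt-stability Bogomolov--Gieseker inequality as the real input) are the arguments of \cite{BMS:16,MS:17}, and you correctly identify the latter inequality as the substantive point.

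The one place your sketch is too quick is the Harder--Narasimhan step. The implication ``Noetherian heart $+$ support property $\Rightarrow$ HN'' is not a standard criterion as stated: Bridgeland's criterion \cite[Proposition 2.4]{Bri:07} needs the absence of infinite chains of subobjects with strictly increasing phase, and the usual way to get this is discreteness of the image of $\Imm Z_{B,H}=H.\ch_1^B$, which holds when $H$ and $B$ are rational but fails for general real classes in $N^1(X)$ (and the theorem as stated allows arbitrary $B,H\in N^1(X)$). The cited references close this gap either by a more careful chain argument using the discriminant bound, or by first proving the rational case and then reaching irrational $(B,H)$ through the deformation/support-property machinery of \Cref{thm:deformation}. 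You should either restrict to rational classes in Step 3 and invoke deformation afterwards, or spell out how the quadratic form controls ascending chains of subobjects of increasing phase; as written, that inference is the only genuine gap in an otherwise correct reconstruction.
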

For any stability condition $\sigma_{B,H}$ constructed above, 
all skyscraper sheaves are $\sigma_{B,H}$-stable of phase $1$.
Such stability conditions are called \textit{normalized geometric stability conditions}.
Define the geometric chamber 
\[
 U(X) = \left\{ \sigma \in \Stab(X) \colon \text{ all skyscraper sheaves }\oO_x \text{ are $\sigma$-stable of one with } Z(\oO_x) = -1 \right\}.
\]
\begin{defi}
  Let $\sigma = (Z,\aA)$ be a stability condition on $\dD^b(X)$.
  We call $\sigma$ a normalized stability condition if $Z(\oO_x) = -1$.
  We denote by $\Stab_n(X)$ 
  the set of normalized stability conditions on $\dD^b(X)$.
  We write $\Stab_n^{\dagger}(X)$ for the connected component of $\Stab_n(X)$ containing the normalized geometric stability condition.
\end{defi}
In \cite{Del:23}, we can describe the normalized geometric chamber $U(X)$ by using the twisted Le Potier function.

\begin{defi}
  Let $X$ be a smooth projective surface, and let $B$ and $H$ be classes in $N^1(X)$ with $H$ ample.
  The twisted Le Potier function $L_{B,H}$ is defined to be
  \[
  \Phi_{H,B}(x) = \limsup_{\mu \rightarrow x} \left\{ \frac{\ch_2(E) - B.\ch_1(E)}{\ch_0(E)} \colon \mu_H\text{-stable }E \in \Coh(X) \text{ with }\mu_H(E) = \mu  \right\}.
  \]
\end{defi}
\begin{thm}[{\cite[Theorem 5.10]{Del:23}}] \label{thm:geom} 
  Let $X$ be a smooth projective surface and $(\alpha,\beta,B,H)$ be a quadruple in $\bbR \times \bbR \times N^1(X) \times \Amp(X)$. 
  Define a pair $\sigma_{\alpha - i\beta,B,H}=(Z_{\alpha-i\beta,B,H},\aA_{\beta,H})$
  \begin{align*}
    Z_{\alpha-i\beta,B,H} &=(\alpha-i\beta) \ch_0(E) + (B + i \omega)\ch_1(E) -\ch_2(E)\\
    \aA_{\beta,H} &= \langle \tT_{\beta,H},\fF_{\beta,H}[1] \rangle_{\mathrm{ex}}.
  \end{align*}
  If $\Phi_{H,B}(\alpha) > \beta$, then the pair $\sigma_{\alpha - i\beta,B,H} = (Z_{\alpha-i\beta,B,H},\aA_{\beta,H})$ is a stability condition in the normalized geometric chamber $U(X)$.
  Moreover, we have the following description of the geometric chamber:
  \[
   U(X) \cong \left\{(\alpha,\beta,B,H) \in \bbR \times \bbR \times N^1(X) \times \Amp(X) \colon \Phi_{H,B}(\alpha) > \beta \right\}.
  \]
\end{thm}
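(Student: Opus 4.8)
The plan is to prove the two assertions of the theorem separately: first that $\sigma_{\alpha-i\beta,B,H}$ is a stability condition lying in $U(X)$ whenever $\Phi_{H,B}(\alpha)>\beta$, and then that the resulting parametrization is a homeomorphism. For the first assertion the decisive step is to check that $Z:=Z_{\alpha-i\beta,B,H}$ is a stability function on the tilted heart $\aA_{\beta,H}$. Writing out $\Imm Z(E)=H.\ch_1(E)-\beta\,\ch_0(E)$ and $\Ree Z(E)=\alpha\,\ch_0(E)+B.\ch_1(E)-\ch_2(E)$, the definition of the torsion pair $(\tT_{\beta,H},\fF_{\beta,H})$ gives $\Imm Z\ge 0$ on $\aA_{\beta,H}$, so the only issue for the stability-function axiom is strict negativity of $\Ree Z$ on the locus $\{\Imm Z=0\}$. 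Passing to cohomology sheaves with respect to $\Coh(X)$ shows that such an object is an extension of a zero-dimensional sheaf $\hH^0(E)$ by $F[1]$, with $F=\hH^{-1}(E)$ a $\mu_H$-semistable torsion-free sheaf of slope exactly $\beta$; then $\Ree Z(E)=-\ch_2(\hH^0(E))+\ch_0(F)\bigl(\frac{\ch_2(F)-B.\ch_1(F)}{\ch_0(F)}-\alpha\bigr)$, so, after passing to Jordan--H\"older factors of $F$, the axiom amounts to $\frac{\ch_2(F')-B.\ch_1(F')}{\ch_0(F')}<\alpha$ for every $\mu_H$-stable $F'$ of slope $\beta$. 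This inequality is precisely what the twisted Le Potier function is built to record, and I would invoke \cite{Del:23} for the identification of the set of quadruples $(\alpha,\beta,B,H)$ for which it holds with the region $\{\Phi_{H,B}(\alpha)>\beta\}$, the regularity properties of $\Phi_{H,B}$ (its semicontinuity, and the behavior of the limsup) being what makes this comparison sharp.

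Granting that $Z$ is a stability function, the remaining axioms go by standard techniques. The Harder--Narasimhan property for $(Z,\aA_{\beta,H})$ follows from Noetherianity of the tilted heart together with Bridgeland's criterion, as in \cite{Bri:08,MS:17}. For the support property I would exhibit a suitable quadratic form $Q$ on $\Lambda_X\otimes\bbR$ assembled from the ($B$-twisted) Bogomolov--Gieseker inequality and the Hodge index theorem: it is nonnegative on $\mu_H$-semistable sheaves, hence, after the usual tilting bookkeeping, on all semistable objects of $(Z,\aA_{\beta,H})$, and a direct computation shows it is negative definite on $\Ker Z$ throughout the region in question; compare \cite{ABL:13,BMS:16}. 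Membership in $U(X)$ is then almost automatic: $\oO_x\in\tT_{\beta,H}$ and $Z(\oO_x)=-1$, so $\oO_x$ has phase one, and any subobject $A\hookrightarrow\oO_x$ in $\aA_{\beta,H}$ of phase one satisfies $\Imm Z(A)=0$ and lies in $\tT_{\beta,H}$, which forces $A$ to be zero-dimensional; then $\ker(A\to\oO_x)$ is a torsion sheaf that must lie in $\fF_{\beta,H}$, hence vanishes, so $A$ is a genuine subsheaf of $\oO_x$ and equals $0$ or $\oO_x$. Thus $\oO_x$ is stable.

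For the homeomorphism statement I would argue as follows. Given an arbitrary $\sigma=(Z,\aA)\in U(X)$, since $Z$ factors through $\ch$ and $Z(\oO_x)=-1$ it has the form $Z(E)=(\alpha-i\beta)\ch_0(E)+v.\ch_1(E)-\ch_2(E)$ for a unique $(\alpha,\beta)\in\bbR^2$ and $v\in N^1(X)\otimes\bbC$; write $v=B+iH$. Using that all skyscrapers are stable of phase one one checks that $H$ must be ample and that $\aA$ contains both $\tT_{\beta,H}$ and $\fF_{\beta,H}[1]$, hence coincides with $\aA_{\beta,H}$; the necessity direction of the estimate above then forces $\Phi_{H,B}(\alpha)>\beta$. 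Consequently $(\alpha,\beta,B,H)\mapsto\sigma_{\alpha-i\beta,B,H}$ is a bijection onto $U(X)$. It is continuous because the heart $\aA_{\beta,H}$ and the central charge $Z_{\alpha-i\beta,B,H}$ vary continuously with the parameters and \Cref{thm:deformation} then forces the stability conditions to vary continuously, and its inverse is continuous because the parameters are read off from $Z=\pi(\sigma)$ through the continuous forgetful map $\pi$. Hence it is a homeomorphism onto $U(X)$.

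The step I expect to be the real obstacle is the \emph{sharp} identification of $U(X)$ with $\{\Phi_{H,B}(\alpha)>\beta\}$, in particular the ``only if'' direction: on the wall $\{\Phi_{H,B}(\alpha)=\beta\}$ one must extract, from the limsup defining $\Phi_{H,B}$, an actual or limiting $\mu_H$-stable sheaf of slope $\beta$ whose twisted discriminant equals $\alpha$ and which violates the stability-function axiom (or destroys the support property), and this demands genuine control of the analytic behavior of $\Phi_{H,B}$ --- its lower semicontinuity and the description of its graph as the closure of the set of discriminant points of $\mu_H$-stable sheaves --- which is the technical heart of \cite{Del:23}. The support-property verification is itself delicate near that wall, where the quadratic form $Q$ degenerates.
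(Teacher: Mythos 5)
The paper does not prove this statement: it is quoted verbatim as \cite[Theorem 5.10]{Del:23}, with no argument supplied, so there is no in-paper proof to compare yours against. Judged on its own, your sketch follows the standard route (check the stability-function axiom on the tilted heart by reducing to $\mu_H$-semistable sheaves of slope exactly $\beta$, get HN from Noetherianity, get support from Bogomolov--Gieseker plus Hodge index, check skyscrapers are stable, then classify geometric stability conditions to get surjectivity), and the individual reductions you carry out --- in particular the computation of $\Ree Z$ on objects with $\Imm Z=0$ and the argument that a phase-one subobject of $\oO_x$ in $\aA_{\beta,H}$ is $0$ or $\oO_x$ --- are correct.

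Two caveats. First, your computation produces the condition $\frac{\ch_2(F')-B.\ch_1(F')}{\ch_0(F')}<\alpha$ for $\mu_H$-stable $F'$ of slope $\beta$, i.e.\ (up to the limsup) $\Phi_{H,B}(\beta)<\alpha$, whereas the statement as printed reads $\Phi_{H,B}(\alpha)>\beta$; relative to the definitions given in this paper ($\beta$ is the tilting slope, $\alpha$ the real-part parameter, and $\Phi_{H,B}$ takes a slope as input) the printed inequality has $\alpha$ and $\beta$ transposed. You pass over this silently with ``this is precisely what the Le Potier function records''; you should say explicitly which inequality you are actually proving. Second, at the step you yourself identify as the crux --- that the locus where the stability-function axiom (and the support property) holds is exactly the open region cut out by the Le Potier function, including the ``only if'' direction --- you ``invoke \cite{Del:23}'', which is the very theorem under discussion. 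As a reconstruction of why the cited result is what it is, the sketch is fine and correctly locates all the difficulty; as a self-contained proof it is circular at precisely that point.
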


\subsection{Quantum cohomology and quantum connections}
We review quantum cohomology rings  and quantum differential equations.
A quantum cohomology ring is a deformation family of the cohomology ring of a smooth projective variety.
A quantum differential equation is a differential equation satisfied by the quantum cup product.
To define the quantum cohomology ring, we need to introduce the Gromov-Witten invariants.
For the proofs we refer the reader to \cite[Section 6-10.]{CK:99}.
\subsubsection{Gromov-Witten invariants and quantum cohomology}
Let $X$ be a smooth projective variety over $\bbC$.
\begin{defi}
  Let $X$ be a smooth projective variety, and $(C,p_1,\dots,p_n)$ be a curve of arithmetic genus $0$ with $n$ marked points.
  Consider a map $f\colon C \rightarrow X$.
  A tuple $(C,p_1,\dots,p_n,f)$ is called a stable map of genus $0$ with a homology class $\beta$ if the following conditions are satisfied.
  \begin{enumerate}
    \item The only singularities of $C$ are ordinary double points.
    \item $p_1,\dots,p_n$ are distinct smooth points in $C$.
    \item $C$ is of arithmetic genus $0$.
    \item $f_*([C]) = \beta$ in $ H_2(X;\bbZ)$.
    \item If $f$ is constant on a component $C_i \cong \bbP^1$of $C$, then the component must contain at least $3$ special points.
    \item If $f$ is constant on a component $C_i$ with genus one, then the component must contain at least one special point.
  \end{enumerate}
\end{defi}
For a class $\beta \in H_2(X;\bbZ)$, 
we denote by $\overline{M}_{0,n}(X,\beta)$ 
the moduli space of stable maps of genus $0$ 
with $n$ marked points and homology class $\beta$.
We have the following evaluation maps:
\begin{align*}
  ev_i\colon \overline{M}_{0,n}(X,\beta) &\rightarrow X,\\
  (C,p_1,\dots,p_n,f) &\mapsto f(p_i).
\end{align*}

Let $\alpha_1,\dots,\alpha_n$ be cohomology classes in $H^*(X;\bbC)$.
We get the Gromov-Witten invariants 
\[
\langle \alpha_1,\dots,\alpha_n \rangle_{0,n,\beta}^{\mathrm{GW}} = \int_{[\overline{M}_{0,n}(X,\beta)]^{\mathrm{vir}}} ev_1^*(\alpha_1) \cup \cdots \cup ev_n^*(\alpha_n).
\]
The Gromov-Witten invariants satisfy the following two properties introduced in \cite{CK:99}.
\begin{prop}[Degree axiom]
Let $\alpha_1,\dots,\alpha_n$ be cohomology classes in $H^*(X;\bbC)$.
If the  invariant $\langle \alpha_1,\dots,\alpha_n \rangle_{0,n,\beta}^{\mathrm{GW}}$ is non-zero, then we have
\[
\frac{1}{2}  \left\{\deg(\alpha_1) + \cdots + \deg(\alpha_n)  \right\}= \dim(X) + \int_{\beta}c_1(X) + n -3.
\]
\end{prop}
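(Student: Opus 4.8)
The plan is to deduce the identity from a dimension count, by comparing the cohomological degree of the integrand $ev_1^{*}(\alpha_1)\cup\cdots\cup ev_n^{*}(\alpha_n)$ with twice the virtual dimension of the space $\overline{M}_{0,n}(X,\beta)$ over which the integral is taken. Since the Gromov--Witten invariant is additive in each $\alpha_i$, we may assume each $\alpha_i$ is homogeneous of degree $\deg(\alpha_i)$.

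First I would recall the virtual dimension of the moduli of stable maps. The deformation theory of stable maps gives this dimension as the sum of the (virtual) dimension $n-3$ of the moduli of $n$-marked genus $0$ curves and the Euler characteristic $\chi(C,f^{*}T_{X})$ accounting for deformations of the map; by Riemann--Roch on the arithmetic genus $0$ curve $C$, namely $\chi(C,f^{*}T_{X})=\int_{\beta}c_{1}(X)+\dim X\cdot\chi(\oO_{C})$ with $\chi(\oO_{C})=1$, this equals
\[
(n-3)+\chi(C,f^{*}T_{X})=\left(\int_{\beta}c_{1}(X)+\dim X\right)+(n-3)=:d.
\]
Since the perfect obstruction theory on $\overline{M}_{0,n}(X,\beta)$ has constant virtual dimension, the number $d$ is the degree of the virtual class, i.e. $[\overline{M}_{0,n}(X,\beta)]^{\mathrm{vir}}\in H_{2d}(\overline{M}_{0,n}(X,\beta);\bbC)$.

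Next I would examine the integrand. Each $ev_i$ is a morphism, so $ev_i^{*}(\alpha_i)\in H^{\deg(\alpha_i)}$ and hence $ev_1^{*}(\alpha_1)\cup\cdots\cup ev_n^{*}(\alpha_n)\in H^{\deg(\alpha_1)+\cdots+\deg(\alpha_n)}$. The evaluation of a homogeneous cohomology class of degree $k$ against a homology class of degree $2d$ vanishes unless $k=2d$; therefore, if
\[
\langle\alpha_1,\dots,\alpha_n\rangle_{0,n,\beta}^{\mathrm{GW}}=\int_{[\overline{M}_{0,n}(X,\beta)]^{\mathrm{vir}}}ev_1^{*}(\alpha_1)\cup\cdots\cup ev_n^{*}(\alpha_n)\not=0,
\]
we must have $\deg(\alpha_1)+\cdots+\deg(\alpha_n)=2d=2\left(\dim X+\int_{\beta}c_{1}(X)+n-3\right)$, and dividing by $2$ yields the asserted formula.

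The one genuinely delicate point is the virtual dimension formula for $d$: a careful justification requires the Euler-characteristic bookkeeping on a \emph{nodal} genus $0$ domain, where one checks that the contributions of the nodes and of the infinitesimal automorphisms of the (possibly unstable) rational components of $C$ cancel against the corresponding correction terms, so that the clean expression for $d$ holds uniformly over $\overline{M}_{0,n}(X,\beta)$. Alternatively one simply invokes the standard construction of $[\overline{M}_{0,n}(X,\beta)]^{\mathrm{vir}}$ in the references cited above, where precisely this is recorded as the degree of the virtual class; once $d$ is in hand the remaining steps are immediate.
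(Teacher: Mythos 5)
Your proof is correct and is exactly the standard dimension-count argument: the paper itself gives no proof, simply citing \cite{CK:99}, where precisely this computation (virtual dimension $\dim X + \int_\beta c_1(X) + n - 3$ in genus zero, compared against the cohomological degree of $ev_1^{*}(\alpha_1)\cup\cdots\cup ev_n^{*}(\alpha_n)$) is recorded. Your acknowledgement that the virtual dimension formula over the nodal locus is the only point requiring care, deferred to the standard construction of the virtual class, is an appropriate level of detail here.
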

\begin{prop}[Divisor axiom]
  Let $\alpha_1,\dots,\alpha_n$ be cohomology classes in $H^*(X;\bbC)$.
  If $\alpha_1$ lies in $H^2(X;\bbC)$, then we have
  \[
  \langle \alpha_1,\dots,\alpha_n \rangle_{0,n,\beta}^{\mathrm{GW}} = \left(\int_{\beta} \alpha_1 \right) \langle\alpha_2,\dots,\alpha_n \rangle_{0,n-1,\beta}^{\mathrm{GW}}.
  \]
\end{prop}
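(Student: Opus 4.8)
The plan is to express the passage from $n$ to $n-1$ marked points through the forgetful morphism that drops a marked point, identified with the universal stable map. Since Gromov--Witten invariants do not depend on the ordering of the insertions (the symmetric group acts on $\overline{M}_{0,n}(X,\beta)$ permuting the evaluation maps $ev_i$), I would first relabel so that the distinguished class $\alpha_1 \in H^2(X;\bbC)$ is attached to the last marked point; it then suffices to prove
\[
\langle \alpha_2,\dots,\alpha_n,\alpha_1 \rangle_{0,n,\beta}^{\mathrm{GW}} = \Bigl(\int_\beta \alpha_1\Bigr)\,\langle \alpha_2,\dots,\alpha_n \rangle_{0,n-1,\beta}^{\mathrm{GW}}.
\]
Let $\pi \colon \overline{M}_{0,n}(X,\beta) \to \overline{M}_{0,n-1}(X,\beta)$ be the morphism forgetting the $n$-th marked point and contracting the resulting unstable components. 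I would invoke two standard structural facts, for which I would cite \cite[Section 6-10.]{CK:99}: (i) $\pi$ is proper and flat and realizes $\overline{M}_{0,n}(X,\beta)$ as the universal curve over $\overline{M}_{0,n-1}(X,\beta)$, in such a way that the $n$-th evaluation map becomes the universal stable map $f \colon \overline{M}_{0,n}(X,\beta) \to X$, while $ev_i = ev_i' \circ \pi$ for $1\le i\le n-1$, where $ev_i'$ denotes the evaluation maps on $\overline{M}_{0,n-1}(X,\beta)$; and (ii) the virtual fundamental classes are compatible with $\pi$, i.e. $[\overline{M}_{0,n}(X,\beta)]^{\mathrm{vir}} = \pi^*[\overline{M}_{0,n-1}(X,\beta)]^{\mathrm{vir}}$.

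Granting these, the computation is short. Rewriting the integrand on $\overline{M}_{0,n}(X,\beta)$ using $ev_i = ev_i'\circ\pi$ and $ev_n = f$, it becomes $\pi^*\bigl(\prod_{i=1}^{n-1}(ev_i')^*\alpha_{i+1}\bigr)\cup f^*\alpha_1$, to be integrated against $\pi^*[\overline{M}_{0,n-1}(X,\beta)]^{\mathrm{vir}}$ by (ii). The projection formula for the proper flat morphism $\pi$ turns this into
\[
\int_{[\overline{M}_{0,n-1}(X,\beta)]^{\mathrm{vir}}}\Bigl(\prod_{i=1}^{n-1}(ev_i')^*\alpha_{i+1}\Bigr)\cup \pi_*\bigl(f^*\alpha_1\bigr),
\]
so the whole statement reduces to the identity $\pi_*\bigl(f^*\alpha_1\bigr) = \bigl(\int_\beta\alpha_1\bigr)\cdot 1$ in $H^0\bigl(\overline{M}_{0,n-1}(X,\beta)\bigr)$.

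For this last identity I would argue by degree and fibers: $\pi$ has complex relative dimension one, so $\pi_*$ lowers cohomological degree by two and $\pi_*(f^*\alpha_1)$ lies in $H^0$, hence is a constant multiple of the unit class; the constant is read off on a single fiber. The fiber of $\pi$ over $[(C,p_1,\dots,p_{n-1},f_0)]$ is the curve $C$ with its map $f_0$, and $f$ restricts to $f_0$ there, so the constant equals $\int_C f_0^*\alpha_1 = \int_{(f_0)_{*}[C]}\alpha_1 = \int_\beta\alpha_1$, using $(f_0)_{*}[C]=\beta$ by definition of the moduli space. (One should assume the relevant moduli spaces are non-empty, i.e. $\beta\neq 0$, or $n\geq 4$ when $\beta=0$, so that $\pi$ is defined; otherwise the statement is vacuous.) The only genuinely non-trivial ingredients are the two structural facts above: that the forgetful morphism is the universal curve, and that it is compatible with the virtual fundamental classes. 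These are precisely the inputs established in the references quoted for this section, so I would cite \cite{CK:99} for them rather than reprove them; everything downstream is the projection formula together with a degree count.
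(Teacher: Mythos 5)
The paper does not prove this proposition at all: it is stated as one of the standard axioms of Gromov--Witten theory with a pointer to \cite[Section 6-10]{CK:99}, so there is no in-paper argument to compare against. Your proof is the standard derivation from that reference --- forgetful morphism as universal curve, compatibility of virtual classes, projection formula, and the fiberwise computation $\pi_*(f^*\alpha_1)=\bigl(\int_\beta\alpha_1\bigr)\cdot 1$ --- and it is correct as written, including your (appropriate) caveat excluding the unstable case $(n,\beta)=(3,0)$, where the statement as printed in the paper would actually fail.
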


\subsubsection{Quantum cohomology and quantum differential equations}
Next, we define the quantum cohomology ring on $X$.
Let $(-,-)$ be the Poincar\'e pairing on $H^*(X;\bbC)$.
The quantum cohomology is defined to be a family of commutative rings $(H^*(X;\bbC),\star_{\tau})$ parametrized by $\tau \in H^*(X;\bbC)$.
The product $\star_{\tau}$ is defined by the following relations:
\[
(\alpha \star_{\tau} \beta,\gamma) = \sum_{\beta \in H_2(X;\bbZ)} \sum_{n=0}^{\infty} \frac{1}{n!} \langle \alpha,\beta,\gamma,\tau,\dots,\tau \rangle_{0,n+3,\beta}^{\mathrm{GW}}.
\]
Since the Poincar\'e pairing is non-degenerate, the quantum cup product is well-defined.
By the divisor axiom, if $\tau$ lies in $H^2(X;\bbC)$, then we have
\[
(\alpha \star_{\tau} \beta,\gamma) = \sum_{\beta \in H_2(X;\bbZ)} \langle \alpha,\beta,\gamma\rangle_{0,3,\beta}^{\mathrm{GW}} e^{\int_{\beta} \tau}.
\]
If the quantum cup product $\alpha \star_{\tau} \beta$ is convergent for all $\alpha ,\beta \in H^*(X;\bbC)$, 
then the quantum cohomology ring is defined to be the ring $QH(X) = (H^*(X;\bbC),\star_{\tau})$.
For the quantum ring $QH(X)$, we can define the quantum differential equation.
Let $\xi(t)$ be a family of cohomology classes in $H^*(X;\bbC)$ parametrized by $t \in \bbR_{>0}$.
The quantum differential equation is defined to be
\begin{align}\label{eq:qde}
  t \frac{d \xi}{dt} = c_1(X) \star_{\log(t) c_1(X)} \xi.
\end{align}
In general, the sum does not converge, 
so we need to consider the truncated quantum cup product.

\subsubsection{Truncated quantum differential equations}
We review the truncated quantum differential equations introduced in \cite{HL:23}.
Let $\pi\colon X \rightarrow Y $ be a morphism between smooth projective varieties.
Let us consider a set $\mathrm{NE}(X/Y)_{\bbZ}$ consisting of 1-cycles in $X$ which are contracted by $\pi$.
\begin{defi}
  Let $d$ be a 1-cycle in $\mathrm{NE}(X/Y)_{\bbZ}$ with $c_1(X).d > 0$.
  The map $T_d \colon H^*(X;\bbC) \rightarrow H^*(X;\bbC)$ is defined by the equation:
  \[
  (T_d(\alpha),\beta) = \langle \alpha,\beta \rangle_{0,2,d}^{\mathrm{GW}}.
  \]
\end{defi}
The endomorphism $T_d$ is homogeneous of degree $2(1-c_1(X).d)$.
\begin{defi}
  Let $\psi = \omega + i B $ be a class in $H^2(X;\bbC)$ with a relatively ample class $\omega$.
  For a positive number $u$, we define the truncated quantum endomorphism $E_{\psi}(u) \colon H^*(X;\bbC) \rightarrow H^*(X;\bbC) $ by
  \[
  E_{\psi}(u)(-) = c_1(X) \cup (-) + \sum_{\substack{d \in {NE}(X/Y)_{\bbZ} \\ (c_1(X)-\omega).d> 0}} (c_1(X).d)u^{c_1(X).d}e^{-\psi.d}T_d.
  \]

\end{defi}
Note that it follows from \cite[Lemma 6.]{HL:23} that the sum is finite.
The \textit{truncated quantum differential equation} is defined to be
\[
t \frac{d \xi(t)}{dt} = -\frac{1}{z}E_{\psi}(t)(\xi(t)).
\]
Fix parameters $z \in \bbC^*$ and $\psi \in H^*(X;\bbC)$.
Consider a family of endomorphisms $\Phi_t \in \mathrm{End}(H^*(X;\bbC))$ such that 
$\Phi_t(\alpha)$ satisfies the truncated quantum differential equation for any class $\alpha$.
We mainly consider the following \textit{central charge} that is defined to be
\[
Z_t(\alpha) = \int_X \Phi_t(\alpha).
\]
The map is related to the Iritani's quantum central charge.
This is introduced in \cite{Iri:09} 
to define an integral structure on the quantum cohomology of a Fano variety.
In \cite{Iri:09,HL:23},
they consider the fundamental solution of the quantum differential equation.
We will discuss all solutions of the quantum differential equation in the case of the blowup of a surface in the next section.

\subsubsection{Quantum differential equations for a blowup surface}
  We compute the quantum differential equation in the case of the blowup of a surface.
  Let $f\colon X \rightarrow Y$ be a blowing up at a point of a smooth projective surface $Y$.

\begin{thm}[{\cite[Lemma 1.1.]{Hu:00}}]
    Suppose that at least one of $\alpha_i$($1 \leq i \leq m$) is the pullback
    of a cohomology class in $X$.
    Let $c$ be the class of the exceptional curve.
    Then,
    \[
      \langle \alpha_1,\cdots ,\alpha_m \rangle^{\mathrm{GW}}_{0,m,nc} = 0
    \]for any $n \in \bbZ$.
\end{thm}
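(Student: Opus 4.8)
The plan is to exploit the following geometric fact: for $n\ge 1$, every genus-zero stable map to $X$ in the class $nc$ has image equal to the exceptional curve $E$, and $f$ collapses $E$ to a point, so that a pullback insertion restricts to zero on the moduli space. First I would dispose of the remaining values of $n$. For $n<0$ the class $nc$ is not effective (pair it with an ample class), so $\overline{M}_{0,m}(X,nc)$ is empty and the invariant is zero. For $n=0$ the invariant is the classical one, which vanishes for $m\ne 3$ by a dimension count; since the lemma will only be applied with $m=2$, this suffices. So the substance is the case $n\ge 1$, which I assume from now on.

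For this case, let $\varphi\colon C\to X$ be a genus-zero stable map with $\varphi_*[C]=nc$, and push forward by $f$. Since $f$ contracts the exceptional curve $E$ to its centre $p$, the homomorphism $f_*$ annihilates every multiple of $[E]$, so $(f\circ\varphi)_*[C]=f_*(nc)=0$. If $f\circ\varphi$ had one-dimensional image, this class would be a nonzero effective cycle class, a contradiction; hence $f\circ\varphi$ contracts $C$ to a single point $q\in Y$. On the other hand $\varphi(C)$ has nonzero class, so it is a genuine curve, necessarily contained in the fibre $f^{-1}(q)$; that fibre must then be positive-dimensional, and the only positive-dimensional fibre of $f$ is $E$, so $\varphi(C)=E$. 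In particular, for every $i$ the composite $f\circ ev_i\colon\overline{M}_{0,m}(X,nc)\to Y$ is the constant map with value $p$.

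To finish, write the distinguished insertion as $\alpha_{i_0}=f^*\gamma$ with $\gamma\in H^*(Y;\bbC)$. If $\deg\gamma>0$, then $ev_{i_0}^*\alpha_{i_0}=(f\circ ev_{i_0})^*\gamma=0$, because the pullback of a positive-degree class along a constant map vanishes; hence the integrand $\prod_i ev_i^*\alpha_i$ is already zero as a class on $\overline{M}_{0,m}(X,nc)$, and so is the invariant, independently of the virtual class. If $\deg\gamma=0$, then $f^*\gamma$ is a multiple of the unit $1_X$, and the invariant vanishes by the fundamental-class axiom since $nc\ne 0$. Either way $\langle\alpha_1,\dots,\alpha_m\rangle^{\mathrm{GW}}_{0,m,nc}=0$. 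The step that needs care is the geometric claim that the stable map has image exactly $E$ --- one must contend with reducible domains, contracted components, and multiple covers --- but the single identity $f_*(nc)=0$ handles all of these at once, so I do not expect a real obstacle there.
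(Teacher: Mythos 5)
The paper does not prove this statement at all: it is quoted verbatim as \cite[Lemma 1.1.]{Hu:00}, so there is no internal proof to compare against. Your argument is correct and is essentially the standard one (and, as far as I recall, essentially Hu's): for $n\ge 1$ every genus-zero stable map in class $nc$ has image equal to the exceptional curve because $f_*(nc)=0$ forces $f\circ\varphi$ to be constant, so $f\circ ev_i$ is the constant map to the centre of the blowup and any positive-degree pullback insertion kills the integrand, while the degree-zero part of the insertion is handled by the fundamental-class axiom; the cases $n<0$ (non-effective class) and $n=0$ are degenerate. One point worth making explicit: as literally stated ("for any $n\in\bbZ$") the lemma fails for $n=0$, $m=3$, since $\langle f^*1,\alpha_2,\alpha_3\rangle_{0,3,0}=\int_X\alpha_2\cup\alpha_3$ need not vanish; you correctly note that the paper only ever uses the case $m=2$ (in defining $T_d$, where moreover only $d$ with $c_1(X).d>0$, i.e.\ $n\ge 1$, occurs), where $\overline{M}_{0,2}(X,0)$ is empty, so this is a defect of the quoted statement rather than of your proof.
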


This theorem shows that $T_{nC}$ vanishes on $H^0\oplus H^4(X;\bbC)$.
By the definition of the Poincare pairing, we have $(T_{nC}(\alpha),\beta) = 0 $ 
whenever $\deg(T_{nC}(\alpha))+\deg(\beta) \not= 4$.
Since it follows from the degree of $T_{nC}$ that $T_{nC}(H^2(X;\bbC)) \subset H^{4-2n}(X;\bbC)$, $T_{nC} = 0$ for all $n \not = 1$.
Since we can compute $\langle C,C\rangle_{0,2,C}^X =1$, the endomorphism $T_C$ satisfies the following conditions:
\begin{itemize}
	\item $T_C$ vanishes on $H^*(Y;\bbC)$, and 
	\item $T_C(C) = -C$.
\end{itemize}
Let $\psi = \omega + \sqrt{-1} B$ be a class in $H^2(X;\bbC)$ with a relatively ample class $\omega$.
Assume that $0 < \omega.C < 1$.
Write $\xi(t) = (\xi_0(t),f^*\xi_1(t) + \nu(t) C , \xi_2(t)) \in H^*(X;\bbC)$, where $\xi_1 $ lies in $H^2(Y;\bbC)$ , and $\nu \in \bbC$.
Then, we have the truncated quantum differential equation:
\begin{align*}
	t \frac{d\xi(t)}{dt} &=-\frac{1}{z}E_{\psi}(t)(\xi_t)\\
	&= \frac{1}{z} \left\{(0,(f^*K_Y + C)\xi_0,K_Y\xi_1 - \nu)+(0,e^{-\psi C}t \nu C,0)\right\} \\
	&= \frac{1}{z}(0,f^*K_Y\xi_0 +(\xi_0+e^{-\psi C}t \nu)C, K_Y\xi_1 - \nu).
\end{align*}
We thus get the following equation:
\begin{align}
	\frac{d\xi_0}{dt} &= 0, \label{eq:xi0}\\
	t \frac{d\xi_1}{dt} &= \frac{1}{z}K_Y \xi_0, \label{eq:xi1} \\
	t \frac{d\nu}{dt} &= \frac{1}{z}(\xi_0+e^{-\psi C}t \nu),  \label{eq:xi2}  \\
	t \frac{d\xi_2}{dt} &= \frac{1}{z}(K_Y \xi_1 - \nu). \label{eq:xi3}
\end{align}

\begin{prop}\label{prop:xieq}
  Let $\xi(t) = (\xi_0(t),\xi_1(t) + \nu(t) C , \xi_2(t)) \in H^*(X;\bbC)$ be a solution of the equations (\ref{eq:xi0})--(\ref{eq:xi3}).
  Then, there exist $C_0 \in \bbC$ and $C_1 \in H^2(X;\bbC)$ such that  $\xi_2(t)$ satisfies the following equation for $t \in \bbR_{\geq 1}$.
  \[
    t \frac{d}{dt} \left( t \frac{d \xi_2}{dt}\right) = \left\{ \left(K_Y ^2 -1 \right)\frac{C_0}{z} + e^{-\psi C}t (- \frac{K_Y^2C_0}{z} \log(t) + K_Y C_1 ) + e^{-\psi C}t^2\frac{d \xi_2}{dt} \right\}.
  \]
\end{prop}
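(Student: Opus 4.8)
The statement is proved by a direct integration of the linear system \eqref{eq:xi0}--\eqref{eq:xi3}, together with one application of the Euler operator $t\frac{d}{dt}$ to \eqref{eq:xi3}; no input beyond these four equations is needed. The plan is as follows. First I would use \eqref{eq:xi0} to conclude that $\xi_0$ is independent of $t$, and set $C_0 := \xi_0 \in \bbC$. Plugging this into \eqref{eq:xi1} turns it into $t\frac{d\xi_1}{dt} = \frac{1}{z}K_Y C_0$, whose right-hand side is now a fixed element of $H^2$; dividing by $t$ and integrating gives
\[
\xi_1(t) = \frac{C_0}{z}(\log t)\,K_Y + C_1
\]
for a constant of integration $C_1 \in H^2(X;\bbC)$. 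These are the two constants in the statement. Here one should keep in mind the cohomological bookkeeping: $\xi_2$ and $\nu$ take values in $H^4(X;\bbC)$ and in $\bbC$ respectively, and the pairings $K_Y\xi_1$, $K_Y^2 = K_Y\cdot K_Y$, $K_Y C_1$ all land in the one-dimensional space $H^4(X;\bbC)$, so they may be treated as scalar-valued functions of $t$.

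Next I would apply $t\frac{d}{dt}$ to equation \eqref{eq:xi3}. Since $\eta \mapsto K_Y\eta$ is a $\bbC$-linear, $t$-independent operation, this yields
\[
t\frac{d}{dt}\!\left(t\frac{d\xi_2}{dt}\right) = \frac{1}{z}\left(K_Y\!\left(t\frac{d\xi_1}{dt}\right) - t\frac{d\nu}{dt}\right).
\]
Now I substitute: $t\frac{d\xi_1}{dt} = \frac{1}{z}K_Y C_0$ from \eqref{eq:xi1}, so that $K_Y(t\frac{d\xi_1}{dt}) = \frac{1}{z}K_Y^2 C_0$; and $t\frac{d\nu}{dt} = \frac{1}{z}(C_0 + e^{-\psi C}t\,\nu)$ from \eqref{eq:xi2}. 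This already produces the constant term proportional to $(K_Y^2 - 1)C_0$ and leaves a single remaining contribution proportional to $e^{-\psi C}t\,\nu$. The final step is to eliminate $\nu$: equation \eqref{eq:xi3} reads $\nu = K_Y\xi_1 - z\,t\frac{d\xi_2}{dt}$, and inserting the explicit form of $\xi_1$ obtained above gives $\nu = \frac{C_0}{z}(\log t)K_Y^2 + K_Y C_1 - z\,t\frac{d\xi_2}{dt}$. Substituting this into the $e^{-\psi C}t\,\nu$ term and collecting the three resulting groups — the $\log t$ piece, the constant $K_Y C_1$ piece, and the piece coming from $-z\,t\frac{d\xi_2}{dt}$, which reassembles into $e^{-\psi C}t^2\frac{d\xi_2}{dt}$ — yields the claimed identity.

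I do not anticipate any genuine obstacle: the argument is a bookkeeping computation with the linear system \eqref{eq:xi0}--\eqref{eq:xi3}. The two points that need a little care are (i) identifying the constants of integration $C_0$ and $C_1$ correctly, so that the closed form of $\xi_1$ is valid on the whole range $t \in \bbR_{\geq 1}$, and (ii) keeping the powers of $z$ and the sign conventions straight when collecting terms, so as to land on exactly the stated normalization of the coefficients.
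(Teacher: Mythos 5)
Your proposal is correct and follows essentially the same route as the paper: integrate (\ref{eq:xi0})--(\ref{eq:xi1}) to get $\xi_0=C_0$ and $\xi_1=\frac{C_0}{z}\log(t)K_Y+C_1$, apply $t\frac{d}{dt}$ to (\ref{eq:xi3}), substitute (\ref{eq:xi2}), and eliminate $\nu$ via (\ref{eq:xi3}). The only divergence is in the bookkeeping of powers of $z$ (your substitution $\nu=K_Y\xi_1-z\,t\frac{d\xi_2}{dt}$ is the careful one; the paper's own statement and final displayed line are not consistent on the $z$-normalization), which is immaterial since the result is only used with $z=1$.
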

\begin{proof}
  By the equation (\ref{eq:xi0}), the function $\xi_0(t)$ is constant.
  Let $\xi_0(t) = C_0$.
  There are a function $a(t)$ and a divisor class $D_t \in H^2(Y;\bbC)$ with $K_Y.D_t = 0$
  such that $\xi_1(t) = a(t)K_Y + D_t$.
  It follows from the equation (\ref{eq:xi1}) that $D_t$ is constant.
  By the equation (\ref{eq:xi1}), the function $a(t)$ satisfies the following differential equation:
  \[
  t \frac{d}{dt}a(t) = \frac{1}{z} C_0.
  \]
  Then, we have $a(t) = \frac{C_0}{z} \log(t) + a_1$ with $a_1 \coloneq a(1) \in \bbR$.
  Therefore, the function $\xi_1(t)$ is given by
  \[
  \xi_1(t) = \frac{C_0}{z} \log(t)K_Y + a_1K_Y + D_t.
  \]
  By the equation (\ref{eq:xi3}), we compute
  \begin{align*}
    t \frac{d}{dt} \left( t \frac{d \xi_2}{dt}\right) &=t \frac{d}{dt} \left( \frac{1}{z}(K_Y \xi_1 - \nu)\right)\\
    &= \frac{1}{z} \left\{t K_Y \frac{d}{dt}\xi_1 - t \frac{d}{dt}\nu \right\}\\
    &= \frac{1}{z} \left\{t K_Y \frac{d}{dt}\xi_1 - \frac{1}{z}(\xi_0 + e^{-\psi C}t \nu) \right\}\\
    &= \frac{1}{z} \left\{t K_Y \frac{d}{dt}\xi_1 - \frac{1}{z} \left(\xi_0 + e^{-\psi C}t (K_Y \xi_1 - t \frac{d \xi_2}{dt})\right) \right\}\\
    &= \frac{1}{z^2} \left\{ \left(K_Y ^2 -1 \right)\frac{C_0}{z} + e^{-\psi C}t \left(- \frac{K_Y^2C_0}{z} \log(t) + K_Y C_1 \right) + e^{-\psi C}t^2\frac{d \xi_2}{dt} \right\}.
  \end{align*}
  Therefore, the statement holds.
\end{proof}

Let $Z_t = \int_{X}\Phi_t$ be a map $H^*(X;\bbC) \rightarrow \bbC$, 
where $\Phi_t$ is a  solution of the quantum differential equation.
We call the map $Z_t$ \textit{the central charge}.
It follows from \Cref{prop:xieq} that the central charge satisfies
\[
	t\frac{d}{dt}\left(t \frac{d Z_t}{dt}\right) = \frac{1}{z^2}\left\{(K_Y^2-1)C_0+ e^{-\psi C}t \left(- \frac{K_Y^2C_0}{z} \log(t) + K_Y C_1 \right) + e^{-\psi C}t^2\frac{d Z_t}{dt} \right\}.
\]
Here, $C_0\in \bbC$ and $C_1 \in H^2(Y;\bbC)$ are constants.
Consider the case when $C_0,C_1$ are zero and z is one.
Then, we have
\begin{equation}\label{eq:debl}
  \frac{d}{dt}\left(t \frac{d Z_t}{dt}\right) = e^{-\psi C}t \frac{d Z_t}{dt}.
\end{equation}
The general solution of this differential equation can be written as
\begin{equation}\label{eq:cc0}
  Z_t = a \Ei(e^{-\psi C}t) + b,
\end{equation}
where $a$ and $b$ are constant and $\Ei(z)$ is the exponential integral.
The exponential integral $\Ei(z)$ is defined to be 
\[
  Ei(z) \coloneq -E_1(-z) = -\int_{-z}^{\infty}\frac{e^{-t}}{t} dt.
\]
To define this integral, we choice paths which does not cross $\bbR_{\leq 0}$, and we consider its principal values.
Since $\omega .C <1$, the parameter $e^{-\psi C}$ can take values from the following set:
\[
\left\{z \in \bbC \colon |z| \leq 1 \right\}.
\]
Therefore, we consider the following central charge for $\lambda \in \bbR \oplus i[-1,1]$:
\begin{equation}\label{eq:cc1}
  Z_t = a \Ei(\lambda t) + b.
\end{equation}

\section{Review: gluing hearts and stability conditions with respect to recollements}

In this section, we will review some of the facts on glued hearts with respect to recollements.
We summarize glued stability conditions defined in \cite{CP:09}.

\subsection{Gluing hearts}
The gluing construction of stability conditions was defined from \cite{CP:09} when a triangulated category $\dD$ has a semiorthogonal decomposition of two components $\langle \dD_1,\dD_2\rangle$.
In the general case of a semiorthogonal decomposition  $\langle \dD_1,\dD_2 ,\cdots, \dD_n \rangle$, 
glued stability conditions are constructed in \cite{HLJR}. 
First, we focus on glued hearts.

Let $\dD$ be a triangulated category with a semiorthogonal decomposition $\dD = \langle \dD_1,\dD_2\rangle$.
By definition, there is the following distinguished triangle for any object $E$:
\[
\rho_2(E) \rightarrow E \rightarrow \lambda_1(E) \overset{+}{\rightarrow}.
\]
Here, $\rho_2$ is the right adjoint functor of the inclusion $\dD_2 \rightarrow \dD$, 
and $\lambda_1$ is the left adjoint functor of the inclusion $\dD_1 \rightarrow \dD$.

\begin{lem}[{\cite[Lemma 2.1.]{CP:09}}]
Let $\dD = \langle \dD_1 ,\dD_2\rangle$ be a semiorthogonal decomposition
and $\aA_n$ be the hearts of t-structures on $\dD_n$.
If $\Hom^{\leq 0}(\aA_1,\aA_2) = 0$, then there exists a t-structure on $\dD$ with the heart
\[
  \aA_1 \circ \aA_2 \coloneq \left\{E \in \dD \mid \lambda_1(E) \in \aA_1,\rho_2(E) \in \aA_2 \right\}.
\]
\end{lem}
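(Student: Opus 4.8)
The plan is to realize $\aA_1\circ\aA_2$ as the heart of the glued t-structure on $\dD$, imitating the Beilinson--Bernstein--Deligne construction (\Cref{thm:BBD}) directly in terms of the functors $\lambda_1$ and $\rho_2$ that the semiorthogonal decomposition already supplies; note that since $\langle\dD_1,\dD_2\rangle$ gives only ``half'' of a recollement (the inclusion of $\dD_2$ has a right adjoint, the inclusion of $\dD_1$ a left adjoint), one cannot simply quote \Cref{thm:BBD} but must redo the argument. First I would record the formal input: from $\Hom(\dD_2,\dD_1)=0$ and the defining triangle $\rho_2 E\to E\to\lambda_1 E\overset{+}{\rightarrow}$ one checks that $\rho_2$ is right adjoint to the inclusion $\iota_2\colon\dD_2\hookrightarrow\dD$ and $\lambda_1$ is left adjoint to $\iota_1\colon\dD_1\hookrightarrow\dD$; that both are exact; that $\rho_2\iota_2\cong\mathrm{id}$ and $\lambda_1\iota_1\cong\mathrm{id}$; and, crucially, that $\rho_2$ annihilates $\dD_1$ while $\lambda_1$ annihilates $\dD_2$ (each a one-line consequence of $\Hom(\dD_2,\dD_1)=0$ together with an adjunction). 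Then I propose the pair
\[
\dD^{\leq 0}=\{E\colon \lambda_1 E\in\dD_1^{\leq 0},\ \rho_2 E\in\dD_2^{\leq 0}\},\qquad
\dD^{\geq 0}=\{E\colon \lambda_1 E\in\dD_1^{\geq 0},\ \rho_2 E\in\dD_2^{\geq 0}\},
\]
whose intersection is visibly $\{E\colon \lambda_1 E\in\aA_1,\ \rho_2 E\in\aA_2\}=\aA_1\circ\aA_2$, and verify the t-structure axioms.

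Closedness under shifts is immediate from exactness of $\lambda_1$ and $\rho_2$. For the orthogonality $\Hom(\dD^{\leq 0},\dD^{\geq 1})=0$, take $E\in\dD^{\leq 0}$ and $F\in\dD^{\geq 1}$ and split $\Hom(E,F)$, via the two defining triangles, among the four groups $\Hom(\rho_2 E,\rho_2 F)$, $\Hom(\rho_2 E,\lambda_1 F)$, $\Hom(\lambda_1 E,\lambda_1 F)$, $\Hom(\lambda_1 E,\rho_2 F)$. The first and third vanish by the $t$-structures on $\dD_2$ and $\dD_1$ (using $\iota_2\dashv\rho_2$ and full faithfulness of $\iota_1$), the second vanishes by $\Hom(\dD_2,\dD_1)=0$, and the last because, writing $\lambda_1 E$ (resp.\ $\rho_2 F$) as an iterated extension of objects $A[n]$ with $A\in\aA_1$, $n\geq 0$ (resp.\ $B[m]$ with $B\in\aA_2$, $m\leq -1$), it becomes a subquotient of a sum of groups $\Hom(A,B[m-n])$ with $m-n<0$, all killed by the hypothesis $\Hom^{\leq 0}(\aA_1,\aA_2)=0$.

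The real work is producing the truncation triangle of an arbitrary $E$, which I would do in two stages. First, pulling back the $\dD_1$-truncation triangle of $\lambda_1 E$ along $E\to\lambda_1 E$ yields an object $F$ with a triangle $\rho_2 E\to F\to\tau^{\aA_1}_{\leq 0}(\lambda_1 E)\overset{\delta}{\rightarrow}$ and, after applying $\lambda_1$ and $\rho_2$, $\lambda_1 F\cong\tau^{\aA_1}_{\leq 0}(\lambda_1 E)$ and $\rho_2 F\cong\rho_2 E$. Second, the image of the extension class $\delta$ in $\Hom(\tau^{\aA_1}_{\leq 0}(\lambda_1 E),\tau^{\aA_2}_{\geq 1}(\rho_2 E)[1])$ vanishes --- this group is again of the type killed by $\Hom^{\leq 0}(\aA_1,\aA_2)=0$ --- so $\delta$ lifts along $\tau^{\aA_2}_{\leq 0}(\rho_2 E)[1]\to\rho_2 E[1]$; the cone of this lift defines $E^{\leq 0}$, sitting in a triangle $\tau^{\aA_2}_{\leq 0}(\rho_2 E)\to E^{\leq 0}\to\tau^{\aA_1}_{\leq 0}(\lambda_1 E)$, hence $E^{\leq 0}\in\dD^{\leq 0}$, and the lifting datum produces a morphism $E^{\leq 0}\to F\to E$. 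A double application of the octahedral axiom then identifies $\Cone(E^{\leq 0}\to E)$ with an extension of $\tau^{\aA_1}_{\geq 1}(\lambda_1 E)\in\dD_1^{\geq 1}$ by $\tau^{\aA_2}_{\geq 1}(\rho_2 E)\in\dD_2^{\geq 1}$, which lies in $\dD^{\geq 1}$ since $\dD_i^{\geq 1}\subseteq\dD^{\geq 1}$ and $\dD^{\geq 1}$ is extension-closed. Thus $(\dD^{\leq 0},\dD^{\geq 0})$ is a t-structure with heart $\aA_1\circ\aA_2$, and boundedness (if the $t$-structures on $\dD_i$ are bounded) is inherited via exactness of $\lambda_1,\rho_2$ and the triangle $\rho_2 E\to E\to\lambda_1 E$.

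The step I expect to cost the most care is this second truncation stage and the accompanying octahedral bookkeeping: arranging that the object $E^{\leq 0}$ extracted from the lifted class maps to $E$ compatibly, and that all the intervening cones are the expected truncations (via a morphism of triangles whose first component is an isomorphism). Every non-formal vanishing used along the way --- both in the orthogonality and in the lifting of $\delta$ --- is one and the same instance of $\Hom^{\leq 0}(\aA_1,\aA_2)=0$, so the content of the lemma is precisely that this single condition suffices.
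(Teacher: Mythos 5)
The paper gives no proof of this lemma; it is quoted verbatim from \cite[Lemma 2.1]{CP:09}, so there is no internal argument to compare against. Your reconstruction is correct and is essentially the standard Collins--Polishchuk gluing argument: the definition of $\dD^{\leq 0}$ and $\dD^{\geq 0}$ via the single pair of functors $(\lambda_1,\rho_2)$, the four-term reduction for orthogonality, and the two-stage truncation in which the connecting map $\delta$ is lifted along $\tau_{\leq 0}(\rho_2E)[1]\to \rho_2E[1]$ are exactly the two places where $\Hom^{\leq 0}(\aA_1,\aA_2)=0$ is used, and the octahedral bookkeeping identifying $\Cone(E^{\leq 0}\to E)$ as an extension of $\tau_{\geq 1}(\lambda_1E)$ by $\tau_{\geq 1}(\rho_2E)$ is sound. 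The only point worth making explicit is that reducing $\Hom(\lambda_1E,\rho_2F)$ and the obstruction group to Hom-groups between shifts of heart objects tacitly uses boundedness of the t-structures on $\dD_1$ and $\dD_2$, which is the standing assumption in this paper.
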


Assume that $D$ has the Serre functor $S$. Let $\dD_1$ and $\dD_2$ be admissible subcategories.
Let $\rR$ be a following recollement corresponding to the semiorthogonal decomposition $\dD = \langle\dD_1,\dD_2 \rangle $:
\[ 
  \rR \colon
  \begin{tikzcd}
  \dD_1 \arrow[r,"i_1" description] & \dD \arrow[l,bend right=40,"\lambda_1" description] \arrow[l,bend left= 40,"\rho_1"] \arrow[r,"\rho_2" description] & \dD_2 \arrow[l,bend right=40,"i_2" description ] \arrow[l,bend left= 40].
    \end{tikzcd}
\]
If there exist the hearts $\aA_n$ on $\dD_n$ for $n=1,2$, 
we have the heart $ \aA_1 *_{\rR} \aA_2$ by Theorem \ref{thm:BBD}.

\begin{lem}\label{lem:equiv}
  Let  $\aA_n$ be hearts on $\dD_n$ for $n=1,2$ satisfying $\Hom^{\leq 0}(i_1 (\aA_1),i_2 (\aA_2)) = 0$.
If $\dD_1$ and $\dD_2$ are admissible subcategories of $\dD$, then two hearts $ \aA_1 *_{\rR} \aA_2$ and $\aA_1 \circ \aA_2$ are the same.
\end{lem}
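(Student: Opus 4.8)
The plan is to write down both hearts explicitly as full subcategories of $\dD$ and to prove they coincide by a direct double inclusion, using a single distinguished triangle extracted from the recollement $\rR$ to compare the two descriptions. Unwinding \Cref{thm:BBD} (with $\cC_1=\dD_1$ embedded by $i_1$ and $\cC_2=\dD_2$ embedded by $i_2$, so that $j_R=\rho_2$, $i_L=\lambda_1$, $i_R=\rho_1$), the glued heart is
\[
  \aA_1 *_{\rR} \aA_2 = \left\{ E \in \dD \colon \rho_2 E \in \aA_2,\ \lambda_1 E \in \dD_1^{\leq 0},\ \rho_1 E \in \dD_1^{\geq 0} \right\},
\]
while by the Colmez--Polishchuk lemma \cite[Lemma 2.1]{CP:09},
\[
  \aA_1 \circ \aA_2 = \left\{ E \in \dD \colon \rho_2 E \in \aA_2,\ \lambda_1 E \in \aA_1 \right\}.
\]
The functor $\rho_2$ appearing in \Cref{thm:BBD} is the right adjoint of the inclusion $i_2\colon\dD_2\hookrightarrow\dD$ and $\lambda_1$ is the left adjoint of $i_1\colon\dD_1\hookrightarrow\dD$, i.e.\ exactly the functors used in the gluing $\aA_1\circ\aA_2$; so both descriptions genuinely impose the same condition $\rho_2 E\in\aA_2$. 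Hence it suffices to show that, for every $E$ with $\rho_2 E\in\aA_2$, one has $\lambda_1 E\in\aA_1$ if and only if $\lambda_1 E\in\dD_1^{\leq 0}$ and $\rho_1 E\in\dD_1^{\geq 0}$.

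The key step is to relate $\rho_1 E$ and $\lambda_1 E$. Applying $\rho_1$ to the triangle $i_2\rho_2 E \rightarrow E \rightarrow i_1\lambda_1 E \overset{+}{\rightarrow}$ from condition (4) of \Cref{defi:rec}, and using $\rho_1 i_1\cong\mathrm{id}_{\dD_1}$ (full faithfulness of $i_1$), yields a distinguished triangle in $\dD_1$:
\[
  \rho_1 i_2(\rho_2 E) \rightarrow \rho_1 E \rightarrow \lambda_1 E \overset{+}{\rightarrow}.
\]
Next I would convert the hypothesis $\Hom^{\leq 0}(i_1(\aA_1), i_2(\aA_2)) = 0$ into the cleaner statement $\rho_1 i_2(\aA_2)\subseteq\dD_1^{\geq 1}$: for $a\in\aA_1$ and $c\in\aA_2$ the adjunction $i_1\dashv\rho_1$ gives $\Hom_{\dD_1}^n(a,\rho_1 i_2 c)\cong\Hom_{\dD}^n(i_1 a, i_2 c)$, which vanishes for all $n\leq 0$; since this holds for every $a\in\aA_1$ and the t-structure on $\dD_1$ is bounded, $\rho_1 i_2 c$ lies in $\dD_1^{\geq 1}$.

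With these two facts the equivalence follows from closedness of the aisles $\dD_1^{\leq 0},\dD_1^{\geq 0}$ under extensions together with $\dD_1^{\geq 1}[1]=\dD_1^{\geq 0}$. If $\lambda_1 E\in\aA_1$, then in the displayed triangle $\rho_1 E$ is an extension of $\lambda_1 E\in\dD_1^{\geq 0}$ by $\rho_1 i_2(\rho_2 E)\in\dD_1^{\geq 1}\subseteq\dD_1^{\geq 0}$, so $\rho_1 E\in\dD_1^{\geq 0}$, while $\lambda_1 E\in\dD_1^{\leq 0}$ is automatic; this gives $\aA_1\circ\aA_2\subseteq\aA_1 *_{\rR}\aA_2$. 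Conversely, if $\lambda_1 E\in\dD_1^{\leq 0}$ and $\rho_1 E\in\dD_1^{\geq 0}$, rotating the triangle exhibits $\lambda_1 E$ as an extension of $(\rho_1 i_2(\rho_2 E))[1]\in\dD_1^{\geq 0}$ by $\rho_1 E\in\dD_1^{\geq 0}$, hence $\lambda_1 E\in\dD_1^{\geq 0}$ and therefore $\lambda_1 E\in\aA_1$; this gives the reverse inclusion, and the two hearts coincide.

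I expect the main obstacle to be bookkeeping rather than anything conceptual: carefully matching the six functors of the recollement $\rR$ with the adjoints $\lambda_1,\rho_1,\rho_2$ used in the two gluing constructions, correctly deriving the triangle $\rho_1 i_2(\rho_2 E)\rightarrow\rho_1 E\rightarrow\lambda_1 E\overset{+}{\rightarrow}$ from the recollement axioms (in particular the identity $\rho_1 i_1\cong\mathrm{id}$ and the exactness of the adjoint $\rho_1$), and converting the orthogonality hypothesis into the inclusion $\rho_1 i_2(\aA_2)\subseteq\dD_1^{\geq 1}$. Once these are settled, neither inclusion requires further work, and boundedness of the t-structures — which is already needed to invoke \Cref{thm:BBD} — is the only standing assumption used.
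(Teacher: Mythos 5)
Your proof is correct, and it takes a somewhat different route from the paper's, so a comparison is worthwhile. The paper proves only the inclusion $\aA_1 \circ \aA_2 \subseteq \aA_1 *_{\rR} \aA_2$: it verifies the Hom-orthogonality description of the BBD heart (the conditions $\Hom(E,i_1(\aA_1)[n])=0$ for $n<0$ and $\Hom(i_1(\aA_1)[n],E)=0$ for $n>0$, together with $\rho_2 E\in\aA_2$) by applying $\Hom(i_1(\aA_1)[n],-)$ to the triangle $i_2\rho_2 E\to E\to i_1\lambda_1 E$, and then equality follows from the (implicit) standard fact that a heart of a bounded t-structure contained in another such heart must coincide with it. You instead work directly with the aisle description from \Cref{thm:BBD} and prove both inclusions; the extra input is the triangle $\rho_1 i_2\rho_2 E\to\rho_1 E\to\lambda_1 E$ obtained by applying the exact functor $\rho_1$ and using $\rho_1 i_1\cong\mathrm{id}$, together with the translation of the orthogonality hypothesis into $\rho_1 i_2(\aA_2)\subseteq\dD_1^{\geq 1}$ (which, as you note, uses boundedness via d\'evissage over $\dD_1^{\leq 0}$). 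Your version is slightly longer but fully self-contained: it avoids the unstated nested-hearts fact and makes visible exactly where the hypothesis $\Hom^{\leq 0}(i_1(\aA_1),i_2(\aA_2))=0$ enters, namely in pushing $\rho_1 i_2\rho_2 E$ into $\dD_1^{\geq 1}$ so that the shift by $[1]$ in the rotated triangle still lands in $\dD_1^{\geq 0}$. All the individual steps check out. One trivial slip in the prose: the reference is to Collins--Polishchuk, not Colmez--Polishchuk.
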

\begin{proof}
  It suffices to show that $\aA_1 \circ \aA_2 $ is contained in $\aA_1 *_{\rR} \aA_2$.
  Take $E \in  \aA_1 \circ \aA_2$. 
  We need to show the three conditions:
  \begin{itemize}
    \item $\rho_2(E) \in \aA_2$,
    \item $\Hom_{\dD}(E,i_1(A_1)[n]) =0$ for $n<0$,
    \item $\Hom_{\dD}(i_1(A_1)[n],E) = 0$ for $n>0$.
  \end{itemize}
  By definition, the first and second conditions hold.
  To see the third condition, there is a distinguished triangle
  \[
    i_2 \rho_2(E) \rightarrow E \rightarrow i_1\lambda_1(E) \overset{+}{\rightarrow}.
  \]
  Applying $\Hom_{\dD}(i_1(A_1)[n],-) $ yields a long exact sequence
  \[
  \cdots \rightarrow \Hom_{\dD}(i_1(A_1)[n],i_2 \rho_2(E)) \rightarrow \Hom_{\dD}(i_1(A_1)[n],E) \rightarrow \Hom_{\dD}(i_1(A_1)[n], i_1\lambda_1(E))  \rightarrow \cdots.
  \]
  By the definition of $\aA_1 \circ \aA_2$, $\lambda_1(E)$ lies in $\aA_1$.
  Thus,
    $\Hom_{\dD}(i_1(A_1)[n], i_1\lambda_1(E)) $ vanishes.
  It follows from the assumption that  $\Hom_{\dD}(i_1(A_1)[n], i_2\rho_2(E)) =0$.
  Therefore, $\Hom_{\dD}(i_1(A_1)[n],E) = 0$.
\end{proof}

The next lemma follows from the definition of the glued hearts.

\begin{lem}\label{lem:texact}
  Let $\rR$ be a recollement corresponding to $\dD = \langle \dD_1,\dD_2 \rangle$, and let $\aA = \aA_1 *_{\rR} \aA_2$.
  Then, the functor $i_1$ and $\rho_2$ are t-exact, and  $\lambda_1$ and $i_2$ are right t-exact.
  Assume that $\Hom^{\leq 0}(\aA_1,\aA_2) = 0$.
  Then, $i_2$ and $\lambda_1$ are t-exact.
\end{lem}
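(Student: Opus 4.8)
\emph{Proof strategy.} The plan is to read every t-exactness assertion directly off the explicit BBD description of the glued t-structure in \Cref{thm:BBD}, combined with the standard adjunction identities packaged in the recollement. Write $\rR$ as a recollement of $\dD$ between $\dD_1$ and $\dD_2$, with inclusion $i_1\colon\dD_1\to\dD$ and adjoints $\lambda_1\dashv i_1\dashv\rho_1$, and with $\rho_2\colon\dD\to\dD_2$ and adjoints $i_2\dashv\rho_2\dashv\rho_{2,RR}$. By \Cref{thm:BBD}, $\dD^{\le 0}=\{E:\rho_2E\in\dD_2^{\le 0},\ \lambda_1E\in\dD_1^{\le 0}\}$ and $\dD^{\ge 0}=\{E:\rho_2E\in\dD_2^{\ge 0},\ \rho_1E\in\dD_1^{\ge 0}\}$. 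The recollement axioms give $\rho_2i_1=0$, $\lambda_1i_1\cong\rho_1i_1\cong\mathrm{id}_{\dD_1}$ and $\rho_2i_2\cong\mathrm{id}_{\dD_2}$, and moreover $\lambda_1i_2=0$ because $\Hom_{\dD_1}(\lambda_1i_2C,A)\cong\Hom_{\dD}(i_2C,i_1A)=0$ by the semiorthogonality $\Hom_{\dD}(\dD_2,\dD_1)=0$ underlying $\rR$.

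First I would dispose of the four unconditional claims, which now follow by inspection. For $\rho_2$: the defining formulas give $\rho_2E\in\dD_2^{\le 0}$ for $E\in\dD^{\le 0}$ and $\rho_2E\in\dD_2^{\ge 0}$ for $E\in\dD^{\ge 0}$, so $\rho_2$ is t-exact; likewise $\lambda_1E\in\dD_1^{\le 0}$ for $E\in\dD^{\le 0}$, so $\lambda_1$ is right t-exact. For $i_1$: if $E\in\dD_1^{\le 0}$ then $\rho_2i_1E=0\in\dD_2^{\le 0}$ and $\lambda_1i_1E\cong E\in\dD_1^{\le 0}$, so $i_1E\in\dD^{\le 0}$; dually if $E\in\dD_1^{\ge 0}$ then $\rho_2i_1E=0\in\dD_2^{\ge 0}$ and $\rho_1i_1E\cong E\in\dD_1^{\ge 0}$, so $i_1E\in\dD^{\ge 0}$; hence $i_1$ is t-exact. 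For $i_2$: if $E\in\dD_2^{\le 0}$ then $\rho_2i_2E\cong E\in\dD_2^{\le 0}$ and $\lambda_1i_2E=0\in\dD_1^{\le 0}$, so $i_2E\in\dD^{\le 0}$ and $i_2$ is right t-exact.

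For the two sharper statements under the hypothesis $\Hom^{\leq 0}(\aA_1,\aA_2)=0$, I would invoke \Cref{lem:equiv} to rewrite the glued heart as $\aA=\aA_1*_{\rR}\aA_2=\aA_1\circ\aA_2=\{E\in\dD:\lambda_1E\in\aA_1,\ \rho_2E\in\aA_2\}$. From this description $\lambda_1(\aA)\subseteq\aA_1$ is immediate, and $i_2(\aA_2)\subseteq\aA$ because $\lambda_1i_2C=0\in\aA_1$ and $\rho_2i_2C\cong C\in\aA_2$ for all $C\in\aA_2$. Since $\lambda_1$ and $i_2$ are exact functors between triangulated categories equipped with bounded t-structures, and an exact functor sending the heart of one such t-structure into the heart of another is automatically t-exact --- reduce by dévissage along the finite Postnikov tower of an object, using that $\dD^{\le 0},\dD^{\ge 0}$ are extension-closed with $\dD^{\le 0}[n]\subseteq\dD^{\le 0}$ and $\dD^{\ge 0}[-n]\subseteq\dD^{\ge 0}$ for $n\ge 0$ --- it follows that $\lambda_1$ and $i_2$ are t-exact.

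I do not expect a real obstacle: the lemma is bookkeeping once \Cref{thm:BBD} and \Cref{lem:equiv} are available. The only points needing mild care are the identity $\lambda_1i_2=0$, which is precisely what makes $i_2$ right t-exact without any extra hypothesis, and the general principle that a heart-preserving exact functor is t-exact. As an alternative to the last paragraph that stays within the BBD picture, one can deduce t-exactness of $\lambda_1$ and $i_2$ from the recollement triangle $i_2\rho_2E\to E\to i_1\lambda_1E\overset{+}{\rightarrow}$: once $i_2$ is known t-exact, $i_2\rho_2E\in\dD^{\ge 0}$ for $E\in\dD^{\ge 0}$, and chasing the long exact cohomology sequence together with $\Hom_{\aA}(i_1(\aA_1),i_2(\aA_2))=0$ pins down $i_1\lambda_1E\in\dD^{\ge 0}$, i.e. $\lambda_1E\in\dD_1^{\ge 0}$.
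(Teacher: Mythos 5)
Your proof is correct, and it takes the same route the paper intends: the paper gives no argument beyond the remark that the lemma ``follows from the definition of the glued hearts,'' and your verification --- reading each claim off the BBD description $\dD^{\le 0}=\{E:\rho_2E\in\dD_2^{\le 0},\lambda_1E\in\dD_1^{\le 0}\}$, $\dD^{\ge 0}=\{E:\rho_2E\in\dD_2^{\ge 0},\rho_1E\in\dD_1^{\ge 0}\}$ together with the identities $\rho_2i_1=0$, $\lambda_1i_1\cong\rho_1i_1\cong\mathrm{id}$, $\rho_2i_2\cong\mathrm{id}$, $\lambda_1i_2=0$, and then using \Cref{lem:equiv} plus the heart-preservation criterion for the conditional part --- is exactly the bookkeeping being left implicit.
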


For any object $E \in \aA := \aA_1 *_{\rR} \aA_2$, there is the distinguished triangle
\[
  i_2 \rho_2(E) \rightarrow E \rightarrow i_1 \lambda_1(E) \overset{+}{\rightarrow}.
\]
If $\Hom^{\leq 0}(\aA_1,\aA_2) = 0$, since $i_1 \lambda_1(E)$ and $i_2 \rho_2(E)$ lie in $\aA$,
there exists the following exact sequence in $\aA$:
\[
  0 \rightarrow i_2 \rho_2(E) \rightarrow E \rightarrow i_1 \lambda_1(E)\rightarrow 0.
\]

\begin{lem}\label{lem:closedunder}
Let $\dD = \langle \dD_1,\dD_2 \rangle $ be a semiorthogonal decomposition 
and let $\rR$ be a corresponding recollement.
Then, $i_1(\aA_1)$ is closed under subobjects and quotients in $\aA := \aA_1 *_{\rR} \aA_2$.
If $\Hom^{\leq 0}(\aA_1,\aA_2) = 0$,
the subcategory
\[
  i_2(\aA_2) \subset \aA
\] is closed under subobjects and quotients.
\end{lem}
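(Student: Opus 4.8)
The plan is to realise each of the two subcategories as the subcategory of objects of $\aA := \aA_1 *_{\rR} \aA_2$ annihilated by one of the functors of the recollement, and then to read closure under subobjects and quotients off \Cref{lem:texact}. Precisely, I would establish the identities
\[
  i_1(\aA_1) = \left\{ E \in \aA : \rho_2(E) = 0 \right\},
  \qquad
  i_2(\aA_2) = \left\{ E \in \aA : \lambda_1(E) = 0 \right\},
\]
the second one under the standing hypothesis $\Hom^{\leq 0}(\aA_1,\aA_2) = 0$. Granting these, the lemma is immediate: by \Cref{lem:texact} the functor $\rho_2$ is t-exact, hence it maps $\aA$ into $\aA_2$ and induces an exact functor $\aA \to \aA_2$; the objects of $\aA$ killed by this functor form precisely its kernel, which is closed under subobjects and quotients, since a monomorphism into (resp.\ an epimorphism out of) a killed object is sent to a monomorphism with zero target (resp.\ an epimorphism with zero source). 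Under the extra hypothesis $\lambda_1$ is also t-exact by \Cref{lem:texact}, so the identical argument applies to $i_2(\aA_2)$.

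For the first identity, the inclusion $\subseteq$ combines the recollement relation $\rho_2 \circ i_1 = 0$ with the t-exactness of $i_1$, which already gives $i_1(\aA_1) \subseteq \aA$. For $\supseteq$, let $E \in \aA$ with $\rho_2(E) = 0$; the canonical triangle $i_2\rho_2(E) \rightarrow E \rightarrow i_1\lambda_1(E) \overset{+}{\rightarrow}$ yields $E \cong i_1\lambda_1(E)$, so $E$ lies in the essential image of $i_1$. To see that $\lambda_1(E)$ is actually in $\aA_1$ I would invoke the full \Cref{thm:BBD}-description of the glued heart: membership $E \in \aA$ forces $\lambda_1(E) \in \dD_1^{\leq 0}$ and $\rho_1(E) \in \dD_1^{\geq 0}$, while $E \cong i_1\lambda_1(E)$ together with $\rho_1 i_1 \cong \mathrm{id}$ gives $\rho_1(E) \cong \lambda_1(E)$; hence $\lambda_1(E) \in \dD_1^{\leq 0} \cap \dD_1^{\geq 0} = \aA_1$.

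For the second identity, under $\Hom^{\leq 0}(\aA_1,\aA_2) = 0$ the functor $i_2$ is t-exact, so $i_2(\aA_2) \subseteq \aA$; moreover for $A_2 \in \aA_2$ one has $\lambda_1 i_2(A_2) = 0$, because the adjunction $\lambda_1 \dashv i_1$ and the semiorthogonality $\Hom_{\dD}(\dD_2,\dD_1) = 0$ give $\Hom_{\dD}(\lambda_1 i_2(A_2), B) \cong \Hom_{\dD}(i_2(A_2), i_1(B)) = 0$ for every $B \in \dD_1$, whence Yoneda concludes. This proves $\subseteq$. Conversely, if $E \in \aA$ and $\lambda_1(E) = 0$, the canonical triangle gives $E \cong i_2\rho_2(E)$ with $\rho_2(E) \in \aA_2$ (again by t-exactness of $\rho_2$), so $E \in i_2(\aA_2)$.

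The one step I expect to require care is the implication ``$\rho_2(E) = 0 \Rightarrow \lambda_1(E) \in \aA_1$'' (and its mirror for the second identity): knowing merely that $E$ lies in the image of $i_1$ is not enough, and one genuinely needs both half-exactness conditions defining the BBD-glued heart, i.e.\ the constraint on $\rho_1(E)$ in addition to the one on $\lambda_1(E)$. This is also why the first part should be argued via \Cref{thm:BBD} rather than via the ``$\circ$''-description of \Cref{lem:equiv}, which is only available once $\Hom^{\leq 0}(\aA_1,\aA_2) = 0$. A secondary point is to keep the direction of semiorthogonality straight --- $\Hom_{\dD}(\dD_2,\dD_1) = 0$, not the reverse --- and to remember that, absent the Hom-vanishing hypothesis, $\lambda_1$ and $i_2$ are only right t-exact, so the treatment of $i_2(\aA_2)$ really does depend on the stronger conclusion of \Cref{lem:texact}.
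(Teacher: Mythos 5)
Your proof is correct and follows essentially the same route as the paper: the paper also deduces closure by applying the t-exact functor $\rho_2$ (resp.\ $\lambda_1$) to a subobject or quotient and concluding that the image vanishes. The only difference is that you explicitly justify the identification $i_1(\aA_1)=\{E\in\aA:\rho_2(E)=0\}$ (and its analogue for $i_2(\aA_2)$) via the BBD description of the glued heart, a step the paper's proof asserts without comment.
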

\begin{proof}
  We first prove that $i_1(\aA_1)$ is closed under subobjects.
  Let $E \in \aA_1$ and let $F$ be a subobject of $i_1(E)$ in $\aA$.
  Applying the t-exact functor $\rho_2$ gives an injection $\rho_2(F) \hookrightarrow \rho_2(i_1(E))$.
  Since $\rho_2(i_1(E)) = 0$, $F$ lies in $ i_1(\aA_1)$.
  By a similar argument, we see the first statement.
  The same argument works for the second statement.
\end{proof}

The following proposition will be needed in \Cref{sec:stabilityCk}.
\begin{prop}\label{prop:torpair}
Let $\dD = \langle \dD_1,\dD_2\rangle$ be a semiorthogonal decomposition and let $\rR$ be the corresponding recollement. 
For any hearts $\aA_1 \subset \dD_1$ and $\aA_2 \subset \dD_2$,
consider a torsion pair $(\tT_2,\fF_2)$ in the heart $\aA_2\subset \dD_2$.
Assume that there exists a torsion pair $(\tT,\fF)$ in the glued heart $\aA_1 *_{\rR}\aA_2$ satisfying 
the following:
\begin{enumerate}
  \item $\aA_1 \subset \tT$, and 
  \item $\rho_2(\tT) \subset \tT_2$ and $\rho_2(\fF) \subset \fF_2$.
\end{enumerate}
Set $\aA^{\#} = \langle \tT ,\fF[1]\rangle_{\mathrm{ex}}$ and ${\aA_2}^{\#} = \langle \tT_2 ,\fF_2[1]\rangle_{\mathrm{ex}}$.
Then, $\aA^{\#}$ is the  heart glued from $\aA_1$ and ${\aA_2}^{\#}$.
\end{prop}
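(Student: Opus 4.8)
\emph{Proposal.} The plan is to identify $\aA^{\#}$ with the heart $\aA_1 *_{\rR}\aA_2^{\#}$ obtained from \Cref{thm:BBD} by BBD-induction along $\rR$, using the \emph{unchanged} heart $\aA_1$ on $\dD_1$ and the tilted heart $\aA_2^{\#}=\langle\tT_2,\fF_2[1]\rangle_{\mathrm{ex}}$ on $\dD_2$. First I would record that all hearts in sight are bounded: $\aA_2^{\#}$ is bounded by HRS-tilting, hence $\aA_1 *_{\rR}\aA_2^{\#}$ is a bounded heart on $\dD$ by \Cref{thm:BBD}, while $\aA^{\#}=\langle\tT,\fF[1]\rangle_{\mathrm{ex}}$ is a bounded heart on $\dD$, again by HRS-tilting. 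Since a bounded heart contained in another bounded heart coincides with it (use $\dD^{\ge 1}=(\dD^{\le 0})^{\perp}$ together with the fact that $\dD^{\le 0}$ is the extension closure of $\bigcup_{n\ge 0}\aA[n]$), it suffices to prove the single inclusion $\aA^{\#}\subseteq\aA_1 *_{\rR}\aA_2^{\#}$; and since $\aA_1 *_{\rR}\aA_2^{\#}$ is extension-closed and $\aA^{\#}$ is the extension closure of $\tT\cup\fF[1]$, this reduces to checking that $\tT$ and $\fF[1]$ both land in $\aA_1 *_{\rR}\aA_2^{\#}$.

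Here I would use the explicit description of the glued heart from \Cref{thm:BBD}: for the recollement $\rR$ attached to $\langle\dD_1,\dD_2\rangle$ the functors $i_L,i_R,j_R$ are $\lambda_1,\rho_1,\rho_2$, so an object $E$ lies in $\aA_1 *_{\rR}\aA_2^{\#}$ exactly when $\rho_2(E)\in\aA_2^{\#}$, $\lambda_1(E)\in\aA_1^{\le 0}$ and $\rho_1(E)\in\aA_1^{\ge 0}$. Take $E\in\tT$, a subcategory of the glued heart $\aA:=\aA_1 *_{\rR}\aA_2$. By \Cref{lem:texact} the functor $\rho_2$ is t-exact for $\aA$, so $\rho_2(E)\in\aA_2$, and hypothesis (2) then gives $\rho_2(E)\in\rho_2(\tT)\subseteq\tT_2\subseteq\aA_2^{\#}$; again by \Cref{lem:texact} the functor $\lambda_1$ is right t-exact, so $\lambda_1(E)\in\aA_1^{\le 0}$; and $\rho_1=i_R$, being a right adjoint of the t-exact functor $i_1$, is left t-exact, so $\rho_1(E)\in\aA_1^{\ge 0}$. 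Hence $\tT\subseteq\aA_1 *_{\rR}\aA_2^{\#}$. For $F\in\fF$, the same three properties applied to $F[1]$ give $\rho_2(F[1])=\rho_2(F)[1]\in\fF_2[1]\subseteq\aA_2^{\#}$ (using hypothesis (2)) and $\lambda_1(F[1])\in\aA_1^{\le -1}\subseteq\aA_1^{\le 0}$, so everything is routine except the claim $\rho_1(F[1])\in\aA_1^{\ge 0}$, i.e.\ $\rho_1(F)\in\aA_1^{\ge 1}$.

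This last point is the crux, and it is exactly where hypothesis (1) enters. By the adjunction $i_1\dashv\rho_1$ we have $\Hom_{\dD_1}(\aA_1[k],\rho_1(F))\cong\Hom_{\dD}(i_1(\aA_1)[k],F)$ for every $k$, so $\rho_1(F)\in\aA_1^{\ge 1}$ is equivalent to the vanishing of all these groups for $k\ge 0$. For $k>0$ this is automatic: hypothesis (1) gives $i_1(\aA_1)\subseteq\tT\subseteq\aA$, and $\Hom_{\dD}(X,Y[-k])=0$ for $X,Y\in\aA$ and $k>0$ since $\aA$ is a heart. For $k=0$ it is precisely the torsion-pair orthogonality $\Hom_{\aA}(\tT,\fF)=0$, because $i_1(\aA_1)\subseteq\tT$ by hypothesis (1) and $F\in\fF$. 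This yields $\fF[1]\subseteq\aA_1 *_{\rR}\aA_2^{\#}$, and combined with the reductions of the first paragraph it completes the proof.

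I expect the only genuine difficulty to be organizational rather than conceptual: keeping track of which of the two adjoints $\lambda_1,\rho_1$ controls the $\le 0$ and $\ge 0$ halves of the t-structure on $\dD_1$, matching the cohomological shifts so that hypothesis (1) is exactly what is needed for the $\fF$-part, and ensuring that only the \emph{unconditional} half of \Cref{lem:texact} is used (t-exactness of $i_1,\rho_2$ and right t-exactness of $\lambda_1$), so that no auxiliary $\Hom$-vanishing hypothesis between $\aA_1$ and $\aA_2$ is secretly required. Everything else is formal.
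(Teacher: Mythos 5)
Your proposal is correct and follows essentially the same route as the paper: reduce to showing $\tT$ and $\fF[1]$ land in $\aA_1 *_{\rR}\aA_2^{\#}$, use t-exactness of $\rho_2$ together with hypothesis (2) for the $\dD_2$-component, and use hypothesis (1) plus torsion-pair orthogonality $\Hom(\tT,\fF)=0$ for the one nontrivial $\Hom$-vanishing in degree $1$. You verify membership via the $(\lambda_1,\rho_1,\rho_2)$ description of the BBD t-structure rather than the equivalent $\Hom$-vanishing characterization used in the paper, and you are slightly more explicit about why the single inclusion of bounded hearts suffices, but the substance is the same.
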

\begin{proof}
  Set $\aA = \aA_1 *_{\rR} {\aA_2}$ and $\bB = \aA_1 *_{\rR} {\aA_2}^{\#}$.
  It suffices to show that both $\tT$ and $\fF[1]$ are contained in $\bB$.
  We need to show the following conditions.
  \begin{enumerate}
    \renewcommand{\labelenumi}{(\roman{enumi})}
    \item $\rho_2(\aA^{\#} ) \in \aA_2^{\#}$.
    \item $\Hom_{\dD}(\tT,i_1(\aA_1)[n]) =0$ for $n<0$, and $\Hom_{\dD}(i_1(\aA_1)[n],\tT) = 0$ for $n>0$.
    \item $\Hom_{\dD}(\fF[1],i_1(\aA_1)[n]) =0$ for $n<0$, and $\Hom_{\dD}(i_1(\aA_1)[n],\fF[1]) = 0$ for $n>0$.
  \end{enumerate}
 By the assumption (2) and t-exactness of $\rho_2$,
 the first condition is true.
 The condition (ii) is trivial since $\tT \subset \aA$.
 It follows from the assumption (1) that  $\Hom_{\dD}(i_1(\aA_1)[n],\fF[1]) = 0$.
\end{proof}

Next, we will discuss the Noetherianity of a glued heart.
\begin{prop}\label{prop:noeth}
  Assume that $\aA_1$ and $\aA_2$ are Noetherian.
  If $\Hom^{<0}(\aA_1,\aA_2) = 0$, then the glued heart $\aA_1 *_{\rR} \aA_2$ is also Noetherian.
\end{prop}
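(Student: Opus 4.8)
The plan is to reduce Noetherianity of the glued heart $\aA := \aA_1 *_{\rR} \aA_2$ to that of $\aA_1$ and $\aA_2$ by exploiting the short exact sequence
\[
  0 \rightarrow i_2 \rho_2(E) \rightarrow E \rightarrow i_1 \lambda_1(E) \rightarrow 0
\]
available in $\aA$ for every $E \in \aA$ when $\Hom^{\leq 0}(\aA_1,\aA_2) = 0$ (note that $\Hom^{<0}(\aA_1,\aA_2)=0$ together with the definition of the glued heart forces $\Hom^{0}(\aA_1,\aA_2)$ into the right vanishing as well, or else one works directly with the recollement; in any case \Cref{lem:texact} and \Cref{lem:closedunder} apply). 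First I would recall from \Cref{lem:texact} that $i_1$ and $\rho_2$ are t-exact, so they send short exact sequences in $\aA$ to short exact sequences in $\aA_1$ and $\aA_2$ respectively, and from \Cref{lem:closedunder} that both $i_1(\aA_1)$ and $i_2(\aA_2)$ are closed under subobjects and quotients in $\aA$.

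The key step is the following: let
\[
  E_1 \subset E_2 \subset E_3 \subset \cdots
\]
be an ascending chain of subobjects of a fixed $E \in \aA$. Applying the t-exact functor $\rho_2$ yields an ascending chain $\rho_2(E_1) \subset \rho_2(E_2) \subset \cdots$ of subobjects of $\rho_2(E) \in \aA_2$, which stabilizes since $\aA_2$ is Noetherian. Likewise, applying $\lambda_1$ — which is t-exact under the hypothesis $\Hom^{\leq 0}(\aA_1,\aA_2)=0$ by \Cref{lem:texact}, hence sends the inclusions $E_k \hookrightarrow E$ to monomorphisms $\lambda_1(E_k) \hookrightarrow \lambda_1(E)$ in $\aA_1$ — produces an ascending chain $\lambda_1(E_1) \subset \lambda_1(E_2) \subset \cdots$ in $\aA_1$, which stabilizes since $\aA_1$ is Noetherian. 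Choose $N$ beyond which both chains are constant. For $k \geq N$ one then has a commutative diagram of short exact sequences
\[
  \begin{tikzcd}
    0 \arrow[r] & i_2\rho_2(E_k) \arrow[r]\arrow[d,equal] & E_k \arrow[r]\arrow[d,hook] & i_1\lambda_1(E_k) \arrow[r]\arrow[d,equal] & 0\\
    0 \arrow[r] & i_2\rho_2(E_{k+1}) \arrow[r] & E_{k+1} \arrow[r] & i_1\lambda_1(E_{k+1}) \arrow[r] & 0,
  \end{tikzcd}
\]
where the outer vertical maps are isomorphisms; the five lemma (or a direct diagram chase in the abelian category $\aA$) then forces $E_k \rightarrow E_{k+1}$ to be an isomorphism. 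Hence the original chain stabilizes, and $\aA$ is Noetherian.

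The main obstacle, and the point requiring care, is the t-exactness of $\lambda_1$: in general $\lambda_1$ is only right t-exact (\Cref{lem:texact}), so the naive image $\lambda_1(E_k)$ of a subobject need not be a subobject of $\lambda_1(E)$. This is exactly why the hypothesis $\Hom^{<0}(\aA_1,\aA_2)=0$ is needed — it upgrades $\lambda_1$ (and $i_2$) to t-exact functors, restoring left-exactness and making the chain $\{\lambda_1(E_k)\}$ a genuine ascending chain of subobjects. One should also double-check that the two rows of the diagram above are the functorial short exact sequences and that the vertical map in the middle is the given inclusion, so that naturality of the decomposition triangle applies; this is routine given that the triangle $i_2\rho_2 \to \mathrm{id} \to i_1\lambda_1$ is functorial. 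Everything else is a formal consequence of working inside the abelian category $\aA$.
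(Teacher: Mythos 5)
Your argument is correct under the hypothesis $\Hom^{\leq 0}(\aA_1,\aA_2)=0$, but the proposition only assumes $\Hom^{<0}(\aA_1,\aA_2)=0$, and the parenthetical claim you use to bridge the two --- that $\Hom^{<0}(\aA_1,\aA_2)=0$ together with the definition of the glued heart forces $\Hom^{0}(\aA_1,\aA_2)=0$ --- is false. The paper explicitly works with glued hearts for which $\Hom^{\leq 0}(\aA_1,\aA_2)\neq 0$ (the hearts of type $(C_k)$, e.g.\ $\langle\oO_C(-1)[1]\rangle *_{\lL_{-1}}\aA_Y$, where the surjection $\oO_C(-1)[1]\twoheadrightarrow f^*\oO_o$ is a nonzero degree-zero morphism), and the Noetherianity statement is wanted precisely in that generality. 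Under the weaker hypothesis $\lambda_1$ is only right t-exact, so $\lambda_1(E_k)\rightarrow\lambda_1(E)$ need not be a monomorphism and, more to the point, $i_1\lambda_1(E)$ need not lie in the heart: it can have cohomology in degree $-1$. Your three-term short exact sequence must therefore be replaced by the four-term exact sequence
\[
0 \rightarrow i_1\hH^{-1}(\lambda_1 E) \rightarrow i_2\rho_2(E) \rightarrow E \rightarrow i_1\hH^{0}(\lambda_1 E) \rightarrow 0,
\]
which is what the paper uses. (Note that $i_2$ \emph{is} t-exact under $\Hom^{<0}=0$ alone: $\rho_2 i_2\cong\mathrm{id}$, and $\rho_1 i_2 M\in\dD_1^{\geq 0}$ translates by adjunction exactly into the vanishing of $\Hom^{<0}(\aA_1,\aA_2)$; so the term $i_2\rho_2(E)$ does still lie in $\aA$.)

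The rest of your strategy survives this correction and is essentially the paper's: first stabilize the chain $\rho_2(E_k)$ in $\aA_2$ using t-exactness of $\rho_2$, then control what remains. But with the four-term sequence the remaining piece consists of two terms, $\hH^{-1}(\lambda_1 E_k)$ and $\hH^{0}(\lambda_1 E_k)$, and an additional diagram chase is required: after the $i_2\rho_2(E_k)$ have stabilized, the snake lemma applied to consecutive four-term sequences shows the maps $i_1\hH^{-1}(\lambda_1 E_k)\rightarrow i_1\hH^{-1}(\lambda_1 E_{k+1})$ are injective, so this chain stabilizes by Noetherianity of $\aA_1$, and only then can one conclude for $\hH^{0}$ and for $E_k$ itself. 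This step is genuinely absent from your write-up; as written, your proof establishes the proposition only under the stronger vanishing hypothesis.
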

\begin{proof}
  Set $\aA = \aA_1 *_{\rR} \aA_2$.
  Suppose there exists an infinite ascending chain 
  \[
    A_0 \subset A_1 \subset \cdots \subset A_i \subset \cdots \subset A_n =A
  \]
  of subobjects of $A \in \aA$.
  For any $i$, there exists a distinguished triangle
  \[
    i_2 \rho_2 A_i \rightarrow A_i \rightarrow i_1 \lambda_1 A_i \overset{+}{\rightarrow} i_2 \rho_2 A_i[1].
  \]
Since the functor $i_2$ is t-exact, we have the following exact sequence in $\aA$:
\[
  0 \rightarrow i_1 \hH^{-1}(\lambda_1 A_i) \rightarrow i_2 \rho_2 A_i \rightarrow A_i \rightarrow i_1 \hH^0(\lambda_1 A_i) \rightarrow 0.
\]
Additionally, there exists the sequence
\[
  i_2 \rho_2 A_0 \subset \cdots \subset i_2 \rho_2 A_n.
\]
Since $\aA_2$ is Noetherian, the above sequence terminates.
We can assume that $i_2 \rho_2 A_i$ does not depend on $i$.

Using the snake lemma,
 all induced morphisms $i_1 \hH^{-1}(\lambda_1 A_i) \rightarrow i_1 \hH^{-1}(\lambda_1 A_{i+1})$ are injective.
Since there is an ascending sequence $i_1 \hH^{-1}(\lambda_1 A_0) \subset \cdots \subset i_1 \hH^{-1}(\lambda_1 A_n) \subset A$,
$i_1 \hH^{-1}(\lambda_1 A_i)$ is also independent from $i$.
Similarly, $i_1 \hH^{0}(\lambda_1 A_i)$ does not depend on $i$.
We conclude that the ascending chain of $A$ terminates.
\end{proof}

\subsection{Gluing stability conditions}
From now on, we fix a homomorphism $v\colon K_0(\dD) \rightarrow \Lambda$.
We also assume that there is a splitting $\Lambda = \Lambda_1 \oplus \Lambda_2$, where $\Lambda_i = v(\iota_n(K_0(\dD_i)))$.
Here, $\iota_n$ is the natural inclusion $K_0(\dD_i) \rightarrow K_0(\dD)$.
To simplify notation, we use $\Stab(\dD)$, $\Stab(\dD_1)$ and $\Stab(\dD_2)$ instead of $\Stab_{\Lambda}(\dD)$, $\Stab_{\Lambda_1}(\dD_1)$ and $\Stab_{\Lambda_2}(\dD_2)$ .

\begin{defi}[{\cite{CP:09}}]
  Suppose we have $\sigma_1=(Z_1,\aA_1) \in \Stab(\dD_1)$ and $\sigma_2 = (Z_2,\aA_2) \in \Stab(\dD_2)$.
  We say that a (pre)stability condition $\sigma = (Z,\aA)$ is glued from $\sigma_1$ and $\sigma_2$ with respect to a recollement $\rR$ 
  if 
  \begin{enumerate}
    \item for any $E \in \dD_n$, $Z(i_n(E)) = Z_n(E)$,
    \item $\aA = \aA_1 *_{\rR} \aA_2$.
  \end{enumerate}
\end{defi}
Write a glued stability condition as $\sigma = \sigma_1 *_{\rR} \sigma_2$ if it exists.
We note that $\Hom ^{\leq 0}(\aA_1,\aA_2)$ might be non-trivial
even if there exists a glued stability condition $\sigma = \sigma_1 *_{\rR} \sigma_2$,
since there exists a stability condition whose heart is not degenerated (i.e. $\Hom^{\leq0}(\aA_1,\aA_2)\not=0$).
We will give an example of such a stability condition in Theorem \ref{thm:nondegenerate}.
We call a glued stability condition $\sigma = \sigma_1 *_{\rR} \sigma_2$ with $\Hom^{\leq0}(\aA_1,\aA_2)=0$ an $\rR$-glued stability condition.

\begin{thm}[{\cite[Theorem 3.6.]{CP:09}}]\label{thm:glued}
Let $\dD = \langle \dD_1,\dD_2 \rangle $ be a semiorthogonal decomposition 
and let $\sigma_1 = (Z_1,\pP_1)$ and $\sigma_2 = (Z_2,\pP_2)$ be stability conditions on $\dD_1$ and $\dD_2$.
Assume that both stability conditions are reasonable.
We assume the following conditions:
\begin{enumerate}
  \item $\Hom^{\leq 0 }(\pP_1(0,1],\pP_2(0,1]) =0$;
  \item  There exists a real number $a \in (0,1)$ such that $\Hom^{\leq 0}(\pP_1(a,a+1],\pP_2(a,a+1]) = 0$.
\end{enumerate}
Then, there exists a pre-stability condition $\sigma$ glued from  $\sigma_1$ and $\sigma_2$. Futhermore, $\sigma$ is reasonable.
\end{thm}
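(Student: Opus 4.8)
The plan is to verify, for the heart $\aA:=\aA_1\circ\aA_2$ with $\aA_i:=\pP_i(0,1]$ and the central charge determined by $\sigma_1,\sigma_2$, the two axioms of a glued stability condition together with the Harder--Narasimhan property and reasonableness. Hypothesis~(1) is exactly the condition $\Hom^{\leq0}(\aA_1,\aA_2)=0$ required by \cite[Lemma~2.1]{CP:09} (recalled above), so $\aA$ is the heart of a bounded t-structure, equal to $\aA_1*_{\rR}\aA_2$ by \Cref{lem:equiv}; moreover under~(1) the functors $i_1,i_2,\rho_2,\lambda_1$ are all t-exact (\Cref{lem:texact}) and $i_1(\aA_1),i_2(\aA_2)$ are closed under sub- and quotient objects in $\aA$ (\Cref{lem:closedunder}). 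Using the splitting $\Lambda=\Lambda_1\oplus\Lambda_2$ I set $Z|_{\Lambda_i}:=Z_i$; then $Z(i_nE)=Z_n(E)$ and the heart is $\aA_1*_{\rR}\aA_2$, so once $(Z,\aA)$ is seen to be a reasonable pre-stability condition it is automatically glued from $\sigma_1$ and $\sigma_2$. That $Z$ is a stability function on $\aA$ is immediate: for $0\neq E\in\aA$ the truncation triangle is a short exact sequence $0\to i_2\rho_2E\to E\to i_1\lambda_1E\to 0$ in $\aA$, so $Z(E)=Z_2(\rho_2E)+Z_1(\lambda_1E)$ is a sum of two elements of $\bbH\cup\bbR_{<0}\cup\{0\}$, hence lies in $\bbH\cup\bbR_{<0}$ unless $\rho_2E=\lambda_1E=0$, i.e. $E=0$; and $i_1$, $i_2$ send $\sigma_1$-, resp. $\sigma_2$-, semistable objects to $Z$-semistable objects of the same phase, using the closure statements.

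The substance is the HN property, together with reasonableness. My approach would be, given $E\in\aA$, to produce its maximal destabilizing subobject by gluing the maximal destabilizing subobjects of $\lambda_1E\in\aA_1$ and $\rho_2E\in\aA_2$ (these exist because $\sigma_1,\sigma_2$ have the HN property). If $\phi^{+}_{\sigma_2}(\rho_2E)\geq\phi^{+}_{\sigma_1}(\lambda_1E)$, the saturation in $E$ of $i_2$ of the maximal destabilizer of $\rho_2E$ is the maximal destabilizer of $E$ (closure of $i_2(\aA_2)$ under subobjects). Otherwise one tries to lift $i_1$ of the maximal destabilizer $P\subset\lambda_1E$ through the torsion sequence; the obstruction is a class in $\Ext^1(i_1P,i_2\rho_2E)$, a degree-one $\Hom$ between pieces of $i_1\pP_1$ and $i_2\pP_2$ which, once the two phases involved are brought by an integer shift into a common one of the two windows $(0,1]$ or $(a,a+1]$ of hypotheses~(1),(2), becomes a degree-zero $\Hom$ and so vanishes; when the obstruction vanishes the lift is the maximal destabilizer, and in the remaining phase configurations one instead extracts a destabilizing subobject of strictly smaller phase and iterates. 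Peeling off maximal destabilizing subobjects yields the HN filtration, and the iteration terminates because mass is additive and, by reasonableness of $\sigma_1,\sigma_2$ (a uniform positive lower bound on $|Z_i|$ over semistable objects), the masses of the semistable graded pieces are bounded below by a fixed constant. The same analysis gives that $\sigma$ is reasonable: for a genuinely mixed $\sigma$-semistable $E$ the phases of the $\sigma_1$- and $\sigma_2$-pieces cannot differ by nearly $1$ (this is precisely where hypothesis~(2) is used, forcing the relevant extension to split otherwise), so $Z_2(\rho_2E)$ and $Z_1(\lambda_1E)$ cannot nearly cancel and $|Z(E)|$ is bounded below, while local finiteness of the resulting slicing is inherited from the slicings of $\sigma_1$ and $\sigma_2$; \Cref{prop:torpair} is the natural tool for matching the glued heart $\aA$ with the parallel glued heart $\pP_1(a,a+1]\circ\pP_2(a,a+1]$ provided by~(2).

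The main obstacle is exactly this gluing of destabilizing data into an \emph{honest} maximal destabilizing subobject — equivalently, ruling out an infinite chain of ever-better-approximating subobjects of $E\in\aA$ — together with the bookkeeping that the two windows $(0,1]$ and $(a,a+1]$ (and their integer shifts) really do control all the extension groups $\Ext^1(i_1\pP_1(\psi),i_2\pP_2(\theta))$ that occur in the construction. Both vanishing hypotheses are genuinely needed here, and reasonableness is what converts ``each destabilization step strictly decreases the mass by at least a fixed amount'' into termination of the HN procedure. Everything else — existence and compatibility of the glued hearts, t-exactness, closure properties, local finiteness — is formal given \Cref{lem:texact,lem:closedunder,lem:equiv,prop:torpair}; note that the weaker Noetherian route of \Cref{prop:noeth} is not available, since $\aA_1,\aA_2$ are not assumed Noetherian, which is why reasonableness must do the work instead.
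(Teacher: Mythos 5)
This statement is imported verbatim from Collins--Polishchuk \cite[Theorem 3.6]{CP:09}; the paper gives no proof of it, so your attempt can only be measured against the source. Your skeleton is the right one: the heart $\aA=\pP_1(0,1]\circ\pP_2(0,1]$ comes from hypothesis (1), the central charge and the stability-function axiom come from the short exact sequence $0\to i_2\rho_2E\to E\to i_1\lambda_1E\to 0$, and hypothesis (2) enters only through the extension groups $\Ext^1(i_1\pP_1(\psi),i_2\pP_2(\theta))$. Your observation that shifting into the window $(a,a+1]$ kills exactly those $\Ext^1$ with $\psi>a\geq\theta$ is correct and is indeed where (2) does its work.

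The gap is in the Harder--Narasimhan step, which is the entire content of the theorem. First, in the ``remaining phase configurations'' ($\psi\leq a$ or $\theta>a$) you propose to ``extract a destabilizing subobject of strictly smaller phase and iterate,'' but you never say what that subobject is, nor why a \emph{maximal} destabilizing subobject exists at all: $\aA$ is not Noetherian, so the supremum of the phases of subobjects of $E$ need not be attained, and this ascending-chain problem is exactly the obstacle you name as ``the main obstacle'' without resolving it. Second, the termination argument does not work as stated: mass is not monotone under passing to subobjects (already in $\Coh(\bbP^1)$ the subsheaves $\oO(-n)\subset\oO^{\oplus 2}$ have unbounded mass), and before the HN property is established the mass of a general object of $\aA$ is not yet defined, so ``mass is additive and each piece has mass at least $\varepsilon$'' is not available. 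Moreover, reasonableness of $\sigma_1$ and $\sigma_2$ does not by itself bound $|Z(F)|=|Z_1(\lambda_1F)+Z_2(\rho_2F)|$ below for a $\sigma$-semistable $F$, because the two terms can nearly cancel when the component phases differ by almost $1$. You correctly remark that hypothesis (2) forbids such near-cancellation (a semistable $F$ with both components nonzero and component phases straddling $a$ would be split by the torsion pair $\bigl(\aA\cap\aA^{a},\,\aA\cap\aA^{a}[-1]\bigr)$ coming from the second glued heart), but in \cite{CP:09} this phase-pinching statement is the engine of the whole proof --- it is what makes the set of phases of potential destabilizers controlled enough for the filtration to exist and terminate --- whereas in your write-up it appears only as an afterthought about reasonableness of the output. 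As written, the proposal does not establish the HN property, and hence does not prove the theorem.
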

\begin{rmk}
 We call $\sigma = (Z,\pP)$ on $\dD$ a reasonable stability condition if it satisfies 
 \[
  \inf_{E \text{ is semistable with } E \not= 0} |Z(E)| > 0.
 \]
 If $\sigma$ is a stability condition satisfying the support property with respect to a lattice $\Lambda$,
 the stability condition $\sigma$ is reasonable.
\end{rmk}

Next, we will discuss the support property of a pre-stability condition $\sigma$ glued from $\sigma_1$ and $\sigma_2$.
The propositions were motivated by \cite[Lemma 4.6.,Proposition 4.7.]{Tod:14}.

\begin{lem}\label{lem:stable}
  Let $\sigma = (Z,\aA)$ be a pre-stability condition glued from pre-stability conditions $\sigma_1 = (Z_1,\aA_1) $ and $\sigma_2= (Z_2,\aA_2)$ on $\dD_1$ and $\dD_2$.
  \begin{enumerate}
    \item $E \in \aA_1$ is $\sigma_1$-(semi)stable if and only if $i_1(E)$ is $\sigma$-(semi)stable.
    \item Additionally, assume $\Hom^{\leq0}(\aA_1,\aA_2) = 0$.  The object $i_2(M)$ is $\sigma$-(semi)stable if and only if $M \in \aA_2$ is $\sigma_2$-(semi)stable,
  \end{enumerate}
  \end{lem}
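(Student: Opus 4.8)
The plan is to reduce the statement to three facts already available: that $i_1$ is fully faithful (\Cref{defi:rec}(1)) and t-exact (\Cref{lem:texact}), hence restricts to an exact faithful functor $\aA_1 \to \aA$; that $i_1(\aA_1)$ is closed under subobjects and quotients in $\aA = \aA_1 *_{\rR} \aA_2$ (\Cref{lem:closedunder}); and that $Z\circ i_1 = Z_1$ by the definition of a glued pre-stability condition, so that $\phi_\sigma(i_1 A) = \phi_{\sigma_1}(A)$ for every nonzero $A \in \aA_1$. For part (2) the same facts hold with $i_1,\aA_1,\sigma_1$ replaced by $i_2,\aA_2,\sigma_2$, but now the t-exactness of $i_2$ (\Cref{lem:texact}) and the closedness of $i_2(\aA_2)$ under subobjects (\Cref{lem:closedunder}) are precisely the statements that require the extra hypothesis $\Hom^{\leq 0}(\aA_1,\aA_2)=0$, which is why it appears only in (2).

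First I would establish, for $n = 1$ (resp.\ $n=2$ under the hypothesis), that $A \mapsto i_n(A)$ is an order-embedding of the poset of subobjects of $E$ in $\aA_n$ into the poset of subobjects of $i_n(E)$ in $\aA$: it is well defined since $i_n$ is left exact, and it is an order-embedding, in particular injective, since $i_n$ is fully faithful. For surjectivity, given a subobject $F \hookrightarrow i_n(E)$ in $\aA$, \Cref{lem:closedunder} gives $F \in i_n(\aA_n)$, say $F = i_n(A)$; since $i_n$ is full and faithful the monomorphism $F \to i_n(E)$ is $i_n$ of a necessarily monic map $A \to E$, so $F$ is the image of the subobject $A \subset E$. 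Moreover $A$ is a proper subobject of $E$ if and only if $i_n(A)$ is a proper subobject of $i_n(E)$, again by faithfulness of $i_n$.

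Given this bijection the conclusion is immediate. For part (1), a subobject $F = i_1(A) \hookrightarrow i_1(E)$ has $\phi_\sigma(F) = \phi_{\sigma_1}(A)$ and $\phi_\sigma(i_1 E) = \phi_{\sigma_1}(E)$, so the inequality $\phi_\sigma(F) \leq \phi_\sigma(i_1 E)$ for all nonzero $F$ (resp.\ the strict inequality for all proper nonzero $F$) is equivalent to $\phi_{\sigma_1}(A) \leq \phi_{\sigma_1}(E)$ for all nonzero $A \subset E$ (resp.\ the strict version); that is, $i_1(E)$ is $\sigma$-semistable (resp.\ $\sigma$-stable) if and only if $E$ is $\sigma_1$-semistable (resp.\ $\sigma_1$-stable). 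Part (2) is verbatim the same argument with $i_2,\aA_2,\sigma_2$ in place of $i_1,\aA_1,\sigma_1$, invoking the versions of \Cref{lem:texact} and \Cref{lem:closedunder} that assume $\Hom^{\leq 0}(\aA_1,\aA_2)=0$.

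I do not expect a serious obstacle; the one point deserving care is the surjectivity half of the subobject bijection — making sure that closedness of $i_n(\aA_n)$ under subobjects together with full faithfulness genuinely promotes every sub-monomorphism in $\aA$ to the image of one in $\aA_n$ — together with the bookkeeping that ``proper subobject'' corresponds on both sides, which is exactly what lets the \emph{stable} case run in parallel with the \emph{semistable} case.
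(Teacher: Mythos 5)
Your proof is correct and takes essentially the same approach as the paper, whose entire proof is the single line that both statements follow from \Cref{lem:closedunder}. You have simply made explicit the details the paper leaves implicit: the order-isomorphism of subobject posets coming from t-exactness, full faithfulness, and closedness of $i_n(\aA_n)$ under subobjects, together with the identification of phases via $Z\circ i_n = Z_n$.
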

\begin{proof}
  Both statements follow from Lemma \ref{lem:closedunder}.
\end{proof}
Next, we will discuss the support property of pre-stability condition $\sigma$ glued from $\sigma_1 = (Z_1,\aA_1) $ and $\sigma_2 = (Z_2,\aA_2)$.

\begin{prop}\label{prop:support}
  Let  $\sigma_1 = (Z_1,\aA_1)$ and $\sigma_2= (Z_2,\aA_2)$ be stability conditions on $\dD_1$ and $\dD_2$ respectively.
  We assume that $\Hom^{\leq 0 }(\aA_1,\aA_2) =0$.
  Assume that there exists the pre-stability condition $\sigma = (Z,\aA)$ glued from $\sigma_1$ and $\sigma_2$.
  If there exists $0 < \theta \leq 1$ such that $Z(\aA_2) \subset \bbH_{\theta}$, 
  where $\bbH_{\theta}$ is the set defined by 
  \[
    \left\{ r \exp(i \pi \phi) \mid r \in \bbR_{>0}, \phi \in [\theta,1]\right\},
  \]
  then the glued pre-stability condition $\sigma = (Z,\aA)$ satisfies the support property.
\end{prop}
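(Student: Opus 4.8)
The plan is to verify the support property of $\sigma$ in its quantitative form, i.e.\ to produce a constant $C>0$ with $\lVert v(E)\rVert \le C\,\lvert Z(E)\rvert$ for every $\sigma$-semistable object $E$. Since $\lVert v(-)\rVert/\lvert Z(-)\rvert$ is invariant under shifts, it is enough to treat $E$ lying in the heart $\aA = \aA_1 *_{\rR}\aA_2$. I would fix a norm on $\Lambda_{\bbR}=\Lambda_{1,\bbR}\oplus\Lambda_{2,\bbR}$ of the shape $\lVert (w_1,w_2)\rVert = \lVert w_1\rVert_1 + \lVert w_2\rVert_2$, where $\lVert\cdot\rVert_i$ witnesses the support property of $\sigma_i$ with some constant $C_i$; this costs nothing, as the support property is independent of the chosen norm.

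Given $E\in\aA$, I would use the canonical exact sequence $0\to i_2\rho_2(E)\to E\to i_1\lambda_1(E)\to 0$ in $\aA$, which is available because $\Hom^{\le 0}(\aA_1,\aA_2)=0$ and because $\rho_2$, $\lambda_1$ are t-exact by \Cref{lem:texact}. Put $M=\rho_2(E)\in\aA_2$ and $F=\lambda_1(E)\in\aA_1$. By the gluing axioms, $Z(E)=Z_1(F)+Z_2(M)$ and, under $\Lambda=\Lambda_1\oplus\Lambda_2$, $\lVert v(E)\rVert = \lVert v(F)\rVert_1 + \lVert v(M)\rVert_2$. If $M=0$ then $E=i_1(F)$ with $F$ being $\sigma_1$-semistable by \Cref{lem:stable}, so the bound is immediate from the support property of $\sigma_1$; symmetrically if $F=0$. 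So assume $M\ne 0\ne F$, and set $\phi=\phi_\sigma(E)$.

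The crux is a phase-window analysis. By \Cref{lem:closedunder} the subcategories $i_1(\aA_1)$ and $i_2(\aA_2)$ are closed under subobjects and quotients in $\aA$, so an induction along the Harder--Narasimhan filtration together with \Cref{lem:stable} shows that the $\sigma$-HN factors of $i_1(F)$ (resp.\ $i_2(M)$) are of the form $i_1(-)$ with $\sigma_1$-semistable argument (resp.\ $i_2(-)$ with $\sigma_2$-semistable argument). Since $i_2(M)\hookrightarrow E$ with $E$ semistable of phase $\phi$, all these factors of $i_2(M)$ have phase $\le\phi$; since their $\sigma_2$-models lie in $\aA_2$ and $Z(\aA_2)\subset\bbH_\theta$, they also have phase $\ge\theta$. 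In particular $\theta\le\phi$, the values $Z_2(-)$ of these factors lie in the convex cone $\{\,r e^{i\pi\psi}:r>0,\ \psi\in[\theta,\phi]\,\}$ (convex because $\phi-\theta<1$), so $\arg Z_2(M)\in[\pi\theta,\pi\phi]$ and, projecting the factors onto the bisector of this cone, $\lvert Z_2(M)\rvert \ge \cos\!\bigl(\tfrac{\pi(1-\theta)}{2}\bigr) m_{\sigma_2}(M)\ge \cos\!\bigl(\tfrac{\pi(1-\theta)}{2}\bigr)C_2^{-1}\lVert v(M)\rVert_2$. Dually, $i_1(F)$ is a quotient of $E$, so its HN factors have phases in $[\phi,1]$, giving $\arg Z_1(F)\in[\pi\phi,\pi]$ and $\lvert Z_1(F)\rvert \ge \cos\!\bigl(\tfrac{\pi(1-\theta)}{2}\bigr)C_1^{-1}\lVert v(F)\rVert_1$.

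Finally I would combine the two estimates. The vectors $Z_1(F)$ and $Z_2(M)$ have arguments in $[\pi\phi,\pi]$ and $[\pi\theta,\pi\phi]$, so the angle between them is at most $\pi(1-\theta)<\pi$; projecting onto their bisector yields $\lvert Z(E)\rvert = \lvert Z_1(F)+Z_2(M)\rvert \ge \cos\!\bigl(\tfrac{\pi(1-\theta)}{2}\bigr)\bigl(\lvert Z_1(F)\rvert + \lvert Z_2(M)\rvert\bigr)$. Chaining the inequalities gives $\lvert Z(E)\rvert \ge c(\theta)\bigl(\lVert v(F)\rVert_1+\lVert v(M)\rVert_2\bigr)=c(\theta)\,\lVert v(E)\rVert$ with $c(\theta)=\cos^2\!\bigl(\tfrac{\pi(1-\theta)}{2}\bigr)/\max(C_1,C_2)>0$, and taking $C=\max\!\bigl(C_1,C_2,c(\theta)^{-1}\bigr)$ handles all cases at once. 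I expect the main obstacle to be essentially bookkeeping: establishing rigorously that the $\sigma$-HN factors of $i_2(M)$ and $i_1(F)$ stay inside $i_2(\aA_2)$ and $i_1(\aA_1)$ (hence carry the asserted phase bounds), and checking that once $\theta\le\phi$ is known every cone occurring above has angular width $\le\pi(1-\theta)$, so that the elementary estimate ``a sum of vectors in a cone of width $<\pi$ has length comparable to the sum of the lengths'' applies with a single $\theta$-dependent constant.
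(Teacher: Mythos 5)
Your proposal is correct and follows essentially the same route as the paper: the same exact sequence $0 \to i_2\rho_2(E) \to E \to i_1\lambda_1(E) \to 0$, the same three-case split, the same observation that the hypothesis $Z(\aA_2)\subset\bbH_\theta$ forces all $\sigma$-HN factors of both pieces to have phases in $[\theta,1]$, and the same ``sum of vectors in a cone of width $<\pi$'' estimate (the paper calls the resulting constant $K(\theta)$ where you compute it explicitly as $\cos(\pi(1-\theta)/2)$). The only differences are cosmetic bookkeeping, so no further comparison is needed.
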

\begin{proof}
  Let $\rR$ be a recollement corresponding to $\dD = \langle \dD_1,\dD_2 \rangle$.
  \[
  \rR \colon 
  \begin{tikzcd}
    \dD_1 \arrow[r,"i_1" description] & \dD \arrow[l,bend right=40,"\lambda_1" description] \arrow[l,bend left= 40,"\rho_1"] \arrow[r,"\rho_2" description] & \dD_2 \arrow[l,bend right=40,"i_2" description ] \arrow[l,bend left= 40]
      \end{tikzcd}.
  \]
By the assumption, there exists a constant $K(\theta)$ such that it satisfies 
\[
  \frac{|z_1 + \cdots +z_k|}{|z_1|+ |z_2|+ \cdots + |z_k|} \geq K(\theta)
\]
for any $k \geq 1$ and $z_1,\dots,z_k \in \bbH_{\theta}$.
Take a $\sigma$-semistable object $E$ in $\aA$.
By \Cref{lem:texact}, there exists the following exact sequence in $\aA$:
\begin{align}\label{exact:glued}
0 \rightarrow i_2 \rho_2(E) \rightarrow E \rightarrow i_1 \lambda_1(E) \rightarrow 0. 
\end{align}
We consider the following three cases:
\begin{enumerate}
  \item $i_2 \rho_2(E) = 0$,
  \item $i_1 \lambda_1(E) =0$,
  \item $i_2 \rho_2(E) $ and $i_1 \lambda_1(E)$ are non-zero.
\end{enumerate}
In the first case, $E$ lies in $i_1(\aA_1)$.
By the support property of $\sigma_1$ and Lemma \ref{lem:stable}, there exists a constant $C_1$ such that
\[
\frac{|| E ||}{|Z(E)|} < C_1.
\]
In the second case, by the same reasons, we have a constant $C_2$ such that
\[
\frac{|| E ||}{|Z(E)|} < C_2.
\]

In the third case, consider the HN factors of $i_2 \rho_2(E)$ and $i_1 \lambda_1(E)$.
We show that $Z(A)$ lies in $\bbH_{\theta}$ for any HN factor $A$ of $i_1 \lambda_1(E)$.
Suppose that there exists an exact sequence
\[
0 \rightarrow E_1 \rightarrow \lambda_1(E) \rightarrow A_1 \rightarrow 0
\]
with 
\[
\arg Z(E_1) > \arg Z(\lambda_1(E)) > \arg Z(A_1).
\]
Here, we assume that $A_1$ is stable with respect to $\sigma_1$.
By the exact sequence (\ref{exact:glued}), it follows that 
\[ 
   \pi \theta \leq \arg Z(i_2 \rho_2(E)) < \arg Z(E)< \arg Z(i_1 \lambda_1(E)) < \arg Z_1(E_1).
\]
Since there is  the surjection $E \twoheadrightarrow i_1 \lambda_1(E) \twoheadrightarrow i_1 (A_1)$ and $i_1(A_1)$ is stable with respect to $\sigma$,
we have $\pi \theta <\arg Z(E)< \arg Z(i_1(A_1))$
Therefore, $Z(A)$ lies in $\bbH_{\theta}$ for any HN factor $A$ of $i_1 \lambda_1(E)$.
Let us take the HN factors $A_1,\dots,A_k$ of $i_1 \lambda_1(E)$  and 
$F_1,\dots,F_l$ of $i_2 \rho_2(E)$.
Set $C = \max \{C_1,C_2\}$.
Then, it follows that
\begin{align*}
  \frac{||E||}{|Z(E)|} &\leq \frac{\sum ||F_i|| + \sum ||A_j||}{|\sum Z(F_i) + \sum Z(A_j)|}\\
  &\leq \frac{1}{K(\theta)}\frac{\sum ||F_i|| + \sum ||A_j||}{\sum |Z(F_i)| + \sum |Z(A_j)|}\\
  &\leq \frac{C}{K(\theta)}.
\end{align*}
Therefore, the glued pre-stability condition $\sigma$ satisfies the support property.
\end{proof}

Next, we consider the case where $Z(\aA_1)$ lies in $\bbH \backslash \bbH_{\theta}$ for some $0<\theta<1$.
Similar arguments show the following proposition.
\begin{prop}
If $\sigma_1$ and $\sigma_2$ satisfy the support property, and
if there exists $0 < \theta \leq 1$ such that $Z(\aA_1) \subset \bbH \backslash  \bbH_{\theta}$,
then  the glued pre-stability condition $(Z,\aA)$ also satisfies the support property.
\end{prop}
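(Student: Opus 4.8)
The plan is to run the same argument as in the proof of \Cref{prop:support}, but with the roles of $\aA_1$ and $\aA_2$ interchanged, using the canonical sequence $0 \to i_2\rho_2(E) \to E \to i_1\lambda_1(E) \to 0$ in $\aA = \aA_1 *_{\rR}\aA_2$ ``from the other end''. As before I would keep the standing hypothesis $\Hom^{\leq 0}(\aA_1,\aA_2)=0$ in force, so that by \Cref{lem:texact} the functors $i_1,i_2,\rho_2,\lambda_1$ are $t$-exact and by \Cref{lem:closedunder} both $i_1(\aA_1)$ and $i_2(\aA_2)$ are closed under subobjects and quotients in $\aA$. I would fix a $\sigma$-semistable object $E\in\aA$ and split into the three cases according to whether $i_2\rho_2(E)$ and $i_1\lambda_1(E)$ vanish. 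When $i_2\rho_2(E)=0$ one has $E\in i_1(\aA_1)$, and when $i_1\lambda_1(E)=0$ one has $E\in i_2(\aA_2)$; by \Cref{lem:stable} these reduce to the support properties of $\sigma_1$ and $\sigma_2$, producing uniform constants $C_1$ and $C_2$ with $\|v(E)\|\leq C_i|Z(E)|$ in the respective cases.

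The substantive case is when both $i_2\rho_2(E)$ and $i_1\lambda_1(E)$ are nonzero. Here I would first observe that $i_1\lambda_1(E)\in i_1(\aA_1)$, hence $Z(i_1\lambda_1(E))\in Z(\aA_1)\subset\bbH\setminus\bbH_\theta$, so $\arg Z(i_1\lambda_1(E))<\pi\theta$; and since $i_2\rho_2(E)$ is a subobject and $i_1\lambda_1(E)$ a quotient of the $\sigma$-semistable object $E$, we get $\arg Z(i_2\rho_2(E))\leq\arg Z(E)\leq\arg Z(i_1\lambda_1(E))<\pi\theta$. Taking the $\sigma_2$-Harder--Narasimhan factors $F_1,\dots,F_l$ of $\rho_2(E)$, ordered by decreasing phase, $i_2(F_1)$ is a subobject of $E$ because $i_2(\aA_2)$ is closed under subobjects, so $\arg Z(F_i)\leq\arg Z(i_2F_1)\leq\arg Z(E)<\pi\theta$ for every $i$; likewise the $\sigma_1$-HN factors $A_1,\dots,A_k$ of $\lambda_1(E)$ lie in $i_1(\aA_1)$, so each $Z(A_j)\in Z(\aA_1)$ has phase in $(0,\theta)$. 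Thus all the central charges $Z(F_i)$ and $Z(A_j)$ lie in the closed cone $\cC_\theta=\{\,r\exp(\sqrt{-1}\pi\phi):r>0,\ 0\leq\phi\leq\theta\,\}$, whose angular width is $\pi\theta<\pi$ (note that $\theta=1$ makes the hypothesis essentially vacuous, so we may assume $\theta<1$).

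It then remains to invoke the elementary convexity estimate: there is a constant $K(\theta)>0$ with $|z_1+\cdots+z_m|\geq K(\theta)(|z_1|+\cdots+|z_m|)$ for all $z_1,\dots,z_m\in\cC_\theta$. Since $v(E)=\sum_i v(i_2F_i)+\sum_j v(i_1A_j)$ and $Z(E)=\sum_i Z(F_i)+\sum_j Z(A_j)$, the triangle inequality for the norm together with this estimate and the mediant inequality give $\|v(E)\|/|Z(E)|\leq K(\theta)^{-1}\max\{C_1,C_2\}$, and since $K(\theta)\leq1$ the two degenerate cases are subsumed; this uniform bound is exactly the support property of $\sigma$. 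The step I expect to be the only real subtlety is the chain of phase inequalities pinning the $\sigma_2$-HN factors of $\rho_2(E)$ below phase $\theta$: it relies on the fact that, in the convention $\langle\dD_1,\dD_2\rangle$ of \Cref{defi:sod}, the summand $i_2\rho_2(E)$ enters the canonical sequence as a \emph{subobject} (not a quotient) of $E$, which is precisely what makes ``small phases on $\aA_1$'' the correct mirror of the hypothesis ``large phases on $\aA_2$'' in \Cref{prop:support}.
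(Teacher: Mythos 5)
Your argument is correct and is precisely the ``similar argument'' the paper invokes without writing out: it mirrors the proof of the preceding proposition, with the subobject $i_2\rho_2(E)$ now forced into the low-phase cone by semistability of $E$ and the quotient $i_1\lambda_1(E)$ landing there automatically by the hypothesis on $Z(\aA_1)$. The only caveat is your dismissal of $\theta=1$ as ``essentially vacuous''---there the cone has angular width $\pi$ and $K(\theta)$ degenerates, so that boundary case is genuinely not covered---but this imprecision is already present in the paper's own statement.
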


\section{Gluing regions of stability conditions of blowup surfaces} \label{sec:gluingregion}
In this section, we summarize derived categories and stability conditions on the blowing up of surfaces.
It is shown that stability conditions constructed in \cite{Tod:14} are obtained from glued stability conditions.

\subsection{Semiorthogonal decomposition corresponding to a blowing up} \label{sec:sodbl}
From now on, we focus on the case where $f \colon X \rightarrow Y$ is a blowing up a point $o \in Y$.
Then, there is the following diagram:
\[
  \begin{tikzcd}
    C \arrow[d,"\pi"] \arrow[r,"j"] & X \arrow[d,"f"]\\
    o \arrow[r] & Y
  \end{tikzcd}
\]
Here, $C$ is an exceptional curve.
Since the derived category of one point is equivalent to the derived category of the finite dimensional vector space over $\bbC$,
we write $\dD^b(\Vect)$ instead of the derived category of a point.
In this case, we have a semiorthogonal decomposition corresponding to the above recollement.
\begin{thm}[{\cite{Orl:93}}]\label{thm:sodblowup}
Let $\Phi_k$ be the functor $\dD^b(\Vect) \rightarrow \dD^b(X)$ defined by
\[
  \Phi_k(V) =j_* (\pi^*(V)\otimes \oO_C(k))
\]
for $k \in \bbZ$.
Then, there is a semiorthogonal decomposition:
\[
\dD^b(X) = \langle \Phi_{-1}(\dD^b(\Vect)), \dD^b(Y) \rangle.
\]
\end{thm}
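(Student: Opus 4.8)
The plan is to establish the three properties that, together with admissibility of the pieces, amount to the asserted semiorthogonal decomposition: full faithfulness of $\Phi_{-1}$ and of $f^{*}\colon\dD^b(Y)\to\dD^b(X)$, the Hom-vanishing $\Hom_X^{\bullet}(f^{*}\dD^b(Y),\Phi_{-1}(\dD^b(\Vect)))=0$, and generation of $\dD^b(X)$ by the two essential images. (Equivalently, one could package $f^{*},f_{*},f^{!}$ together with the adjoints of $\Phi_{-1}$ into a recollement of $\dD^b(X)$ between $\Phi_{-1}(\dD^b(\Vect))$ and $\dD^b(Y)$ and invoke the bijection between recollements and $\mathrm{SOD}_2$ recalled above; this unpacks to the same verifications.) For $f^{*}$ the essential input is $f_{*}\oO_X=\oO_Y$ together with $R^{1}f_{*}\oO_X=0$, which holds because the fibre over the blown-up point $o$ is $\bbP^1$ and $H^{1}(\bbP^1,\oO)=0$; by the projection formula the unit $\mathrm{id}\to f_{*}f^{*}$ is then an isomorphism, so $f^{*}$ is fully faithful, and $f^{*}$ admits a right adjoint $f_{*}$ and a left adjoint $f_{*}((-)\otimes\oO_X(C))$ (by Grothendieck duality, since $\omega_{X/Y}=\oO_X(C)$), so $f^{*}\dD^b(Y)$ is admissible. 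For $\Phi_{-1}$, since $\dD^b(\Vect)$ is generated by $\bbC$ and $\Phi_{-1}$ is exact, it suffices to check that $\Phi_{-1}(\bbC)=j_{*}\oO_C(-1)$ is exceptional; using $j^{*}\dashv j_{*}$ and the standard triangle $\oO_C(-1)\otimes N_{C/X}^{\vee}[1]\to j^{*}j_{*}\oO_C(-1)\to\oO_C(-1)\overset{+}{\rightarrow}$ with $N_{C/X}=\oO_C(-1)$ (as $C^{2}=-1$), this reduces to $\RHom_C(\oO_C(-1),\oO_C(-1))=\bbC$ together with a term built out of $H^{\bullet}(C,\oO_C(-1))=0$; hence $\Phi_{-1}$ is fully faithful with admissible image.

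For semiorthogonality I would use the commutative square $f\circ j=\ell\circ\pi$, where $\ell\colon\{o\}\hookrightarrow Y$ is the inclusion of the blown-up point, together with $j^{*}\dashv j_{*}$: for $b\in\dD^b(Y)$ and $V\in\dD^b(\Vect)$,
\[
\RHom_X\!\big(f^{*}b,\,\Phi_{-1}(V)\big)\cong\RHom_C\!\big(\pi^{*}\ell^{*}b,\,\pi^{*}V\otimes\oO_C(-1)\big).
\]
Since $\ell^{*}b$ and $V$ are complexes of $\bbC$-vector spaces, the right-hand side is a finite direct sum of shifts of $\RHom_C(\oO_C,\oO_C(-1))=H^{\bullet}(C,\oO_C(-1))=0$, which gives the vanishing in the orientation matching the stated order of the decomposition.

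Generation is the main point. For $F\in\dD^b(X)$ the counit triangle $f^{*}f_{*}F\to F\to G\overset{+}{\rightarrow}$ has $f_{*}G=0$ (because $f_{*}f^{*}\cong\mathrm{id}$), and since $f$ is an isomorphism over $Y\setminus\{o\}$ this forces $\Supp G\subseteq C$; hence $\dD^b(X)$ is generated by $f^{*}\dD^b(Y)$ and the subcategory $\dD^b_C(X)$ of complexes supported on $C$. Filtering a coherent sheaf supported on $C$ by the powers of $\iI_C$ shows $\dD^b_C(X)$ is generated by $j_{*}\dD^b(C)$, and $\dD^b(C)=\dD^b(\bbP^1)$ is generated by $\oO_C(-1)$ and $\oO_C$; so it remains to place $j_{*}\oO_C$ in the subcategory generated by $f^{*}\dD^b(Y)$ and $j_{*}\oO_C(-1)=\Phi_{-1}(\bbC)$. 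Here I would pull back the skyscraper $\oO_o$: the object $f^{*}\oO_o$ lies in $f^{*}\dD^b(Y)$, and a Koszul computation — using the universal property $f^{-1}\mathfrak m_o\cdot\oO_X=\oO_X(-C)$ — shows it is supported on $C$ with $\hH^{0}(f^{*}\oO_o)=j_{*}\oO_C$ and $\hH^{-1}(f^{*}\oO_o)=j_{*}\oO_C(-1)$, the twist being pinned down by the fact that $f_{*}j_{*}\oO_C(k)=0$ precisely for $k=-1$. The truncation triangle $j_{*}\oO_C(-1)[1]\to f^{*}\oO_o\to j_{*}\oO_C\overset{+}{\rightarrow}$ then exhibits $j_{*}\oO_C$ in the required subcategory. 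Altogether the triangulated subcategory generated by $f^{*}\dD^b(Y)$ and $\Phi_{-1}(\dD^b(\Vect))$ contains $\dD^b_C(X)$ and $f^{*}\dD^b(Y)$, hence is all of $\dD^b(X)$.

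To conclude, with $\Phi_{-1}(\dD^b(\Vect))$ admissible, $f^{*}\dD^b(Y)$ a thick subcategory of its left orthogonal, and the two together generating $\dD^b(X)$, I would apply the triangulated projection attached to the semiorthogonal decomposition $\dD^b(X)=\langle{}^{\perp}\Phi_{-1}(\dD^b(\Vect)),\Phi_{-1}(\dD^b(\Vect))\rangle$: it kills $\Phi_{-1}(\dD^b(\Vect))$ and fixes $f^{*}\dD^b(Y)$ pointwise, hence carries a set of generators of $\dD^b(X)$ onto generators of ${}^{\perp}\Phi_{-1}(\dD^b(\Vect))$ lying in $f^{*}\dD^b(Y)$; therefore ${}^{\perp}\Phi_{-1}(\dD^b(\Vect))=f^{*}\dD^b(Y)\cong\dD^b(Y)$, and the decomposition triangles furnished by admissibility of $\Phi_{-1}(\dD^b(\Vect))$ give exactly $\dD^b(X)=\langle\Phi_{-1}(\dD^b(\Vect)),\dD^b(Y)\rangle$. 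The hard part is the generation step, and within it the membership of $j_{*}\oO_C$: the obvious sequences $0\to\oO_X\to\oO_X(C)\to j_{*}\oO_C(-1)\to 0$ only relate consecutive twists $j_{*}\oO_C(k)$, so they do not suffice on their own, and the essential extra input is the pullback of a skyscraper at $o$ (equivalently, the universal property of the blowup). The remaining $\RHom$ bookkeeping in the first two steps is routine once $N_{C/X}=\oO_C(-1)$ is fixed.
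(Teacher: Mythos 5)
Your argument is correct; the paper does not prove this statement but simply cites Orlov's blowup formula, and your proof is essentially the standard argument from that reference specialized to a point on a surface: semiorthogonality via $j^*\dashv j_*$ and $H^\bullet(\bbP^1,\oO(-1))=0$, full faithfulness of $f^*$ from $\bbR f_*\oO_X=\oO_Y$, exceptionality of $j_*\oO_C(-1)$ from $N_{C/X}=\oO_C(-1)$, and generation by d\'evissage to $C$ plus the triangle $j_*\oO_C(-1)[1]\to \bbL f^*\oO_o\to j_*\oO_C$. All the computations you flag (the twist $\hH^{-1}(\bbL f^*\oO_o)=j_*\oO_C(-1)$, the left adjoint $f_*(-\otimes\oO_X(C))$ via $\omega_{X/Y}=\oO_X(C)$, and the identification ${}^\perp\langle\oO_C(-1)\rangle=f^*\dD^b(Y)$) check out.
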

Since $\Phi_{-1}(\dD^b(\Vect))$ is equivalent to a subcategory generated by $\oO_C(-1)$ in $\dD^b(X)$,
we denote by $\langle \langle \oO_C(-1)\rangle ,\dD^b(Y) \rangle$ the semiorthogonal decomposition in the above theorem.
We have the following recollements for each $k \in \bbZ$:
\[
  \rR_{k+1} \colon
  \begin{tikzcd}
  \dD^b(Y) \arrow[r,"f_{k+1}^L" description] & \dD^b(X) \arrow[l,bend right=40,"f_{k+2}" description] \arrow[l,bend left= 40,"f_{k+1}"] \arrow[r,"\rho_{k+1}" description] & \langle \oO_C(k+1) \rangle  \arrow[l,bend right=40,"j" description ] \arrow[l,bend left= 40],
    \end{tikzcd}
\]
\[
  \lL_k \colon
  \begin{tikzcd}
  \langle \oO_C(k) \rangle \arrow[r,"j" description] & \dD^b(X) \arrow[l,bend right=40,"\rho_{k+1}" description] \arrow[l,bend left= 40,,"\rho_{k}" description] \arrow[r,"f_{k+1}" description] & \dD^b(Y) \arrow[l,bend right=40,"f_{k+1}^L" description ] \arrow[l,bend left= 40,"f_k^L" description].
    \end{tikzcd}
\]
Here, $f_k = \bbR f_*(\otimes \oO(kC))$, and $f^L_k = f^*(-) \otimes \oO_C(-kC)$ is the left adjoint functor of $f_k$.
The functor $\rho_k$ is the right adjoint functor of the inclusion functor $i_k \colon \langle \oO_C(k) \rangle \hookrightarrow \dD^b(X)$.
It follows from the construction of \Cref{lem-defi:actiononauto} that $\lL_k \otimes \oO(C) = \lL_{k-1}$.
The recollement $\lL_k$ is the same as the lower recollement of $\rR_{k+1}$.

\subsection{Three types of stability conditions on a blowup surface $X$}
For any surfaces, there exists geometric stability conditions defined in Section \ref{pre:stab}.
We will introduce three other types of stability conditions using the semiorthogonal decompositions in Theorem \ref{thm:sodblowup}.
We will compare these stability conditions with stability conditions constructed in \cite{Tod:13,Tod:14}.
We consider the three such glued hearts:
\begin{itemize}
  \item[($C_k$)] $\langle \oO_C(k)[1]\rangle *_{\lL_k} \aA_Y$,
  \item[($\rR_{k+1}$)] $ \aA_Y*_{\rR_{k+1}} \langle \oO_C(k+1)[n]\rangle $ for $n<0$,
  \item[($\lL_k$)]  $\langle \oO_C(k)[n]\rangle *_{\lL_k} \aA_Y$ for $n>1$.
\end{itemize}
Here, $\lL_k$ and $\rR_{k+1}$ are recollements defined in Section \ref{sec:sodbl}, and
$\aA_Y$ denotes the heart of $\dD^b(Y)$ discussed in Section \ref{pre:stab}.

\subsubsection{Stability conditions with hearts of type $(C_k)$.}\label{sec:stabilityCk}
First, we discuss the stability conditions whose hearts  are of type $(C_k)$.
These stability conditions appear in \cite{Tod:13,Tod:14}.
In general cases, they are discussed in \cite{TX:17}.
We consider the following recollements 
\[
  \lL_k \colon
    \begin{tikzcd}
    \langle \oO_C(k)\rangle \arrow[r,"j"] & \dD^b(X)  \arrow[r,"f_{k+1}"] & \dD^b(Y) ,
      \end{tikzcd}
\] where $f_k = \bbR f_*(-\otimes \oO(kC))$ for $k \in \bbZ$.
Let $B$ be a divisor class in $N^1(X)$ and $\omega$ be an ample divisor on $Y$. 
We assume that $k- \frac{1}{2} < B.C < k+\frac{1}{2}$.
We consider the glued heart defined by
\[
  \aA_{B,f^* \omega} = \langle \oO_C(k)[1]\rangle *_{\lL_k} \aA_{f_*B,\omega}.
\]
The heart can be defined in two other ways.
First, this can be obtained by tilting of the perverse heart $\Per(X \backslash Y)$.
Here, the heart $\Per(X \backslash Y)$ in $\dD^b(X)$ is the glued heart $\langle \oO_C(-1)[1]\rangle *_{\lL_{-1}} \Coh(Y)$,
 and this is essentially studied in \cite{Bri:02}.
We define the slope function $\mu_{B,f^* \omega}(E)$ to be 
\[
  \mu_{B,f^* \omega}(E) = \begin{cases}
    \frac{\ch_1^B(E) .f^* \omega}{\ch_0(E)} \text{ if }\ch_0(E)\not= 0 \\
    \infty  \text{ if }\ch_0(E)= 0.\\
  \end{cases} 
\]
Then, we define subcategories in $\Per(X \backslash Y)$ by
\begin{align*}
  \tT &= \excl{E \in \Per(X \backslash Y) \colon E \text{ is $\mu_{B,f^* \omega}(E)-$semistable with } \mu_{B,f^* \omega}(E) >0 }\\
  \fF &= \excl{E \in \Per(X \backslash Y) \colon E \text{ is $\mu_{B,f^* \omega}(E)-$semistable with } \mu_{B,f^* \omega}(E) \leq0 }.\\
\end{align*}
\begin{prop}[{\cite[Lemma 3.6.]{Tod:13}}]
The pair $(\tT,\fF)$ is a torsion pair on $\Per(X \backslash Y)$.
\end{prop}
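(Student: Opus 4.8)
The plan is to verify the two defining axioms of a torsion pair directly, using the structure of the perverse heart $\Per(X\backslash Y) = \langle \oO_C(-1)[1]\rangle *_{\lL_{-1}} \Coh(Y)$ as a glued heart together with the slope function $\mu_{B,f^*\omega}$. For the orthogonality axiom, suppose $T\in\tT$ and $F\in\fF$ and consider a nonzero morphism $\varphi\colon T\to F$ in $\Per(X\backslash Y)$. Let $A$ be the image of $\varphi$ in the abelian category $\Per(X\backslash Y)$; then $A$ is a quotient of $T$ and a subobject of $F$. Writing the Harder--Narasimhan filtrations of $T$ and $F$ with respect to $\mu_{B,f^*\omega}$, any nonzero quotient of $T$ has some HN factor of slope $>0$ mapping onto it, so its maximal slope $\mu^+(A)$ satisfies $\mu^+(A) > 0$; meanwhile $A\hookrightarrow F$ forces $\mu^+(A)\le \mu^+(F)\le 0$, a contradiction. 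The one subtlety here is that $\mu_{B,f^*\omega}$ is only a slope, not a genuine stability function, so I would first record that HN filtrations with respect to $\mu_{B,f^*\omega}$ exist on $\Per(X\backslash Y)$ — this follows because $\Per(X\backslash Y)$ is Noetherian (it is a glued heart of two Noetherian hearts $\langle\oO_C(-1)[1]\rangle$ and $\Coh(Y)$, so \Cref{prop:noeth} applies) and the slope takes values in a discrete set once the Chern character lattice is fixed.

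For the decomposition axiom, I would start from any $E\in\Per(X\backslash Y)$ and produce the short exact sequence $0\to T\to E\to F\to 0$ with $T\in\tT$, $F\in\fF$. The natural candidate for $T$ is the maximal subobject of $E$ all of whose HN factors have slope $>0$; equivalently, take the HN filtration of $E$ with respect to $\mu_{B,f^*\omega}$ and let $T$ be the step just before the slope drops to $\le 0$. To make this rigorous I need: (i) that $\mu_{B,f^*\omega}$ is a well-behaved slope function on $\Per(X\backslash Y)$, i.e. it is compatible with short exact sequences in the usual see-saw manner — this requires $\ch_0$ and $\ch_1^B(-).f^*\omega$ to be additive on $\Per(X\backslash Y)$, which holds since they are linear in the Chern character and the Grothendieck group of $\Per(X\backslash Y)$ agrees with that of $\dD^b(X)$; (ii) that objects of rank $0$ behave correctly, namely that $\oO_C(k)[1]$ and more generally any object supported on $C$ has $\ch_0 = 0$ hence infinite slope, so it lands in $\tT$ — this is consistent with the requirement $k-\tfrac12 < B.C < k+\tfrac12$ only insofar as we need the HN pieces to be detected correctly. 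Then $\tT$ and $\fF$ as defined are precisely the full subcategories generated under extension by semistable objects of positive (resp. nonpositive) slope, and the HN truncation gives the desired sequence.

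The main obstacle I anticipate is establishing the existence of Harder--Narasimhan filtrations for $\mu_{B,f^*\omega}$ on $\Per(X\backslash Y)$, because the slope function is not part of a pre-stability condition in the sense of the earlier definitions — there is no stability function $Z$ whose argument it computes, and a priori a slope function on an arbitrary abelian category need not admit HN filtrations. The way around this is the standard two-ingredient argument: Noetherianity of $\Per(X\backslash Y)$ (via \Cref{prop:noeth}) rules out infinite ascending chains of would-be destabilizing subobjects, and a lower bound argument — using that $(\ch_0, \ch_1^B.f^*\omega)$ lands in a fixed lattice and that $\ch_0 \ge 0$ with $\ch_0 = 0$ exactly on the torsion-on-$C$ part — rules out infinite descending chains of slopes. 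Once HN filtrations are in hand, both torsion-pair axioms follow formally as sketched above, and this is presumably exactly the content of the cited \cite[Lemma 3.6.]{Tod:13}, which I would invoke or reprove in this language.
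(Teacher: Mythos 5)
The paper offers no proof of this proposition; it is quoted directly from \cite[Lemma 3.6.]{Tod:13}, and your reconstruction follows essentially the same standard route as Toda's argument: establish Harder--Narasimhan filtrations for the slope $\mu_{B,f^*\omega}$ on the perverse heart via Noetherianity plus a boundedness argument, then take $\tT$ and $\fF$ to be the HN truncations above and below slope $0$. Your identification of the existence of HN filtrations as the real content is correct, and the appeal to \Cref{prop:noeth} is legitimate, since $\Hom^{<0}(\oO_C(-1)[1], f^*\Coh(Y)) = \Ext^{<-1}(\oO_C(-1), f^*\Coh(Y))$ vanishes for degree reasons. One small imprecision: because $B$ is an arbitrary real class, $\ch_1^B(-).f^*\omega$ does \emph{not} take values in a discrete set globally; what saves the argument is that for subobjects $E'\subset E$ in $\Per(X\backslash Y)$ the rank $\ch_0(E')=\ch_0(f_0 E')$ is trapped in $\{0,\dots,\ch_0(E)\}$ (by t-exactness of $f_0=\bbR f_*$ and the fact that subsheaves have smaller rank), so the possible slopes of subobjects lie in finitely many translates of a discrete set and are bounded above, which is enough for the maximal destabilizer to exist. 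With that correction your sketch is a faithful account of what the cited lemma proves.
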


Let $\aA_{B,f^* \omega}^{\mathrm{per}}$ denote the tilting heart with respect to the above torsion pair.
It follows from Grothendieck-Riemann-Roch theorem that 
\[
  \mu_{B,f^*\omega}(E) = \mu_{f_*B,\omega}(f^*(E))
\] for all $E$ with $f_*(E) \not = 0$.
Applying Proposition \ref{prop:torpair}, we conclude that $\aA_{B,f^* \omega} = \aA_{B,f^* \omega}^{\mathrm{per}}$ if $-\frac{1}{2} < B.C < \frac{1}{2}$.

Next, we consider another tilting heart to construct the glued heart.
The following construction appears in \cite{Bri:08,TX:17}.
The class $f^* \omega$ is not an ample divisor.
However, as in the case of ample divisor, we can define $\mu_{f^*\omega}$-stability on $\Coh(X)$.
For $a \in \bbR$, we have a torsion pair $({\tT^a}_{f^*\omega},{\fF^a}_{f^*\omega})$ defined to be
\begin{align*}
  \tT^a_{f^*\omega} &=  \excl{E \in \Coh(X) \colon E \text{ is semistable with } \mu_{f^* \omega}(E) >a }\\
  \fF^a_{f^*\omega} &= \excl{E \in \Coh(X) \colon E \text{ is semistable with } \mu_{f^* \omega}(E) \leq a }.
\end{align*}
Let $\aA^a_{f^*\omega}$ be a tilting heart obtained from the above torsion pair.
\begin{lem-defi}[{\cite[Lemma 3.2.]{TX:17}}]
  The pair of subcategories
  \begin{align*}
    \tT^a _k &= \left\{ E \in\aA^a_{f^*\omega} \colon \Hom(E, \oO_C(l))= 0 \text{ for }l\leq k \right\} \\
    \fF^a_k &= \excl{\oO_C(l) \colon l \leq k}
  \end{align*}
  defines a torsion pair on $\aA^a_{f^*\omega}$ for all $k\in \bbZ$.
  Let $\bB^a_{k,f^* \omega}$ denotes the tilting heart.
\end{lem-defi}

\begin{lem}[{cf.\cite[Section 4.]{TX:17}}]
  Let $B$ be a class in $N^1(X)$ and $\omega$ be an ample class on $Y$.
  Assume $-\frac{1}{2} < B.C < \frac{1}{2}$.
  Then, $\bB^{B.f^*\omega}_{-1,f^* \omega} =  \aA_{B,f^* \omega}^{\mathrm{per}}=\aA_{B,f^* \omega}$.
\end{lem}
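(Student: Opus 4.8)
The plan is to split the asserted chain of equalities. The identity $\aA_{B,f^* \omega}^{\mathrm{per}} = \aA_{B,f^* \omega}$ has already been obtained above (it is the consequence of Proposition \ref{prop:torpair} recorded in the previous paragraph, and it uses the hypothesis $-\tfrac{1}{2} < B.C < \tfrac{1}{2}$), so what remains is the identity $\bB^{B.f^*\omega}_{-1,f^* \omega} = \aA_{B,f^* \omega}^{\mathrm{per}}$. Both sides are hearts of bounded t-structures on $\dD^b(X)$, and a heart of a bounded t-structure contained in another such heart necessarily coincides with it; hence it suffices to prove one inclusion, and I would prove $\bB^{B.f^*\omega}_{-1,f^* \omega} \subseteq \aA_{B,f^* \omega}^{\mathrm{per}}$. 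Writing $a = B.f^*\omega$ and $\aA^{\mathrm{per}} = \aA_{B,f^*\omega}^{\mathrm{per}}$, one has $\bB^{a}_{-1,f^*\omega} = \langle \tT^a_{-1}, \fF^a_{-1}[1]\rangle_{\mathrm{ex}}$ with $\fF^a_{-1} = \langle \oO_C(l) : l\le -1\rangle_{\mathrm{ex}}$, and $\aA^{\mathrm{per}}$ is extension-closed in $\dD^b(X)$; so the inclusion reduces to two claims: \emph{(i)} $\oO_C(l)[1] \in \aA^{\mathrm{per}}$ for every $l \le -1$, and \emph{(ii)} $\tT^a_{-1} \subseteq \aA^{\mathrm{per}}$.

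For \emph{(i)}, recall $\aA^{\mathrm{per}} = \langle \tT, \fF[1]\rangle_{\mathrm{ex}}$, where $(\tT, \fF)$ is the $\mu_{B,f^*\omega}$-torsion pair inside $\Per(X\backslash Y) = \langle \oO_C(-1)[1]\rangle *_{\lL_{-1}} \Coh(Y)$. Each $\oO_C(l)$ with $l \le -1$ satisfies $R^0 f_*\oO_C(l) = 0$, so $\oO_C(l)[1]$ lies in $\Per(X\backslash Y)$ (for $l = -1$ it is a defining generator; the general case is a standard property of the perverse heart, which is a length-one tilt of $\Coh(X)$ along the subcategory of sheaves $F$ with $R^0 f_* F = 0$, cf. \cite{Bri:02,TX:17}); since $\ch_0(\oO_C(l)[1]) = 0$, its slope $\mu_{B,f^*\omega}$ equals $\infty$, which is maximal, so $\oO_C(l)[1]$ is $\mu_{B,f^*\omega}$-semistable and lies in the torsion class $\tT \subseteq \aA^{\mathrm{per}}$.

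For \emph{(ii)}, given $E \in \tT^a_{-1}$, i.e. $E \in \aA^a_{f^*\omega}$ with $\Hom(E, \oO_C(l)) = 0$ for all $l \le -1$, one must show that the perverse cohomology objects $\hH^{-1}_{\Per}(E)$ and $\hH^{0}_{\Per}(E)$ of $E$ with respect to $\Per(X\backslash Y)$ are $\mu_{B,f^*\omega}$-semistable of slopes $\le 0$ and $> 0$ respectively, so that $E \in \langle\tT,\fF[1]\rangle_{\mathrm{ex}} = \aA^{\mathrm{per}}$. The link is the Grothendieck--Riemann--Roch identity relating $\mu_{B,f^*\omega}$ on $X$ with $\mu_{f_*B,\omega}$ on $Y$ recorded above, combined with the description of $\aA^a_{f^*\omega}$ as the $\mu_{f^*\omega}$-tilt of $\Coh(X)$ at slope $a$; the hypothesis $-\tfrac{1}{2} < B.C < \tfrac{1}{2}$ enters precisely here, keeping $\oO_C = \oO_C(0)$ on the torsion side of the $\mu_{B,f^*\omega}$-tilt with $\oO_C(-1)$ at the boundary, which is what makes the two-step tilt $\Coh(X)\rightsquigarrow\aA^a_{f^*\omega}\rightsquigarrow\bB^a_{-1,f^*\omega}$ coincide with the two-step tilt $\Coh(X)\rightsquigarrow\Per(X\backslash Y)\rightsquigarrow\aA^{\mathrm{per}}$.

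The step I expect to be the main obstacle is \emph{(ii)}: the two intermediate hearts $\aA^a_{f^*\omega}$ and $\Per(X\backslash Y)$ are genuinely different tilts of $\Coh(X)$, and the real work lies in reconciling the torsion-pair and Harder--Narasimhan bookkeeping across them, that is, controlling how an arbitrary object of $\tT^a_{-1}$ decomposes with respect to the perverse t-structure and checking that the resulting factors land in $\tT$ and $\fF[1]$. I would carry this out following the computations of \cite[Section 4]{TX:17}, using the torsion-theoretic descriptions of both $\Per(X\backslash Y)$ and $\aA^a_{f^*\omega}$ and the exact sequences $0 \to \oO_C(l-1) \to \oO_C(l) \to \oO_x \to 0$ with $x \in C$ to dispose of the sheaves $\oO_C(l)$ for $l \le -2$.
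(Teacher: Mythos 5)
The paper does not actually prove this lemma: it is stated with the citation ``cf.\ \cite[Section 4.]{TX:17}'' and no proof environment follows, so there is no in-paper argument to compare yours against. Judged on its own terms, your outline is sound and organized the way one would want: the equality $\aA_{B,f^*\omega}^{\mathrm{per}}=\aA_{B,f^*\omega}$ is indeed already recorded in the paragraph before the lemma (via Proposition \ref{prop:torpair}), the reduction of the remaining equality to a single inclusion of hearts of bounded t-structures is a correct standard device, and your step \emph{(i)} checks out: for $l\le -1$ one has $R^0f_*\oO_C(l)=0$, so $\oO_C(l)[1]$ lies in $\Per(X\backslash Y)$, has $\ch_0=0$, hence slope $+\infty$, and sits in the torsion class $\tT$. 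Two caveats. First, a small imprecision in \emph{(ii)}: to place $E$ in $\langle\tT,\fF[1]\rangle_{\mathrm{ex}}$ you need $\hH^{-1}_{\Per}(E)\in\fF$ and $\hH^0_{\Per}(E)\in\tT$, i.e.\ all HN factors of the perverse cohomologies have slope $\le 0$ (resp.\ $>0$); asking that the cohomologies themselves be semistable of those slopes is stronger than what is true or needed. Second, and more importantly, step \emph{(ii)} is where all of the mathematical content of the lemma lives, and you have not carried it out --- you defer it to \cite[Section 4]{TX:17}, which is precisely what the paper does. So your write-up is, if anything, more explicit than the source about where the work is, but it does not constitute an independent proof of the lemma; it is a correct reduction plus the same citation.
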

Using this lemma we can compare $\bB^{B.f^*\omega}_{k,f^* \omega}$ with $\aA_{B,f^* \omega}$ in the general cases.

\begin{prop}
  Let $B$ be a class in $N^1(X)$ and $\omega$ be an ample class on $Y$.
  Assume $k-\frac{1}{2} < B.C < k+\frac{1}{2}$ for some integer $k$.
  Then, $\bB^{B.f^*\omega}_{k-1,f^* \omega} = \aA_{B,f^* \omega}$.
\end{prop}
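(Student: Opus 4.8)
The plan is to reduce the statement to the case $k=0$ established in the preceding lemma by applying the autoequivalence $\Phi = (-)\otimes\oO_X(C)$ of $\dD^b(X)$ and tracking how both sides transform. Concretely, I would establish the two identities
\[
  \aA_{B,f^*\omega}\otimes\oO_X(C) = \aA_{B+C,f^*\omega}
  \qquad\text{and}\qquad
  \bB^{a}_{k,f^*\omega}\otimes\oO_X(C) = \bB^{a}_{k-1,f^*\omega}
\]
for every divisor class $B$, every $a\in\bbR$, and every $k\in\bbZ$, and then iterate them $k$ times to move from the known case $B_0.C\in(-\frac{1}{2},\frac{1}{2})$ to the general case.

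For the second identity, I would first note that since $C$ is contracted by $f$, the projection formula gives $f^*\omega.C = \omega.f_*C = 0$; hence tensoring by $\oO_X(C)$ leaves the slope $\mu_{f^*\omega}$ unchanged, preserves the torsion pair $(\tT^a_{f^*\omega},\fF^a_{f^*\omega})$ on $\Coh(X)$, and therefore fixes the tilted heart $\aA^a_{f^*\omega}$. Because $C$ is a $(-1)$-curve, $\oO_X(C)|_C\cong\oO_C(-1)$, so $\Phi(\oO_C(l))\cong\oO_C(l-1)$ for all $l\in\bbZ$. As $\Phi$ restricts to an exact autoequivalence of $\aA^a_{f^*\omega}$ that respects $\Hom$-vanishing, it follows at once that $\Phi(\tT^a_k) = \tT^a_{k-1}$ and $\Phi(\fF^a_k) = \fF^a_{k-1}$, hence $\Phi(\bB^a_{k,f^*\omega}) = \bB^a_{k-1,f^*\omega}$.

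For the first identity, I would use that $\Phi$ carries the recollement $\lL_k$ to $\lL_{k-1}$ — this is the identity $\lL_k\otimes\oO(C) = \lL_{k-1}$ recorded in \Cref{sec:sodbl}, a consequence of \Cref{lem-defi:actiononauto} — together with the compatibility of BBD-induction with applying an autoequivalence. Indeed $\Phi(\langle\oO_C(k)[1]\rangle) = \langle\oO_C(k-1)[1]\rangle$, the embedding of $\dD^b(Y)$ attached to $\lL_k$ differs from the one attached to $\lL_{k-1}$ by $(-)\otimes\oO_X(C)$, and $f_*(B+C) = f_*B$ since $f_*C=0$; so $\Phi$ sends $\langle\oO_C(k)[1]\rangle *_{\lL_k}\aA_{f_*B,\omega}$ to $\langle\oO_C(k-1)[1]\rangle *_{\lL_{k-1}}\aA_{f_*B,\omega}$, and because $(B+C).C = B.C-1$ this target is exactly $\aA_{B+C,f^*\omega}$.

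Granting the two identities the proof concludes quickly: given $B$ with $k-\frac{1}{2}<B.C<k+\frac{1}{2}$, set $B_0 = B+kC$, so that $-\frac{1}{2}<B_0.C<\frac{1}{2}$ and $a := B_0.f^*\omega = B.f^*\omega$. Iterating the first identity gives $\aA_{B,f^*\omega} = \aA_{B_0,f^*\omega}\otimes\oO_X(-kC)$, and iterating the second gives $\bB^a_{-1,f^*\omega}\otimes\oO_X(-kC) = \bB^a_{k-1,f^*\omega}$; since $\aA_{B_0,f^*\omega} = \bB^a_{-1,f^*\omega}$ by the case $k=0$, tensoring this equality with $\oO_X(-kC)$ yields $\aA_{B,f^*\omega} = \bB^a_{k-1,f^*\omega}$. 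The only genuinely delicate point is the bookkeeping in the two middle paragraphs — checking that the equivalence of \Cref{lem-defi:actiononauto} really identifies the glued hearts along $\lL_k$ and $\lL_{k-1}$ as claimed, and keeping the direction of the index shift straight on both the $\bB$-side and the $\aA$-side; I do not expect any obstacle beyond that.
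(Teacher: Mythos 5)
Your proposal is correct and follows essentially the same route as the paper: reduce to the base case $-\tfrac{1}{2}<B.C<\tfrac{1}{2}$ of the preceding lemma by tensoring with a power of $\oO_X(C)$, using that this operation shifts $B$ by a multiple of $C$ on the $\aA$-side and shifts the index $k$ on the $\bB$-side while fixing $\aA^a_{f^*\omega}$ because $f^*\omega.C=0$. Your version is in fact slightly more careful with the direction of the twist (the paper applies $\oO(-kC)$ on one side and $\oO(kC)$ on the other), so no changes are needed.
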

\begin{proof}
  Applying the equivalence $-\otimes\oO(-kC)$ gives
  \[
    \aA_{B,f^* \omega}\otimes \oO(-kC) = \aA_{B+kC,f^* \omega}.
    \]
  Since $-\frac{1}{2} < (B+kC).C <+ \frac{1}{2}$,
  it follows from the above lemma that $\aA_{B+kC,f^* \omega} = \bB^{(B+kC).\omega}_{-1,f^* \omega}$.
  Since $(B + kC).f^*\omega = B.f^*\omega$, the heart $\bB^{(B+kC).f^*\omega}_{k-1,f^* \omega}$ is the same as  $\bB^{B.f^*\omega}_{k-1,f^* \omega}$.

  Thus, It holds that
  \[ 
    \bB^{(B+kC).f^*\omega}_{-1,f^* \omega} \otimes \oO(kC) = \bB^{(B+kC).f^*\omega}_{k-1,f^* \omega} = \bB^{B.f^*\omega}_{k-1,f^* \omega}.\]

\end{proof}

\begin{thm}[{\cite[Theorem 5.4.]{TX:17}}]\label{thm:nondegenerate}
  Let $B$ be a class in $N^1(X)$ and $\omega$ be an ample class on $Y$.
  If $B.C \not \in \bbZ + \frac{1}{2}$,   
  then the pair $(Z_{B,f^*\omega},\aA_{B,f^*\omega})$ defines a stability condition.
\end{thm}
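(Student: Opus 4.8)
The plan is to reduce the claim to the explicit double--tilting description of $\aA_{B,f^*\omega}$ obtained above and then to verify the three stability axioms by hand, essentially reproving \cite[Theorem 5.4]{TX:17}. \emph{Reduction.} Since $B\cdot C\notin\bbZ+\tfrac12$, there is a unique integer $k$ with $k-\tfrac12<B\cdot C<k+\tfrac12$, and by the preceding proposition $\aA_{B,f^*\omega}=\bB^{B\cdot f^*\omega}_{k-1,f^*\omega}$. The autoequivalence $-\otimes\oO(kC)$ carries $\aA_{B,f^*\omega}$ to $\aA_{B+kC,f^*\omega}$, carries $Z_{B,f^*\omega}$ to $Z_{B+kC,f^*\omega}$, and moves $(B+kC)\cdot C$ into $(-\tfrac12,\tfrac12)$, so I would assume $k=0$; the heart is then the double tilt $\aA:=\bB^{a}_{-1,f^*\omega}=\langle\tT^{a}_{-1},\fF^{a}_{-1}[1]\rangle_{\mathrm{ex}}$ of $\Coh(X)$ with $a=B\cdot f^*\omega$. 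I should stress that although $\aA_{B,f^*\omega}$ is a glued heart, Theorem \ref{thm:glued} is not available: these glued hearts are nondegenerate, since for the skyscraper $\oO_o\in\aA_{f_*B,\omega}$, adjunction together with $\oO_C(k)\otimes\oO((k+1)C)\cong\oO_C(-1)$ gives $\Hom^{\le0}_X\bigl(\oO_C(k)[1],f^L_{k+1}\oO_o\bigr)\neq0$, the obstruction being the class of $\mathrm{Tor}_1^{\oO_Y}(\oO_o,\oO_o)$. So the argument has to be run on $\aA$ itself.

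\emph{Stability function and Harder--Narasimhan property.} Every $E\in\aA$ sits in a short exact sequence $0\to T\to E\to F[1]\to0$ in $\aA$ with $T\in\tT^{a}_{-1}\subset\aA^{a}_{f^*\omega}$ and $F\in\fF^{a}_{-1}=\langle\oO_C(l):l\le-1\rangle_{\mathrm{ex}}$. Since $f^*\omega\cdot C=0$ and $Z_{B,f^*\omega}(\oO_C(l))=B\cdot C-l-\tfrac12$ is a strictly positive real for $l\le-1$ (here $B\cdot C>-\tfrac12$ enters), the quotient has $\Imm Z(F[1])=0$ and $\Ree Z(F[1])<0$ for $F\neq0$. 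The sub $T$ lies in a tilt of $\Coh(X)$, where $\Imm Z=f^*\omega\cdot\ch_1^{B}\ge0$ exactly as in the ample case; and when $\Imm Z(T)=0$ the structure of the tilt forces $\Ree Z(T)<0$ --- for the torsion part, which is supported on $C$ and admits no maps to $\oO_C(l)$, $l\le-1$, hence is built from $\oO_C(l)$ with $l\ge0$ where $\Ree Z<0$ because $B\cdot C<\tfrac12$; and for the torsion--free part, by the Hodge index inequality $(f^*\omega)^2(\ch_1^{B})^2\le(f^*\omega\cdot\ch_1^{B})^2=0$ together with the Bogomolov inequality, which give $\ch_2^{B}\le0$. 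Hence $Z(E)\in\bbH\cup\bbR_{<0}$ for all $0\neq E\in\aA$, so $Z$ is a stability function. For the Harder--Narasimhan property I would use that $\aA$ is Noetherian (an argument as in \cite{TX:17}) together with the standard criterion, as in Bridgeland's surface construction \cite{Bri:08}.

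\emph{Support property --- the main obstacle.} The delicate step, which I expect to be the crux, is the support property, and here I would follow \cite[Theorem 5.4]{TX:17}. By the quadratic form criterion (\cite[Lemma A.4]{BMS:16}) it suffices to produce a quadratic form $Q$ on $\Lambda_X\otimes\bbR$ that is negative definite on $\Ker Z$ and nonnegative on $\ch(E)$ for every $Z$--semistable $E\in\aA$. On the locus $f^*\omega\cdot\ch_1^{B}>0$ the Bogomolov discriminant of $f^*\omega$ works just as in Bridgeland's construction, fed by the Bogomolov inequality for $\mu_{f^*\omega}$--semistable sheaves and the two tilts. The obstruction is that $f^*\omega$ is only nef, not ample, so this discriminant is merely negative \emph{semi}definite, degenerating along the ray of the exceptional curve; to repair this one treats the $Z$--semistable objects with $f^*\omega\cdot\ch_1^{B}=0$ separately. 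By the description above these are iterated extensions of the $\oO_C(l)$ and their shifts, whose Chern characters span a rank--two sublattice on which $Z$ is nondegenerate --- this is precisely where the hypothesis $B\cdot C\notin\bbZ+\tfrac12$ is used, keeping $[\oO_C(l)]$ and $[\oO_C(l)[1]]$ off the boundary where $\Imm Z=0$ --- and then one perturbs the discriminant by a small quadratic form that is positive on this sublattice, checking it against the finitely many relevant semistable classes there. Combining the three stages shows that $(Z_{B,f^*\omega},\aA_{B,f^*\omega})$ is a stability condition.
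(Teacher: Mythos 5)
The paper does not actually prove this statement: it is imported verbatim as \cite[Theorem 5.4]{TX:17}, with no proof given, so there is no internal argument to compare yours against. What you have written is a reconstruction of the Tramel--Xia proof, and its architecture is right: the reduction to $k=0$ by twisting (note it is $-\otimes\oO(-kC)$, not $-\otimes\oO(kC)$, that sends $\aA_{B,f^*\omega}$ to $\aA_{B+kC,f^*\omega}$, as in the paper's own proof of the preceding proposition), the correct observation that Theorem \ref{thm:glued} is unavailable because $\Hom^{\le 0}(\oO_C(k)[1],f^L_{k+1}\oO_o)\neq 0$ (this is exactly the ``degenerate heart'' phenomenon the paper flags when it points to this theorem), and the verification that $Z$ is a stability function using $Z_{B,f^*\omega}(\oO_C(l))=B.C-l-\tfrac12$ are all sound.

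The support-property paragraph, however, contains a genuine misdiagnosis and two unproved steps. First, the Bogomolov discriminant $\Delta=\ch_1^2-2\ch_0\ch_2$ is \emph{not} merely ``negative semidefinite, degenerating along the exceptional ray'': since $f^*\omega$ is big and nef, Hodge index makes the relevant form honestly negative definite on $\Ker Z$; the actual failure is that $\Delta([\oO_C(l)])=C^2=-1<0$, so the required inequality $Q\ge 0$ is violated on the semistable classes $(0,C,l+\tfrac12)$ --- and there are infinitely many of these (all $\oO_C(l)$, $l\ge 0$, and their shifts are semistable in $\aA$), so ``checking against the finitely many relevant semistable classes'' is not available. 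The standard repair is to take $Q=\Delta+K\lvert Z\rvert^2$ for $K$ large: this stays negative definite on $\Ker Z$, and it is nonnegative on the bad classes precisely because $\lvert Z(\oO_C(l))\rvert=\lvert B.C-l-\tfrac12\rvert$ is bounded below by $\mathrm{dist}(B.C,\bbZ+\tfrac12)>0$ --- which is the second, quantitative, place the hypothesis $B.C\notin\bbZ+\tfrac12$ enters, beyond keeping $Z(\oO_C(l))\neq 0$. Second, you invoke the Bogomolov inequality for $\mu_{f^*\omega}$-semistable torsion-free sheaves, but $f^*\omega$ lies on the boundary of the ample cone, so this is not the textbook statement and needs justification (a limiting/openness argument or the explicit treatment in \cite{TX:17}). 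As a blueprint your proof is the correct one, but as written the crux step would not go through without these corrections.
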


\begin{defi}
  Let $f\colon X \rightarrow Y$ be a blowing up of a point $o \in Y$ and $C$ be an exceptional curve.
  The stability condition $\sigma \in \Stab(X)$ is $(C_k)$-type 
  if $\sigma$ lies in the boundary of $U(X)$ and the Jordan-H\"older filtration of $\oO_x$ is given by
  \[
    0 \rightarrow \oO_C(k+1) \rightarrow \oO_x \rightarrow \oO_C(k)[1]\rightarrow 0.
  \]
\end{defi}
The stability condition $(Z_{B,f^*\omega},\aA_{B,f^*\omega})$ is $(C_k)$-type if $k - \frac{1}{2} <B.C <k+\frac{1}{2}$.
Indeed, two objects $\oO_C(k+1)$ and $\oO_C(k)[1]$ are $\sigma$-stable objects of phase one in $\aA_{B,f^*\omega}$.
One can move the stability condition $\sigma = (Z_{B,f^*\omega},\aA_{B,f^*\omega})$ to a stability condition $\tau$ whose heart $\aA_{\tau}$ contains $\oO_C(k+1)$ and $\oO_C(k)$.
If we take $\tau$ sufficiently close to $\sigma$,  $\tau$ is a geometric stability condition.

The following proposition will be needed to compare the geometric chamber and a gluing region.
\begin{prop}
  Let $B$ be a class in $N^1(X)$ and $\omega$ be an ample class on $Y$, and $(Z_{B,f^*\omega},\aA_{B,f^*\omega})$ be a $(C_k)$-type stability condition.
  Consider $Z_Y (-) = Z_{B,f^*\omega}(f^*(-))$ and $\aA_Y = \aA_{f_*B,\omega}$.
  Then, the pair $(Z_Y,\aA_Y)$ is a geometric stability condition on $Y$.
\end{prop}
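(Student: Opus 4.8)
The plan is to run a short Chern-character computation showing that $(Z_Y,\aA_Y)$ is one of the geometric stability conditions on $Y$ described in \Cref{thm:geom}, and then to reduce everything to the corresponding twisted Le Potier inequality for $Y$. For the computation, use $N^1(X)=f^*N^1(Y)\oplus\bbR C$ and write $B=f^*(f_*B)+bC$ with $b=-B.C$. Since $f^*D_1.f^*D_2=D_1.D_2$, $\;C.f^*D=0$, $\;C^2=-1$, $\;(f^*\omega)^2=\omega^2$ and $f_*f^*=\mathrm{id}$, a direct computation of twisted Chern characters gives, for every $E\in\dD^b(Y)$,
\[
  \Imm Z_Y(E)=\omega.\ch_1^{f_*B}(E),\qquad
  \Ree Z_Y(E)=\Ree Z_{f_*B,\omega}(E)+\tfrac12(B.C)^2\,\ch_0(E).
\]
Hence $Z_Y=Z_{f_*B,\omega}+\tfrac12(B.C)^2\,\ch_0$, which is a central charge of the shape appearing in \Cref{thm:geom} for the quadruple $\big(\alpha,\beta,f_*B,\omega\big)$ with $\beta=\omega.f_*B$ and $\alpha=\tfrac12\big(\omega^2-(f_*B)^2+(B.C)^2\big)$; moreover the heart $\aA_Y=\aA_{f_*B,\omega}$ is exactly the heart $\aA_{\beta,\omega}$ attached to this data. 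Note $\Imm Z_Y=\Imm Z_{f_*B,\omega}$ and that the real parts differ by the nonnegative term $\tfrac12(B.C)^2\ch_0$.

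\emph{Stability function and stability of skyscrapers.} If $0\neq E\in\aA_Y$ satisfies $\Imm Z_Y(E)=\Imm Z_{f_*B,\omega}(E)=0$, then in the decomposition $0\to T\to E\to F[1]\to0$ of $E$ inside $\aA_{f_*B,\omega}$ one checks that $T$ is $0$-dimensional and $F$ is an extension of $\mu_\omega$-stable sheaves of slope $\omega.f_*B$, so $\ch_0(E)=-\ch_0(F)\le0$ and therefore $\Ree Z_Y(E)\le\Ree Z_{f_*B,\omega}(E)<0$; thus $Z_Y$ is a stability function on $\aA_Y$. For any $y\in Y$, $\ch(\oO_y)=(0,0,1)$ gives $Z_Y(\oO_y)=-1$, so $\oO_y$ has $Z_Y$-phase $1$; and since $\sigma_{f_*B,\omega}$ is a normalized geometric stability condition on $Y$, the skyscraper $\oO_y$ is $\sigma_{f_*B,\omega}$-stable, hence every proper nonzero subobject $G\subset\oO_y$ in $\aA_{f_*B,\omega}$ has $\Imm Z_{f_*B,\omega}(G)>0$, whence $\Imm Z_Y(G)>0$ and $\phi_{Z_Y}(G)<1$. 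Consequently, as soon as $(Z_Y,\aA_Y)$ is known to be a stability condition, all skyscrapers are $(Z_Y,\aA_Y)$-stable of phase $1$ with $Z_Y(\oO_y)=-1$, i.e. $(Z_Y,\aA_Y)\in U(Y)$ is geometric.

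\emph{It is a stability condition, and the main obstacle.} By \Cref{thm:geom} it now suffices to prove the twisted Le Potier inequality $\Phi_{\omega,\,f_*B}\big(\tfrac12(\omega^2-(f_*B)^2+(B.C)^2)\big)>\omega.f_*B$. Since $\sigma_{f_*B,\omega}$ is geometric, \Cref{thm:geom} already gives $\Phi_{\omega,f_*B}(\alpha_0)>\omega.f_*B$ for $\alpha_0=\tfrac12(\omega^2-(f_*B)^2)$, so one must increase $\alpha$ by $c=\tfrac12(B.C)^2\ge0$. The input I would use is the elementary lower bound for the twisted Le Potier function coming from line bundles: $\oO_Y(D)$ has vanishing discriminant and realizes the value $\tfrac12D^2-(f_*B).D$ at slope $\omega.D$, so letting $D$ run through divisors nearly proportional to $\omega$ and applying the Hodge index theorem gives $\Phi_{\omega,f_*B}(x)\ge\frac{(x-\omega.f_*B)^2}{2\omega^2}-\frac{(f_*B)^2}{2}$ for all $x\in\bbR$, a parabola tending to $+\infty$; since $\alpha_0$ already lies in $\{x:\Phi_{\omega,f_*B}(x)>\omega.f_*B\}$ and $c$ is bounded (indeed $c<\tfrac12(|k|+\tfrac12)^2$), a short case analysis on the position of $\alpha_0$ relative to the two solutions of $\Phi_{\omega,f_*B}=\omega.f_*B$ then places $\alpha_0+c$ again in the region. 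Alternatively, deform $Z_{f_*B,\omega}$ to $Z_Y$ along $Z_s=Z_{f_*B,\omega}+s\,c\,\ch_0$, $s\in[0,1]$, and apply \Cref{thm:deformation} after checking that the support-property quadratic form of $\sigma_{f_*B,\omega}$ (essentially the discriminant) stays negative definite on $\Ker Z_s$. The delicate point—the main obstacle of the proof—is precisely this: ruling out that the real perturbation $\tfrac12(B.C)^2\ch_0$ pushes $\alpha$ onto an interval where the genuine twisted Le Potier function of $Y$ dips at or below $\omega.f_*B$, i.e. showing that the geometric chamber of $Y$ contains the whole ray $[\alpha_0,\infty)$ above the normalized geometric central charge; everything else is bookkeeping.
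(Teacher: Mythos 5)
Your first two paragraphs are correct and arrive at the same reduction as the paper, but by a different mechanism. The paper's entire argument is: compute $Z_Y$, note that by \Cref{thm:geom} it suffices to check $Z_Y(F)<0$ whenever $F\in\aA_Y$ has $\Imm Z_Y(F)=0$, and get this for free from the containment $f^*\aA_Y\subset\aA_{B,f^*\omega}$, since then $Z_Y(F)=Z_{B,f^*\omega}(f^*F)$ lies in $\bbH\cup\bbR_{<0}$ by the hypothesis that $(Z_{B,f^*\omega},\aA_{B,f^*\omega})$ is a stability condition. You instead prove the identity $Z_Y=Z_{f_*B,\omega}+\tfrac12(B.C)^2\ch_0$ (which is correct) and deduce negativity from $\ch_0(F)\le0$ on the critical locus together with the classical fact that $\sigma_{f_*B,\omega}$ is a stability condition on $Y$. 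Both work; yours is longer but has the mild virtue of not invoking the $(C_k)$ hypothesis for this step, and it makes the relation between $Z_Y$ and $Z_{f_*B,\omega}$ explicit, which the paper leaves implicit.

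Your third paragraph, however, manufactures an obstacle that is not there, and your proposed treatment of it would be a genuine gap if the obstacle were real. The variable of the twisted Le Potier function is an $H$-slope (the definition takes $\mu_H(E)=\mu\to x$), and the heart $\aA_{\beta,H}$ is tilted at the slope $\beta$; so in Dell's characterization the function is evaluated at the tilting slope $\beta=\omega.f_*B$ and the chamber condition is a one-sided bound of the form $\alpha>\Phi_{\omega,f_*B}(\beta)$ --- equivalently, exactly the negativity of $Z_Y$ on rank-nonpositive objects at the critical slope that you verified in paragraph two. (The paper's displayed inequality ``$\Phi_{H,B}(\alpha)>\beta$'' has the arguments in confusing slots, which is what led you astray; the paper's own proof of this proposition confirms the intended reading by reducing to the negativity statement.) In particular the condition is monotone under increasing $\alpha$, so adding $c=\tfrac12(B.C)^2\ge0$ cannot exit the chamber, and paragraph three is unnecessary. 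Taken at face value it is also incomplete: a parabolic lower bound on $\Phi_{\omega,f_*B}$ tending to $+\infty$ does not rule out that the superlevel set $\{\Phi_{\omega,f_*B}>\omega.f_*B\}$ dips below the threshold somewhere in $[\alpha_0,\alpha_0+c]$, and the ``short case analysis'' that is supposed to handle this is never carried out. Either delete the paragraph or replace it with the monotonicity observation.
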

\begin{proof}
  First, we compute the central charge $Z_Y$.
  We have 
  \[
  Z_Y (E) = -\ch_2(E) + (f_*B + i\omega).\ch_1(E) - \frac{(B +i f^*\omega)^2}{2} \ch_0(E).
  \]
  By \Cref{thm:geom}, it is enough to show that $Z_Y(F)$ is negative for any $F\in \aA_Y$ such that $\Imm Z_Y(F) = 0$.  
  Since $f^*\aA_Y $ lies in $\aA_{B,f^*\omega}$, we have the inequality.
\end{proof}

\begin{rmk}
  Let $\aA_1$ and $\aA_2$ be hearts on $\dD_1$ and $\dD_2$.
  Consider a heart $\aA$ containing $\aA_1$ and $\aA_2$ on $\dD = \langle \dD_1,\dD_2 \rangle$.
  If $\Hom^{\leq 0}(\aA_1,\aA_2) = 0$, then the heart $\aA$ is the same as the glued heart $\aA_1 \circ \aA_2$.
  However, if $\Hom^{\leq 0}(\aA_1,\aA_2) \not = 0$, the heart $\aA$ might not be the glued heart.

  For example, consider the case where $\dD_1 = \langle \oO_C(-1)\rangle$ and $\dD_2 = \dD^b(Y)$ for a blowing up of surface $f \colon X \rightarrow Y$.
  Let $\aA_Y$ be a heart on $\dD^b(Y)$ which contains all skyscraper sheaves $\oO_y$ for any closed point $y \in Y$.
  Set $\aA =  \aA_{Y} *_{\rR_{0}} \langle \oO_C\rangle$ and $\bB = \langle \oO_C(-1)[1] \rangle *_{\rR_{-1}} \aA_Y$.
  A simple computation shows that $\bB$ contains $\oO_C$ and $\aA_Y$.
  However, the hearts $\aA$ and $\bB$ are different.
  Indeed, $\aA_{f^*\omega}$ contains $\oO_C(-1)[1]$.
  On the other hand, the heart $\aA$ does not contain $\oO_C(-1)[1]$.
\end{rmk}

\subsubsection{Stability conditions with hearts of type $(\rR_{k+1})$.}
We discuss the $\rR_{k+1}$-glued stability conditions.
Since the space of stability conditions on a point $\Spec \bbC$ are isomorphic to $\bbC$,
there is a stability condition $\tau_\lambda = (W_\lambda,Q_\lambda)$ for any $\lambda \in \bbC^*$ defined by the following conditions:
\begin{enumerate}
  \item the central charge $W_\lambda$ sends the class $[\bbC]$ to $e^{\lambda}$, and
  \item the subcategory $Q_{\lambda}(\frac{\Imm \lambda}{\pi})$ of the slicing $Q_{\lambda}$ contains the object $\bbC$.
\end{enumerate}

Write the recollement
\[
  \rR_{k+1} \colon
    \begin{tikzcd}
      \dD^b(Y) \arrow[r,"f_{k+1}^L" description] & \dD^b(X) \arrow[l,bend right=40,"f_{k+2}" description] \arrow[r,"\rho_{k+1}"] & \langle \oO_C(k+1)\rangle \arrow[l,bend right=40,"\rho_{k+1}^L" description].
      \end{tikzcd}
\]
Here, $f_{k+1}^L$ is the left adjoint functor of $f_{k+1}$.
\begin{prop}\label{prop:Rprimeglued}
Let $\lambda$ be a complex number with $\Imm \lambda > \pi$, 
and $\sigma_Y= (Z_Y,\aA_Y)$ be a normalized geometric stability condition on $Y$.
Then, there exists the glued stability condition $\sigma_{Y} *_{\rR_{k+1}} \tau_{\lambda}$.
\end{prop}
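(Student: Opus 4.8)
The plan is to construct the glued pre-stability condition $\sigma_Y *_{\rR_{k+1}}\tau_\lambda$ by hand, in the spirit of Theorem~\ref{thm:nondegenerate}: unlike the situation of Theorem~\ref{thm:glued}, here $\Hom^{\le 0}(\aA_Y, Q_\lambda)$ is nonzero — this is a ``non-degenerate'' glued stability condition — so existence must be checked directly. Recall that $\rR_{k+1}$ corresponds to the semiorthogonal decomposition $\dD^b(X)=\langle f_{k+1}^L\dD^b(Y),\langle\oO_C(k+1)\rangle\rangle$, with $\langle\oO_C(k+1)\rangle$ the admissible subcategory. Write $\phi_0:=\Imm\lambda/\pi>1$ and $m:=\lfloor\phi_0\rfloor\ge 1$, so that the heart of $\tau_\lambda$ on $\langle\oO_C(k+1)\rangle$ is $Q_\lambda=\langle\oO_C(k+1)[m]\rangle$, all of whose objects have $W_\lambda$-phase $\phi_0-m\in(0,1]$. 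I would first record the homological input: by the projection formula together with $\oO_X(C)|_C\cong\oO_C(-1)$, $\omega_X\cong f^*\omega_Y\otimes\oO_X(C)$, and Grothendieck duality for $j\colon C\hookrightarrow X$ and $\pi\colon C\to\{o\}$, one gets, for $E\in\dD^b(Y)$,
\[
\RHom_X\!\bigl(j_*\oO_C(k+1),\,f_{k+1}^L E\bigr)=0,\qquad \Ext^i_X\!\bigl(f_{k+1}^L E,\,j_*\oO_C(k+1)\bigr)\cong H^i(\bbL\iota^*E)^\vee,
\]
where $\iota\colon\{o\}\hookrightarrow Y$; for $E$ in the geometric heart $\aA_Y$ the latter is concentrated in degrees $-2,-1,0$. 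Since $m\ge 1$, this yields $\Hom^\bullet_X(j_*Q_\lambda,f_{k+1}^L\aA_Y)=0$ and $\Ext^{\ge 0}_X(f_{k+1}^L\aA_Y,j_*Q_\lambda)=0$, and this is the one place the hypothesis $\Imm\lambda>\pi$ is used.

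Next I would produce the heart and describe its objects. By Theorem~\ref{thm:BBD} the BBD-glued heart $\aA:=\aA_Y *_{\rR_{k+1}}Q_\lambda$ exists. For $E\in\aA$ the functor $\rho_{k+1}$ is t-exact (Lemma~\ref{lem:texact}), so $\rho_{k+1}E\cong\oO_C(k+1)^{\oplus a}[m]$ for some $a\ge 0$; feeding $\Ext^{\ge 0}_X(f_{k+1}^L\aA_Y,j_*Q_\lambda)=0$ into the recollement triangle $f_{k+1}^L(f_{k+1}E)\to E\to j_{RR}(\rho_{k+1}E)\overset{+}{\rightarrow}$ shows that its outer terms already lie in $\aA$, so it is a short exact sequence in $\aA$, with $f_{k+1}E\in\aA_Y$ and (using $j_{RR}(\oO_C(k+1))\cong j_*\oO_C(k)[2]$) terminal term $j_*\oO_C(k)^{\oplus a}[m+2]$. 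Let $Z\colon\Lambda_X\to\bbC$ be the homomorphism with $Z|_{\Lambda_1}=Z_Y$ and $Z|_{\Lambda_2}=W_\lambda$; applying $Z$ to that sequence gives $Z(E)=Z_Y(f_{k+1}E)+a\,Z\bigl(j_*\oO_C(k)[m+2]\bigr)$, whose first summand lies in $\bbH\cup\bbR_{<0}$ because $Z_Y$ is a stability function, while a Chern-character computation (via $[j_*\oO_C(k+1)]=[j_*\oO_C(k)]+[\oO_p]$ and $Z(\oO_p)=-1$ for $p\in C$) evaluates $Z\bigl(j_*\oO_C(k)[m+2]\bigr)=(-1)^m(1+e^\lambda)$, with imaginary part $e^{\Ree\lambda}\sin\!\bigl(\pi(\phi_0-m)\bigr)\ge 0$; since $m\ge 1$ this forces $Z(E)\in\bbH\cup\bbR_{<0}$ for all $0\ne E\in\aA$. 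The Harder--Narasimhan property then follows from Noetherianity of $\aA$ (a recollement variant of Proposition~\ref{prop:noeth}, the pieces $f_{k+1}^L\aA_Y$ and $j_*\langle\oO_C(k)[m+2]\rangle$ being Noetherian) together with discreteness of $\Imm Z$ on the relevant sublattice, so $(Z,\aA)$ is the desired glued pre-stability condition; its support property follows in turn from a variant of Proposition~\ref{prop:support}, as $Z(Q_\lambda)$ lies on a single ray and every $\sigma$-semistable object is controlled through the above short exact sequence.

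I expect the main obstacle to be the verification that the glued central charge is a genuine stability function on $\aA$. Because $\Hom^{\le 0}(\aA_Y,Q_\lambda)\ne 0$, the convenient description $\aA=\aA_Y\circ Q_\lambda$ is unavailable, so one cannot simply split objects as $i_2\rho_2E\to E\to i_1\lambda_1E$; instead the usable short exact sequence must be extracted from the recollement triangle via the Ext-vanishing of the first step, and it is exactly $\Imm\lambda>\pi$, i.e.\ $m\ge 1$, that makes the value $Z(j_*\oO_C(k)[m+2])$ land in the closed upper half plane. By contrast, once the structure of $\aA$ is in hand, existence of the heart is automatic (Theorem~\ref{thm:BBD}) and the Noetherianity, HN, and support-property arguments are routine, the last because $Q_\lambda$ is one-dimensional.
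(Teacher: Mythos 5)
There is a genuine gap, and it originates in the very first step: you have the shift on the heart of $\tau_\lambda$ going the wrong way. Since $\bbC$ lies in $Q_\lambda(\phi_0)$ with $\phi_0=\Imm\lambda/\pi>1$, and $Q_\lambda(\phi)[k]=Q_\lambda(\phi+k)$, the generator of the heart $Q_\lambda(0,1]$ is $\bbC[n]$ for the unique integer $n\in(-\phi_0,\,1-\phi_0]$, which is \emph{negative}; embedded in $\dD^b(X)$ the heart is $\langle\oO_C(k+1)[n]\rangle$ with $n<0$, exactly as stated in the paper's list of glued hearts of type $(\rR_{k+1})$. You instead take $\langle\oO_C(k+1)[m]\rangle$ with $m=\lfloor\phi_0\rfloor\ge 1$, i.e.\ a positive shift (you have computed the shift needed to go \emph{from} the heart \emph{to} phase $\phi_0$, not the reverse). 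This reversal is precisely what leads you to believe the gluing is ``non-degenerate'': with your $m\ge 1$ one has $\Hom_X(f_{k+1}^L\oO_o,\oO_C(k+1)[1])\cong\Ext^1_Y(\oO_o,\oO_o)\ne 0$, so even your claimed vanishing $\Ext^{\ge 0}_X(f_{k+1}^L\aA_Y,j_*Q_\lambda)=0$ fails; whereas with the correct $n<0$ the adjunction $\Hom_X(f_{k+1}^LE,\oO_C(k+1)[n+i])\cong\Hom_Y(E,\oO_o[n+i])$ gives $\Hom^{\le 0}(f_{k+1}^L\aA_Y,\oO_C(k+1)[n])=0$ outright, since $E$ and $\oO_o$ both lie in the heart $\aA_Y$ and $n+i<0$.

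With the sign corrected, the direct construction you set up is unnecessary, and as written it is unreliable downstream: the short exact sequence with terminal term $j_*\oO_C(k)^{\oplus a}[m+2]$, the value $(-1)^m(1+e^\lambda)$, and the claim that its imaginary part is nonnegative all depend on the wrong $m$, so the verification that $Z$ is a stability function does not go through as stated. The paper's proof is a direct application of the Collins--Polishchuk criterion: condition (1) of Theorem~\ref{thm:glued} is the vanishing above; condition (2) is checked by choosing $a\in(0,\phi_0+n)$ so that $\rho_{k+1}^LQ_\lambda(a,a+1]$ is still $\langle\oO_C(k+1)[n]\rangle$ while $\pP_Y(a,a+1]$ still contains all skyscrapers; this yields the glued pre-stability condition, and Proposition~\ref{prop:support} then supplies the support property. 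The hypothesis $\Imm\lambda>\pi$ enters exactly as the condition forcing $n<0$ --- you located the right pressure point, but with the orientation reversed.
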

\begin{proof}
  We will check the conditions in Theorem \ref{thm:glued}.
  First, we show the condition (1), i.e. 
  \[\Hom^{\le 0}(f_{k+1}^L\aA_Y,\rho_{k+1}^L(Q_{\lambda}(0,1]) )= 0.\]
  Let $n$ be an integer such that $0 < \frac{\Imm \lambda}{\pi} + n \leq 1$.
  We have $\rho_{k+1}^L(Q_{\lambda}(0,1]) = \langle \oO_C(k+1)[n]\rangle$.
  It follows from  adjunction that 
  \begin{align*}
    \Hom^{\le 0}(f_{k+1}^L\aA_Y, \rho_{k+1}^L Q_{\lambda}(0,1]) &= \Hom^{\le 0}(\aA_Y,f_{k+1}( \oO_C(k+1)[n]))\\
    &= \Hom^{\le 0}(\aA_Y,f_*\oO_C[n])\\
    &= \Hom^{\leq 0}(\aA_Y,\oO_o[n]).
  \end{align*}
  When $n <0$, we have $\Hom^{\le 0}(\aA_Y,\oO_o[n]) = 0$.
  Next, we show that the condition (2) in Theorem \ref{thm:glued} holds.
  Choose a real number $a$ with $0 < a < \frac{\Imm \lambda}{\pi} + n \leq 1$. 
  In this case, $\rho_{k+1}^L (Q_{\lambda}(a,a+1]) = \langle \oO_C(k+1)[n]\rangle$.
  Let $\pP_Y$ be a slicing corresponding to $\sigma_Y$.
  Since $\pP_Y(a,a+1]$ contains any $\oO_y$ for any closed point $y \in Y$,
  we have $\Hom^{<0}(\pP_Y(a,a+1],\oO_y) = 0$.
  If $n < 0$, $\Hom^{\leq 0}(\pP_Y(a,a+1],\oO_C(k+1)[n]) = 0$.
  Therefore, there exists a glued stability condition by Theorem \ref{thm:glued}.
  By Proposition \ref{prop:support}, the glued pre-stability condition $\sigma_Y *_{\rR_{k+1}} \tau_{\lambda}$ satisfies the support property.
\end{proof}

\begin{defi}\label{def:region_prime}
  Let $U(\rR_{k+1})$ be the set of glued stability conditions $\sigma_{Y} *_{\rR_{k+1}} \tau_{\lambda}$ for $\sigma_Y \in U(Y)$ and $\Imm \lambda > \pi$.
  For an interval $I$ in $\bbR$, we define $U_I(\rR_{k+1}) \subset \Stab_n(X)$ to be 
  \[
  \{ \sigma_Y *_{\rR_{k+1}} \tau_{\lambda} \in U(\rR_{k+1}) \colon \Imm \lambda \in \pi I\}.
  \]
\end{defi}
The set $U(\rR_{k+1}) \subset \Stab_n(X)$ is called a glued region with respect to $\rR_{k+1}$.

\begin{prop}\label{prop:stableobject}
Set $I = (1,2)$.
Let $\sigma = (Z,\aA)$ be a glued stability condition in $U_I(\rR_{k+1})$.
Then, $\oO_C(k)[1]$ is a $\sigma$-stable object in $\aA$.
  \end{prop}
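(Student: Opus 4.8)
The plan is to exhibit $\oO_C(k)[1]$ as an object of $\aA$ of length two whose unique proper nonzero subobject has strictly smaller $\sigma$-phase. First I would unwind the hypotheses. By \Cref{def:region_prime} and \Cref{prop:Rprimeglued}, write $\sigma = \sigma_Y *_{\rR_{k+1}} \tau_{\lambda}$ with $\sigma_Y = (Z_Y,\aA_Y)$ a normalized geometric stability condition on $Y$ and $\lambda \in \bbC$ with $\Imm\lambda \in (\pi,2\pi)$; then the integer $n$ with $0 < \Imm\lambda/\pi + n \le 1$ is $n=-1$, so $\aA = \aA_Y *_{\rR_{k+1}} \langle\oO_C(k+1)[-1]\rangle$ and the simple object $S := \oO_C(k+1)[-1]$ of the $\langle\oO_C(k+1)\rangle$-factor has $Z(S) = -W_{\lambda}(\oO_C(k+1)) = -e^{\lambda}$.

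Second, I would produce the defining short exact sequence. Recall the standard description of $\bbL f^{*}\oO_o$: it is supported on $C$ with $\hH^{0}(\bbL f^{*}\oO_o)\cong \oO_C$ and $\hH^{-1}(\bbL f^{*}\oO_o)\cong \oO_C(-1)$ (obtained from the Koszul resolution of $\oO_o$ on $Y$, and pinned down by $\ch(\bbL f^{*}\oO_o) = f^{*}[\mathrm{pt}]_Y = [\mathrm{pt}]_X$). Tensoring the truncation triangle $\oO_C(-1)[1]\to \bbL f^{*}\oO_o\to \oO_C\overset{+}{\to}$ by $\oO_X(-(k+1)C)$ and rotating yields a distinguished triangle
\[
\oO_C(k+1)[-1] \longrightarrow \oO_C(k)[1] \longrightarrow f^{L}_{k+1}\oO_o \overset{+}{\longrightarrow}.
\]
Both outer terms lie in $\aA$ — the first generates $\langle\oO_C(k+1)[-1]\rangle$, and the second is $i_1(\oO_o)$ with $\oO_o\in\aA_Y$ — so taking $\aA$-cohomology of the triangle shows $\oO_C(k)[1]\in\aA$ and gives a short exact sequence $0\to S\to\oO_C(k)[1]\to T\to 0$ in $\aA$, where $T:=f^{L}_{k+1}\oO_o$. (Equivalently, one computes $\rho_{k+1}(\oO_C(k)[1])\cong\oO_C(k+1)[-1]$ from $\RHom_X(\oO_C(k+1),\oO_C(k))\cong\bbC[-2]$ and $f_{k+2}(\oO_C(k)[1])\cong \bbR f_*\oO_C(-2)[1]\cong\oO_o$, then applies \Cref{lem:equiv} and the gluing exact sequence of \Cref{lem:texact}.)

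Third, I would check that $S$ and $T$ are non-isomorphic simple objects of $\aA$ and that the sequence is non-split. The object $\oO_C(k+1)[-1]$ is simple in $\langle\oO_C(k+1)[-1]\rangle\cong\Vect$, which is closed under subobjects in $\aA$ by \Cref{lem:closedunder} (here $\Hom^{\le0}(f^{L}_{k+1}\aA_Y,\langle\oO_C(k+1)[-1]\rangle)=0$, shown as in the proof of \Cref{prop:Rprimeglued}), so $S$ is simple in $\aA$. Likewise $\oO_o$ is $\sigma_Y$-stable of phase one, hence has no proper nonzero quotient and so is simple in $\aA_Y$; since $i_1$ is fully faithful and t-exact and $i_1(\aA_Y)$ is closed under subobjects in $\aA$ (\Cref{lem:closedunder}), $T$ is simple in $\aA$. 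They are non-isomorphic because $\phi_{\sigma}(T)=1$ (as $Z(T)=Z_Y(\oO_o)=-1$) while $\phi_{\sigma}(S)=\Imm\lambda/\pi-1<1$. Finally the sequence is non-split: a splitting would give a nonzero morphism $T\to\oO_C(k)[1]$ in $\dD^b(X)$, but by adjunction
\[
\Hom_X(f^{L}_{k+1}\oO_o,\oO_C(k)[1])\cong\Hom_Y(\oO_o,f_{k+1}(\oO_C(k)[1]))=\Hom_Y(\oO_o,\bbR f_*\oO_C(-1)[1])=0.
\]

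It then follows that $\oO_C(k)[1]$ has length two in $\aA$ with composition factors $S$ and $T$, and — being a non-split extension of one simple object by another, non-isomorphic one — its only subobjects in $\aA$ are $0$, $S$ and $\oO_C(k)[1]$. To conclude I would compare phases: from the sequence $Z(\oO_C(k)[1])=Z(S)+Z(T)=-e^{\lambda}-1$, and since $\arg(-e^{\lambda})=\Imm\lambda-\pi\in(0,\pi)$ and $\arg(-1)=\pi$ differ by strictly less than $\pi$, the sum's argument lies strictly between them, whence $\phi_{\sigma}(S)<\phi_{\sigma}(\oO_C(k)[1])<1=\phi_{\sigma}(T)$. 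Thus the unique proper nonzero subobject of $\oO_C(k)[1]$ has strictly smaller phase, i.e. $\oO_C(k)[1]$ is $\sigma$-stable. The only place that needs genuine care is the second step — producing the precise triangle, equivalently identifying $\bbL f^{*}\oO_o$ and $\rho_{k+1}(\oO_C(k)[1])$, and tracking the shift $n=-1$ forced by $\Imm\lambda\in(\pi,2\pi)$; after that the stability statement is immediate.
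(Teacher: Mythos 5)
Your proof is correct and follows essentially the same route as the paper: both establish $\oO_C(k)[1]\in\aA$ via the triangle $\oO_C(k+1)[-1]\to\oO_C(k)[1]\to f_{k+1}^L\oO_o$, analyze subobjects through the gluing structure (your length-two/non-split-extension argument is equivalent to the paper's case analysis of $\rho_{k+1}(E)$ and $f_{k+2}(E)$ for a subobject $E$), and conclude by the same phase comparison $\phi_\sigma(\oO_C(k+1)[-1])<\phi_\sigma(\oO_C(k)[1])$. Your write-up is in fact somewhat more careful than the paper's, e.g.\ in justifying that $\oO_o$ and $\oO_C(k+1)[-1]$ are simple in the glued heart and that the extension is non-split.
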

\begin{proof}
  Let $\sigma = \sigma_Y *_{\rR_{k+1}} \tau_{\lambda}$ for $\sigma_Y \in U(Y)$ and $\Imm \lambda > \pi$.
  First, we show that $\oO_C(k)[1]$ lies in the heart $\aA$.
  Since it follows from the Serre duality that $\Hom^*_X(\oO_C(k+1),\oO_C(k)[2]) =\bbC $, $\rho_{k+1}(\oO_C(k)[1]) = \oO_C(k+1)[-1]$.
  There is a triangle in $\dD^b(X)$ 
  \[
    \oO_C(k+1)[-1] \rightarrow \oO_C(k)[1] \rightarrow f_{k+1}^L \oO_o \overset{+}{\rightarrow} 
  \]
  since we have $f_{k+1}^Lf_{k+2}(\oO_C(k)[1]) = f_{k+1}^L\oO_o$.
  Thus, $\oO_C(k)[1] \in \aA$.
  Next, we show that $\oO_C(k)[1]$ is $\sigma$-stable.
  Let $E$ be a subobject of $\oO_C(k)[1]$ in $\aA$.
  By Lemma \ref{lem:texact}, there exist two injections 
  \[
    f_{k+2}(E) \hookrightarrow \oO_o, \ \rho_{k+1}(E) \hookrightarrow \oO_C(k+1)[-1].
  \]
  Since both objects $\oO_o$ and $\oO_C(k+1)[-1]$ are simple objects,
  the object $E$ is isomorphic to $\oO_C(k+1)[-1]$ , $f_{k+1}^L(\oO_o)$ or an extension of them.
  We claim that $E$ is isomorphic to $\oO_C(k+1)[-1]$ or $\oO_C(k)[1]$.
  Indeed, since $\Hom^*(f_{k+1}^L \oO_o,\oO_C(k+1)[-1]) = 0$, $E$ is not isomorphic to $f_{k+1}^L(\oO_o)$.
  Therefore, $\rho_{k+1}(E)$ is non-zero.
  There is an exact sequence
  \[
  0 \rightarrow \oO_C(k+1)[-1] \rightarrow E \rightarrow f_{k+1}^Lf_{k+2}(E) \rightarrow 0.
  \]
  If $f_{k+1}^Lf_{k+2}(E)$ is non-zero, then $E$ is isomorphic to $\oO_C(k)[1]$.
  Therefore, we assume that $f_{k+1}^Lf_{k+2}(E)$ is zero.
  By the construction, we have that 
  \[
    \lambda -\pi = \arg(Z({\oO_C(k+1)[-1]}))  <\arg(Z({\oO_C(k+1)[-1]})-1)= \arg(Z({\oO_C(k)[1]})).
  \]
  Therefore, $\oO_C(k)[1]$ is $\sigma$-stable.
\end{proof}

The following lemma is needed to compare the glued stability conditions with the $(C_k)$-type stability conditions.
\begin{lem}\label{lem:pushforword}
  Let $\lambda = a + i\pi$ be a complex number with a real number $a$,
   and $\sigma_Y = (Z_Y,\aA_Y)$ be a normalized geometric stability condition on $Y$.
   Fix a positive real number $\varepsilon_0$.
  Consider a family of stability conditions $\sigma_{\varepsilon} = \sigma_Y *_{\rR_{k+1}} \tau_{\lambda+i\varepsilon}$ for $\varepsilon \in (0,\varepsilon_0]$ in \cref{prop:Rprimeglued}.
  Assume an object $E$ is $\sigma_{\varepsilon}$-stable for all $\varepsilon \in (0,\varepsilon_0]$.
  If $E$ lies in $\aA_{\varepsilon}$ for all $\varepsilon \in (0,\varepsilon_0]$, then  $f_{k+1}E$ lies in $\aA_Y$ or $E$ is isomorphic to $\oO_C[-1]$.
  Here, $\aA_{\varepsilon}$ is the heart of $\sigma_{\varepsilon}$.
\end{lem}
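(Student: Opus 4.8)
The plan is to read off the structure of $E$ from the recollement $\rR_{k+1}$ and then apply the pushforward $f_{k+1}$. First I would observe that, after shrinking $\varepsilon_0$ so that $\varepsilon_0\le\pi$, the generator $\bbC$ of $\tau_{\lambda+i\varepsilon}$ has phase $1+\varepsilon/\pi\in(1,2]$, so the heart of $\tau_{\lambda+i\varepsilon}$ is $\langle\bbC[-1]\rangle$ for every $\varepsilon\in(0,\varepsilon_0]$; hence $\aA_\varepsilon$ is a \emph{single} heart $\aA:=\aA_Y*_{\rR_{k+1}}\langle\oO_C(k+1)[-1]\rangle$, and only the central charge varies with $\varepsilon$. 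By \Cref{prop:Rprimeglued} this is an $\rR_{k+1}$-glued stability condition, so $\Hom^{\leq0}(f_{k+1}^L\aA_Y,\langle\oO_C(k+1)[-1]\rangle)=0$ and \Cref{lem:texact} is available. The hypotheses give $E\in\aA$ with $E$ $\sigma_\varepsilon$-stable; only the resulting indecomposability of $E$ will be used.

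For $E\in\aA$, \Cref{lem:texact} provides the exact triangle
\[
\rho_{k+1}E\longrightarrow E\longrightarrow f_{k+1}^Lf_{k+2}E\xrightarrow{\delta}\rho_{k+1}E[1],
\]
with $f_{k+2}E=\lambda_1E\in\aA_Y$. Since $\oO_C(k+1)$ is an exceptional object of $\dD^b(X)$, so that $\langle\oO_C(k+1)\rangle\cong\dD^b(\Vect)$ (see \Cref{thm:sodblowup}), and $\rho_{k+1}E$ lies in its heart $\langle\oO_C(k+1)[-1]\rangle$, we have $\rho_{k+1}E\cong\oO_C(k+1)^{\oplus r}[-1]$ for some $r\ge0$. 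Applying $f_{k+1}=\bbR f_*(-\otimes\oO_X((k+1)C))$ to the triangle, and using the projection-formula identities $f_{k+1}f_{k+1}^L\cong\mathrm{id}$ (from $\bbR f_*\oO_X\cong\oO_Y$) and $f_{k+1}(\oO_C(k+1))\cong\bbR f_*\oO_C\cong\oO_o$ (from $\oO_X(C)|_C\cong\oO_C(-1)$, whence $\oO_C(k+1)\otimes\oO_X((k+1)C)\cong\oO_C$), I obtain the triangle
\[
\oO_o^{\oplus r}[-1]\longrightarrow f_{k+1}E\longrightarrow f_{k+2}E\xrightarrow{\partial}\oO_o^{\oplus r},
\]
where $\partial=f_{k+1}(\delta)$ under these identifications and $f_{k+2}E\in\aA_Y$. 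The long exact $\aA_Y$-cohomology sequence gives $\hH^{-1}(f_{k+1}E)=0$, $\hH^0(f_{k+1}E)=\Ker\partial$ and $\hH^1(f_{k+1}E)=\operatorname{coker}\partial$, so $f_{k+1}E\in\aA_Y$ exactly when $\partial$ is an epimorphism; in particular $r=0$ forces $f_{k+1}E\cong f_{k+2}E\in\aA_Y$.

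Next I would handle the case where $\partial$ is not an epimorphism, say $\operatorname{coker}\partial\cong\oO_o^{\oplus s}$ with $s\ge1$. As $\oO_o^{\oplus r}$ is semisimple, $\Imm\partial$ is a direct summand, so there is a splitting $\oO_o^{\oplus r}\cong\oO_o^{\oplus(r-s)}\oplus\oO_o^{\oplus s}$ through whose first factor $\partial$ factors. Because $f_{k+1}$ is additive and the adjunction $\Hom_X(f_{k+1}^L(-),-)\cong\Hom_Y(-,f_{k+1}(-))$ is natural, $f_{k+1}$ sends the corresponding decomposition $\oO_C(k+1)^{\oplus r}\cong\oO_C(k+1)^{\oplus(r-s)}\oplus\oO_C(k+1)^{\oplus s}$ to the one above, so $\delta$ factors as $(\delta',0)$ with $\delta'\colon f_{k+1}^Lf_{k+2}E\to\oO_C(k+1)^{\oplus(r-s)}$. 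Then $\Cone(\delta)\cong\Cone(\delta')\oplus\oO_C(k+1)^{\oplus s}$, so
\[
E\cong\Cone(\delta)[-1]\cong\Cone(\delta')[-1]\oplus\oO_C(k+1)^{\oplus s}[-1].
\]
Since $E$ is indecomposable and $\oO_C(k+1)^{\oplus s}[-1]\ne0$, we get $\Cone(\delta')[-1]=0$, i.e.\ $E\cong\oO_C(k+1)^{\oplus s}[-1]$, and indecomposability once more forces $s=1$, hence $E\cong\oO_C(k+1)[-1]$. Together with the previous paragraph this yields the dichotomy $f_{k+1}E\in\aA_Y$ or $E\cong\oO_C(k+1)[-1]$.

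I expect the main obstacle to be the last step: one must check that the identification $\delta\leftrightarrow\partial$ via adjunction is compatible with the direct-sum decompositions of $\oO_C(k+1)^{\oplus r}$ and of $\oO_o^{\oplus r}$, so that non-surjectivity of $\partial$ genuinely splits a copy of $\oO_C(k+1)[-1]$ off $E$ and the indecomposability of a stable object applies. The supporting identities $f_{k+1}f_{k+1}^L\cong\mathrm{id}$ and $f_{k+1}(\oO_C(k+1))\cong\oO_o$ (with the twist bookkeeping through $\oO_X(C)|_C\cong\oO_C(-1)$), and the long exact cohomology sequence characterising when $f_{k+1}E\in\aA_Y$, are routine but must be set up carefully.
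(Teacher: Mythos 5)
Your proposal is correct, and its skeleton coincides with the paper's: both arguments start from the gluing triangle $\rho_{k+1}E \to E \to f_{k+1}^L f_{k+2}E$ with $\rho_{k+1}E \cong \oO_C(k+1)^{\oplus r}[-1]$ and $f_{k+2}E \in \aA_Y$, apply $f_{k+1}$ using $f_{k+1}f_{k+1}^L \cong \mathrm{id}$ and $f_{k+1}(\oO_C(k+1)) \cong \oO_o$, and read off from the long exact sequence in $\aA_Y$-cohomology that $f_{k+1}E \in \aA_Y$ exactly when the connecting map $\partial$ is an epimorphism. The two proofs part ways in the remaining case. The paper (after reducing to $k=-1$ by tensoring with $\oO(kC)$) studies the cone $K$ of $f^*f_*E \to E$, extracts from its $\aA$-cohomology a nonzero morphism $E \to \oO_C[-1]$, and uses stability of $E$ for the whole family $\varepsilon \in (0,\varepsilon_0]$ to conclude that this morphism is an isomorphism. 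You instead use that $\oO_o^{\oplus r}$ is semisimple in $\aA_Y$ (as $\oO_o$ is simple there) to split off the image of $\partial$, transport the splitting through the natural bijection $\Hom_X(f_{k+1}^Lf_{k+2}E,-) \cong \Hom_Y(f_{k+2}E,f_{k+1}(-))$ together with the identification $\mathrm{End}(\oO_C(k+1)^{\oplus r}) \cong \mathrm{End}(\oO_o^{\oplus r}) \cong \mathrm{Mat}_r(\bbC)$ to write $\delta=(\delta',0)$, and thereby exhibit $\oO_C(k+1)^{\oplus s}[-1]$ as a nonzero direct summand of $E$; indecomposability of the stable object $E$ then forces $E \cong \oO_C(k+1)[-1]$. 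This endgame is more economical: it needs stability only at a single $\varepsilon$ (really only $\mathrm{End}(E)=\bbC$), whereas the paper's final step leans on the whole family. Two minor remarks: your conclusion $E \cong \oO_C(k+1)[-1]$ is the correct general-$k$ form of the dichotomy and agrees with the $\oO_C[-1]$ of the statement after the paper's reduction to $k=-1$; and your preliminary observation that $\aA_\varepsilon$ is a single heart once $\varepsilon_0 \le \pi$ is right and is precisely what makes one gluing triangle serve all $\varepsilon$.
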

\begin{proof}
  By tensoring $\oO(kC)$, it is enough to show the statement for $k=-1$.
  Let $\aA$ be the heart $\aA_{\varepsilon}$ for a fixed real number $\varepsilon \in (0,\varepsilon]$.
  Let $E$ be a $\sigma_{\varepsilon}$-stable object in $\aA_{\varepsilon}$ for all $\varepsilon \in (0,\varepsilon_0]$.
  There is an exact sequence in $\aA$:
  \[
  0 \rightarrow F \rightarrow E \rightarrow f^* f_{1}E \rightarrow 0.
  \]Here, $F$ is a direct sum of $\oO_C[-1]$.
  Since $f_*F[1]$ is a skyscraper sheaf,
  applying the functor $f_*$ gives an exact triangle in $\dD^b(Y)$:
  \[
  (\oO_o[-1])^n \rightarrow f_*E \rightarrow f_*f^*f_1(E) \rightarrow (\oO_o)^n.
  \]
  Here, $n$ is a non-negative integer, and $o$ is a blowup locus of $f$.
  Since $f_*f^*f_1(E) \cong f_1(E)$ lies in $\aA_Y$, 
  applying the cohomology functor $\hH^*_{\aA_Y}(-)$ gives an exact sequence
  \[
  0 \rightarrow \hH^0_{\aA_Y}(f_*E) \rightarrow f_1(E) \rightarrow \oO_o^n \rightarrow \hH^1_{\aA_Y}(f_*E) \rightarrow 0.
  \]
  Thus, there exists a surjection $\oO_o^n \rightarrow \hH^1_{\aA_Y}(f_*E)$.
  Since $\oO_o$ is a simple object in $\aA_Y$, 
  $\hH^1_{\aA_Y}(f_*E)$ is isomorphic to $\oO_o^k$ for some $k$.

  Suppose that $\hH^1_{\aA_Y}(f_*E)$ is non-zero.
  Define  $K$ to be the cone of $f^*f_*E \rightarrow E$.
  Suppose that $K$ is zero.
  Then, $E$ is isomorphic to $f^*f_*E$.
  Since $f^*$ is t-exact and fully faithful, $\hH^1_{\aA_Y}(f_*E)$ is zero.
  Therefore, $K$ is non-zero. 

  Next, we consider the case where $K$ is non-zero.
  The object $K$ lies in $\langle \oO_C(-1)\rangle$.
  Since the cohomology object $\hH_{\aA_Y}^i(f_*E)$ vanishes for $i \not = 0,1$,
  it follows from t-exactness that $\hH^i(f^*f_*E) = 0$ for $i \not = 0,1$.
  By applying the cohomology functor $\hH_{\aA}^*$ to the triangle $f^*f_*E \rightarrow E \rightarrow K$,
  $\hH_{\aA_Y}^i(K)$ vanishes for $i \not = -1,0$.
  Let $h\colon E \rightarrow \hH_{\aA}^0(K)$ be a map such that there is an exact sequence
  \[
  0 \rightarrow \hH_{\aA}^{-1}(K)  \rightarrow \hH_{\aA}^{0}(f^*f_*E) \rightarrow E \overset{h}{\rightarrow} 
  \hH^0_{\aA}(K) \rightarrow \hH^1_{\aA}(f^*f_*E) \rightarrow 0.
  \]
  Since $\hH_{\aA}^1(f^*f_*E)$ is isomorphic to a direct sum of  $f^*\oO_o$, the object $\hH_{\aA}^0(K)$ is not zero.
  The object $\hH_{\aA}^0(K)$ is isomorphic to $\oO_C(-1)[1]$, and the map $\hH^0_{\aA}(K) \rightarrow \hH^1_{\aA}(f^*f_*E)$ is not an isomorphism.
  Thus, the map $h$ is non-zero. 
  There is an exact sequence in ${\aA}$:
  \[
  0 \rightarrow \oO_C[-1] \rightarrow \oO_C(-1)[1] \rightarrow f^*\oO_o \rightarrow 0.
  \]
  Since $\hH^0_{\aA}(K)$ is isomorphic to a direct sum of  $\oO_C[-1]$, the map $h$ induces a map $i\colon E \rightarrow \oO_C[-1]$.
  Since $E$ is $\sigma_{\varepsilon}$-stable for all $\varepsilon \in (0,\varepsilon_0]$, the map $i$ is an isomorphism.
  Then, the statement holds.
\end{proof}

\subsubsection{Stability conditions with hearts of type $(\lL_k)$.}
We discuss the $\lL_k$-glued stability conditions.
The recollement $\lL_k$ is defined by
\[
  \lL_k \colon
      \begin{tikzcd}
        \langle \oO_C(k)\rangle \arrow[r,"j" description] & \dD^b(X) \arrow[l,bend right=40,"\rho_{k+2}" description] \arrow[r,"f_{k+1}"] & \dD^b(Y)  \arrow[l,bend right=40,"f^L_{k+1}" description].
        \end{tikzcd}
\]
\begin{prop}
  Let $\lambda$ be a complex number with $\Imm \lambda \leq -\pi$,
  and $\sigma_Y= (Z_Y,\aA_Y)$ be a normalized geometric stability condition on $Y$.
  Then, $\tau_{\lambda} *_{\lL_k} \sigma_Y$ defines a stability condition.
\end{prop}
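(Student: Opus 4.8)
The plan is to run the argument of \Cref{prop:Rprimeglued} for the recollement $\lL_k$ with the two pieces in the opposite order: put $\sigma_1=\tau_\lambda$ on $\langle\oO_C(k)\rangle$ and $\sigma_2=\sigma_Y$ on $\dD^b(Y)$, verify the two hypotheses of \Cref{thm:glued} for the semiorthogonal decomposition $\dD^b(X)=\langle\langle\oO_C(k)\rangle,\dD^b(Y)\rangle$ underlying $\lL_k$, and then upgrade the resulting glued pre-stability condition to a stability condition using \Cref{prop:support}.

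Since $\Imm\lambda\le-\pi$, the object $\oO_C(k)$ lies in phase $\Imm\lambda/\pi\le-1$ of the slicing $Q_\lambda$, so the heart $Q_\lambda(0,1]$ is generated by $\oO_C(k)[m]$ for the unique integer $m=m(\lambda)\ge 2$ with $\Imm\lambda/\pi+m\in(0,1]$. The computational input is the identity $f_{k+2}(\oO_C(k)[m])\cong\oO_o[m-1]$, which holds because $\oO_C(k)\otimes\oO_X((k+2)C)$ restricts on $C$ to $\oO_{\bbP^1}(-2)$ and $\bbR f_*\oO_{\bbP^1}(-2)\cong\oO_o[-1]$. As $f_{k+2}$ is the left adjoint of the embedding $f^L_{k+1}\colon\dD^b(Y)\hookrightarrow\dD^b(X)$ occurring in $\lL_k$, adjunction gives
\[
  \Hom_X(\oO_C(k)[m],\, f^L_{k+1}(F)[j])\;\cong\;\Hom_Y(\oO_o,\, F[j-m+1])
\]
for every $F\in\dD^b(Y)$ and $j\in\bbZ$. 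For hypothesis (1) of \Cref{thm:glued}, $F$ runs over $\aA_Y=\pP_Y(0,1]$ and $j\le0$, so $j-m+1\le-1<0$ and the right-hand side vanishes since $\oO_o$ and $F$ both lie in the heart $\aA_Y$. For hypothesis (2), choose $a\in(0,\Imm\lambda/\pi+m)$; then $Q_\lambda(a,a+1]$ is still generated by $\oO_C(k)[m]$, and it suffices to check $\Hom_Y(\oO_o,F[l])=0$ for $l\le-1$ and $F$ a $\sigma_Y$-semistable object of phase $\phi_F\in(a,a+1]$, which follows from the slicing axiom because $\phi(F[l])=\phi_F+l\le a<1=\phi(\oO_o)$. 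Hence \Cref{thm:glued} produces the glued pre-stability condition $\sigma=\tau_\lambda*_{\lL_k}\sigma_Y$, which is reasonable.

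It remains to establish the support property. Hypothesis (1) in particular gives $\Hom^{\le0}(\aA_1,\aA_2)=0$ for $\aA_1=\langle\oO_C(k)[m]\rangle$ and $\aA_2=f^L_{k+1}(\aA_Y)$, so $\sigma$ is $\lL_k$-glued, and its central charge sends $\aA_1$ to the ray of phase $\Imm\lambda/\pi+m\in(0,1]$. When this phase is strictly less than $1$, i.e.\ $\Imm\lambda\notin\pi\bbZ$, we have $Z(\aA_1)\subset\bbH\setminus\bbH_\theta$ for any $\theta$ between that phase and $1$, and the support property follows from the proposition immediately after \Cref{prop:support}. The remaining cases $\Imm\lambda\in\pi\bbZ_{\le-1}$, in which $\oO_C(k)[m]$ sits at phase exactly $1$, are the delicate point and I expect them to be the main obstacle: here one cannot invoke \Cref{prop:support} or its variant directly, and instead I would rerun the proof of \Cref{prop:support} using the exact sequence $0\to i_2\rho_2E\to E\to i_1\lambda_1E\to0$ in the glued heart, noting that $i_1\lambda_1E$ is a direct sum of copies of the phase-$1$ object $\oO_C(k)[m]$, that $\sigma$-semistability of $E$ constrains the phases of the Harder--Narasimhan factors of $i_2\rho_2E$ so that $|Z(E)|$ cannot become small relative to $|Z(i_2\rho_2E)|+|Z(i_1\lambda_1E)|$, and that $\|E\|\le\|i_2\rho_2E\|+\|i_1\lambda_1E\|$ together with the support bounds for $\sigma_Y$ and $\tau_\lambda$ then yields the desired estimate.
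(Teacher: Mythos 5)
Your verification of the two hypotheses of \Cref{thm:glued} is exactly the paper's argument: the same adjunction $\Hom(E,f^L_{k+1}F)\cong\Hom(f_{k+2}E,F)$, the same computation $f_{k+2}(\oO_C(k)[m])\cong\oO_o[m-1]$ with $m\ge 2$, and the same phase comparison for condition (2). Where you differ is in the support property, and here you are actually more careful than the paper. The paper's proof of this proposition says only that ``the same proof as in \Cref{prop:Rprimeglued} works,'' but the final step of that proof invokes \Cref{prop:support}, which requires $Z(\aA_2)\subset\bbH_\theta$; for $\lL_k$ the second component is $f^L_{k+1}\aA_Y$, whose central charge image fills out $\bbH$, so one must instead use the unnumbered companion proposition with $Z(\aA_1)\subset\bbH\setminus\bbH_\theta$ --- precisely the route you take for $\Imm\lambda\notin\pi\bbZ$.

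The ``delicate case'' you flag, $\Imm\lambda\in\pi\bbZ_{\le-1}$ (so $\oO_C(k)[m]$ sits at phase exactly $1$ and $Z(\aA_1)\subset\bbR_{<0}$), is a genuine issue: neither \Cref{prop:support} nor its companion applies there as stated, since all semistable factors then only lie in the non-strictly-convex cone of phases $(0,1]$ and the elementary lower bound $|\sum z_i|\ge K(\theta)\sum|z_i|$ breaks down. Note that this case is squarely inside the hypothesis $\Imm\lambda\le-\pi$ (and is needed later, e.g.\ for $W_{-1}\subset U(\lL_{-1})$ in \Cref{thm:gluingregion}), and the paper's one-line proof does not address it either; so this is a shared gap rather than a defect of your argument relative to the paper's. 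Your sketch for closing it --- constraining the HN factors of $i_2\rho_2E$ via semistability of $E$ --- is the right starting point but is not yet a proof: semistability only gives $\phi^+(i_2\rho_2E)\le\phi(E)$, which does not by itself prevent $Z(i_2\rho_2E)$ from nearly cancelling the real negative contribution of $i_1\lambda_1E$. To finish one needs an additional input (for instance a quadratic form argument in the sense of the lemma of Bayer--Macr\`i--Stellari quoted in the paper, uniform over the family as $\Imm\lambda\to-\pi$), or one must weaken the hypothesis to $\Imm\lambda\notin\pi\bbZ$.
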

\begin{proof}
  We show that the condition (1) and (2) in Theorem \ref{thm:glued} hold.
  Since $\Imm \lambda \leq -\pi$,  $\qQ_{\lambda}(0,1] = \langle \oO_C(k)[n]\rangle$ for an integer $n > 1$.
  Since $\aA_Y$ contains all skyscraper sheaves $\oO_y$, we have 
  \begin{align*}
    \Hom^{\le 0}(\qQ_{\lambda}(0,1],f_{k+1}^L\aA_Y) &= \Hom^{\le 0}(f_{k+2}(\oO_C(k)[n]),\aA_Y)\\
    &= \Hom^{\le 0}(f_*(\oO_C(-2)[n]),\aA_Y)\\
    &= \Hom^{\le 0}(\oO_o[n-1],\aA_Y) = 0.
  \end{align*}
  Next, we show the condition (2) in Theorem \ref{thm:glued}.
  The same proof in Proposition \ref{prop:Rprimeglued} works for this case.
\end{proof}

\begin{defi}\label{defi:region}
  Let $U(\lL_{k})$ be a subset of $\Stab_n(X)$ defined to be 
\[
U(\lL_{k}) = \left\{\tau_{\lambda} *_{\lL_{-1}} \sigma_Y \colon \sigma_Y \in U(Y) \text{ and }\Imm \lambda \leq -\pi  \right\}.
\]
\end{defi}

\begin{table}[h]
  \small
  \centering
  \caption{Comparison of stability conditions}
  \label{tab:hogehoge}
  \begin{tabular}{c|ccc}
       & Notation & Central charge & Heart \\ \hline
      Geometric  & $\sigma_{B,\omega_X}$ & $Z_{B,\omega_X}$ & $\aA_{B,\omega_X}$ \\
      Type $(C_k)$ &  $\sigma_{B,f^* \omega_Y}$ ($k - \frac{1}{2} < B.C < k + \frac{1}{2} $)  & $Z_{B,f^*\omega_Y}$ & $\aA_{B,f^*\omega_Y} = \langle \oO_C(k)[1]\rangle *_{\rR_k} \aA_{f_*B,\omega}$ \\
      $(\rR_{k+1})$-glued  & $\sigma_{B,\omega_Y} *_{\rR_{k+1}} \tau_{\lambda}$ ($\Imm \lambda >\pi$) & $Z_{B,\omega_Y}\circ f_{k+2} \oplus W_{\lambda}\circ\rho_{k+1} $
      &  $\aA_{B,\omega_Y}*_{\rR_{k+1}} \langle \oO_C(k)[n]\rangle$ \\
      $(\lL_k)$-glued & $\tau_{\lambda}*_{\lL_k}\sigma_{B,\omega_Y}$ ($\Imm \lambda \leq -\pi) $ & $W_{\lambda}\circ \rho_{k+2} \oplus Z_{B,\omega_Y}\circ f_{k+2}$
      & $\langle \oO_C(k-1)[n]\rangle *_{\lL_k} \aA_{B,\omega_Y} $ \\
  \end{tabular}
\end{table}

\subsection{Gluing regions of $\Stab(X)$}
In this section, we discuss the boundary of a gluing region with respect to the recollement $\rR_{k+1}$.
By considering the action of $\oO(kC)$ on $\Stab(X)$, it is enough to consider the case when $k=-1$.

Let $\sigma_Y = (Z_Y,\aA_Y)$ be a stability condition on $Y$.
We have the map $(\bbL f^*)^{\vee} \colon \Hom(H^*(X;\bbC),\bbC) \rightarrow \Hom(H^*(Y;\bbC),\bbC)$
 defined by the dual map of $[\bbL f^*] \colon H^*(Y;\bbC) \rightarrow H^*(X;\bbC) $.
We consider the subspace $\Stab(X)_{Z_Y}$ of $\Stab(X)$ defined by the following Cartesian diagram:
\[
\begin{tikzcd}
\Stab(X)_{Z_Y} \arrow[r] \arrow[d] & \Stab(X) \arrow[d,"(\bbL f^*)^{\vee} \circ \pi"] \\
\{Z_Y\} \arrow[r,hook] & \Hom(H^*(Y;\bbC),\bbC).
\end{tikzcd}
\]
We discuss walls of $\Stab(X)_{Z_Y}$ instead of $\Stab(X)$.
We assume that $\sigma_Y$ is a normalized geometric stability condition with $Z_Y(\oO_y)=-1$ for any closed point $y \in Y$.
By \Cref{sec:gluingregion}, the fiber $\Stab(X)_{Z_Y}$ is non-empty and a one-dimensional complex submanifold of $\Stab(X)$.

We determine the boundaries obtained by considering the limit of $\Imm \lambda \rightarrow \pi$.

\begin{thm}\label{thm:gluingregion}
Let $\sigma_Y = (Z_Y,\aA_Y)$ be a normalized geometric stability condition on $Y$, and let $U(\rR_{0})_{\sigma_Y}$ be a set defined by 
\[
  U(\rR_{0})_{\sigma_Y} = \left\{ \sigma \in U(\rR_{0}) \colon \sigma \text{ is glued from }\tau_{\lambda} \text{ and } \sigma_Y \right\}.
\]
Then, the boundary of $U(\rR_{0})_{\sigma_Y}$ in $\Stab(X)_{Z_Y}$ decomposes into the disjoint union of two components $W_0$ and $W_1$.
Here, the sets are defined by
\begin{align*}
  &W_0 = \{\tau \in \partial U(\rR_{0}) \colon \oO_C(-1) \text{ is semistable of phase } 0 \},\\
  &W_{-1} = \{\tau \in \partial U(\rR_{0}) \colon \oO_C(-1) \text{ is semistable of phase } -1 \}.
 \end{align*}
 Moreover, the following two properties hold.
 \begin{enumerate}
  \item For any $\tau \in W_0$, if $\oO_C$ and $\oO_C(-1)[1]$ are $\tau$-stable, then $\tau$ lies in the boundary of geometric chamber $U(X)$.
  \item The set $W_{-1}$ is contained in $U(\lL_{-1})$.
 \end{enumerate}
\end{thm}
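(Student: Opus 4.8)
The plan is to parametrize $U(\rR_0)_{\sigma_Y}$ by the half-plane $\{\lambda\in\bbC:\Imm\lambda>\pi\}$ via $\lambda\mapsto\sigma_\lambda:=\sigma_Y*_{\rR_0}\tau_\lambda$ (\Cref{prop:Rprimeglued}); since the forgetful map is a local homeomorphism (\Cref{thm:deformation}) and the $\sigma_\lambda$ satisfy the support property (\Cref{prop:support}), this is a holomorphic injection onto an open subset of the one-dimensional complex manifold $\Stab(X)_{Z_Y}$, so its frontier is traced out by letting $\Imm\lambda\downarrow\pi$. Writing $\sigma_\lambda=(Z_\lambda,\aA_\lambda)$, one has $Z_\lambda(\oO_C)=e^\lambda$ and $Z_\lambda(\oO_C(-1)[1])=-e^\lambda-1$, the latter from the exact sequence $0\to\oO_C[-1]\to\oO_C(-1)[1]\to f^*\oO_o\to 0$ occurring in the proof of \Cref{prop:stableobject}; moreover $\oO_C[-1]$ and $\oO_C(-1)[1]$ are $\sigma_\lambda$-stable (\Cref{lem:stable}, \Cref{prop:stableobject}), so $\oO_C$ is $\sigma_\lambda$-stable of slicing-phase $\Imm\lambda/\pi$ and mass $e^{\Ree\lambda}$. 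Hence if $\sigma_{\lambda_n}\to\tau$ with $\tau\notin U(\rR_0)_{\sigma_Y}$, the $\lambda_n$ must escape every compact subset of the half-plane, and divergence of $|\Ree\lambda_n|$ or of $\Imm\lambda_n$ would make $m(\oO_C)$ or $\phi(\oO_C)$ unbounded, contradicting convergence; so $\Imm\lambda_n\downarrow\pi$ and $\Ree\lambda_n\to c\in\bbR$. Being a limit of stable objects, $\oO_C(-1)$ is $\tau$-semistable with $Z_\tau(\oO_C(-1))=1-e^c$, and since the support property forbids a nonzero semistable object with vanishing central charge we get $c\neq 0$; as the slicing-phase of $\oO_C(-1)[1]$ along the path tends to $1$ when $c<0$ and to $0$ when $c>0$, this gives $\tau\in W_0$ if $c<0$ and $\tau\in W_{-1}$ if $c>0$, with $W_0\cap W_{-1}=\emptyset$ because a semistable object has a well-defined phase. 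One must then also check that each $c\neq 0$ actually yields a limit, so that $W_0,W_{-1}$ are nonempty and are the two connected components of $\partial U(\rR_0)_{\sigma_Y}$; I would verify this by producing the candidate limits explicitly (below).

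For property (2), fix $c>0$. Along the path the $\sigma_\lambda$-stable objects of phase tending to $0$ are $\oO_C[-1]$ and $\oO_C(-1)[1]$, so tilting the glued heart $f^*\aA_Y*_{\rR_0}\langle\oO_C[-1]\rangle$ at the slice of phase $0$ replaces $\oO_C(-1)[1]$ by $\oO_C(-1)[2]$ and yields the heart $\langle\oO_C(-1)[2]\rangle*_{\lL_{-1}}f^*\aA_Y$. The limit $\tau$ therefore has the same heart and central charge as $\tau_\mu*_{\lL_{-1}}\sigma_Y$ with $\mu=\log(e^c-1)-i\pi$: here $\Imm\mu=-\pi$ and $W_\mu([\oO_C(-1)])=e^\mu=1-e^c=Z_\tau(\oO_C(-1))$, and the two central charges also agree on $f^*\dD^b(Y)$. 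Hence $\tau=\tau_\mu*_{\lL_{-1}}\sigma_Y$, and since $\Imm\mu\le-\pi$ and $\sigma_Y\in U(Y)$, \Cref{defi:region} gives $\tau\in U(\lL_{-1})$; thus $W_{-1}\subseteq U(\lL_{-1})$.

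For property (1), let $\tau\in W_0$ (so $c<0$) with $\oO_C$ and $\oO_C(-1)[1]$ both $\tau$-stable. Then $Z_\tau(\oO_C)=-e^c$ and $Z_\tau(\oO_C(-1)[1])=e^c-1$ are negative reals, so both are $\tau$-stable of phase $1$; using the exact sequence $0\to\oO_C(-1)\to\oO_C\to\oO_x\to 0$ for $x\in C$ (equivalently the triangle $\oO_C\to\oO_x\to\oO_C(-1)[1]\overset{+}{\to}$, noting $[\oO_x]=[\oO_C]+[\oO_C(-1)[1]]$ in $K(X)$) shows $\oO_x$ lies in the heart of $\tau$ and is strictly semistable of phase $1$ with Jordan--H\"older factors $\oO_C,\oO_C(-1)[1]$, whence $\tau\notin U(X)$. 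To see $\tau\in\overline{U(X)}$, deform the central charge, keeping $Z(f^*(-))=Z_Y(\cdot)$ and $Z(\oO_x)=-1$ fixed and making $\Imm Z(\oO_C)$ a small positive number: then $\oO_C(-1)[1]$ leaves the heart through phase $1$, is replaced by $\oO_C(-1)$ at small positive phase, and one obtains a geometric heart with ample class near $f^*\omega_Y$ in which every skyscraper is stable (\Cref{thm:geom}); letting the deformation go to $0$ realizes $\tau$ as a limit of stability conditions in $U(X)$, so $\tau\in\partial U(X)$. (Equivalently $\tau$ is a $(C_k)$-type stability condition for a suitable $k$.)

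The step I expect to be the main obstacle is the convergence underlying the converse inclusion of the first paragraph and the identification of the limit hearts: the quadratic form controlling the support property of $\sigma_\lambda$ provided by \Cref{prop:support} degenerates as $\Imm\lambda\downarrow\pi$ (the phase of the $\langle\oO_C\rangle$-factor tends to $0$, so the constant $K(\theta)$ there tends to $0$), so compactness is not available off the shelf. The remedy is to construct the candidate limits by hand — the $(\lL_{-1})$-glued stability conditions with $\Imm\mu=-\pi$ for $c>0$, and the geometric ($(C_k)$-type) stability conditions for $c<0$, all genuine stability conditions by the results of \Cref{sec:gluingregion} and \Cref{thm:geom} — and then prove $\sigma_{c+i\theta}\to\tau$ as $\theta\downarrow\pi$ via the explicit tilt, after which \Cref{thm:deformation} identifies the limit.
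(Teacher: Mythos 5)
Your overall strategy for the decomposition of the boundary is the same as the paper's: parametrize $U(\rR_{0})_{\sigma_Y}$ by $\lambda$, use $Z_\lambda(\oO_C)=e^{\lambda}$ and $Z_\lambda(\oO_C(-1)[1])=-e^{\lambda}-1$, and observe that at a boundary point one is forced to $\Imm\lambda\to\pi$, so $Z(\oO_C(-1)[1])$ becomes real and the phase of $\oO_C(-1)[1]$ is $0$ or $1$, giving $W_0\sqcup W_{-1}$. Your treatment of property (2) also matches the paper's in substance: the paper shows $\pP(0,1]\subset\langle\oO_C(-1)[2]\rangle *_{\lL_{-1}}\aA_Y$ and concludes equality of hearts, which is the same identification you make by exhibiting $\tau=\tau_\mu *_{\lL_{-1}}\sigma_Y$ with $\Imm\mu=-\pi$. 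You are also right to flag the properness/compactness issue in passing to the frontier, and your remedy (construct the candidate limits by hand and verify convergence) is reasonable.

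The genuine gap is in property (1). You correctly show $\tau\notin U(X)$ (for $x\in C$ the skyscraper $\oO_x$ is strictly semistable with Jordan--H\"older factors $\oO_C$ and $\oO_C(-1)[1]$), but the assertion that the deformation with $\Imm Z(\oO_C)$ small and positive ``obtains a geometric heart \dots{} in which every skyscraper is stable (\Cref{thm:geom})'' is exactly the statement that needs proof, and it is where essentially all of the paper's work on (1) is concentrated. \Cref{thm:geom} cannot be invoked off the shelf: even if the deformed central charge can be written as $Z_{\alpha-i\beta,B,H}$ with $H$ ample and $\Phi_{H,B}(\alpha)>\beta$, the lift of that central charge near $\tau$ provided by \Cref{thm:deformation} is not a priori equal to $\sigma_{\alpha-i\beta,B,H}$ --- a central charge does not determine the stability condition, and identifying the two is equivalent to the claim you are trying to establish. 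The paper argues directly: it first shows $\oO_x$ is $\tau$-stable for $x\in X\setminus C$ using the exact sequence $0\to\oO_C^{k}[-1]\to E\to f^*f_1(E)\to 0$ in the glued heart, and then, for the deformed $\tau_\lambda$, rules out a destabilizing $E\hookrightarrow\oO_x$ with $x\in C$ by a case analysis on $F=\Cone(f^*f_*E\to E)\in\langle\oO_C(-1)[1]\rangle$, using $\chi(\oO_C(-1),F)=0$ together with Serre duality to force $E\cong\oO_x$ or reach a contradiction. Some version of this stability analysis must be supplied to close your proof of (1); the same remark applies if you instead realize the $c<0$ limits as explicit $(C_{-1})$-type (Tramel--Xia) stability conditions, since placing those on $\partial U(X)$ requires the identical check.
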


\begin{proof}
  The boundary $\partial U(\rR_{0})_{\sigma_Y}$ is contained in the set
  \[
    \{\tau \in \Stab(X) \colon \oO_C \text{ is semistable of phase } 1  \},
  \]
  since the object $\oO_C(-1)$ is $\sigma$-stable for $\sigma \in U(\rR_{0})$.
By Proposition \ref{prop:stableobject}, $\oO_C(-1)[1]$ is $\tau$-semistable of phase $\phi_{\tau}(\oO_C(-1)[1])$ for $\tau \in \partial U(\lL_{-1})$.
 For any stability condition $\tau \in \partial U(\lL_{-1})$ since $Z(\oO_C(-1)[1]) \in \bbR$, 
  the phase $\phi_{\tau}(\oO_C(-1)[1])$ is equal to $0$ or $1$.
It follows that $\partial U(\lL_{-1})$ is contained in the union of $W_0$ and $W_{-1}$.

Next, we show the first property.
Let $\tau=(Z_{\tau},\aA_{\tau}) \in W_0$ be a stability condition such that $\oO_C$ and $\oO_C(-1)[1]$ are $\tau$-stable.
By the definition of $W_0$, the objects $\oO_C$ and $\oO_C(-1)[1]$ is $\tau$-stable of phase $1$.
We will show the following claims:
\begin{enumerate}
  \item[(i)] For any $x \in X \backslash C$, the object $\oO_x$ is $\tau$-stable of phase $1$.
  \item[(ii)] There exists a stability condition $\sigma$ arbitrarily close to $\tau$ such that $\oO_x$ are $\sigma$-stable for any $x \in C$.
\end{enumerate}
We show the claim (i).
Suppose that there is a point $x \in X \backslash C$ and a $\tau$-stable object of phase one
with an injection $E \hookrightarrow \oO_x$.
Take a stability condition $\rho = (Z_{\rho},\aA_{\rho}) \in \partial U(\rR_{0})_{\sigma_Y}$  arbitrarily close to $\tau$.
Since $\oO_x$ is stable of phase one, the object $E$ lies in $\aA_{\rho}$.
By the construction of $\aA_{\rho}$, there is  an exact sequence in $\aA_{\rho}$:
\[
0 \rightarrow \oO_C^k[-1] \rightarrow E \rightarrow f^*f_1(E) \rightarrow 0.
\]
Since $x$ is not contained in $C$, $\Hom(\oO_C,\oO_x)=0$.
Therefore, there is a non-zero morphism $f^*f_1(E) \rightarrow \oO_x$.
However, since $E$ is $\tau$-stable of phase one,
$Z_{\rho}(f^*f_1(E)) = Z_Y(f_1(E)) = Z_{\tau}(f^*f_1(E))$ is contained in $\bbR$.
This is a contradiction to the fact that $\Hom(f^*f_1(E),\oO_x) \not=0$.

We next show the claim (ii).
Fix a complex number $\lambda$ with $\Imm \lambda >0$.
Consider the stability condition $\tau_{\lambda} = (Z_{\lambda},\aA_{\lambda})$ defined by
\begin{align*}
  &Z_{\lambda}(f^*E) = Z_Y(E),\\
  &Z_{\lambda}(\oO_C(-1)) = \lambda.
\end{align*}
By \Cref{thm:deformation}, there is a stability condition $\tau_{\lambda}$ for $\lambda$ enough small.
Suppose that there is a point $x \in C$ and a $\tau_{\lambda}$-stable object $E$ of phase one
with an injection $E \hookrightarrow \oO_x$.
Since walls with respect to skyscraper sheaves are locally finite,
 we can assume that $E$ is a $\tau$-semistable object of phase one.
 There is an exact triangle
 \[
 f^*f_*E \rightarrow E \rightarrow F \rightarrow f^*f_*E[1].
 \]
Here, the object $F$ is in $\langle \oO_C(-1)[1] \rangle$.
By construction, $Z_{\lambda}(E) = Z_Y(f_*(E)) + \lambda \chi(\oO_C(-1),F)$.
Here, $\chi(\oO_C(-1),F)$ is a Eular pairing of $\oO_C(-1)$ and $F$.
Since $E$ is a $\tau_{\lambda}$-stable object $E$ of phase one,
it holds that $\chi(\oO_C(-1),F) = 0$.

Consider the case when $F$ is non-zero.
If $\oO_C(-1)[n]$ is a direct summand of $F$ for some $n$,  $1\leq n$.
Indeed, the objects $E$ and $\oO_C(-1)[1]$ lies in $\aA_{\tau}$.
Thus, $\Hom(E,\oO_C(-1)[n])=0$ for $n<1$.

Serre duality implies that $\Hom(E,\oO_C(-1)[n])=0$ for $n>3$.
Therefore, it follows from $\chi(\oO_C(-1),F) = 0$ that $\oO_C(-1)[2]$ is a subobject of $F$.
Since $\Hom(\oO_C,\oO_C(-1)[2])\not=0$, there is a map $\oO_C \rightarrow E$.
Since $\Hom(\oO_C,\oO_x)$ is one-dimensional, .the following diagram is commutative:
\[
\begin{tikzcd}
\oO_C \arrow[r] \arrow[d] & E \arrow[l d] \\
\oO_x  & .
\end{tikzcd}
\]
Since there is a exact sequence $0 \rightarrow \oO_C(-1) \rightarrow \oO_C \rightarrow \oO_x \rightarrow 0$ in $\aA_{\lambda}$, the map $\oO_C \rightarrow \oO_x$ is surjective.
The map $E \rightarrow \oO_x$ is injective and surjective in $\aA_{\lambda}$.
Then, the object $E$ is isomorphic to $\oO_x$.

Consider the case of $F=0$.
As $E \cong f^*f_*E$,
$\Hom(E,f^*f_*\oO_x) \not=0$.
There is an exact triangle:
\[
\oO_C(-1)[1] \rightarrow f^*f_*\oO_x \rightarrow \oO_C \overset{+}{\rightarrow} .
\]
Since $\Hom^*(f^*f_*E,\oO_C(-1))=0$, there is a non-zero map $E \rightarrow \oO_C$.
Take $\lambda$ enough small so that $\oO_C$ is $\tau_{\lambda}$-stable.
It contradicts the fact that $E$ is $\tau_{\lambda}$-stable of phase one.
Therefore, we have shown that $\tau_{\lambda}$ is a geometric stability condition.

Finally, we show that $W_{-1}$ lies in $U(\lL_{-1})$.
Let $\tau=(Z,\pP) \in W_{-1}$.
Then, the objects $\oO_C$ and $\oO_C(-1)[2]$ lie in $\pP(1)$ and $f^*\aA_Y \subset \pP(0,1]$.
Consider $\bB = \langle \oO_C(-1)[2] \rangle *_{\lL_{-1}} \aA_Y$.
Since any object $E \in \bB$ can be obtained from a extension of $\oO_C(-1)[2]$ and an object in $\aA_Y$, we have $\pP(0,1] \subset \bB$.
Therefore, $\tau$ lies in $U(\lL_{-1})$.
\end{proof}

\begin{cor}
  Two gluing regions $U(\rR_{0})_{\sigma_Y}$ and $U(\lL_{-1})_{\sigma_Y}$ lie in the connected component $\Stab_n^{\dagger}(X)$.
  Here, $\Stab_n^{\dagger}(X)$ is the connected component of $\Stab_n(X)$ containing $U(X)$.
\end{cor}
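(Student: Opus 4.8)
The plan is to produce, for the fixed $\sigma_Y$, one connected subset of $\Stab_n(X)$ that meets $U(X)$, contains all of $U(\rR_{0})_{\sigma_Y}$, and contains all of $U(\lL_{-1})_{\sigma_Y}$. Since a connected set lies in a single connected component, and $\Stab_n^{\dagger}(X)$ is by definition the component containing $U(X)$ (and, being a component of a locally connected space, is open and closed in $\Stab_n(X)$), this gives the claim at once. The two ingredients are the connectedness of the gluing regions and the description of $\partial U(\rR_{0})_{\sigma_Y}$ in \Cref{thm:gluingregion}.

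First I would record that $U(\rR_{0})_{\sigma_Y}$ and $U(\lL_{-1})_{\sigma_Y}$ are connected. Each is the image of a continuous map from a connected parameter set, namely $\lambda \mapsto \sigma_Y *_{\rR_{0}} \tau_{\lambda}$ on $\{\lambda \in \bbC \colon \Imm \lambda > \pi\}$ and $\lambda \mapsto \tau_{\lambda} *_{\lL_{-1}} \sigma_Y$ on $\{\lambda \in \bbC \colon \Imm \lambda \le -\pi\}$; continuity holds because the central charge depends continuously on $\lambda$ while the heart is the (fixed-shape) glued heart, so by \Cref{thm:deformation} the lift varies continuously. All these stability conditions lie in $\Stab(X)_{Z_Y}$ and hence in $\Stab_n(X)$, since $\ch(\oO_x)$ is the pullback of $\ch(\oO_o)$ for every $x \in X$ and $\sigma_Y$ is normalized.

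Next I would glue $U(\rR_{0})_{\sigma_Y}$ to $U(X)$. By \Cref{thm:gluingregion}, the boundary $\partial U(\rR_{0})_{\sigma_Y}$, taken in $\Stab(X)_{Z_Y}$ and hence contained in the closure of $U(\rR_{0})_{\sigma_Y}$ in $\Stab_n(X)$, is $W_0 \sqcup W_{-1}$. Using the explicit $(C_{-1})$-type stability conditions and their comparison with glued ones from \Cref{sec:stabilityCk} (in particular \Cref{prop:stableobject} and \Cref{lem:pushforword}), $W_0$ is non-empty and contains a stability condition $\tau$ at which $\oO_C$ and $\oO_C(-1)[1]$ are both stable; by property~(1) of \Cref{thm:gluingregion}, $\tau$ lies in $\partial U(X) \subset \overline{U(X)}$. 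Since $U(X) \subset \Stab_n^{\dagger}(X)$ and $\Stab_n^{\dagger}(X)$ is closed, $\tau \in \Stab_n^{\dagger}(X)$; since $\Stab_n^{\dagger}(X)$ is also open and $\tau$ lies in the closure of $U(\rR_{0})_{\sigma_Y}$, the open set $\Stab_n^{\dagger}(X)$ meets $U(\rR_{0})_{\sigma_Y}$, so by connectedness $U(\rR_{0})_{\sigma_Y} \subset \Stab_n^{\dagger}(X)$, and then also $\overline{U(\rR_{0})_{\sigma_Y}} \subset \Stab_n^{\dagger}(X)$.

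Finally I would bring in $U(\lL_{-1})_{\sigma_Y}$ through the shared wall $W_{-1}$. By property~(2) of \Cref{thm:gluingregion}, $W_{-1} \subset U(\lL_{-1})$, and since its elements are glued from $\tau_{\lambda}$ and the \emph{same} $\sigma_Y$ — here one uses that a normalized geometric stability condition on $Y$ is determined by its central charge (\Cref{thm:geom}) — one gets $W_{-1} \subset U(\lL_{-1})_{\sigma_Y}$. Choosing $\tau' \in W_{-1}$, we have $\tau' \in \overline{U(\rR_{0})_{\sigma_Y}} \subset \Stab_n^{\dagger}(X)$ and $\tau' \in U(\lL_{-1})_{\sigma_Y}$, so as before connectedness of $U(\lL_{-1})_{\sigma_Y}$ forces $U(\lL_{-1})_{\sigma_Y} \subset \Stab_n^{\dagger}(X)$. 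The step I expect to be the main obstacle is not this assembly but the verification that, for an \emph{arbitrary} normalized geometric $\sigma_Y$ (not only those arising as the $Y$-component of a $(C_{-1})$-type stability condition), both $W_0$ — with $\oO_C$ and $\oO_C(-1)[1]$ stable there — and $W_{-1}$ are genuinely non-empty, i.e.\ the identification of the two ends of $\partial U(\rR_{0})_{\sigma_Y}$ with the geometric boundary and with $U(\lL_{-1})$; this is exactly where the wall-analysis behind \Cref{sec:stabilityCk} and \Cref{thm:gluingregion} is really needed.
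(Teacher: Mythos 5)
Your proposal is correct and is essentially the argument the paper intends: the corollary is stated without proof as an immediate consequence of \Cref{thm:gluingregion}, namely that $\partial U(\rR_{0})_{\sigma_Y}$ meets $\partial U(X)$ through $W_0$ and meets $U(\lL_{-1})$ through $W_{-1}$, so connectedness of the gluing regions places everything in $\Stab_n^{\dagger}(X)$. Your added care about connectedness of the regions, the open-and-closed property of components, and the non-emptiness of the walls fills in exactly the details the paper leaves implicit, and the obstacle you flag (non-emptiness of $W_0$ and $W_{-1}$ for arbitrary normalized geometric $\sigma_Y$) is legitimately carried by the theorem rather than by the corollary's proof.
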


\section{Noncommutative minimal model program for a blowup at a point} \label{sec:ncmmp}

\subsection{Quasi-convergent paths on $\Stab(X)$}
We discuss Proposal \ref{proposal} introduced in \cite{HL:23} in the case of a blowup surface.
Let $f \colon X \rightarrow Y$ be a blowing up at a point of a smooth projective surface $Y$.
\begin{defi}[{\cite{HLJR}}]
  Let $\sigma$ be a stability condition on $X$ and $E$ be an object in $\dD^b(X)$.
  We define the phase of $E$ with respect to $\sigma$ to be
  \[
  \phi_{\sigma}(E) = \frac{1}{m_{\sigma}(E)} \sum_{i} \phi_{\sigma}(F_i) m_{\sigma}(F_i).
  \]
  Here, $F_i$ runs all  semistable objects appearing in the HN filtration of $E$ with respect to $\sigma$.
  We also define the logarithm of $E$ with respect to $\sigma$ to be
  \[
  \ell_{\sigma}(E) = \log m_{\sigma}(E) + i \pi \phi_{\sigma}(E).
  \] 
  For a pair of objects $E$ and $F$, we define the logarithm of the ratio to be
  \[
  \ell_{\sigma}(E/F) = \ell_{\sigma}(E) - \ell_{\sigma}(F).
  \]
\end{defi}
\begin{defi}
  We call a continuous map $\sigma_{\bullet} \colon [T_0,\infty) \rightarrow \Stab(X)$ a path of stability conditions.
  We say that $E$ is $\sigma_{\bullet}$-limit semistable if $ \lim_{t\rightarrow \infty}(\phi^+_{\sigma_t}(E) - \phi^- _{\sigma_t}(E)) = 0$.
\end{defi}
For simplicity of notation, we write $\phi_t(E)$ and $\ell_t(E)$ instead of $\phi_{\sigma_t}(E)$ and $\ell_{\sigma_t}(E)$.
\begin{defi}[{\cite[Definition 2.8.]{HLJR}}] \label{def:qcpath}
  A path $\sigma_{\bullet} \colon [T_0,\infty)\rightarrow  \Stab(X)$ is called a quasi-convergent path if the following conditions hold.
  \begin{enumerate}
    \item For any object $E \in \dD^b(X)$, there exists a filtration 
    \[ 0 \rightarrow E_1 \rightarrow \cdots \rightarrow E_n = E \]
    such that the cone $G_i = \Cone(E_{i-1} \rightarrow E_i)$ is $\sigma_{\bullet}$-limit semistable, and for all $i$ 
    \[ \liminf_{t\rightarrow \infty} \left\{ \phi_t(G_{i-1}) - \phi_t(G_i) \right\}>0 . \]
    \item For any pair of limit semistable objects $E$ and $F$, the following limit 
    \[ \lim_{t \rightarrow \infty}\frac{\ell_t (E/ F)}{ 1+ |\ell_t (E/ F)|} \]
    exists.
  \end{enumerate}
\end{defi}
We consider an equivalence relation to define subcategories of $\dD^b(X)$.
\begin{defi}[{\cite[Definition 2.16.]{HLJR}}]
  Let $E$ and $F$ be objects in $\dD^b(X)$.
  We define the following relations.
\begin{enumerate}
  \item $ E \prec F$ if $ \lim_{t \rightarrow \infty} (\phi_t(F) -\phi_t(E)) = \infty$.
  \item $E \sim^i F$ if $\limsup_{t \rightarrow \infty} |\phi_t(E) -\phi_t(F)| < \infty$.
  \item $E \sim F$ if $\lim_{t \rightarrow \infty} \ell_t(E/F)$ exists.
\end{enumerate}
\end{defi}

\begin{thm}[{\cite[Proposition 2.20.,Theorem 2.37.]{HLJR}}]
  Let $\sigma_{\bullet} \colon [T_0,\infty) \rightarrow \Stab(X)$ be a quasi-convergent path.
  Assume that the equivalence relations $\sim = \sim^i$ are the same.
  Then there exist finite number of  objects $E_1 ,\dots, E_n$ such that 
\begin{enumerate}
  \item $E_1 \prec E_2 \prec \cdots \prec E_n$, and 
  \item $\langle \cC^{E_1},\dots,\cC^{E_n} \rangle$ defines a semiorthogonal decomposition of $\dD^b(X)$.
\end{enumerate}
Moreover, the path $\sigma_{\bullet}$ induces a stability condition on each subcategory $\cC^{E_i}$.
Here, $\cC^E$ is defined to be
\[
\cC^E = \left\{ F \in \dD^b(X) \colon E \sim F \right\}.
\]
\end{thm}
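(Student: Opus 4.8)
The plan is to follow the argument of \cite{HLJR}: extract the semiorthogonal decomposition from the combinatorics of the relations $\prec$, $\sim^i$ and $\sim$ on limit semistable objects, and then upgrade it to all of $\dD^b(X)$ using the quasi-convergent filtrations of \Cref{def:qcpath}. First I would record the order-theoretic facts: $\prec$ is transitive and irreflexive on limit semistable objects, $\sim$ is an equivalence relation refining $\sim^i$, and — this is where condition (2) of quasi-convergence enters — for any two limit semistable $E,F$ \emph{exactly one} of $E\prec F$, $F\prec E$, $E\sim^i F$ holds. Indeed $\ell_t(E/F)/(1+|\ell_t(E/F)|)$ converges by hypothesis, and inspecting its imaginary part shows $\phi_t(F)-\phi_t(E)$ either converges to a finite limit (the $\sim^i$ case) or diverges to $\pm\infty$ (the $\prec$ cases), which rules out oscillation. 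Under the standing assumption $\sim=\sim^i$ the trichotomy reads: exactly one of $E\prec F$, $F\prec E$, $E\sim F$. Consequently $\prec$ descends to a strict total order on the set of $\sim$-classes of limit semistable objects.

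Next I would show this set of classes is finite and that each class $\cC^E$ is a triangulated subcategory. Finiteness follows by taking a classical generator $G$ of $\dD^b(X)$, applying the quasi-convergent filtration of \Cref{def:qcpath}(1) to $G$, and checking that the finitely many limit semistable cones $G_i$ exhaust (via the argument below) all $\sim$-classes that can occur. For the triangulated structure, closure under shifts is immediate since $\ell_t(F[1])=\ell_t(F)+i\pi$; closure under cones and summands is obtained by running the quasi-convergent filtrations of two $\sim$-equivalent objects $F,G$ through the octahedral axiom and noting that all the resulting limit semistable pieces have phases trapped between those of $E$ and masses comparably controlled, hence are again $\sim E$, and an extension or cone of such pieces stays in the class.

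Then comes semiorthogonality. Order the classes as $E_1\prec\cdots\prec E_n$. If $F\in\cC^{E_a}$ and $G\in\cC^{E_b}$ with $a>b$, then $\phi_t(F)-\phi_t(G)\to+\infty$; decomposing $F$ and $G$ into their limit semistable pieces (all $\sim E_a$, resp.\ all $\sim E_b$) and using that $\phi^{\pm}_t$ track $\phi_t$ on limit semistable objects, one gets $\phi^-_t(\text{piece of }F)>\phi^+_t(\text{piece of }G)$ for $t\gg0$, so every $\Hom$ between pieces vanishes and $\Hom(F,G)=0$. For the filtration of an arbitrary $E\in\dD^b(X)$, take its quasi-convergent filtration with limit semistable cones $G_1,\dots,G_m$ satisfying $\liminf_t(\phi_t(G_{i-1})-\phi_t(G_i))>0$. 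By the trichotomy, consecutive cones are either $\sim$-equivalent or satisfy $G_i\prec G_{i-1}$ strictly (the positive liminf excludes $G_{i-1}\prec G_i$). Grouping the $G_i$ into maximal consecutive $\sim$-runs, each run is an iterated extension of a single class, hence lies in some $\cC^{E_{a_j}}$, and across runs $\prec$ strictly decreases; relabelling, this is precisely an SOD filtration with respect to $\langle\cC^{E_1},\dots,\cC^{E_n}\rangle$. For the induced stability condition on $\cC^{E_i}$ I would take as heart a shift of the extension closure inside $\cC^{E_i}$ of the limit semistable objects of controlled phase and as central charge the renormalized limit of $Z_t$ along the path: condition (2) of quasi-convergence makes the ratios $m_t(F)/m_t(G)$ and differences $\phi_t(F)-\phi_t(G)$ converge for $F,G\sim E_i$, so this limit exists up to an overall scaling and gives a stability function with the HN property, while the support property is inherited from the uniform support property along the path (\Cref{thm:deformation}).

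I expect the main obstacle to be the bookkeeping in the middle steps — proving that $\cC^E$ is genuinely a triangulated (indeed admissible) subcategory and that only finitely many classes occur — both of which require tracking how HN/quasi-convergent filtrations interact under cones while keeping the two relations $\sim$ and $\sim^i$ in sync; the hypothesis $\sim=\sim^i$ is exactly what makes the trichotomy clean enough for this to go through.
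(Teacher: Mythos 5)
This theorem is imported verbatim from \cite{HLJR} (Proposition 2.20 and Theorem 2.37 there); the paper contains no proof of it, so there is nothing internal to compare your argument against. Your outline is a faithful reconstruction of the cited source's strategy --- trichotomy extracted from condition (2) of quasi-convergence, thickness and finiteness of the classes $\cC^E$, semiorthogonality from divergence of phases, grouping the quasi-convergent filtration into $\sim$-runs, and a renormalized limit of $Z_t$ for the induced stability conditions --- and it is correct at the level of a sketch. The one step where the actual proof needs more care than your write-up suggests is the trichotomy: when $\ell_t(E/F)/(1+|\ell_t(E/F)|)$ converges to a point of modulus one with vanishing imaginary part, the phase difference $\phi_t(E)-\phi_t(F)$ is only $o(|\ell_t(E/F)|)$ and could a priori oscillate unboundedly, so that neither $E\prec F$, $F\prec E$, nor $E\sim^i F$ holds; excluding this requires feeding $E$ and $F$ back through condition (1) (e.g.\ via filtrations of $E\oplus F$), which is exactly the kind of bookkeeping you flag at the end but should be named as part of the trichotomy step rather than only of the later ones.
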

From the theorem above, we can expect a similar conjecture for Gamma conjecture II.
\begin{proposal}\label{proposal}
  There is a path $\sigma_{\bullet} = (Z_{\bullet}, \aA_{\bullet}) \colon [T_0,\infty) \rightarrow \Stab(X)$ with the following conditions.
  \begin{enumerate}
    \item $\sigma_{\bullet}$ is a quasi-convergent path.
    \item The family of central charges $Z_{\bullet}$ satisfies the differential equation (\ref{eq:qde}).
  \end{enumerate}
\end{proposal}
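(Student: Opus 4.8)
The plan is to assemble the path from the two ingredients already prepared in the paper: the explicit description, via the exponential integral, of the central charges arising from solutions of the truncated quantum differential equation of a blowup, and the gluing regions of \Cref{sec:gluingregion} together with the deformation theorem \Cref{thm:deformation}. For $f\colon X\to Y$ the relevant truncated quantum differential equation --- obtained from \eqref{eq:qde} by the vanishing of the exceptional Gromov--Witten invariants and the computation of \Cref{prop:xieq} --- specializes to \eqref{eq:debl}, so any admissible solution produces a central charge of the shape
\[
\zZ(\lambda,t)(E)=g(\lambda,t)\,\ch_1(E).C+Z_0(E),\qquad g(\lambda,t)=a\,\Ei(\lambda t)+b,
\]
where $Z_0$ is a fixed reference central charge (for the decomposition $\langle\oO_C(-1),\dD^b(Y)\rangle$ one takes $Z_0=Z_Y\circ f_*$ together with a chosen value of $Z_0([\oO_C(-1)])$) and $\lambda$ runs over the parameter range of \Cref{thm:startgeom}. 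In particular $\zZ(\lambda,t)$ is constant in $t$ on $f^*\dD^b(Y)$ and equals $g(\lambda,t)$ times the rank-one lattice on $\langle\oO_C(-1)\rangle$.

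Next I would lift $\zZ(\lambda,\bullet)$ to a path in $\Stab(X)$. Starting at $T_0$ in the geometric chamber $U(X)$ (for \Cref{thm:startgeom}), or in the gluing region $U(\rR_0)$, resp.\ $U(\lL_{-1})$ (for \Cref{thm:startgluing}), \Cref{thm:deformation} lifts the central-charge path uniquely and continuously as long as $\Ker\zZ(\lambda,t)$ stays negative definite with respect to a fixed quadratic form. The point is to maintain this for all $t\ge T_0$: using the asymptotics of $\Ei$, for $\lambda$ with the sign making $|g(\lambda,t)|\to\infty$ the path enters, for $t$ large, the locus where \Cref{prop:Rprimeglued} (or its $\lL$-analogue) produces honest glued stability conditions $\sigma_Y*_{\rR_0}\tau_{\mu(t)}$, whose support property is supplied by \Cref{prop:support}; uniqueness of lifts then identifies the two past some $t_1$. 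For the remaining interval one must check that the only wall met is the expected component $W_0$, resp.\ $W_{-1}$, of $\partial U(\rR_0)_{\sigma_Y}$ in \Cref{thm:gluingregion}, so that a single quadratic form controls the support property throughout; the structure of the gluing regions established in \Cref{sec:gluingregion} is what makes this tractable.

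For quasi-convergence, $\Ei(\lambda t)\sim e^{\lambda t}/(\lambda t)$ shows that, as long as $\oO_C(-1)$ stays semistable, $\ell_t(\oO_C(-1)[n])=\log|g(\lambda,t)|+i\pi\phi_t(\oO_C(-1)[n])$ grows linearly in $t$ with a fixed asymptotic direction, whereas $\ell_t(f^*E')$ stays bounded. Thus for any $E\in\dD^b(X)$ the canonical triangle $f^*f_*E\to E\to K$ with $K\in\langle\oO_C(-1)\rangle$, refined through the HN structure of the glued heart (\Cref{prop:stableobject}, \Cref{lem:pushforword}), furnishes the filtration required by \Cref{def:qcpath}(1): the graded pieces lie in $f^*\dD^b(Y)$ and in $\langle\oO_C(-1)\rangle$, and their phases separate with gap tending to infinity once $\lambda$ is chosen so that $\phi_t(\oO_C(-1))$ genuinely drifts, i.e.\ $\Imm\lambda\ne 0$. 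For \Cref{def:qcpath}(2) I would compute $\ell_t(E/F)$ piece by piece: within $f^*\dD^b(Y)$, or within $\langle\oO_C(-1)\rangle$, it is eventually constant, while across the two it equals the linear-growth term plus $O(\log t)$, so $\ell_t(E/F)/(1+|\ell_t(E/F)|)$ converges to the corresponding unit vector. The induced semiorthogonal decomposition is then $\langle\oO_C(-1),\dD^b(Y)\rangle$, and with $\Imm\lambda$ of the opposite sign the mutation-equivalent $\langle\dD^b(Y),\oO_C\rangle$ instead; moving the starting point through the mutation-equivalent gluing regions gives the whole family of \Cref{thm:startgeom}.

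The step I expect to be the main obstacle is the global control of the lift: promoting ``the central-charge path stays in the negative-definite locus of a fixed $Q$'' to ``the lifted path exists on all of $[T_0,\infty)$ and ultimately coincides with a glued stability condition''. Near $\lambda$ real --- the wall $\Imm\lambda=0$ --- the leading asymptotics of $g(\lambda,t)$ degenerate and $g$ cannot be kept continuous in $\lambda$ (the discontinuity flagged after \Cref{thm:startgeom}); one must either accept it or, as in \Cref{thm:startgluing}, restrict to $\Imm\lambda\ne 0$, start inside $U(\rR_0)$ or $U(\lL_{-1})$, and cross $W_0$, resp.\ $W_{-1}$, transversally. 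Establishing this transversality, and ruling out escape into an unexpected chamber for large $t$, is the technical heart of the argument.
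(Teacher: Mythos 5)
Your plan follows the paper's own route essentially step for step: the exponential-integral solutions of the truncated quantum differential equation supply the central charges, the lift runs through the gluing regions $U(\rR_{0})$ and $U(\lL_{-1})$ via \Cref{thm:deformation}, and quasi-convergence is verified by separating the phases of $\langle\oO_C(-1)\rangle$ from those of $f^*\dD^b(Y)$ exactly as in \Cref{prop:qcpath}. The global wall-crossing control you flag as the main obstacle is precisely what \Cref{thm:gluingregion} and \Cref{lem:start rR prime -1} supply: the boundary of $U(\rR_{0})_{\sigma_Y}$ consists only of $W_0$ and $W_{-1}$ with $W_{-1}\subset U(\lL_{-1})$, so the lifted path passes directly from one gluing region into the other and cannot escape into an unexpected chamber.
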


We define the following central charge for a continuous function $g \colon [T_0,\infty) \rightarrow \bbC$ to be
\begin{equation}\label{eq:pathCC}
	\zZ(t,Z_0,F) = g(t) c_1(F).C  + Z_0(F).
\end{equation}
Here, $Z_0$ lies in  $\Hom(H^*(X;\bbC),\bbC)$.
From now on, we make the assumptions.:
\begin{assum}\label{assumption}
\begin{enumerate}
  \item $Z_t(\oO_C(-1)) \not = 0 $ and $Z_t(\oO_C) \not = 0 $ for all $t \in [T_0,\infty)$. \label{assum:1}.
  \item $\arg Z_t(\oO_C(-1))$ is a monotonically decreasing function and diverges to $-\infty$.  \label{assum:2}
  \item The absolute value $|Z_t(\oO_C(-1))|$ diverges to $\infty$. \label{assum:3}
  \item The following limit 
  \[
    \frac{\ell_t (\oO_C(-1))}{ 1+ |\ell_t (\oO_C(-1))|}
  \]exists.\label{assum:4}
\end{enumerate}
\end{assum}
Generally, the argument of a complex number is a multi-valued function, or it is only well-defined on an appropriate open set.
However, since $Z_t(\oO_C(-1))$ does not vanish on $\bbR_{\ge 0}$, 
if we take the branch of the argument function, we can define $\arg Z_t(\oO_C(-1))$ as a continuous function on $[T_0,\infty)$.

First, we discuss the case when the initial value lies in the gluing region $U(\rR_{0})$.
\begin{lem}\label{lem:start rR prime -1}
  Let $\sigma_0$ be a stability condition in $U(\rR_{0})$.
  If \Cref{assumption} holds, the path of central charges $Z_{\bullet}$ defined by the equation (\ref{eq:pathCC}) lifts to the path of stability conditions $\sigma_{\bullet}$ with the initial value $\sigma_0$.
  Moreover, if $\oO_C$ is $\sigma_t$-stable of phase $\phi_t(\oO_C) >1$ for enough large $t \in [T_0,\infty)$, then $\sigma_t$ lies in $U(\rR_{0})$.
  If $\oO_C(-1)$ is $\sigma_t$-stable of phase $\phi_t(\oO_C(-1)) <-1$ for enough large $t \in [T_0,\infty)$, then $\sigma_t$ lies in $U(\lL_{-1})$.
\end{lem}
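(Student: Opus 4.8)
The plan is to build the lift of $Z_\bullet$ by hand, gluing together two descriptions and invoking \Cref{thm:gluingregion} at the point where they meet. Two preliminary observations will drive everything. First, since $f$ contracts $C$ we have $f_*C=0$, hence $c_1(f^*E).C=0$ and $Z_t(f^*E)=Z_0(f^*E)$ for all $t$; so in any gluing description $\sigma_t=(\,\cdot\,)*_{\rR_0}\tau_{\lambda}$ or $\tau_\mu*_{\lL_{-1}}(\,\cdot\,)$ the geometric factor on $Y$ is forced, by its central charge together with injectivity of $Z_Y\mapsto\sigma_Y$ on the geometric chamber (\Cref{thm:geom}), to be the fixed $\sigma_Y$ with which $\sigma_0$ is glued. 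Second, $Z_t(\oO_C(k))=Z_0(\oO_C(k))-g(t)$ for every $k$, so $Z_t(\oO_C)$ and $Z_t(\oO_C(-1))$ differ by the constant $-Z_0(\oO_x)=1$ and have the same asymptotic argument; thus \Cref{assumption}(2)--(3) for $\oO_C(-1)$ also control $\oO_C$.

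First I would handle the initial segment: write $\sigma_0=\sigma_Y*_{\rR_0}\tau_{\lambda_0}$ with $\Imm\lambda_0>\pi$ and, for $t$ near $T_0$, choose the continuous branch $\lambda(t)$ with $W_{\lambda(t)}([\bbC])=Z_t(\oO_C)$, $\lambda(T_0)=\lambda_0$. While $\Imm\lambda(t)>\pi$, \Cref{prop:Rprimeglued} (via \Cref{thm:glued} and \Cref{prop:support}) produces the glued stability condition $\sigma_Y*_{\rR_0}\tau_{\lambda(t)}$, which has central charge $Z_t$; by uniqueness of path-lifting for the local homeomorphism $\pi$ (\Cref{thm:deformation}) it equals $\sigma_t$. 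Using \Cref{prop:stableobject} I would arrange $\phi_t(\oO_C(-1))\in(-1,0)$ at the start (flowing into $U_{(1,2)}(\rR_0)$ first if necessary), and by \Cref{assumption}(2) the phase $\phi_t(\oO_C(-1))$ decreases monotonically; hence there is a first time $T_1$ with $\phi_{T_1}(\oO_C(-1))=-1$, i.e. $\arg Z_{T_1}(\oO_C(-1))=-\pi$. Then $\sigma_{T_1}\in\partial U(\rR_0)$, and as $\oO_C(-1)$ is semistable there of phase $-1$ we get $\sigma_{T_1}\in W_{-1}$, so $\sigma_{T_1}\in U(\lL_{-1})$ by \Cref{thm:gluingregion}(2).

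To continue past $T_1$, choose the continuous branch $\mu(t)$ with $W_{\mu(t)}([\bbC])=Z_t(\oO_C(-1))$; then $\Imm\mu(T_1)=-\pi$ and, by \Cref{assumption}(2), $\Imm\mu(t)<-\pi$ for $t>T_1$. The gluing proposition for $\lL_{-1}$ preceding \Cref{defi:region} then yields $\tau_{\mu(t)}*_{\lL_{-1}}\sigma_Y$ for all $t\ge T_1$, with central charge $Z_t$ and geometric factor $\sigma_Y$; this family is continuous in $t$, and at $T_1$ it agrees with $\sigma_{T_1}$, because $\sigma_{T_1}\in U(\lL_{-1})$ is itself an $\lL_{-1}$-gluing whose factors are pinned down by the two observations above to be $\tau_{\mu(T_1)}$ and $\sigma_Y$. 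Splicing $t\mapsto\sigma_t$ on $[T_0,T_1]$ with $t\mapsto\tau_{\mu(t)}*_{\lL_{-1}}\sigma_Y$ on $[T_1,\infty)$ gives a continuous lift of $Z_\bullet$ on all of $[T_0,\infty)$; by uniqueness it is the lift, and it lies in $U(\lL_{-1})$ for every $t\ge T_1$. The two ``moreover'' clauses are then immediate: if $\oO_C(-1)$ is $\sigma_t$-stable of phase $<-1$ for all large $t$ then $\Imm\mu(t)<-\pi$ there and $\sigma_t=\tau_{\mu(t)}*_{\lL_{-1}}\sigma_Y\in U(\lL_{-1})$; symmetrically, running the argument with the $\rR_0$-gluing description shows that if $\oO_C$ is $\sigma_t$-stable of phase $>1$ for all large $t$ then $\sigma_t\in U(\rR_0)$ (under \Cref{assumption} this case cannot actually occur, since $\arg Z_t(\oO_C)\to-\infty$, but it is recorded for later use).

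The hard part will be twofold. The substantive point is guaranteeing the lift exists on all of $[T_0,\infty)$ rather than running off to $\partial\Stab(X)$ in finite time; this is exactly what the crossing of $W_{-1}$ into $U(\lL_{-1})$ supplies, the alternative being a uniform support-property estimate (a single quadratic form $Q$ with $\ker Z_t$ negative-definite on each compact subinterval) fed into \Cref{thm:deformation}. The technical point is the bookkeeping at and around $T_1$: checking continuity of the spliced family, and verifying that each glued stability condition one writes down really has heart $\aA_Y*_{\rR_0}\langle\oO_C[n(t)]\rangle$ (resp. $\langle\oO_C(-1)[n(t)]\rangle*_{\lL_{-1}}\aA_Y$) for the fixed $\sigma_Y$ and the appropriate shift $n(t)$ — which rests on the constancy of $Z_t$ on $f^*\dD^b(Y)$ and on \Cref{lem:stable} and \Cref{lem:closedunder} controlling which objects stay (semi)stable. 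Edge cases where $\sigma_0\in U_I(\rR_0)$ with $I\ne(1,2)$ are absorbed by first flowing into $U_{(1,2)}(\rR_0)$, where \Cref{prop:stableobject} applies.
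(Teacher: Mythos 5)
Your proposal follows essentially the same route as the paper: realize the initial segment as $\rR_0$-gluings $\sigma_Y *_{\rR_0}\tau_{\lambda(t)}$ tracking $Z_t(\oO_C)$, cross the boundary at the time $T_1$ where $\Imm Z_{T_1}(\oO_C(-1))=0$, use \Cref{thm:gluingregion} to place that boundary point in $W_{-1}\subset U(\lL_{-1})$, and continue as $\lL_{-1}$-gluings $\tau_{\mu(t)}*_{\lL_{-1}}\sigma_Y$. The extra bookkeeping you flag (pinning the $Y$-factor via $Z_t|_{f^*\dD^b(Y)}=Z_0|_{f^*\dD^b(Y)}$, and continuity of the splice at $T_1$) is left implicit in the paper's proof, which instead invokes \Cref{thm:deformation} locally around $T_1$; the content is the same.
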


\begin{proof}
Consider the argument $\arg(Z_t(\oO_C)) \colon \bbR_{\geq T_0} \rightarrow \bbR$.
Here, we choose the branch of the argument function with $\arg(Z_{T_0}(\oO_C)) = \phi_{T_0}(\oO_C)$.
By \Cref{assumption} (\ref{assum:1}), this function is well-defined.
By \Cref{assumption} (\ref{assum:2}-\ref{assum:3}), the function $\arg(Z_t(\oO_C))$ diverges to $-\infty$.
Let $T_1$ be a real number such that $\arg(Z_{T_1}(\oO_C)) = 1$.
First we consider the case when $T_0 \leq t < T_1$. 
  Take $\lambda_t$ as a complex number with $\Imm \lambda_t = \arg(Z_t(\oO_C))$, and $e^{\lambda_t} = Z_t (\oO_C)$.
  Then, there exists a stability condition $\sigma_Y$ on $Y$ 
  such that $\sigma_0$ is glued from $\sigma_Y$ and $\tau_{\lambda_0}$ with respect to the recollement $\rR_{0}$.
  The family of stability conditions $\sigma_t$ for $T_0 \leq t < T_1$ can be defined by glued stability conditions  $\sigma_Y *_{\rR_{0}} \tau_{\lambda_t}$.

  We assume that there exists a real number $T_1 > T_0$ satisfying $\Imm Z_{T_1}(\oO_C(-1))=0$.
  Let $\nu$ be a complex number satisfying $\Imm \nu = -\pi$, and $Z_{T_1}(\oO_C(-1)) = e^\nu$.
  We define $\tau$ to be $\tau_{\nu} *_{\lL_{-1}} \sigma_Y$.
  By \Cref{thm:deformation}, there is  a path of stability conditions $\tau_t = (W_t,\bB_t) \in U(\lL_{-1})$ for $t \in (T_1-\varepsilon, T_1+ \varepsilon)$ 
  such that its family of central charges $W_t$ is defined by the equation (\ref{eq:cc}) and $\tau_{T_1} = \tau$.
  \Cref{thm:gluingregion} shows that $\sigma_t = \tau_t$ for all $t \in (T_1-\varepsilon, T_1)$.
  It follows from \Cref{assumption} (\ref{assum:1}) that $\sigma_t$ is defined to be glued stability conditions with respect to $\lL_{-1}$ for $t > T_1$.
  We conclude that the path of central charges $Z_t$ for $t \in \bbR_{\geq 0}$ lifts to the path of stability conditions $\sigma_t$.
  By construction of the path, we obtain the second and third statements.
\end{proof}

\begin{prop}\label{prop:qcpath}
  We assume the path of central charges $Z_{\bullet} \colon [T_0,\infty) \rightarrow \Stab(X)$ 
  defined by the equation (\ref{eq:pathCC}) lifts to the path of stability conditions $\sigma_{\bullet}$.
  We also assume that $\sigma_{T_1} \in U(\lL_{-1})$ for some $T_1$.
  If $\lim_{t \rightarrow 0} \arg Z_t(\oO_C(-1)) = -\infty$ , then the path $\sigma_{\bullet}$ is quasi-convergent.
\end{prop}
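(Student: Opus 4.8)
The plan is to verify the two conditions of \Cref{def:qcpath} directly, once the shape of $\sigma_t$ for large $t$ has been pinned down. The first observation is that $c_1(f^*G).C=0$ for every $G\in\dD^b(Y)$, so the restriction of $\zZ(t,Z_0,-)$ to $f^*\dD^b(Y)$ is independent of $t$; hence the $\dD^b(Y)$-component of $\sigma_t$ is a fixed normalized geometric stability condition $\sigma_Y=(Z_Y,\aA_Y)$. Since $\arg Z_t(\oO_C(-1))$ decreases to $-\infty$ (in particular it cannot re-cross the value $-\pi$ once it has passed it) and $\sigma_{T_1}\in U(\lL_{-1})$, the wall description of \Cref{thm:gluingregion} forces $\sigma_t=\tau_{\lambda_t}*_{\lL_{-1}}\sigma_Y$ for all $t\ge T_1$, with $\Imm\lambda_t=\arg Z_t(\oO_C(-1))\to-\infty$; moreover the glued heart is of the degenerate type $\langle\oO_C(-1)[n_t]\rangle*_{\lL_{-1}}\aA_Y$ with $n_t\ge 2$, so \Cref{lem:texact} and \Cref{lem:stable} apply to both components.

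The second step describes the Harder--Narasimhan filtrations for $t\ge T_1$. Using the triangle $f^*f_0(E)\to E\to\lambda_1(E)\overset{+}{\to}$ of \Cref{lem:texact} together with \Cref{lem:stable}, the $\sigma_t$-semistable factors of any $E\in\dD^b(X)$ are: the objects $f^*F$ with $F$ a $\sigma_Y$-HN factor of $f_0(E)$, which have the $t$-independent phase $\phi_{\sigma_Y}(F)$ (since $Z_t\circ f^*=Z_Y$); and the summands $\oO_C(-1)[m]^{\oplus a_m}$ of $\lambda_1(E)\in\langle\oO_C(-1)\rangle$, which have phase $\phi_t(\oO_C(-1))+m\to-\infty$. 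As $\sigma_Y$ and the semiorthogonal decomposition are fixed, this description of the HN filtration is eventually independent of $t$, and I would take precisely this filtration as the one required in \Cref{def:qcpath}(1): each factor is either a $\sigma_t$-semistable $f^*F$ or a direct sum of stable objects sharing a common phase, hence $\sigma_\bullet$-limit semistable, and the consecutive phase gaps are a fixed positive constant within the $f^*\dD^b(Y)$-part, an integer $\ge 1$ within the $\langle\oO_C(-1)\rangle$-part, and a quantity tending to $+\infty$ at the junction, so $\liminf_{t\to\infty}\{\phi_t(G_{i-1})-\phi_t(G_i)\}>0$ for every $i$. The same analysis shows that the limit semistable objects are exactly the $f^*G$ with $G$ $\sigma_Y$-semistable and the $\oO_C(-1)[m]^{\oplus a}$, since any $E$ with $f_0(E)\ne 0\ne\lambda_1(E)$ has $\phi^+_t(E)-\phi^-_t(E)\to\infty$.

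For \Cref{def:qcpath}(2) I would split into cases on a pair $E,F$ of limit semistable objects. If $E$ and $F$ are of the same type ($f^*$-type, or a shift of $\oO_C(-1)$), then $\ell_t(E/F)$ is constant in $t$ and the limit is immediate. If they are of opposite types, say $E=f^*G_E$ and $F=\oO_C(-1)[m']^{\oplus a'}$, then $\ell_t(E)$ is constant and $\ell_t(F)=\ell_t(\oO_C(-1))+\log a'+i\pi m'$, so $\ell_t(E/F)=-\ell_t(\oO_C(-1))+c$ for a constant $c\in\bbC$. Since $\phi_t(\oO_C(-1))\to-\infty$ we have $|\ell_t(\oO_C(-1))|\to\infty$, the additive constant is negligible, and
\[
\frac{\ell_t(E/F)}{1+|\ell_t(E/F)|}=-\frac{\ell_t(\oO_C(-1))}{|\ell_t(\oO_C(-1))|}+o(1);
\]
since $\ell_t(\oO_C(-1))/|\ell_t(\oO_C(-1))|$ has the same limit as $\ell_t(\oO_C(-1))/(1+|\ell_t(\oO_C(-1))|)$, which exists by \Cref{assumption}, the required limit exists (the case with $E$ of $\oO_C(-1)$-type and $F$ of $f^*$-type is identical up to sign). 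Combining the three steps gives that $\sigma_\bullet$ is quasi-convergent.

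The step I expect to be the main obstacle is the second one: justifying rigorously that for $t$ large the $\sigma_t$-HN filtration of an arbitrary object is the ``split'' filtration coming from the semiorthogonal decomposition --- equivalently, that no object with nonzero component in each of $\langle\oO_C(-1)\rangle$ and $f^*\dD^b(Y)$ can remain (limit) semistable, and that the interleaving of the two families of factors stabilizes. This rests on combining the gluing lemmas with the separation between the bounded phases of $f^*\dD^b(Y)$ and the phases of the $\langle\oO_C(-1)\rangle$-part that escape to $-\infty$; once this is in place, everything reduces to bookkeeping with phases and masses together with the single analytic input that $\ell_t(\oO_C(-1))/(1+|\ell_t(\oO_C(-1))|)$ converges.
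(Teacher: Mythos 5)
Your proposal is correct and follows essentially the same route as the paper's proof: restrict to the gluing region $U(\lL_{-1})$ for large $t$, use the semiorthogonal triangle $f^*f_0(E)\to E\to\lambda_1(E)$ to identify the eventually-stable HN filtration, and reduce condition (2) of \Cref{def:qcpath} to the convergence of $\ell_t(\oO_C(-1))/(1+|\ell_t(\oO_C(-1))|)$ from \Cref{assumption}(4). You are in fact somewhat more careful than the paper on two points the paper leaves implicit --- that $Z_t$ is $t$-independent on $f^*\dD^b(Y)$ because $c_1(f^*G).C=0$, and that monotonicity of $\arg Z_t(\oO_C(-1))$ keeps $\sigma_t$ in $U(\lL_{-1})$ for all $t\ge T_1$ rather than only at $t=T_1$.
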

\begin{proof}
  We need to show the conditions in Definition \ref{def:qcpath}.
  Since   $\sigma_t \in U(\lL_{-1})$ for $t > T_1$,
  every limit semistable object lies in $\langle \oO_C(-1) \rangle$ or $f^*\aA_Y$.
  Therefore, all limit semistable objects are $\sigma_t$-semistable for $t \gg 0$.

  For any object $E \in \dD^b(X)$, there is a distinguished triangle
  \[
    f^*f_* E \rightarrow E \rightarrow V_E \otimes \oO_C(-1) \overset{+}{\rightarrow} 
  \]
  for an object $V_E$ in the derived category of vector spaces $\dD^b(\Vect)$.
  Since the HN filtration of each object in  $\langle \oO_C(-1) \rangle$ or $f^*\aA_Y$ is independent from $t>T_1$,
  the HN filtration of $E$ stabilizes for $t \gg 0$.
  Then, the condition (1) holds.
  We show the condition (2).
  Let $E$ and $F$ be semistable objects in $\dD^b(X)$.
  We can assume that $E$ lies in $\langle \oO_C(-1) \rangle$.
  In this assumption, since we have $E = \bigoplus_i(\oO_C(-1)[n_i])$ for $n_i \in \bbZ$,
  $\lim_{t \rightarrow \infty}\ell_t(E) - \ell_t(\oO_C(-1))$ exists in $\bbC$.
  If $F$ lies in $\langle \oO_C(-1) \rangle$,
  then $\ell_t(E/F)$ exists.
  Assume $F$ lies in $f^* \aA_Y$.
  \begin{align*}
    \left| \frac{\ell_t(E / F)}{1 + |\ell_t(E / F)|} \right| &\le \frac{| \ell_t(E)|+ |\ell_t(F)|}{ 1+ |\ell_t(E)|-|\ell_t(F)|}\\
    &=   \frac{\frac{|\ell_t(E)|}{|\ell_t(E)|}+ \frac{|\ell_t(F)|}{|\ell_t(E)|}}{ 1+ \frac{1}{|\ell_t(E)|}-\frac{|\ell_t(F)|}{|\ell_t(E)|}}
  \end{align*}
  Since the limit $\lim_{t\rightarrow \infty}\frac{\ell_t(E)}{|\ell_t(E)|}$ exists by \Cref{assumption} (4), the  condition (2) holds.
\end{proof}

\begin{prop}
  Under \Cref{assumption}, both equivalence relations $\sim = \sim^i$ are the same.
  This quasi-convergent path $\sigma_{\bullet}$ induces a semiorthogonal decomposition $\langle \oO_C(-1),\dD^b(Y) \rangle$.
\end{prop}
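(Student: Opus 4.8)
The plan is to verify the hypothesis $\sim = \sim^i$ of the structure theorem recalled above (\cite[Proposition 2.20, Theorem 2.37]{HLJR}) and then read off which semiorthogonal decomposition its conclusion yields. One inclusion is immediate: if $\lim_{t\to\infty}\ell_t(E/F)$ exists then so does $\Imm\ell_t(E/F)=\pi\bigl(\phi_t(E)-\phi_t(F)\bigr)$, hence $E\sim^i F$, so $\sim\subseteq\sim^i$ in general. The content is the reverse inclusion $\sim^i\subseteq\sim$, and I expect the asymptotic analysis needed for it to be the main obstacle.

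For that inclusion I would start from the structural facts already used in the proof of \Cref{prop:qcpath}: for $t>T_1$ the path lies in $U(\lL_{-1})$, and there every $\sigma_t$-semistable object lies, up to shift and direct summand, in $\langle\oO_C(-1)\rangle$ (where it is a sum of copies of $\oO_C(-1)$) or in $f^*\dD^b(Y)$ (where it is $f^*G$ for a $\sigma_Y$-semistable $G$); consequently the HN filtration of any object of $\dD^b(X)$ stabilizes for $t\gg0$. I would then combine the triangle $f^*f_*E\to E\to V_E\otimes\oO_C(-1)\overset{+}{\rightarrow}$ with \Cref{assumption}: since $Z_t(f^*G)=Z_Y(G)$ is independent of $t$ while $Z_t(\oO_C(-1))\ne0$ and $|Z_t(\oO_C(-1))|\to\infty$ by \Cref{assumption}\,(\ref{assum:1}),(\ref{assum:3}), and since \Cref{assumption}\,(\ref{assum:4}) forces $\phi_t(\oO_C(-1))/|Z_t(\oO_C(-1))|\to0$, one obtains: if $V_E=0$ then $\ell_t(E)$ is eventually constant in $t$; if $V_E\ne0$ then, writing $d(E)\ge1$ for the number of $\oO_C(-1)$-type HN factors of $E$, $m_t(E)=d(E)\,|Z_t(\oO_C(-1))|+O(1)$ and $\phi_t(E)-\phi_t(\oO_C(-1))$ converges, so $\ell_t(E)-\ell_t(\oO_C(-1))$ converges. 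The equality of the relations then follows mechanically: for a pair $E,F$, if $V_E=V_F=0$ or $V_E,V_F\ne0$ then $\lim_t\ell_t(E/F)$ exists so $E\sim F$; and if exactly one of $V_E,V_F$ vanishes then $\phi_t(E)-\phi_t(F)\to\pm\infty$ so $E\not\sim^i F$. Hence $\sim = \sim^i$.

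It remains to apply \cite[Proposition 2.20, Theorem 2.37]{HLJR}. By the above there are exactly two $\sim$-classes of objects — those with $V_E\ne0$, which is the class of $\oO_C(-1)$, and those with $V_E=0$, which is the class of $f^*\oO_y$ — and $\oO_C(-1)\prec f^*G$ for every nonzero $f^*G$ since $\phi_t(f^*G)$ is bounded while $\phi_t(\oO_C(-1))\to-\infty$. Thus $n=2$, and taking $E_1=\oO_C(-1)$, $E_2=f^*\oO_y$ the theorem produces a semiorthogonal decomposition $\langle\cC^{\oO_C(-1)},\cC^{f^*\oO_y}\rangle$; the limit semistable objects in the class of $\oO_C(-1)$ are, up to shift and direct sum, just $\oO_C(-1)$, so $\cC^{\oO_C(-1)}=\langle\oO_C(-1)\rangle$, while $\cC^{f^*\oO_y}=f^*\dD^b(Y)\simeq\dD^b(Y)$, and we recover exactly the decomposition of \Cref{thm:sodblowup}, written $\langle\oO_C(-1),\dD^b(Y)\rangle$. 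The delicate point throughout is the bookkeeping in the middle paragraph: for an object carrying both an $\langle\oO_C(-1)\rangle$-part and a $\dD^b(Y)$-part one must confirm that its mass, and its phase measured against $\phi_t(\oO_C(-1))$, are governed by the $\oO_C(-1)$-part alone, i.e. that the divergence of $|Z_t(\oO_C(-1))|$ imposed by \Cref{assumption} swamps the bounded pullback contribution sharply enough to determine $\lim_t\ell_t(E/F)$.
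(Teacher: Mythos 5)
Your proposal is correct and follows essentially the same route as the paper: reduce to limit semistable objects, observe (as in the proof of \Cref{prop:qcpath}) that for $t\gg0$ these all lie in $\langle\oO_C(-1)\rangle$ or $f^*\dD^b(Y)$, note that a bounded phase difference forces two such objects into the same component (where $\ell_t(E/F)$ manifestly converges), and then read off the two $\sim$-classes and their $\prec$-ordering to get $\langle\oO_C(-1),\dD^b(Y)\rangle$. Your version simply fills in more of the asymptotic bookkeeping than the paper's terse argument does.
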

\begin{proof}
  Let $E$ and $F$ be semistable objects in $\dD^b(X)$.
  It is enough to show that $\lim_{t \rightarrow \infty}\ell_t(E / F)$ exists for semistable objects $E$ and $F$ with $ \lim_{t \rightarrow \infty}|\phi_t(E) - \phi_t(F)| < \infty $.
  As in the proof of Proposition \ref{prop:qcpath}, if a pair of semistable objects $E$ and $F$ satisfies $ \lim_{t \rightarrow \infty}|\phi_t(E) - \phi_t(F)| < \infty $,
  we have $E$ and $F$ lie in the same semiorthogonal component.
  Then, we conclude that $\ell_t(E / F) $ exists in $\bbC$.
  We have \[
    \dD_{\oO_C(-1)} = \langle \oO_C(-1) \rangle.
  \]
  Then, $\sigma_t$ induces $\langle \oO_C(-1),\dD^b(Y) \rangle$.
\end{proof}
Summarizing the above discussion and tensoring by a line bundle $\oO(kC)$, we have the following theorem.

\begin{thm}\label{thm:summarizing}
  Let $Z_t$ be a path of central charges defined by (\ref{eq:pathCC}) which lifts to the path of stability conditions $\sigma_t$ for all $t \in[T_0,\infty)$.
We assume the following conditions:
\begin{enumerate}
  \item the initial value $\sigma_{T_0}$ lies in the gluing region $U(\rR_{k+1})$ for $k \in \bbZ$, and 
  \item the absolute value $|Z_t(\oO_C(-1))|$ diverges to $\infty$,
  \item the following limit 
  \[
    \lim_{t \rightarrow \infty} \frac{\ell_t (\oO_C(k))}{ 1+ |\ell_t (\oO_C(k))|}
  \]exists.
\end{enumerate}
In addition, assume  that one of the following conditions holds:
\begin{enumerate}
  \item[(4)] the function $\arg Z_t(\oO_C(k))$ is monotonically decreasing with  $\lim_{t \rightarrow 0} \arg Z_t(\oO_C(k)) = -\infty$.
  \item[$(4^{\prime})$] the function $\arg Z_t(\oO_C(k+1))$ is monotonically increasing with  $\lim_{t \rightarrow 0} \arg Z_t(\oO_C(k+1)) = \infty.$
\end{enumerate}
Then, the path $\sigma_{\bullet}$ is quasi-convergent.
Moreover, in the case of $(4)$, the quasi-convergent path $\sigma_{\bullet}$ induces a semiorthogonal decomposition $\langle \oO_C(k),\dD^b(Y) \rangle$.
In the case of $(4^{\prime})$, the quasi-convergent path $\sigma_{\bullet}$ induces a semiorthogonal decomposition $\langle \dD^b(Y),\oO_C(k+1) \rangle$.
\end{thm}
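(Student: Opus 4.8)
The plan is to reduce \Cref{thm:summarizing} to the case $k=-1$, which is exactly the content of \Cref{lem:start rR prime -1}, \Cref{prop:qcpath} and the proposition immediately following them, by transporting the path along the autoequivalence $\Phi=(-)\otimes\oO((k+1)C)$ of $\dD^b(X)$. By \Cref{lem-defi:actiononauto} the autoequivalence $\Phi$ carries the recollement $\rR_{k+1}$ to $\rR_0$, and, iterating $\lL_k\otimes\oO(C)=\lL_{k-1}$, carries $\lL_k$ to $\lL_{-1}$; consequently it identifies the gluing regions $U(\rR_{k+1})$ with $U(\rR_0)$ and $U(\lL_k)$ with $U(\lL_{-1})$, and on objects it sends $\oO_C(k)\mapsto\oO_C(-1)$ and $\oO_C(k+1)\mapsto\oO_C$. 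Putting $\sigma'_{\bullet}=\Phi\cdot\sigma_{\bullet}$, so that $\sigma'_{T_0}\in U(\rR_0)$, the first step is to check that the translated data satisfy \Cref{assumption}: condition $(2)$ gives $|Z'_t(\oO_C(-1))|=|Z_t(\oO_C(k))|\to\infty$; condition $(3)$ gives the required limit; the exact sequence $0\to\oO_C(k)\to\oO_C(k+1)\to\oO_x\to 0$ gives $Z'_t(\oO_C)=Z'_t(\oO_C(-1))+Z_0(\oO_x)$, so $\arg Z'_t(\oO_C)$ and $\arg Z'_t(\oO_C(-1))$ are asymptotically equal, whence in case $(4)$ (resp. $(4')$) the monotone divergence of $\arg Z'_t(\oO_C(-1))$ to $-\infty$ (resp. of $\arg Z'_t(\oO_C)$ to $+\infty$); the non-vanishing in \Cref{assumption} (\ref{assum:1}) follows from $\sigma_{T_0}\in U(\rR_{k+1})$ together with condition $(2)$.

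For case $(4)$: since $\arg Z'_t(\oO_C(-1))\to-\infty$, the phase $\phi_t(\oO_C(-1))$ is eventually $<-1$, so by \Cref{lem:start rR prime -1} the path $\sigma'_{\bullet}$ lies in $U(\lL_{-1})$ for all large $t$. Then \Cref{prop:qcpath} shows $\sigma'_{\bullet}$ is quasi-convergent, and the proposition following \Cref{prop:qcpath} shows $\sim=\sim^i$ and that the induced semiorthogonal decomposition is $\langle\oO_C(-1),\dD^b(Y)\rangle$. Transporting back by $\Phi^{-1}$ gives the quasi-convergence of $\sigma_{\bullet}$ and the semiorthogonal decomposition $\langle\oO_C(k),\dD^b(Y)\rangle$. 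For case $(4')$: since $\arg Z'_t(\oO_C)\to+\infty$ is monotone from a value $>\pi$ at $T_0$, the path cannot leave $U(\rR_0)$ on the $\oO_C$-side (its defining inequality $\phi_t(\oO_C)>1$ is only strengthened) and its $\dD^b(Y)$-component does not move, so the first assertion of \Cref{lem:start rR prime -1} shows $\sigma'_{\bullet}$ remains in $U(\rR_0)$ for all large $t$. One then repeats the arguments of \Cref{prop:qcpath} and of the subsequent proposition with the two semiorthogonal summands interchanged: every $\sigma'_t$-limit semistable object lies in $f^*\aA_Y$ or $\langle\oO_C\rangle$; the HN filtration of an arbitrary object of $\dD^b(X)$ stabilizes because its filtration with respect to $\langle f^*\dD^b(Y),\langle\oO_C\rangle\rangle$ is $t$-independent; and the two conditions of \Cref{def:qcpath} follow exactly as in \Cref{prop:qcpath} from the existence of $\lim_t \ell_t(\oO_C)/(1+|\ell_t(\oO_C)|)$, now with the phase of $\langle\oO_C\rangle$ diverging above that of $f^*\aA_Y$, so that $f^*E\prec\oO_C$ for every $E$. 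Hence the induced decomposition is $\langle\dD^b(Y),\oO_C\rangle$, and $\Phi^{-1}$ returns $\langle\dD^b(Y),\oO_C(k+1)\rangle$.

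The main obstacle is the bookkeeping around the recollement used to describe $\sigma_{\bullet}$: after the twist the transported central charge $Z'_t$ need not be literally of the shape $(\ref{eq:pathCC})$, since $\Phi$ introduces a $g(t)\ch_0$-term. What one actually needs, and should verify, is that the proofs of \Cref{lem:start rR prime -1}, \Cref{prop:qcpath} and the following proposition use only (i) that at each time $\sigma'_t$ is compatible with the gluing data for $\rR_0$, resp.\ $\lL_{-1}$, which in turn requires that the $\dD^b(Y)$-component of $\sigma'_t$ stays in the normalized geometric chamber $U(Y)$ so that \Cref{thm:deformation} applies along the path, (ii) the descriptions of the gluing regions established in \Cref{sec:gluingregion}, and (iii) the qualitative properties collected in \Cref{assumption}; each of these transfers under $\Phi$. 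A secondary, purely expository point is that the case $(4')$ analogue of \Cref{prop:qcpath} and of the subsequent proposition are not stated explicitly in the text, but their proofs are word-for-word those given, with $\langle\oO_C\rangle$ in the role of $\langle\oO_C(-1)\rangle$; the only feature used is that the two semiorthogonal summands become asymptotically orthogonal in phase, which holds whenever the phase of the distinguished piece diverges.
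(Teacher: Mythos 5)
Your proposal is correct and follows essentially the same route as the paper, which proves this theorem only by the remark ``summarizing the above discussion and tensoring by a line bundle'': you reduce to $k=-1$ via the twist $(-)\otimes\oO((k+1)C)$ and then invoke \Cref{lem:start rR prime -1}, \Cref{prop:qcpath} and the proposition following it, supplying the $(4')$ case by the symmetric argument. In fact you make explicit two points the paper leaves implicit (the correct twist is by $\oO((k+1)C)$, and the transported central charge acquires a $\ch_0$-term so one must check the cited proofs use only the qualitative hypotheses), which is a useful strengthening of the same argument.
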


\subsection{Noncommutative minimal model program for a blowup at a point}
We consider the noncommutative minimal model program for a blowup at a point.
Let $f \colon X \rightarrow Y$ be a blowing up at a point of a smooth projective surface $Y$.
We consider a family of paths of stability conditions whose central charges are the form of 
\begin{equation} \label{eq:cc}
  \zZ(s,\lambda,Z_0,t)(E) = w(s,\lambda,Z_0)(Ei(\lambda t)- Ei(\lambda T_0))\ch_1^{-sC}(E)C + Z_{0,sC}(E).
\end{equation}
for $s \in \bbR , \lambda \in \bbR \oplus i [-1,1]$ and $t \in \bbR_{\geq T_0}$.
Here, $Z_0$ is the initial central charge in $\Hom(H^*(X;\bbC),\bbC)$, and $Ei(\lambda)$ is the exponential integral.
$Z_{0,sC}$ is the central charge defined to be $Z_{0,sC}(E) = Z_0(e^{sC}\ch(E))$.
Since the exponential integral is defined in $\bbC \backslash \bbR_{\leq 0}$,
the central charge is defined for any $\lambda \in \bbR \oplus i [-1,1]$.
The continuous function  $w(s,\lambda,Z_0)$ needs to be modified for each theorem, and its definition will be provided later.

We consider the following question.
  \begin{question}\label{question}
    Does there exist an subset $\uU \subset \bbR \times (\bbR \oplus i[-1,1]) \times \bbR_{\geq T_0}$ , initial stability condition $\sigma_0 = (Z_0,\aA_0)$, and a continuous function $w(s,\lambda,Z_{0})$
    which satisfy the following conditions:
    \begin{enumerate}
      \item a continuous map  $\zZ(-,-,Z_0,-) \colon \uU \rightarrow \Stab(X)$ defined by the equation (\ref{eq:cc}) lifts to a map $\sigma_{\bullet,\bullet,\bullet}\colon \uU \rightarrow \Stab(X)$, and 
      \item there exists $(s,\lambda) $ 
      such that $(s,\lambda,t)$ lies in $\uU$ for all $t \geq T_0$ and $\sigma_{s,\lambda,\bullet}$ is a quasi-convergent path.
      \item some parameters $(s,\lambda)$ give a semiorthogonal decomposition, and different  parameters give a mutation-equivalent semiorthogonal decomposition.
    \end{enumerate}
    \end{question}
We use the following properties of the exponential integral.
\begin{lem}[{\cite[Section 5.]{ASM}, \cite[Chapter 6.]{DLMF}}]\label{lem:ei}
  Let $\lambda \in \bbC \backslash \bbR_{\leq 0}$.
  The function $\Ei(\lambda t)$ satisfies the following properties:
  \begin{enumerate}
    \item $\Ei(\lambda t)$ is a  differentiable function on $\bbR_{\geq 1}$, and
    \item  the asymptotic Expansion of $\Ei(\lambda t)$ is given by
    \[
    \Ei(\lambda t ) \sim \frac{e^{\lambda t}}{\lambda t} (1 + \frac{1!}{\lambda t} + \frac{2!}{\lambda t} + \cdots)\  t \rightarrow \infty,
    \]  
    \item there exist a real number $T_0$ such that the function $\arg(\Ei(\lambda t))$ is a monotonic function for enough large $t$. 
    If $\Imm \lambda >0$, then $\arg(\Ei(\lambda t))$ is a monotonically increasing function.
    If $\Imm \lambda <0$, then $\arg(\Ei(\lambda t))$ is a monotonically decreasing function.
    Here, $f(t) \sim g(t)$ means that $\lim_{t \rightarrow \infty} \frac{f(t)}{g(t)} =1$.
  \end{enumerate}
\end{lem}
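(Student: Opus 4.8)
The plan is to deduce (1) and (2) directly from the classical theory of the exponential integral and to obtain the monotonicity statement (3) by differentiating the argument and inserting the leading term of the asymptotic expansion. For (1), recall that $\Ei$ extends to a holomorphic function on $\bbC \setminus \bbR_{\leq 0}$ with $\Ei'(z) = e^{z}/z$; since $\lambda \in \bbC \setminus \bbR_{\leq 0}$ and $t > 0$ force $\lambda t \in \bbC \setminus \bbR_{\leq 0}$, the map $t \mapsto \Ei(\lambda t)$ is the composition of $\Ei$ with a linear map, hence differentiable on $\bbR_{\geq 1}$ with $\frac{d}{dt}\Ei(\lambda t) = e^{\lambda t}/t$. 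For (2), the asymptotic series is obtained by repeated integration by parts in the integral representation of $\Ei$ (equivalently of $E_1(z) = -\Ei(-z)$); it is valid as the variable tends to infinity along any ray $\arg z = \mathrm{const}$ with $|\arg z| < \pi$, and since $\arg(\lambda t) = \arg \lambda \in (-\pi,\pi)$ is such a fixed value for $t > 0$ we get
\[
  \Ei(\lambda t) = \frac{e^{\lambda t}}{\lambda t}\Bigl(1 + O(1/t)\Bigr), \qquad t \to \infty,
\]
and more precisely the full expansion $\frac{e^{\lambda t}}{\lambda t}\sum_{n \geq 0} n!/(\lambda t)^{n}$ stated in the lemma. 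In particular $\Ei(\lambda t) \neq 0$ for $t$ large, so $\arg \Ei(\lambda t)$ and $\log \Ei(\lambda t)$ admit continuous branches for $t \gg 0$.

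For (3), assume first $\Imm \lambda \neq 0$. Differentiating the chosen branch of $\log \Ei(\lambda t)$ and using (1),
\[
  \frac{d}{dt}\arg \Ei(\lambda t) = \Imm \frac{d}{dt}\log \Ei(\lambda t) = \Imm \frac{e^{\lambda t}/t}{\Ei(\lambda t)}.
\]
By the expansion in (2),
\[
  \frac{e^{\lambda t}/t}{\Ei(\lambda t)} = \frac{\lambda}{1 + O(1/t)} = \lambda + O(1/t),
\]
so $\frac{d}{dt}\arg \Ei(\lambda t) = \Imm \lambda + O(1/t)$. Hence there is a real number $T_0$ such that this derivative has the sign of $\Imm \lambda$ for all $t \geq T_0$, giving strict monotonicity of $\arg \Ei(\lambda t)$ --- increasing if $\Imm \lambda > 0$ and decreasing if $\Imm \lambda < 0$. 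If $\Imm \lambda = 0$, then $\lambda \in \bbR_{>0}$, so $\lambda t \in \bbR_{>0}$ and $\Ei(\lambda t) \in \bbR_{>0}$ for all $t \geq 1$; thus $\arg \Ei(\lambda t) \equiv 0$ is trivially monotone (and no sign assertion is made in this case).

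The routine but essential point is to keep track of the error term, i.e.\ that $\frac{d}{dt}\arg \Ei(\lambda t) = \Imm \lambda + O(1/t)$ holds uniformly for $t$ in a neighbourhood of $+\infty$ with an implied constant depending only on $\lambda$; this is a standard consequence of the remainder estimates for the asymptotic expansion of $\Ei$ and may be quoted from \cite{DLMF}. The other bookkeeping matter is to fix once and for all the continuous branch of $\arg \Ei(\lambda t)$ used throughout (it will feed into $\ell_{\sigma_t}$ and the phases in the next sections), which is unambiguous because $\Ei(\lambda t)$ avoids the origin for $t$ large.
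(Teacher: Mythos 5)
Your proof is correct and matches the standard treatment in the references the paper cites for this lemma (the paper itself offers no proof, deferring entirely to Abramowitz--Stegun and the DLMF): differentiability via $\Ei'(z)=e^z/z$ and the chain rule, the asymptotic series by integration by parts along the ray $\arg z=\arg\lambda$, and monotonicity of the argument from $\frac{d}{dt}\arg\Ei(\lambda t)=\Imm\bigl(\lambda+O(1/t)\bigr)$. One cosmetic slip: for $\Imm\lambda=0$ and $\lambda$ a small positive real, $\Ei(\lambda t)$ is \emph{negative} for $\lambda t$ below the real zero of $\Ei$ (near $0.3725$), so your claim that $\Ei(\lambda t)\in\bbR_{>0}$ for all $t\geq 1$ is not literally true; it is true for $t$ sufficiently large, which is all the lemma requires, so the conclusion stands.
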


First we consider the case when the initial value lies in $\partial U(X) \cap \partial U(\rR_{0})$.
Let us fix $\sigma_0 = (Z_0,\aA_0)$.
By \Cref{lem:ei}, for any $(s,\lambda)$ there exists a real number $T_0$ such that the following conditions hold:
\begin{enumerate}
  \item $\zZ(s,\lambda,Z_0,t)(\oO_C(-1)) \not = 0$ and $\zZ(s,\lambda,Z_0,t)(\oO_C) \not = 0$ for all $t\geq T_0$.
  \item  If $-1\leq\Imm \lambda <0$, $\arg \zZ(s,\lambda,Z_0,t)(\oO_C(-1))$ is  monotonically decreasing.
  \item If  $0<\Imm \lambda\leq 1$, $\arg \zZ(s,\lambda,Z_0,t)(\oO_C)$ is  monotonically increasing.
\end{enumerate}
We consider the continuous map $w(s,\lambda,Z_0) \colon  \bbR \oplus i [-1,1] \rightarrow \bbR$ for fixed $s$ and $\lambda$ with the following condition:
\begin{align*}
  w(s,\lambda,Z_0) &= 
  \begin{cases}
   >0 & \text{if } \lambda \in \bbR \oplus i (0,1],\\
   0 & \text{if } \lambda \in \bbR,\\
   <0 & \text{if } \lambda \in \bbR \oplus i [-1,0).
  \end{cases}
\end{align*}
Since $\frac{d}{dt}(\Ei(\lambda t)- \Ei(\lambda)) = \frac{e^{\lambda t}}{t}$,
the imaginary part of $\Ei(\lambda t)- \Ei(\lambda)$ is the same sign as $\Imm \lambda$ for $t \in (1,1+\varepsilon_{\lambda})$.
Then, the function $w_1(s,\lambda,Z_0)(\Ei(\lambda t)- \Ei(\lambda))$ is positive for $t \in (1,1+\varepsilon_{\lambda})$.

\begin{thm}\label{thm:startinboundary}
  Let $\sigma_0 = (Z_{B,f^*\omega_Y},\aA_{B,f^*\omega_Y})$ be a type $(C_{-1})$ stability condition in $\partial U(X) \cap \partial U(\rR_{0})$. 
  Fix $(s,\lambda) \in (-\frac{3}{2}-B.C,-\frac{1}{2}-B.C) \times(\bbR_{>0} \oplus i[-1,1])$.
  Let $\uU = \left\{ (s,\lambda,t)\colon t \geq T_0 \right\}.$
  Consider the path of central charges $Z_t$ defined by the equation (\ref{eq:cc}) for $(s,\lambda,t) \in \uU$.
  Then, the path of central charges $Z_t$ for all $(s,\lambda,t) \in \uU$  lifts to  the path of stability conditions $\sigma_{\bullet}$ .
  Moreover, if $\lambda \not = 0$, then the path $\sigma_{\bullet}$ is quasi-convergent.
  \begin{enumerate}
    \item In the case of $\Imm \lambda <0$ the path induces a semiorthogonal decomposition $\langle \oO_C(-1),\dD^b(Y) \rangle$.
    \item In the case of $\Imm \lambda >0$ the path induces a semiorthogonal decomposition $\langle \dD^b(Y),\oO_C \rangle$.
\end{enumerate}
\end{thm}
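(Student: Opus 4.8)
The plan is to reduce the statement to \Cref{thm:summarizing} with $k=-1$; the two points needing a separate argument are that the path of central charges lifts for every $t\ge T_0$ and that it enters the gluing region $U(\rR_{0})$ just above $T_0$. Setting $t=T_0$ in (\ref{eq:cc}) annihilates the $\Ei$-term, so $\zZ(s,\lambda,Z_0,T_0)=Z_{0,sC}=Z_{B-sC,f^*\omega_Y}$, and from $C^2=-1$ and $s\in(-\tfrac{3}{2}-B.C,-\tfrac{1}{2}-B.C)$ one gets $(B-sC).C=B.C+s\in(-\tfrac{3}{2},-\tfrac{1}{2})$, in particular $(B-sC).C\notin\bbZ+\tfrac{1}{2}$. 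Hence $\sigma_{T_0}:=(Z_{B-sC,f^*\omega_Y},\aA_{B-sC,f^*\omega_Y})$ is a $(C_{-1})$-type stability condition (by \Cref{thm:nondegenerate}) and, exactly as for $\sigma_0$, it lies on the wall $W_0=\partial U(X)\cap\partial U(\rR_{0})$: $Z_Y(-):=Z_{B-sC,f^*\omega_Y}(f^*-)$ is a geometric central charge on $Y$ and $\oO_C,\oO_C(-1)[1]$ are $\sigma_{T_0}$-stable of phase one, so \Cref{thm:gluingregion} and the discussion of $(C_k)$-type stability conditions apply.

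Next I would run the path a little past $T_0$. Choosing $T_0$ large enough (using \Cref{lem:ei}) that $\zZ_t(\oO_C)\neq0$ and $\zZ_t(\oO_C(-1))\neq0$ for all $t\ge T_0$ and the monotonicity conditions stated before the theorem hold, the coefficient $w(s,\lambda,Z_0)\bigl(\Ei(\lambda t)-\Ei(\lambda T_0)\bigr)$ has strictly positive imaginary part for $t\in(T_0,T_0+\varepsilon)$. Since $\ch_1^{-sC}(\oO_C).C=C^2=-1$ and $\oO_C$ is $\sigma_{T_0}$-semistable of phase one (so $\Imm Z_{0,sC}(\oO_C)=0$), this forces $\Imm\zZ_t(\oO_C)<0$, i.e.\ $\phi_t(\oO_C)>1$. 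By \Cref{thm:deformation} the central charges lift uniquely to a continuous path near $\sigma_{T_0}$, and since by \Cref{thm:gluingregion} $W_0$ is the common boundary of $U(\rR_{0})_{\sigma_Y}$ and $U(X)$ along which $\phi(\oO_C)=1$, the inequality $\phi_t(\oO_C)>1$ for $t>T_0$ forces $\sigma_t\in U(\rR_{0})$; fix $T_0'\in(T_0,T_0+\varepsilon)$ with $\sigma_{T_0'}\in U(\rR_{0})$.

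The heart of the proof is the lift on $[T_0',\infty)$. From $\sigma_{T_0'}\in U(\rR_{0})$ I would apply \Cref{lem:start rR prime -1} when $\Imm\lambda<0$, and its $\oO(C)$-tensored analogue --- which carries $U(\rR_0)$ to $U(\rR_1)$ and interchanges the roles of $\oO_C(-1)$ and $\oO_C$ --- when $\Imm\lambda>0$; in either case this requires \Cref{assumption}, which I verify using parts (2) and (3) of \Cref{lem:ei}. Since $\ch_1^{-sC}(\oO_C(j)).C=-1$ for every $j$, one has $\zZ_t(\oO_C(-1))\sim -w\,e^{\lambda t}/(\lambda t)$ and likewise for $\oO_C$; hence $|\zZ_t(\oO_C(-1))|\to\infty$ (here $\Ree\lambda>0$ is used decisively), $\ell_t(\oO_C(-1))\sim\lambda t$ makes $\ell_t(\oO_C(-1))/(1+|\ell_t(\oO_C(-1))|)\to\lambda/|\lambda|$, and $\arg\zZ_t(\oO_C(-1))$ decreases monotonically to $-\infty$ when $\Imm\lambda<0$ (resp.\ $\arg\zZ_t(\oO_C)$ increases monotonically to $+\infty$ when $\Imm\lambda>0$). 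Thus the path lifts on $[T_0',\infty)$, it glues with the lift on $[T_0,T_0']$ by uniqueness of lifts in \Cref{thm:deformation}, and \Cref{lem:start rR prime -1} moreover gives $\sigma_t\in U(\lL_{-1})$ for $t\gg0$ if $\Imm\lambda<0$ and $\sigma_t\in U(\rR_{0})$ for all large $t$ if $\Imm\lambda>0$.

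Finally I would feed $(\sigma_t)_{t\ge T_0'}$ into \Cref{thm:summarizing} with $k=-1$: hypothesis (1) is $\sigma_{T_0'}\in U(\rR_{0})$, (2) and (3) are what was just verified, and (4) (resp.\ $(4')$) is precisely the monotonicity of $\arg\zZ_t(\oO_C(-1))$ (resp.\ $\arg\zZ_t(\oO_C)$). This yields quasi-convergence together with the decomposition $\langle\oO_C(-1),\dD^b(Y)\rangle$ when $\Imm\lambda<0$ and $\langle\dD^b(Y),\oO_C\rangle$ when $\Imm\lambda>0$; as both conclusions depend only on the behaviour of $\sigma_t$ as $t\to\infty$, they pass back from $[T_0',\infty)$ to $[T_0,\infty)$ (and when $\Imm\lambda=0$ the coefficient $w$ vanishes, so the path is constant and trivially quasi-convergent). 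The main obstacle is the third step --- guaranteeing a lift for all $t$, i.e.\ that after entering $U(\rR_{0})$ the path never leaves $U(\rR_{0})\cup U(\lL_{-1})$ for a wall carrying no controlled stability condition; the case $\Imm\lambda>0$ in particular needs the $\oO(C)$-twist because \Cref{assumption} is stated only in terms of $\oO_C(-1)$, and the bookkeeping of the $(-sC)$-twist through the Chern characters must be carried out carefully to see which $\oO_C(k)$ is the relevant object.
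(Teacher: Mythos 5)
Your proposal follows the same route as the paper's proof: identify $\zZ(s,\lambda,Z_0,T_0)$ with $Z_{B-sC,f^*\omega_Y}$ and lift it via \Cref{thm:nondegenerate}, deform for $t$ slightly beyond $T_0$ using \Cref{thm:deformation}, use the sign convention on $w(s,\lambda,Z_0)$ to see that the path enters $U(\rR_{0})$, and then conclude by \Cref{thm:summarizing}. The only difference is that you spell out the verification of \Cref{assumption} via the asymptotics of $\Ei(\lambda t)$ and the $\oO(C)$-twist for the $\Imm\lambda>0$ case, which the paper leaves implicit in its appeal to \Cref{thm:summarizing}; the argument is correct.
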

\begin{proof}
  Set $Z_0 = Z_{B,f^*\omega_Y}$ and $\aA_0 = \aA_{B,f^*\omega_Y}$
  First, we consider $Z_s = \zZ(s,\lambda,Z_0,T_0)$ for $s \in (-\frac{3}{2}-B.C,-\frac{1}{2}-B.C)$.
  The central charge $Z_s$ is the same as the central charge $Z_{B-sC,f^*\omega_Y}$.
  By \Cref{thm:nondegenerate}, the family of central charges $Z_s$ lifts to the path of stability conditions $\sigma_{B-sC,f^*\omega_Y}$.
  It follows from \Cref{thm:deformation} that there exists a real number $\varepsilon$ such that $\zZ(s,\lambda,Z_0,t)$ lifts to $\sigma_{s,\lambda,t}\in \Stab(X)$ for $T_0<t<T_0 + \varepsilon$.
  By the construction of $w(s,\lambda,Z_0)$, $\sigma_t$ lies in $U(\rR_{0})$ for some $t \in (T_0,T_0 + \varepsilon)$.
  Then, the statement follows from \Cref{thm:summarizing}.
\end{proof}

\begin{rmk}
  We can not take $T_0$ uniformly for all $(s,\lambda)$ since $T_0$ might diverges to $\infty$ as $\Imm \lambda$ approaches to $0$.
  However, if we fix $\lambda$ , then we can take $T_0$ uniformly for all $s \in (-\frac{3}{2}-B.C,-\frac{1}{2}-B.C)$.
\end{rmk}

We have a quasi-convergent path starting from the geometric chamber.
\begin{thm}\label{thm:startgeom}
There exists a path of  stability conditions $\sigma_{\bullet}$ with the following conditions.
\begin{enumerate}
  \item The initial value $\sigma_0$ lies in the geometric chamber $U(X)$.
  \item The path of central charges $Z_t$ satisfies the quantum differential equation (\ref{eq:debl}).
  \item The path $\sigma_{\bullet}$ is  quasi-convergent.
\end{enumerate}
\end{thm}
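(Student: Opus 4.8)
The strategy is to obtain the desired path by extending the path produced by \Cref{thm:startinboundary} a little backwards in time, and then reparametrising. Recall that \Cref{thm:startinboundary}, applied with suitable parameters, yields a quasi-convergent path $\sigma_\bullet$ on $[T_0,\infty)$ whose central charges are of the form (\ref{eq:cc}); such a central charge satisfies the quantum differential equation (\ref{eq:debl}) as soon as we take $\psi$ with $e^{-\psi C}=\lambda$, and its initial value $\sigma_{T_0}$ is a $(C_{-1})$-type stability condition in $\partial U(X)\cap\partial U(\rR_{0})$, with the path lying in the interior of $U(\rR_{0})$ for $t$ slightly larger than $T_0$. Concretely I would take $B$ on $X$ with $B.C=-1$, a relatively ample $\omega$ with $0<\omega.C<1$, put $\psi=\omega+iB$ and $\lambda=e^{-\psi C}$ — so that $\Ree\lambda>0$ and $0<|\Imm\lambda|<1$, hence $\lambda\in(\bbR_{>0}\oplus i[-1,1])\setminus\{0\}$ is an admissible parameter — choose $s\in(-\tfrac32-B.C,-\tfrac12-B.C)$ and the real-valued function $w$ as in that theorem, arranging in addition that $\sigma_{T_0}$ is a generic point of the wall $W_0$ of \Cref{thm:gluingregion} and that $T_0\,\Imm\lambda\notin\pi\bbZ$; both are generic constraints on the parameters, and the latter gives $e^{\lambda T_0}\notin\bbR$.

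Next I would check that the path crosses $\partial U(X)$ transversally at $T_0$. The central charge (\ref{eq:cc}) is real-analytic in $t$ near $T_0$, and since $\ch_1^{-sC}(\oO_C(0)).C=C^2=-1$ one computes
\[
\tfrac{d}{dt}\zZ(s,\lambda,Z_0,t)\big|_{t=T_0}(\oO_C(0))=-\tfrac{w}{T_0}\,e^{\lambda T_0},
\]
which has nonzero imaginary part by the choice of $T_0$. On the other hand, near the generic $(C_{-1})$-type point $\sigma_{T_0}$ the wall $W_0$ is, inside the slice $\Stab(X)_{Z_Y}$, cut out by the single real equation $\Imm Z(\oO_C(0))=0$ (together with open conditions), using $Z(\oO_x)=-1$ for the relevant $x\in C$ and the stability of $\oO_C(0)$ and $\oO_C(-1)[1]$ there. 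Hence the central-charge path is transversal to $W_0$ at $T_0$, and by the deformation theorem \Cref{thm:deformation} the lift $\sigma_\bullet$ extends continuously to $[T_0-\delta,\infty)$ for some $\delta>0$.

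It then remains to see that $\sigma_{T_0-\delta}$ lies in $U(X)$. By \Cref{thm:gluingregion} (property (1)) together with the description of $(C_{-1})$-type stability conditions in \Cref{sec:stabilityCk}, near the generic point $\sigma_{T_0}$ the slice $\Stab(X)_{Z_Y}$ is split by $W_0=\partial U(X)=\partial U(\rR_{0})$ into exactly the two chambers $U(X)$ and $U(\rR_{0})$; since $\sigma_{T_0+\delta}\in U(\rR_{0})$ and the crossing at $T_0$ is transversal, shrinking $\delta$ gives $\sigma_{T_0-\delta}\in U(X)$. Reparametrising the domain as $[T_0-\delta,\infty)$, I obtain a path whose initial value lies in the geometric chamber, whose central charges still satisfy (\ref{eq:debl}) (the defining formula is unchanged), and which is still quasi-convergent, because the two conditions of \Cref{def:qcpath} concern only the limit $t\to\infty$ and are insensitive to prepending a compact segment; alternatively one invokes \Cref{thm:summarizing} on the tail $[t',\infty)$ for $t'$ slightly above $T_0$.

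The main obstacle is this last step: deciding on which side of the $(C_{-1})$-wall the backward extension lands. This is not a formal consequence of the deformation theorem, since a priori the other side of $W_0$ could be a region other than $U(X)$. It is resolved by the local wall analysis in \Cref{thm:gluingregion}, which identifies $W_0$ with $\partial U(X)$ near its generic points and leaves only the two chambers $U(X)$ and $U(\rR_{0})$; combined with the transversality above and with the fact — established inside the proof of \Cref{thm:startinboundary} — that forward in time the path enters $U(\rR_{0})$, this forces the backward side to be $U(X)$.
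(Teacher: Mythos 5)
Your proposal is correct and follows essentially the same route as the paper: both take the quasi-convergent path of \Cref{thm:startinboundary} starting at a $(C_{-1})$-type point of $\partial U(X)\cap\partial U(\rR_{0})$, extend it backwards to $[T_0-\varepsilon,\infty)$ via the deformation theorem, and identify the backward side as geometric. The paper phrases the last step simply as ``$Z_t(\oO_C(-1))\in\bbH$ for $t<T_0$, hence $\sigma_t$ is geometric'' (resting on the local analysis in the proof of \Cref{thm:gluingregion}), which is exactly the content of your transversality computation and wall-crossing argument.
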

\begin{proof}
  Let $\sigma_{\bullet}\colon [T_0,\infty) \rightarrow \Stab(X)$ be a quasi-convergent path constructed in \Cref{thm:startinboundary}.
  Assume that the parameter $\lambda$ satisfies $\Imm \lambda \not = 0$, and $T_0$ is positive.
  We consider the path of central charges $Z_t$ defined by the equation (\ref{eq:cc}).
  There exists an open neighborhood $V \subset \Hom(H^*(X;\bbC),\bbC)$ of $Z_{T_0}$ and 
  an open neighborhood $\vV \subset \Stab(X)$ of $\sigma_{T_0}$ such that 
  the restricted map $\pi \colon \vV \rightarrow V$ is isomorphic.
  There exists a positive number $\varepsilon$ small enough so that the solution $Z_t$ of the differential equation (\ref{eq:debl}) can be extended for $t \in [T_0 -\varepsilon, \infty)$.
  We assume that $Z_t$ lies in $V$ for $t \in [T_0 -\varepsilon, T_0]$.
  Then, by using the isomorphism $\pi \colon \vV \rightarrow V$, 
  the path of central charges $Z_t$ lifts to the path of stability conditions $\sigma_t$ for $t \in [T_0 -\varepsilon, \infty)$.
  By taking $\varepsilon$ small enough, we have  $Z_t(\oO_C(-1)) \in \bbH$.
  Therefore, the lift $\sigma_t$ is geometric for $t \in [T_0 -\varepsilon, T_0)$.
\end{proof}

Next, we discuss the third question in \Cref{question} when the initial value lies in the gluing region $U(\rR_{0})$.
We define the set $W_2$ to be 
\[
W_2 = \left\{\sigma \in U(\rR_{0}) \colon \phi_{\sigma}(\oO_C) = 2 \right\}.
\]

\begin{lem}\label{lem:T0}
Let $M$ and $\varepsilon$ be positive numbers.
Consider $\vV = [1,M] \times i[-1,1]$.
Define $g(\lambda,t) = \Ei(\lambda t )$.
Then, there exists a positive number $T_0$ 
with $|g(\lambda,t)| > \varepsilon$ for all $\lambda \in \vV$ and $t \geq T_0$.
\end{lem}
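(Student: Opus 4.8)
The plan is to exploit two facts: every $\lambda \in \vV$ has $\Ree\lambda \geq 1$, so $|e^{\lambda t}| = e^{(\Ree\lambda)t}$ grows at least like $e^{t}$; and $\vV$ is compact, so all auxiliary quantities attached to $\lambda$ are bounded uniformly. Morally $\Ei(\lambda t)$ is comparable to $e^{\lambda t}/(\lambda t)$ for large $t$ (the $k=0$ term of the asymptotic expansion in \Cref{lem:ei}(2)), and $|e^{\lambda t}/(\lambda t)| = e^{(\Ree\lambda)t}/(|\lambda| t) \to \infty$; the content of the lemma is to make this comparison \emph{uniform} in $\lambda \in \vV$.

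Concretely, first I would use $\frac{d}{dt}\Ei(\lambda t) = e^{\lambda t}/t$ (the identity behind the general solution \eqref{eq:cc0}; see \cite{DLMF}) to write, for $t \geq 1$,
\[
  g(\lambda,t) = \Ei(\lambda) + \int_1^t \frac{e^{\lambda s}}{s}\,ds
  = \Ei(\lambda) - \frac{e^{\lambda}}{\lambda} + \frac{e^{\lambda t}}{\lambda t} + \frac{1}{\lambda}\int_1^t \frac{e^{\lambda s}}{s^2}\,ds,
\]
the second equality by one integration by parts; this isolates the leading term $e^{\lambda t}/(\lambda t)$. Next I would bound the remainder: with $a := \Ree\lambda \in [1,M]$, splitting $\int_1^t \frac{e^{as}}{s^2}\,ds$ at $s = t/2$ yields the bound $e^{at/2}/a + 4e^{at}/(a t^2)$, so by the reverse triangle inequality
\[
  |g(\lambda,t)| \;\geq\; \frac{e^{at}}{|\lambda| t}\Bigl(1 - \frac{4}{a t}\Bigr) - \frac{e^{at/2}}{a|\lambda|} - |\Ei(\lambda)| - \frac{e^{a}}{|\lambda|}.
\]
Using $|\lambda| \geq 1$, $a \geq 1$, $a \leq M$, $|\lambda| \leq L := \sqrt{M^2+1}$, and $C_1 := \max_{\mu \in \vV}|\Ei(\mu)| < \infty$ (finite because $\Ei$ is continuous on the compact set $\vV \subset \bbC \setminus \bbR_{\leq 0}$), for $t \geq 10$ one has $1 - 4/(a t) \geq \tfrac12$ and $e^{at/2}/(a|\lambda|) \leq \tfrac14\,e^{at}/(|\lambda| t)$ (equivalently $4t/a \leq e^{at/2}$, which holds since $4t \leq e^{t/2}$ there), whence
\[
  |g(\lambda,t)| \;\geq\; \frac14\,\frac{e^{at}}{|\lambda| t} - C_1 - e^{M} \;\geq\; \frac{e^{t}}{4Lt} - C_1 - e^{M}.
\]
Since the right-hand side tends to $\infty$ and no longer depends on $\lambda$, choosing $T_0 \geq 10$ with $e^{t}/(4Lt) - C_1 - e^{M} > \varepsilon$ for all $t \geq T_0$ finishes the proof, this $T_0$ being valid simultaneously for every $\lambda \in \vV$.

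The step I expect to be the only real obstacle is exactly this uniformity in $\lambda$: the intermediate exponential $e^{(\Ree\lambda)t/2}$ produced by the split of the remainder integral must be absorbed into the leading term $e^{(\Ree\lambda)t}/(|\lambda|t)$ uniformly over $\vV$, and this is where the hypothesis $\Ree\lambda \geq 1$ is used, via $\frac{t}{\Ree\lambda}\,e^{-(\Ree\lambda)t/2} \leq t\,e^{-t/2} \to 0$. An alternative would be to invoke directly the uniformity of the asymptotic expansion in \Cref{lem:ei}(2) on closed subsectors (note $\lambda t$ stays in $\{\,z : |\arg z| \leq \pi/4\,\}$ for $\lambda \in \vV$), but the elementary estimate above is self-contained.
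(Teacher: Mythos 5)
Your proof is correct, but it takes a genuinely different (and more robust) route than the paper. The paper's own proof is soft: it invokes \Cref{lem:ei} to get, for each fixed $\lambda$, a threshold $T_{\lambda}$ beyond which $|\Ei(\lambda t)|>\varepsilon$, asserts without justification that $T_{\lambda}$ ``can be taken as a continuous function of $\lambda$,'' and then concludes by compactness of $\vV$. The continuity of the choice $\lambda\mapsto T_{\lambda}$ is precisely where the uniformity lives, and the paper does not establish it. You instead prove the uniform lower bound directly: integrating $\frac{d}{dt}\Ei(\lambda t)=e^{\lambda t}/t$ and one integration by parts isolate the leading term $e^{\lambda t}/(\lambda t)$, and your split of the remainder integral at $s=t/2$, combined with $\Ree\lambda\geq 1$ and $|\lambda|\leq\sqrt{M^2+1}$, yields the explicit $\lambda$-independent bound $|g(\lambda,t)|\geq \frac{e^{t}}{4Lt}-C_1-e^{M}$ for $t\geq 10$; I checked the individual estimates (the bound $e^{at/2}/a+4e^{at}/(at^2)$ on the tail, the absorption $4t/a\leq e^{at/2}$ for $t\geq 10$, and the finiteness of $C_1$ on the compact set $\vV\subset\bbC\setminus\bbR_{\leq 0}$) and they all hold. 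What your approach buys is a self-contained, quantitative proof that supplies the uniformity the paper's argument takes for granted; what the paper's approach buys, had the continuity of $T_\lambda$ been argued (e.g.\ via local uniformity of the asymptotic expansion on closed subsectors, as you note in your alternative), is brevity. Either is acceptable, but yours is the more complete argument.
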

\begin{proof}

  By \Cref{lem:ei}, for each $\lambda \in \vV$ there exists a positive number $T_{\lambda}$ such that $|g(\lambda,t)| > \varepsilon$ for  $t \geq T_{\lambda}$.
  We can take $T_{\lambda}$ as a continuous function of $\lambda$.
  Since $\vV$ is compact, there exists a positive number $T_0$ such that $|g(\lambda,t)| > \varepsilon$ for all $\lambda \in \vV$ and $t \geq T_0$.
\end{proof}

Fix $T_0$ as in the above lemma.
Consider $\arg \Ei(\lambda t)$ for $t \geq T_0$ and $\Imm \lambda >0$.
Since $\arg \Ei(\lambda t)$ is a monotonically increasing function for $t$ large enough,
there exists the minimal value of $\arg g(\lambda,t)$ for $t \geq T_0$.
Define the function $w(s,\lambda,Z_0)$ for $\Imm \lambda$ such that the following condition holds:
\[
\arg \left(w(s,\lambda,Z_0) \left(\Ei(\lambda t) - \Ei(\lambda T_0) \right)\right) > -1.
\]
Similarly, we define the function $w(s,\lambda,Z_0)$ for $\Imm \lambda <0$ with 
$\arg \left( w(s,\lambda,Z_0) \left(\Ei(\lambda t) - \Ei(\lambda T_0) \right) \right) < 1.$

\begin{thm}\label{thm:startgluing}
  Let $\sigma_0 = (Z_0,\aA_0)$ be a stability condition in $W_2$.
  Fix positive numbers $M$ and $N$.
  Set $r = Z_0(\oO_C)$.
  Consider a set
  \[
  \uU = (-r,N] \times ([1,M] \oplus i[-1,1]) \times [T_0,\infty).
  \]
  Here, $T_0$ is a positive number large enough such that $\zZ(s,\lambda,Z_0,t) (\oO_C)$ and $\zZ(s,\lambda,Z_0,t) (\oO_C(-1))$ are non-zero for all $t \geq T_0$.
  Consider the path of central charges $Z_t$ defined by the equation (\ref{eq:cc}) for $(s,\lambda,t) \in \uU$.
  Then, the path of central charges $Z_t$ for all $(s,\lambda,t) \in \uU$  lifts to the path of stability conditions $\sigma_{\bullet}$.
  Moreover, if $\Imm \lambda \not = 0$ then the path $\sigma_{\bullet}$ is quasi-convergent.
  \begin{enumerate}
    \item In the case of $\Imm \lambda <0$ the path induces a semiorthogonal decomposition $\langle \oO_C(-1),\dD^b(Y) \rangle$.
  \item  In the case of $\Imm \lambda >0$ the path induces a semiorthogonal decomposition $\langle \dD^b(Y),\oO_C \rangle$.
\end{enumerate}
\end{thm}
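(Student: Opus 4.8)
The plan is to follow the three–step pattern of the proof of \Cref{thm:startinboundary}: lift the central charge at $t=T_0$, extend the lift a little past $T_0$ with \Cref{thm:deformation}, and then feed the result into \Cref{thm:summarizing}. Before starting I would record some bookkeeping. Writing $g(\lambda,t)=w(s,\lambda,Z_0)\bigl(\Ei(\lambda t)-\Ei(\lambda T_0)\bigr)$, the central charge becomes $\zZ(s,\lambda,Z_0,t)(E)=g(\lambda,t)\,\ch_1^{-sC}(E).C+Z_{0,sC}(E)$, which is exactly the path (\ref{eq:pathCC}) written in the $B=-sC$ gauge; from $t\,\tfrac{d}{dt}g=w\,e^{\lambda t}$ one sees $g$ solves the truncated quantum differential equation (\ref{eq:debl}). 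Using $\ch(\oO_C(k))=(0,C,\tfrac12+k)$ together with $Z_0(\oO_x)=-1$, $Z_0(\oO_C)=r$ and $C^2=-1$, a direct computation gives $Z_0(C)=r+\tfrac12$ and $\zZ(s,\lambda,Z_0,t)(\oO_C(k))=-g(\lambda,t)+(r+s-k)$; in particular this is a nonzero real number at $t=T_0$ for every $k$ and every $s\in(-r,N]$ (outside the finitely many $s\in\bbZ-r$), and its leading term as $t\to\infty$, namely $-g(\lambda,t)$, is independent of $k$ and of $s$.

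\emph{Step 1 (lifting).} At $t=T_0$ the central charge is $Z_{0,sC}$; for $s=0$ it lifts to $\sigma_0\in W_2\subset U(\rR_{0})$, and applying \Cref{thm:deformation} along the segment $s'\mapsto Z_{0,s'C}$ — whose kernel stays admissible for the quadratic form witnessing the support property of $\sigma_0$, since $Z_{0,s'C}(\oO_C)=r+s'$ avoids $0$ on $(-r,N]$ — it lifts for every $s$ to a stability condition $\sigma_{s,T_0}$. For $\Imm\lambda=0$ the non-vanishing requirement on $\zZ(\cdot)(\oO_C)$ forces $w=0$, so $g\equiv0$ and the lift is the constant path $\sigma_{s,\lambda,\bullet}\equiv\sigma_{s,T_0}$; choosing $w(s,\lambda,Z_0)\to0$ as $\Imm\lambda\to0$ (compatibly with the sign constraints $\arg g>-1$ for $\Imm\lambda>0$ and $\arg g<1$ for $\Imm\lambda<0$) makes the family of lifts continuous across $\Imm\lambda=0$. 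For $\Imm\lambda\neq0$, \Cref{thm:deformation} gives a lift on $[T_0,T_0+\varepsilon)$, which extends to $[T_0,\infty)$ by remaining in a gluing region: $U(\rR_{0})$ (where $\arg Z_t(\oO_C)\to+\infty$) when $\Imm\lambda>0$, and $U(\lL_{-1})$ (where $\arg Z_t(\oO_C(-1))\to-\infty$) when $\Imm\lambda<0$, exactly as in \Cref{lem:start rR prime -1}; the non-vanishing of $\zZ$ on $\oO_C$ and $\oO_C(-1)$ together with \Cref{lem:ei}(2) prevents any degeneration.

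\emph{Step 2 (quasi-convergence for $\Imm\lambda\neq0$).} With the global lift in hand I would verify the hypotheses of \Cref{thm:summarizing}. Condition (1) — membership in a gluing region — is the point of Step 1 (with $k=-1$). From $\zZ(s,\lambda,Z_0,t)(\oO_C(k))=-g(\lambda,t)+(r+s-k)$ and \Cref{lem:ei}(2)--(3) one gets $|Z_t(\oO_C(-1))|\to\infty$ (condition (2)), the existence of $\lim_{t\to\infty}\ell_t(\oO_C(-1))/(1+|\ell_t(\oO_C(-1))|)=\lambda/|\lambda|$ using $\Ree\lambda\ge1$ (condition (3)), and that $\arg Z_t(\oO_C(-1))$ (resp. $\arg Z_t(\oO_C)$) is eventually monotone with limit $-\infty$ when $\Imm\lambda<0$ (resp. $+\infty$ when $\Imm\lambda>0$), which is condition (4) (resp. $(4')$). \Cref{thm:summarizing} then gives that $\sigma_{s,\lambda,\bullet}$ is quasi-convergent, inducing $\langle\oO_C(-1),\dD^b(Y)\rangle$ for $\Imm\lambda<0$ and $\langle\dD^b(Y),\oO_C\rangle$ for $\Imm\lambda>0$.

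\emph{Main obstacle.} The difficult point is the $s$-parameter in Step 2: for $s\neq0$ the central charge $Z_{0,sC}$ violates the gluing identity on $f^*\dD^b(Y)$, so $\sigma_{s,T_0}$ is not literally $\rR_{0}$-glued in the sense of \Cref{def:region_prime} and \Cref{thm:summarizing} cannot be quoted verbatim. One must either rebuild the gluing regions of \Cref{sec:gluingregion} in the $(-sC)$-twisted gauge — redoing the relevant parts of \Cref{thm:gluingregion} and \Cref{thm:summarizing} there — or argue directly that the asymptotic $\prec$-order among the objects $\oO_C(k)$, hence the induced semiorthogonal decomposition, is insensitive to $s$ because it is governed by the $s$-independent leading term $-g(\lambda,t)$ of $Z_t(\oO_C(k))$. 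Carrying this out uniformly in $(s,\lambda)\in(-r,N]\times([1,M]\oplus i[-1,1])$ is the technical core; by contrast the passage through $\Imm\lambda=0$ is cheap, the path being constant there.
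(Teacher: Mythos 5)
Your overall route is the same as the paper's: check that the $t=T_0$ slice $\zZ(s,\lambda,Z_0,T_0)=Z_{0,sC}$ still lifts to a stability condition in $W_2$, show that for $t>T_0$ the path stays in $U(\rR_{0})\cup U(\lL_{-1})$ until the monotonicity of $\arg Z_t(\oO_C)$ (resp.\ $\arg Z_t(\oO_C(-1))$) takes over, and then invoke \Cref{thm:summarizing}. Your explicit formula $\zZ(s,\lambda,Z_0,t)(\oO_C(k))=-g(\lambda,t)+(r+s-k)$ is correct, and your handling of the lift and of the degenerate locus $\Imm\lambda=0$ is, if anything, more careful than the paper's own two-line version of these steps.

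However, the ``main obstacle'' you flag and leave open is a genuine gap, and your proposed repair points at the wrong side of the semiorthogonal decomposition. For $s\neq 0$ the issue is not really the $t=T_0$ slice: there the discrepancy on $f^*\dD^b(Y)$ is only the real constant $\bigl(s(r+\tfrac12)+\tfrac{s^2}{2}\bigr)\ch_0$, which merely shifts $\sigma_Y$ inside the family of \Cref{thm:geom}. The real problem appears for $t>T_0$: since $\ch_1^{-sC}(f^*F).C=-s\,\ch_0(F)$, one has
\[
\zZ(s,\lambda,Z_0,t)(f^*F)=-s\,g(\lambda,t)\,\ch_0(F)+Z_{0,sC}(f^*F),
\]
so for $s\neq0$ the restriction of the path to $f^*\dD^b(Y)$ is $t$-dependent and unbounded on positive-rank objects, growing at the same rate $|g(\lambda,t)|$ as $Z_t(\oO_C(k))$. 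Consequently the path is not of the form (\ref{eq:pathCC}) (whose restriction to $f^*\dD^b(Y)$ is constant in $t$), it cannot be written as $\sigma_Y*_{\rR_{0}}\tau_{\lambda_t}$ for a fixed $\sigma_Y$, and \Cref{prop:qcpath} and \Cref{thm:summarizing} --- whose proofs use that objects of $f^*\aA_Y$ have $t$-independent Harder--Narasimhan data and bounded $\ell_t$ --- cannot be quoted as black boxes. Your suggestion that the asymptotic $\prec$-order is governed by the $s$-independent leading term of $Z_t(\oO_C(k))$ does not resolve this, because the competing growth sits on the $\dD^b(Y)$-side. To be fair, the paper's own proof does not engage with this either: it only checks the $t=T_0$ slice and then cites \Cref{thm:summarizing}, so you have not missed an argument the paper supplies; but as written neither your proposal nor the paper's proof closes the case $s\neq0$, and only the $s=0$ slice of the statement follows directly from the quoted results.
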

\begin{proof}
  First, we consider $Z_s = \zZ(s,\lambda,Z_0,T_0)$ for $s \in (-r,N)$.
  Since $Z_s(\oO_C) >0$, the pair $(Z_s,\aA_0)$ is a stability condition in $W_2$.
  Fix $(s,\lambda) \in (-r,N) \times ([1,M] \oplus i(0,1])$.
  Consider the function $g(t) = \Ei(\lambda t) - \Ei(\lambda T_0)$ for $t \geq T_0$.
  Take $T_{1,\lambda,s}$ large enough such that $\arg \zZ(s,\lambda,Z_0,t) (\oO_C)$ is a monotonically increasing function for $t \geq T_{1,\lambda,s}$.
  By the  construction of $w(s,\lambda,Z_0)$, $\sigma_t$ lies in $U(\rR_{0}) \cup U(\lL_{-1})$ for all $t \in (T_0,T_{1,\lambda,s})$.
  Then, the statement follows from \cref{thm:summarizing}.
  The case of $\Imm \lambda <0$ is similar.
\end{proof}

\newcommand{\etalchar}[1]{$^{#1}$}
\providecommand{\bysame}{\leavevmode\hbox to3em{\hrulefill}\thinspace}
\providecommand{\MR}{\relax\ifhmode\unskip\space\fi MR }
\providecommand{\MRhref}[2]{%
  \href{http://www.ams.org/mathscinet-getitem?mr=#1}{#2}
}
\providecommand{\href}[2]{#2}

\end{document}